\numberwithin{equation}{section}
\newcommand{\rHom}{\mathrm{RHom}}
\newcommand{\BDC}{{\mathbf{D}}^{\mathrm{b}}}
\newcommand{\Mod}{\mathrm{Mod}}
\newcommand{\Hom}{\mathrm{Hom}}
\newcommand{\CC}{\mathbb{C}}
\newcommand{\NN}{\mathbb{N}}
\newcommand{\RR}{\mathbb{R}}
\newcommand{\ZZ}{\mathbb{Z}}
\newcommand{\D}{\mathcal{D}}
\newcommand{\E}{\mathcal{E}}
\newcommand{\M}{\mathcal{M}}
\newcommand{\N}{\mathcal{N}}
\renewcommand{\SS}{\mathcal{S}}
\newcommand{\calT}{\mathcal{T}}
\newcommand{\Ker}{\operatorname{Ker}}
\newcommand{\Coker}{\operatorname{Coker}}
\newcommand{\id}{{\rm id}}
\newcommand{\supp}{{\rm supp}}
\newcommand{\Int}{{\rm Int}}
\newcommand{\Perv}{{\rm Perv}}
\newcommand{\Sol}{{\rm Sol}}
\newcommand{\tl}[1]{\widetilde{#1}}
\newcommand{\simto}{\overset{\sim}{\longrightarrow}}
\newcommand{\op}{\mbox{\scriptsize op}}
\newcommand{\SD}{\mathcal{D}}
\newcommand{\SO}{\mathcal{O}}
\newcommand{\SA}{\mathcal{A}}
\newcommand{\SM}{\mathcal{M}}
\newcommand{\SN}{\mathcal{N}}
\newcommand{\SE}{\mathcal{E}}
\newcommand{\SF}{\mathcal{F}}
\newcommand{\SG}{\mathcal{G}}
\newcommand{\SH}{\mathcal{H}}
\newcommand{\calI}{\mathcal{I}}
\newcommand{\SC}{\mathcal{C}}
\newcommand{\Conn}{\mathrm{Conn}}
\newcommand{\Modhol}{\mathrm{Mod}_{\mbox{\rm \scriptsize hol}}}
\newcommand{\Modrh}{\mathrm{Mod}_{\mbox{\rm \scriptsize rh}}}
\newcommand{\BDCcoh}{{\mathbf{D}}^{\mathrm{b}}_{\mbox{\rm \scriptsize coh}}}
\newcommand{\BDChol}{{\mathbf{D}}^{\mathrm{b}}_{\mbox{\rm \scriptsize hol}}}
\newcommand{\BDCrh}{{\mathbf{D}}^{\mathrm{b}}_{\mbox{\rm \scriptsize rh}}}
\newcommand{\BDCmero}{{\mathbf{D}}^{\mathrm{b}}_{\mbox{\rm \scriptsize mero}}}
\newcommand{\DChol}{\mathbf{D}_{\mbox{\rm \scriptsize hol}}}
\newcommand{\DD}{\mathbb{D}}
\newcommand{\Lotimes}[1]{\overset{L}{\otimes}_{#1}}
\newcommand{\Dotimes}{\overset{D}{\otimes}}
\newcommand{\Dboxtimes}{\overset{D}{\boxtimes}}
\newcommand{\Potimes}{\overset{+}{\otimes}}
\newcommand{\Pboxtimes}{\overset{+}{\boxtimes}}
\newcommand{\rhom}{{\bfR}{\mathcal{H}}om}
\newcommand{\rihom}{{\bfR}{\mathcal{I}}hom}
\newcommand{\Prihom}{{\bfR}{\mathcal{I}}hom^+}
\newcommand{\I}{{\rm I}}
\newcommand{\che}[1]{\check{#1}}
\newcommand{\var}[1]{\overline{#1}}
\newcommand{\BEC}{{\mathbf{E}}^{\mathrm{b}}}
\newcommand{\BECstb}{{\mathbf{E}}_{\rm stb}^{\mathrm{b}}}
\newcommand{\ZEC}{{\mathbf{E}}^{\mathrm{0}}}
\newcommand{\ZECmero}{{\mathbf{E}}^{\mathrm{0}}_{\mbox{\rm \scriptsize mero}}}
\newcommand{\BECmero}{{\mathbf{E}}^{\mathrm{b}}_{\mbox{\rm \scriptsize mero}}}
\newcommand{\Q}{\mathbf{Q}}
\newcommand{\q}{\mathbf{q}}
\newcommand{\bfr}{\mathbf{r}}
\newcommand{\bfl}{\mathbf{l}}
\newcommand{\EE}{\mathbb{E}}
\newcommand{\bs}{\backslash}
\newcommand{\bfR}{\mathbf{R}}
\newcommand{\bfL}{\mathbf{L}}
\newcommand{\bfD}{\mathbf{D}}
\newcommand{\rmR}{{\rm R}}
\newcommand{\rmE}{{\rm E}}
\newcommand{\rmD}{{\rm D}}
\newcommand{\rmt}{{\rm t}}
\newcommand{\bfE}{\mathbf{E}}
\renewcommand{\Re}{\operatorname{Re}}
\newcommand{\singsupp}{{\rm Sing.Supp}}
\newcommand{\sing}{{\rm sing}}
\newcommand{\reg}{{\rm reg}}
\newcommand{\sh}{{\rm sh}}
\newcommand{\RH}{{\rm RH}}
\newcommand{\ord}{\operatorname{ord}}
\newtheorem{theorem}{Theorem}[section]
\newtheorem{corollary}[theorem]{Corollary}
\newtheorem{lemma}[theorem]{Lemma}
\newtheorem{sublemma}[theorem]{Sublemma}
\newtheorem{proposition}[theorem]{Proposition}
\newtheorem{fact}[theorem]{Fact}
\newtheorem{notation}[theorem]{Notatiton}
\theoremstyle{definition}
\newtheorem{definition}[theorem]{Definition}
\theoremstyle{remark}
\newtheorem{remark}[theorem]{\sc Remark}
\title{$\CC$-Constructible Enhanced Ind-Sheaves
\footnote{{\bf 2010 Mathematics 
Subject Classification: }32C38, 32S60, 35A27}}
\author{ Yohei ITO
\footnote{Graduate School of Mathematical Science, The University of 
Tokyo, 3-8-1, Komaba, 
Meguro, Tokyo, 153-8914, Japan. 
E-mail: yitoh@ms.u-tokyo.ac.jp }}
\date{}
\begin{document}
\maketitle

\begin{abstract}
A. D'Agnolo and M. Kashiwara proved that
their enhanced solution functor induces a fully faithful embedding
of the triangulated category of holonomic $\D$-modules
into the one of $\RR$-constructible enhanced ind-sheaves.
In this paper,
we define $\CC$-constructible enhanced ind-sheaves
and show that the triangulated category of them is equivalent to
its essential image.
Moreover we show that there exists a t-structure
on it whose heart is equivalent to the abelian category of holonomic $\D$-modules.
\end{abstract}

\section{Introduction}
In 1984, the Riemann-Hilbert correspondence for regular holonomic $\D$-modules
was proved by M. Kashiwara \cite{Kas84}.
He established an equivalence of categories between the triangulated category $\BDCrh(\D_X)$
of regular holonomic $\D_X$-modules on a complex manifold $X$ and 
the one $\BDC_{\CC-c}(\CC_X)$ of $\CC$-constructible sheaves on $X$.
More precisely, we have functors
\[\xymatrix@R=5pt{
\BDCrh(\D_X)^{\op}\ar@<0.7ex>@{->}[r]^-{\Sol_X}\ar@{}[r]|-{\sim}
&
\BDC_{\CC-c}(\CC_X)\ar@<0.7ex>@{->}[l]^-{\RH_X}\\
\M \ar@{|->}[r]
&
\Sol_X(\M) := \rhom_{\D_X}(\M, \SO_X)\\
\RH_X(\SF) := \rhom(\SF, \SO_X^{\rmt})
&
\SF\ar@{|->}[l]
}\]
quasi-inverse to each other.
Here, $\SO_X^{\rmt}$ is the ind-sheaf of tempered holomorphic functions
(see \cite{KS01} for the definition).
The triangulated category $\BDC_{\CC-c}(\CC_X)$ has a t-structure
$\big({}^p\bfD^{\leq0}_{\CC-c}(\CC_X), {}^p\bfD^{\geq0}_{\CC-c}(\CC_X)\big)$
which is called the perverse t-structure.
Let us denote by $$\Perv(\CC_X) :=
{}^p\bfD^{\leq0}_{\CC-c}(\CC_X)\cap {}^p\bfD^{\geq0}_{\CC-c}(\CC_X)$$ its heart
and call an object of $\Perv(\CC_X) $ a perverse sheaf.
The above equivalence induces an equivalence of categories between
the abelian category $\Modrh(\D_X)$ of regular holonomic $\D_X$-modules
and the one $\Perv(\CC_X)$ of perverse sheaves.
The problem of extending
the Riemann-Hilbert correspondence to cover
the case of holonomic $\D$-modules with irregular singularities
had been open for 30 years.
In 2015, A. D'Agnolo and M. Kashiwara proved that
there exists an isomorphism of $\D_X$-modules
\[\M\simto\RH_X^{\rmE}\big(\Sol_X^{\rmE}(\M)\big)\]
for any holonomic $\D_X$-module $\M\in\BDChol(\D_X)$ \cite{DK16}.
Here, we set $\Sol_X^{\rmE}(\M) := \rihom_{\D_X}(\M, \SO_X^{\rmE})$,
$\RH_X^{\rmE}(K) := \rhom^{\rmE}(K, \SO_X^{\rmE})$ and
$\SO_X^{\rmE}$ is the enhanced ind-sheaf of tempered holomorphic functions
(see \cite{DK16} for the definition).
In particular, the enhanced solution functor $\Sol_X^{\rmE}$ induces a fully faithful embedding
\[\Sol_X^{\rmE} : \BDChol(\D_X)^{\op}\hookrightarrow\BEC_{\RR-c}(\I\CC_X)\] 
of the triangulated category $\BDChol(\D_X)$ of holonomic $\D$-modules
into the one $\BEC_{\RR-c}(\I\CC_X)$ of $\RR$-constructible enhanced ind-sheaves. 
Moreover, in \cite{DK16-2},
they gave a generalized t-structure
$\big({}^{\frac{1}{2}}\bfE_{\RR-c}^{\leq c}(\I\CC_X),
{}^{\frac{1}{2}}\bfE_{\RR-c}^{\geq c}(\I\CC_X)\big)_{c\in\RR}$ on $\BEC_{\RR-c}(\I\CC_X)$
and proved that
the enhanced solution functor induces a fully faithful embedding
of the abelian category $\Modhol(\D_X)$ of holonomic $\D_X$-modules
into ${}^{\frac{1}{2}}\bfE_{\RR-c}^{\leq 0}(\I\CC_X)\cap
{}^{\frac{1}{2}}\bfE_{\RR-c}^{\geq 0}(\I\CC_X)$.
On the other hand, T. Mochizuki proved that
the image of $\Sol_X^{\rmE}$ can be characterized  by the curve test \cite{Mochi16}.

In this paper, 
we define $\CC$-constructible enhanced ind-sheaves
and give a more explicit description of the essential image
of the enhanced solution functor
$\Sol_X^{\rmE} : \BDChol(\D_X)^{\op}\hookrightarrow\BEC_{\RR-c}(\I\CC_X)$
with them.
We say that an enhanced ind-sheaf $K\in\ZEC(\I\CC_X)$ is $\CC$-constructible if
there exists a complex stratification $\{X_\alpha\}_{\alpha\in A}$ of $X$
such that $\pi^{-1}\CC_{Z_\alpha\setminus D_\alpha}\otimes \bfE f_\alpha^{-1}K$
has a modified quasi-normal form along $D_\alpha$ for any $\alpha\in A$ (see, Definition \ref{def3.7}),
where $f_\alpha : Z_\alpha \to X$ is a complex blow-up of $\var{X_\alpha}$
along $\var{X_\alpha}\setminus X_\alpha$
and $D_\alpha := f_\alpha^{-1}\big(\var{X_\alpha}\setminus X_\alpha\big)$.
Then we show that the category consisting of them
is a full abelian subcategory of $\ZEC_{\RR-c}(\I\CC_X)$.
Let us denote by $\BEC_{\CC-c}(\I\CC_X)$ the full subcategory
of $\BEC_{\RR-c}(\I\CC_X)$ consisting of cohomologically $\CC$-constructible complexes.
Then the following result is the main theorem of this paper:
\begin{theorem}
For any $\M\in\BDChol(\D_X)$, the enhanced solution complex $\Sol_X^\rmE(\M)$
of $\M$ is a $\CC$-constructible enhanced ind-sheaf.
On the other hand, 
for any $\CC$-constructible enhanced ind-sheaf $K\in\BEC_{\CC-c}(\I\CC_X)$,
there exists $\M\in\BDChol(\D_X)$
such that $$K\simto \Sol_X^{\rmE}(\M).$$
In particular, we obtain an equivalence of categories
\[\Sol_X^{\rmE} : \BDChol(\D_X)^{\op}\simto \BEC_{\CC-c}(\I\CC_X).\]
\end{theorem}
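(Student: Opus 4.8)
The plan is to leverage the fully faithfulness already established in \cite{DK16}: since $\Sol_X^{\rmE}$ is a fully faithful embedding of $\BDChol(\D_X)^{\op}$ into $\BEC_{\RR-c}(\I\CC_X)$, the asserted equivalence follows at once if I identify its essential image with the full subcategory $\BEC_{\CC-c}(\I\CC_X)$. Thus the two displayed claims of the theorem -- that $\Sol_X^{\rmE}(\M)$ is $\CC$-constructible, and that every object of $\BEC_{\CC-c}(\I\CC_X)$ is isomorphic to some $\Sol_X^{\rmE}(\M)$ -- are exactly the two inclusions between the essential image and $\BEC_{\CC-c}(\I\CC_X)$, and I would prove them separately. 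Throughout I would keep in mind that $\RH_X^{\rmE}$ is a left quasi-inverse on the essential image, since $\M\simto\RH_X^{\rmE}(\Sol_X^{\rmE}(\M))$ holds for holonomic $\M$ by \cite{DK16}.

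For the first inclusion I would start from the structure theory of holonomic $\D$-modules. Given $\M\in\BDChol(\D_X)$, the theorem of Mochizuki and Kedlaya on good formal (asymptotic) structures provides a complex stratification $\{X_\alpha\}_{\alpha\in A}$ of $X$ such that, after the complex blow-up $f_\alpha\colon Z_\alpha\to X$ resolving $\var{X_\alpha}\setminus X_\alpha$ into a normal crossing divisor $D_\alpha:=f_\alpha^{-1}(\var{X_\alpha}\setminus X_\alpha)$, the module $\M$ acquires good formal structure along $D_\alpha$. Because $\Sol_X^{\rmE}$ is compatible with inverse images and sends a meromorphic connection of exponential type to the corresponding enhanced exponential ind-sheaf attached to $\Re\varphi$, the pulled-back complex $\pi^{-1}\CC_{Z_\alpha\setminus D_\alpha}\otimes\bfE f_\alpha^{-1}\Sol_X^{\rmE}(\M)$ then takes the modified quasi-normal form of Definition \ref{def3.7} along each $D_\alpha$. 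This verifies directly that $\Sol_X^{\rmE}(\M)\in\BEC_{\CC-c}(\I\CC_X)$.

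For the converse I would take $K\in\BEC_{\CC-c}(\I\CC_X)$ and produce $\M$ with $K\simto\Sol_X^{\rmE}(\M)$. The natural candidate is $\M:=\RH_X^{\rmE}(K)$, so by the quasi-inverse property it suffices to show that $K$ actually lies in the essential image. Here I would invoke Mochizuki's characterization of the essential image by the curve test \cite{Mochi16}: restriction to a holomorphic curve commutes with $\bfE f_\alpha^{-1}$ and with the formation of enhanced exponentials, so the quasi-normal form of $K$ along each $D_\alpha$ restricts, on a test curve, to a one-dimensional object of the shape produced by the Hukuhara--Levelt--Turrittin decomposition of a meromorphic connection. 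Since such one-dimensional objects pass the curve test by the classical rank-one-after-ramification theory (and by the normal-form case of \cite{DK16}), $K$ passes the curve test, hence lies in the essential image; the holonomicity of $\M=\RH_X^{\rmE}(K)$ and the isomorphism $K\simto\Sol_X^{\rmE}(\M)$ follow.

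The hard part will be this last step: bridging the \emph{resolved} nature of the definition of $\CC$-constructibility -- a condition imposed upstairs on the blow-ups $Z_\alpha$ after tensoring by $\pi^{-1}\CC_{Z_\alpha\setminus D_\alpha}$ -- with a criterion such as the curve test, or the holonomicity of $\RH_X^{\rmE}(K)$, which lives downstairs on $X$. Controlling how the quasi-normal form interacts with restriction to arbitrary curves through the blow-down $f_\alpha$, and checking that the stratum-wise data glue compatibly with the distinguished triangles of the recollement, is where the genuine work lies. I would carry it out by a devissage on $\dim X$ along the stratification $\{X_\alpha\}_{\alpha\in A}$, taking the known one-dimensional and normal-form cases of \cite{DK16} as the base of the induction and using the generalized t-structure of \cite{DK16-2} to keep the cohomological bookkeeping under control.
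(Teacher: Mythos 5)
Your overall frame---identify the essential image of the fully faithful functor $\Sol_X^{\rmE}$ of \cite{DK16} with $\BEC_{\CC-c}(\I\CC_X)$ and prove the two inclusions separately---is exactly the paper's, and your first inclusion is close to the paper's argument, up to one misattribution: the stratification is not produced by Kedlaya--Mochizuki. The paper builds it by iterating singular supports (Lemma \ref{lem3.23}), so that $(\bfD f_\alpha^{\ast}\M)(\ast D_\alpha)$ has meromorphic-connection cohomologies by \cite[Theorem 3.1]{Kas78}; Kedlaya--Mochizuki (via Corollary \ref{cor2.11}) enters only afterwards, to show that solution complexes of meromorphic connections have modified quasi-normal form (Lemma \ref{lem3.9} (1), through Lemmas \ref{lem3.3} (1) and \ref{lem3.6} (1)). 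This is a repairable imprecision.

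The genuine gap is in the essential surjectivity, and it is precisely the step you flag as ``where the genuine work lies'' but do not carry out. The paper does not use the curve test at all. Its engine is the reconstruction statement (Lemma \ref{lem3.9} (2)): if $K$ has a modified quasi-normal form along $Y$, then $\RH_X^{\rmE}(K)$ is a meromorphic connection and $K\simto\Sol_X^{\rmE}\big(\RH_X^{\rmE}(K)\big)$; this is proved by the chain normal form $\to$ quasi-normal form $\to$ modified quasi-normal form (Lemmas \ref{lem3.3}, \ref{lem3.6}, \ref{lem3.9}), whose analytic content comes from the reconstruction theorems of \cite{IT18} (Theorems \ref{thm2.12} and \ref{thm2.13}), Proposition \ref{prop2.14}, and \cite[Theorem 9.3, Lemma 9.8]{Mochi16}, together with direct-image arguments for ramifications, modifications and blow-ups. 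Granting that, each stratum-wise piece satisfies $\pi^{-1}\CC_{X_\alpha}\otimes K\simeq\Sol_X^{\rmE}(\bfD f_{\alpha\ast}\N_\alpha)[d_X-d_{X_\alpha}]$ (Remark \ref{rem3.12} (2)), and $K$ is assembled from these pieces by finitely many distinguished triangles over the stratification; since the essential image of a fully faithful triangulated functor is closed under cones, $K$ lies in the image (this is the whole proof of Theorem \ref{thm3.17}). Your curve-test detour cannot bypass this input: to verify the test on a curve that crosses several strata, or lies inside $\var{X_\alpha}\setminus X_\alpha$, you would need to know that each $\pi^{-1}\CC_{X_\alpha}\otimes K$ is already a solution complex---which is the reconstruction statement itself---and your claim that restriction to a holomorphic curve ``commutes with $\bfE f_\alpha^{-1}$'' is unfounded, because a curve in $X$ does not lift through the blow-up $f_\alpha$ in any way compatible with tensoring by $\pi^{-1}\CC_{Z_\alpha\setminus D_\alpha}$. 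In short, your proposal presupposes, rather than proves, the one lemma that constitutes the real content of the theorem.
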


Moreover we show that there exists a t-structure
on the triangulated category $\BEC_{\CC-c}(\I\CC_X)$
whose heart is equivalent to the abelian category $\Modhol(\D_X)$
of holonomic $\D$-modules as follows.
We set
\begin{align*}
{}^p\bfE^{\leq0}_{\CC-c}(\I\CC_X) &:=
\{K\in\BEC_{\CC-c}(\I\CC_X)\ |\ \sh_X(K)\in{}^p\bfD^{\leq0}_{\CC-c}(\CC_X)\},\\
{}^p\bfE^{\geq0}_{\CC-c}(\I\CC_X) &:=
\{K\in\BEC_{\CC-c}(\I\CC_X)\ |\ \rmD_X^{\rmE}(K)\in{}^p\bfE^{\leq0}_{\CC-c}(\I\CC_X)\}\\
&=
\{K\in\BEC_{\CC-c}(\I\CC_X)\ |\ \sh_X(K)\in{}^p\bfD^{\geq0}_{\CC-c}(\CC_X)\},
\end{align*}
where the pair $\big({}^p\bfD^{\leq0}_{\CC-c}(\CC_X),
{}^p\bfD^{\geq0}_{\CC-c}(\CC_X)\big)$
is the perverse t-structure on $\BDC_{\CC-c}(\CC_X)$,
$\sh := \alpha_Xi_0^!\bfR^{\rmE} : \BEC(\I\CC_X)\to\BDC(\CC_X)$
is the sheafification functor and
$\rmD_X^{\rmE}$ is the duality functor for enhanced ind-sheaves.
Then we obtain the second main theorem of this paper.
\begin{theorem}
The pair $\big({}^p\bfE^{\leq0}_{\CC-c}(\I\CC_X),
{}^p\bfE^{\geq0}_{\CC-c}(\I\CC_X)\big)$
is a t-structure on $\BEC_{\CC-c}(\I\CC_X)$
and
its heart $$\Perv(\I\CC_X) :=
{}^p\bfE^{\leq0}_{\CC-c}(\I\CC_X)\cap{}^p\bfE^{\geq0}_{\CC-c}(\I\CC_X)$$ is equivalent to the abelian category $\Modhol(\D_X)$
of holonomic $\D_X$-modules.
\end{theorem}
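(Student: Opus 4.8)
The plan is to transport the entire statement to the side of $\D$-modules through the equivalence $\Sol_X^{\rmE}\colon\BDChol(\D_X)^{\op}\simto\BEC_{\CC-c}(\I\CC_X)$ furnished by the first main theorem above, and to prove there that the proposed pair becomes nothing but the standard t-structure on $\BDChol(\D_X)^{\op}$. The bridge between the two sides is the compatibility of the sheafification functor with the solution functors, namely an isomorphism $\sh_X\circ\Sol_X^{\rmE}\cong\Sol_X$ of functors $\BDChol(\D_X)^{\op}\to\BDC_{\CC-c}(\CC_X)$, where $\Sol_X$ is the classical solution functor; I would deduce this from $\alpha_X i_0^!\bfR^{\rmE}\SO_X^{\rmE}\cong\SO_X^{\rmt}$ together with $\alpha_X\SO_X^{\rmt}\cong\SO_X$. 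Granting it, for $K=\Sol_X^{\rmE}(\M)$ one has $\sh_X(K)\cong\Sol_X(\M)$, so the defining conditions of ${}^p\bfE^{\leq0}_{\CC-c}(\I\CC_X)$ and ${}^p\bfE^{\geq0}_{\CC-c}(\I\CC_X)$ translate into $\Sol_X(\M)\in{}^p\bfD^{\leq0}_{\CC-c}(\CC_X)$ and $\Sol_X(\M)\in{}^p\bfD^{\geq0}_{\CC-c}(\CC_X)$.

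Next I would record the classical input that $\Sol_X$ sends $\BDChol(\D_X)$ into $\BDC_{\CC-c}(\CC_X)$ and is perverse t-exact, in the precise sense that $\Sol_X(\M)\in{}^p\bfD^{\leq0}_{\CC-c}(\CC_X)$ holds if and only if $\M$ lies in the $\geq0$ part of the standard t-structure on $\BDChol(\D_X)$, and dually. The two "if'' directions are Kashiwara's theorem that the solution complex of a holonomic module is perverse, applied to the standard cohomologies of $\M$; the "only if'' directions follow from the same statement together with the conservativity of $\Sol_X$ on the abelian category $\Modhol(\D_X)$, which I would obtain from the regularization functor $(-)_{\reg}\colon\Modhol(\D_X)\to\Modrh(\D_X)$, which is exact and conservative and satisfies $\Sol_X\cong\Sol_X\circ(-)_{\reg}$, combined with Kashiwara's Riemann--Hilbert equivalence for regular holonomic modules. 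Together with the translation above this shows that the preimage under $\Sol_X^{\rmE}$ of the pair $\big({}^p\bfE^{\leq0}_{\CC-c}(\I\CC_X),{}^p\bfE^{\geq0}_{\CC-c}(\I\CC_X)\big)$ is exactly the standard t-structure on $\BDChol(\D_X)^{\op}$; since an equivalence of triangulated categories transports t-structures, the pair is a t-structure on $\BEC_{\CC-c}(\I\CC_X)$. I would also verify that the two descriptions of ${}^p\bfE^{\geq0}_{\CC-c}(\I\CC_X)$ coincide by invoking the interchange $\sh_X\circ\rmD_X^{\rmE}\cong\rmD_X\circ\sh_X$ of duality functors, under which Verdier duality swaps ${}^p\bfD^{\leq0}_{\CC-c}(\CC_X)$ and ${}^p\bfD^{\geq0}_{\CC-c}(\CC_X)$.

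For the statement on the heart, the same equivalence $\Sol_X^{\rmE}$ then identifies $\Perv(\I\CC_X)$ with the heart of the standard t-structure on $\BDChol(\D_X)^{\op}$, that is, with $\Modhol(\D_X)^{\op}$. Composing with the holonomic duality functor $\DD_X\colon\Modhol(\D_X)^{\op}\simto\Modhol(\D_X)$ yields the asserted equivalence $\Perv(\I\CC_X)\simeq\Modhol(\D_X)$ of abelian categories, the degree and opposite-category bookkeeping matching up because $\Sol_X$ is contravariant and perverse t-exact.

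I expect the main obstacle to be the compatibility $\sh_X\circ\Sol_X^{\rmE}\cong\Sol_X$ and, relatedly, the perverse t-exactness of $\Sol_X$ on all of $\BDChol(\D_X)$. The "only if'' half of t-exactness is exactly the point where $\Sol_X$ genuinely forgets irregularity, so faithfulness of $\Sol_X^{\rmE}$ is of no help there and one must instead route the argument through the conservativity supplied by regularization. Once these classical inputs are in place, the remainder is a formal transport of structure across the equivalence above and should present no essential difficulty.
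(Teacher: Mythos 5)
Your proposal is correct and follows essentially the same route as the paper: the paper's proof likewise reduces everything, via the isomorphism $\sh\big(\Sol_X^{\rmE}(\M)\big)\simeq\Sol_X(\M)$ (cited from \cite[Lemma 9.5.5]{DK16}) together with Lemma \ref{lem4.1} for the duality interchange, to the classical biconditional of Fact \ref{lem4.4} (cited to \cite{Kas75} and \cite{Bjo93}), and then transports the standard t-structure on $\BDChol(\D_X)$ across the equivalence of Theorem \ref{thm3.17}. The only divergences are cosmetic: where the paper cites Fact \ref{lem4.4} outright, you re-derive its harder implication from the exactness and conservativity of the regularization functor $(\cdot)_{\reg}$ and Kashiwara's regular Riemann--Hilbert correspondence --- ingredients the paper itself records in Proposition \ref{prop2.7} --- and you make the heart equivalence with $\Modhol(\D_X)$ covariant by composing with $\DD_X$, a bookkeeping point the paper leaves implicit.
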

Moreover, the pair $\big({}^p\bfE^{\leq0}_{\CC-c}(\I\CC_X),
{}^p\bfE^{\geq0}_{\CC-c}(\I\CC_X)\big)$
is related to the generalized t-structure
$\big({}^{\frac{1}{2}}\bfE_{\RR-c}^{\leq c}(\I\CC_X),
{}^{\frac{1}{2}}\bfE_{\RR-c}^{\geq c}(\I\CC_X)\big)_{c\in\RR}$
on $\BEC_{\RR-c}(\I\CC_X)$ as follows
\begin{align*}
{}^p\bfE_{\CC-c}^{\leq 0}(\I\CC_X) &=
{}^{\frac{1}{2}}\bfE_{\RR-c}^{\leq 0}(\I\CC_X)\cap
\BEC_{\CC-c}(\I\CC_X),\\
{}^p\bfE_{\CC-c}^{\geq 0}(\I\CC_X) &=
{}^{\frac{1}{2}}\bfE_{\RR-c}^{\geq 0}(\I\CC_X)\cap
\BEC_{\CC-c}(\I\CC_X).
\end{align*}

\begin{remark}
\begin{itemize}
\setlength{\itemsep}{0pt}
\item[(1)]
We can describe the algebraic irregular Riemann-Hilbert correspondence
as similar to the analytic case.
See \cite{Ito20} for the details.
\item[(2)]
We also remark that
T.Kuwagaki introduced another approach to the irregular Riemann-Hilbert correspondence \cite{Kuwa18}.
\end{itemize}
\end{remark}

\section*{Acknowledgement}
I am grateful to Kiyoshi Takeuchi for many discussions at University of Tsukuba
and for giving many ideas.
I am also grateful to Takuro Mochizuki for kindly answering some questions
and for giving many advises, in Porto and Kyoto. 
I would like to thank Taito Tauchi for giving many comments.

\section{Preliminary Notions and Results}
In this section,
we briefly recall some basic notions
and results which will be used in this paper. 
\subsection{Generalized t-Structures}
First, let us recall the notion of (classical) t-structure from \cite{BBD}.
We say that a full subcategory $\SS$ of a category $\SC$ is strictly full
if it contains every object of $\SC$ which is isomorphic to an object of $\SS$.
\begin{definition}
Let $\cal{T}$ be a triangulated category.
A (classical) t-structure $(\calT^{\leq0}, \calT^{\geq0})$ on $\calT$
is a pair of strictly full subcategories of $\calT$ such that, setting
$$\calT^{\leq n} := \calT^{\leq0}[-n],\hspace{7pt} \calT^{\geq n} := \calT^{\geq0}[-n]$$
for $n\in\ZZ$, we have:
\begin{itemize}
\setlength{\itemsep}{0pt}
\item[(i)]
$\calT^{\leq0}\subset\calT^{\leq1},\hspace{3pt} \calT^{\geq1}\subset\calT^{\geq0}$,

\item[(ii)]
$\Hom_{\calT}(\calT^{\leq0}, \calT^{\geq1})=0$,

\item[(iii)]
for any $X\in\calT$ there exists a distinguished triangle
$$X_{\leq0}\to X\to X_{\geq1}\xrightarrow{\ +1\ }$$
in $\calT$ with $X_{\leq0}\in\calT^{\leq0}$ and $X_{\geq1}\in\calT^{\geq1}$.
\end{itemize}
\end{definition}
The full abelian subcategory $\calT^{\leq0}\cap\calT^{\geq0}$ is called the heart of the t-structure.

Let us recall the notion of generalized t-structure from \cite{Kas15}. 
\begin{definition}
A generalized t-structure $(\calT^{\leq c}, \calT^{\geq c})_{c\in\RR}$ on $\calT$
is a pair of families of strictly full subcategories of $\calT$ satisfying conditions (i)-(iv) below,
where we set 
$$\calT^{< c} := \bigcup_{c'<c}\calT^{\leq c'},\hspace{7pt}
\calT^{> c} := \bigcup_{c'>c}\calT^{\geq c'}\hspace{5pt} \mbox{for any $c\in\RR$.}$$

\begin{itemize}
\setlength{\itemsep}{0pt}
\item[(i)]
$\calT^{\leq c} = \bigcap_{c'>c}\calT^{\leq c'}$ and
$\calT^{\geq c} = \bigcap_{c'<c}\calT^{\geq c'}$
\hspace{5pt}
for any $c\in\RR$,

\item[(ii)]
$\calT^{\leq c+1} = \calT^{\leq c}[-1]$ and
$ \calT^{\geq c+1} = \calT^{\geq c}[-1]$
\hspace{5pt}
for any $c\in\RR$,

\item[(iii)]
$\Hom_{\calT}(\calT^{< c}, \calT^{> c})=0$
\hspace{5pt}
for any $c\in\RR$,

\item[(iv)]
for any $X\in\calT$ and any $c\in\RR$,
there are distinguished triangles in $\calT$
$$X_{\leq c}\to X\to X_{> c}\xrightarrow{\ +1\ }
, \hspace{17pt}
X_{< c}\to X\to X_{\geq c}\xrightarrow{\ +1\ }$$
with $X_{\ast}\in\calT^{\ast}$ for $\ast$ eqaul to
$\leq c,\hspace{3pt} > c,\hspace{3pt} < c$ or $\geq c$.
\end{itemize}
\end{definition}

Remark that the condition (iii) is equivalent to either of the followings:
\begin{itemize}
\setlength{\itemsep}{0pt}
\item[(iii)']
$\Hom_{\calT}(\calT^{\leq c}, \calT^{> c})=0$
\hspace{5pt}
for any $c\in\RR$,

\item[(iii)'']
$\Hom_{\calT}(\calT^{< c}, \calT^{\geq c})=0$
\hspace{5pt}
for any $c\in\RR$.
\end{itemize}
We also remark that we can consider a (classical) t-structure as a generalized t-structure.

\subsection{Ind-Sheaves}
Let us recall some basic notions on ind-sheaves.
References are made to Kashiwara-Schapira \cite{KS01} and \cite{KS06}. 
Let $M$ be a good topological space
(i.e., a locally compact Hausdorff space
which is countable at infinity and has finite soft dimension). 
We denote by $\Mod(\CC_M)$
the abelian category of sheaves of $\CC$-vector spaces on $M$
and by $\I\CC_M$ that of ind-sheaves on it.
Then there exists
a natural exact embedding $\iota_M : \Mod(\CC_M)\to\I\CC_M$.
We sometimes omit it.
It has an exact left adjoint $\alpha_M$, 
that has in turn an exact fully faithful
left adjoint functor $\beta_M$.
The category $\I\CC_M$ does not have enough injectives. 
Nevertheless, we can construct the derived category $\BDC(\I\CC_M)$ 
for ind-sheaves and the Grothendieck six operations among them. 
We denote by $\otimes$ and $\rihom$ the operations 
of tensor products and internal homs, respectively. 
If $f : M\to N$ be a continuous map, we denote 
by $f^{-1}, \rmR f_\ast, f^!$ and $\rmR f_{!!}$
the operations of the inverse image,
the direct image, the proper inverse image and the proper direct image, respectively. 
Note that $(f^{-1}, \rmR f_\ast)$ and 
$(\rmR f_{!!}, f^!)$ are pairs of adjoint functors.
We also set $\rhom := \alpha_M\circ\rihom$.

\subsection{Ind-Sheaves on Bordered Spaces}
We shall recall a notion of ind-sheaves on a bordered space and results on it.
For the results in this subsection, we refer to D'Agnolo-Kashiwara \cite{DK16}.
A bordered space is a pair $M_{\infty} = (M, \che{M})$ of
a good topological space $\che{M}$ and an open subset $M\subset\che{M}$.
For a locally closed subset $Z\subset M$ of $M$,
we set $Z_\infty := (Z, \var{Z})$.
A morphism $f : (M, \che{M})\to (N, \che{N})$ of bordered spaces
is a continuous map $f : M\to N$ such that the first projection
$\che{M}\times\che{N}\to\che{M}$ is proper on
the closure $\var{\Gamma}_f$ of the graph $\Gamma_f$ of $f$ 
in $\che{M}\times\che{N}$. 
The category of good topological spaces is embedded into that
of bordered spaces by the identification $M = (M, M)$. 

We define the triangulated category of ind-sheaves on 
$M_{\infty} = (M, \che{M})$ by 
\begin{align*}
\BDC(\I\CC_{M_\infty}) &:= 
\BDC(\I\CC_{\che{M}})/\BDC(\I\CC_{\che{M}\backslash M}).
\end{align*}
The quotient functor
\[\q : \BDC(\I\CC_{\che{M}})\to\BDC(\I\CC_{M_\infty})\]
has a left adjoint $\bfl$ and a right 
adjoint $\bfr$, both fully faithful, defined by 
\[\bfl(\q F) := \CC_M\otimes F,\hspace{25pt} 
\bfr(\mathbf{q} F) := \rihom(\CC_M, F). \]
Moreover they induce equivalences of categories
\begin{align*}
\bfl &: \BDC(\I\CC_{M_\infty})\simto
\{F\in\BDC(\I\CC_{\che{M}})\ |\ \CC_{M}\otimes F\simto F\},\\
\bfr &: \BDC(\I\CC_{M_\infty})\simto
\{F\in\BDC(\I\CC_{\che{M}})\ |\ F\simto\rihom(\CC_{M}, F)\},
\end{align*}
respectively.
It is clear that the quotient category 
$$\BDC(\CC_{M_\infty}) := 
\BDC(\CC_{\che{M}})/\BDC(\CC_{\che{M}\backslash M})$$
is equivalent to the derived category $\BDC(\CC_{M})$
of the abelian category $\Mod(\CC_M)$
and there exists an embedding functor 
$\BDC(\CC_{M_\infty}) \hookrightarrow \BDC(\I\CC_{M_\infty})$.
We sometimes write $\BDC(\CC_{M_\infty})$ for $\BDC(\CC_{M})$,
when considered as a full subcategory of $\BDC(\I\CC_{M_\infty})$.
For a morphism $f : M_\infty\to N_\infty$ 
of bordered spaces, 
we have the Grothendieck operations 
$ \otimes, \rihom, \rmR f_\ast, \rmR f_{!!}, f^{-1}, f^! $
(see \cite[Definitions 3.3.1 and 3.3.4]{DK16}).

Let $j_{M} : M_\infty \to \che{M}$ be the morphism of bordered spaces
given by the open embedding $M\hookrightarrow \che{M}$.
Actually, the functors
$j_M^{-1}\simeq j_M^! : \BDC(\I\CC_{\che{M}})\to\BDC(\I\CC_{M_\infty})$ 
are isomorphic to the quotient functor
and the functor $\bfR j_{M!!} : \BDC(\I\CC_{M_\infty})\to\BDC(\I\CC_{\che{M}})$
(resp.\ $\bfR j_{M\ast} : \BDC(\I\CC_{M_\infty})\to\BDC(\I\CC_{\che{M}})$)
is isomorphic to the functor $\bfl$ (resp.\ $\bfr$).
Then we have the following standard t-structure of $\BDC(\I\CC_{M_\infty})$
which is induced by the standard t-structure of $\BDC(\I\CC_{\che{M}})$:
\begin{align*}
\bfD^{\leq 0}(\I\CC_{M_\infty}) & = \{F\in \BDC(\I\CC_{M_\infty})\ | \ 
\bfR j_{M!!}(F)\in \bfD^{\leq 0}(\I\CC_{\che{M}})\},\\
\bfD^{\geq 0}(\I\CC_{M_\infty}) & = \{F\in \BDC(\I\CC_{M_\infty})\ | \ 
\bfR j_{M!!}(F)\in \bfD^{\geq 0}(\I\CC_{\che{M}})\}.
\end{align*}
We denote by 
\[\SH^n : \BDC(\I\CC_{M_\infty})\to\bfD^0(\I\CC_{M_\infty})\]
the $n$-th cohomology functor, where we set 
\[\bfD^0(\I\CC_{M_\infty}) :=
\bfD^{\leq 0}(\I\CC_{M_\infty})\cap\bfD^{\geq 0}(\I\CC_{M_\infty})
\simeq \I\CC_{M_\infty} := \I\CC_{\che{M}} / \I\CC_{\che{M}\backslash M}.\]

\subsection{Enhanced Ind-Sheaves}
Let us recall some basic notions and results on enhanced ind-sheaves.
References are made to D'Agnolo-Kashiwara \cite{DK16} and Kashiwara-Schapira \cite{KS16}. 
Let $M$ be a good topological space.
Set $\RR_\infty := (\RR, \var{\RR})$ for 
$\var{\RR} := \RR\sqcup\{-\infty, +\infty\}$,
and let $t\in\RR$ be the affine coordinate. 
We denote by $\Potimes, \Prihom$ the convolution functors
for ind-sheaves on $M\times\RR_\infty := (M\times \RR, M\times\var{\RR})$.
Now we define the triangulated category 
of enhanced ind-sheaves on $M$ by 
$$\BEC(\I\CC_M) := \BDC(\I\CC_{M \times\RR_\infty})/\pi^{-1}\BDC(\I\CC_M)$$
where $\pi : M\times\RR_\infty\to M$ is a morphism
of bordered spaces induced by the first projection $M\times\RR\to M$.
The quotient functor
\[\Q : \BDC(\I\CC_{M\times\RR_\infty})\to\BEC(\I\CC_M)\]
has fully faithful left and right adjoints 
$\bfL^\rmE,\bfR^\rmE : \BEC(\I\CC_M) \to\BDC(\I\CC_{M\times\RR_\infty})$ defined by 
\[\bfL^\rmE(\Q F) := (\CC_{\{t\geq0\}}\oplus\CC_{\{t\leq 0\}})
\Potimes F ,\hspace{20pt} \bfR^\rmE(\Q F) :
=\Prihom(\CC_{\{t\geq0\}}\oplus\CC_{\{t\leq 0\}}, F).\]
Moreover they induce equivalences of categories
\begin{align*}
\bfL^\rmE &: \BEC(\I\CC_M)\simto
\{F\in\BDC(\I\CC_{M\times\RR_\infty})\ |\
(\CC_{\{t\geq0\}}\oplus\CC_{\{t\leq0\}})\Potimes F\simto F\},\\
\bfR^\rmE &: \BEC(\I\CC_M)\simto
\{F\in\BDC(\I\CC_{M\times\RR_\infty})\ |\ F\simto
\Prihom(\CC_{\{t\geq0\}}\oplus\CC_{\{t\leq0\}}, F)\},
\end{align*}
respectively,
where $\{t\geq0\}$ stands for $\{(x, t)\in M\times\var{\RR}\ |\ t\in\RR, t\geq0\}$ 
and $\{t\leq0\}$ is defined similarly.
Then we have the following standard t-structure of $\BEC(\I\CC_M)$
which is induced by the standard t-structure of $\BDC(\I\CC_{M\times\RR_\infty})$:
\begin{align*}
\bfE^{\leq 0}(\I\CC_M) & = \{K\in \BEC(\I\CC_M)\ | \ 
\bfL^{\rmE}K\in \bfD^{\leq 0}(\I\CC_{M\times\RR_\infty})\},\\
\bfE^{\geq 0}(\I\CC_M) & = \{K\in \BEC(\I\CC_M)\ | \ 
\bfL^{\rmE}K\in \bfD^{\geq 0}(\I\CC_{M\times\RR_\infty})\}.
\end{align*}
We denote by 
\[\SH^n : \BEC(\I\CC_M)\to\bfE^0(\I\CC_M)\]
the $n$-th cohomology functor, where we set 
\begin{align*}
\bfE^0(\I\CC_M) &:=
\bfE^{\leq 0}(\I\CC_M)\cap\bfE^{\geq 0}(\I\CC_M)\\
&\simeq
\I\CC_{M\times\RR_\infty} / \pi^{-1}\I\CC_M\\
&\simeq
 \{F\in \I\CC_{M \times\RR_\infty}\ |\ 
(\CC_{\{t\geq0\}}\oplus\CC_{\{t\leq0\}})\Potimes F\simto F\}.
\end{align*}

The convolution functors are also defined for enhanced ind-sheaves.
We denote them by the same symbols $\Potimes$, $\Prihom$. 
For a continuous map $f : M \to N $, we 
can define also the operations 
$\bfE f^{-1}$, $\bfE f_\ast$, $\bfE f^!$, $\bfE f_{!!}$ 
for enhanced ind-sheaves. 
Moreover we have outer-hom functors
$\rihom^\rmE(K_1, K_2),  \rhom^\rmE(K_1, K_2), \rHom^\rmE(K_1, K_2)$
with values in $\BDC(\I\CC_M), 
\BDC(\CC_M)$ and $\BDC(\CC)$, respectively. 
Here, $\BDC(\CC)$ is the derived category of $\CC$-vector spaces.
For $F\in\BDC(\I\CC_M)$ and $K\in\BEC(\I\CC_M)$ the objects 
\begin{align*}
\pi^{-1}F\otimes K &:=\Q(\pi^{-1}F\otimes \bfL^\rmE K),\\
\rihom(\pi^{-1}F, K) &:=\Q\big(\rihom(\pi^{-1}F, \bfR^\rmE K)\big). 
\end{align*}
in $\BEC(\I\CC_M)$ are well defined. 
Set $\CC_M^\rmE := \Q 
\Bigl(``\underset{a\to +\infty}{\varinjlim}"\ \CC_{\{t\geq a\}}
\Bigr)\in\BEC(\I\CC_M)$. 
We say that an object $K$ of $\in\BEC(\I\CC_M)$ is stable
if $K\simeq K\Potimes\CC_M^{\rmE}$
and we denote by $\BECstb(\I\CC_M)$ the full subcategory of $\BEC(\I\CC_M)$
consisting of stable enhanced ind-sheaves on $M$.
Note that $K\in\BEC(\I\CC_M)$ is stable if and only if $K\simeq\Prihom(\CC_M^{\rmE}, K)$.
Then we have a natural embedding $e : \BDC(\CC_M) \to \BECstb(\I\CC_M)$
defined by \[e(\SF) :=  \CC_M^\rmE\otimes\pi^{-1}\SF.\]
Let $i_0 : M\to M\times\RR_\infty$ be the inclusion map of bordered spaces
induced by $x\mapsto (x, 0)$.
We set
\[\sh := \alpha_M\circ i_0^!\circ \bfR^{\rmE} : \BEC(\I\CC_M) \to \BDC(\CC_M) \]
and call it the sheafification functor.
By \cite[Lemma 3.13]{IT18},
we have $$\sh(K)\simeq \rhom^\rmE(\CC_{\{t\geq0\}}\oplus\CC_{\{t\leq 0\}}, K)$$
for an enhanced ind-sheaf $K$.
Note that there exists an isomorphism $\id\simto\sh \circ e$.

For a continuous function $\varphi : U\to \RR$ defined on an open subset $U\subset M$,
we set the exponential enhanced ind-sheaf by 
\[\EE_{U|M}^\varphi := 
\CC_M^\rmE\Potimes
\Q(\CC_{\{t+\varphi\geq0\}})
, \]
where $\{t+\varphi\geq0\}$ stands for 
$\{(x, t)\in M\times\var{\RR}\ |\ t\in\RR, x\in U, 
t+\varphi(x)\geq0\}$. 

We also define the notion of enhanced ind-sheaves on bordered space $M_\infty =(M, \che{M})$
and denote by $\BEC(\I\CC_{M_\infty})$ the triangulated category
of the enhanced ind-sheaves on $M_\infty$.
We shall skip the details of it.
Reference are made to \cite{KS16-2}.

\subsection{$\RR$-Constructible Enhanced Ind-Sheaves} 
We shall recall a notion of the $\RR$-constructability for enhanced ind-sheaves and results on it.
References are made to D'Agnolo-Kashiwara \cite{DK16-2} and \cite{DK16}.
In this subsection, we assume that $M$ is a subanalytic space.
\begin{definition}[{\cite[Definition 4.9.1]{DK16}}]
We denote by $\BDC_{\RR-c}(\CC_{M\times\RR_\infty})$
the full subcategory of $\BDC(\CC_{M\times\RR_\infty})$
consisting of objects $\SF$ satisfying
$\rmR j_{M!}\SF$ is an $\RR$-constructible sheaf on $M\times\var{\RR}$.
We regard $\BDC_{\RR-c}(\CC_{M\times\RR_\infty})$
as a full subcategory of $\BDC(\I\CC_{M\times\RR_\infty})$.
\end{definition}
\begin{definition}[{\cite[Definition 4.9.2]{DK16}}]
We say that $K\in\BEC(\I\CC_M)$ is $\RR$-constructible
if for any relatively compact subanalytic open subset $U\subset M$
there exists an isomorphism $\pi^{-1}\CC_U\otimes K\simeq \CC_M^{\rmE}\Potimes \SF$
for some $\SF\in\BDC_{\RR-c}(\CC_{M\times\RR_\infty})$.
We denote by $\BEC_{\RR-c}(\I\CC_{M})$
the full triangulated subcategory of $\BEC(\I\CC_{M})$
consisting of $\RR$-constructible enhanced ind-sheaves.
\end{definition}
$\BEC_{\RR-c}(\I\CC_M)$ has the following standard t-structure
which is induced by the standard t-structure on $\BEC(\I\CC_{M})$:
\begin{align*}
\bfE^{\leq 0}_{\RR-c}(\I\CC_M) & := \bfE^{\leq 0}(\I\CC_M)\cap \BEC_{\RR-c}(\I\CC_{M}),\\
\bfE^{\geq 0}_{\RR-c}(\I\CC_M) & := \bfE^{\geq 0}(\I\CC_M)\cap \BEC_{\RR-c}(\I\CC_{M}).
\end{align*}
We set $\ZEC_{\RR-c}(\I\CC_M) :=
\bfE^{\leq 0}_{\RR-c}(\I\CC_M)\cap\bfE^{\geq 0}_{\RR-c}(\I\CC_M)$.

The following lemma implies that
the $\RR$-constructability of enhanced ind-shaves is a local property.
\begin{lemma}[{\cite[Lemma 4.9.7]{DK16}}]\label{lem2.3}
For $K\in\BEC(\I\CC_M)$, the following conditions are equivalent $:$
\begin{itemize}
\item[\rm(i)]
$K\in\BEC_{\RR-c}(\I\CC_M),$

\item[\rm(ii)]
there exist a locally finite family $\{Z_i\}_{i\in I}$
of locally closed subanalytic subset of $M$
and $\SF_i\in\BDC_{\RR-c}(\CC_{M\times\RR_\infty})$ such that
$M=\cup_{i\in I}Z_i$ and
$$\pi^{-1}\CC_{Z_i}\otimes K\simeq \CC_M^{\rmE}\Potimes \SF_i
\mbox{ for all } i\in I.$$
\end{itemize}
\end{lemma}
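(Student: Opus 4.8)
The plan is to prove the two implications separately: (i)$\Rightarrow$(ii) is essentially immediate, while (ii)$\Rightarrow$(i) requires a subanalytic stratification together with a ``support trick'' to upgrade $\RR$-constructibility to the standard convolution form demanded by the definition.

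For (i)$\Rightarrow$(ii), I would exploit that a subanalytic space $M$ is locally compact, Hausdorff and countable at infinity, hence admits an exhaustion $M=\bigcup_n W_n$ by relatively compact subanalytic open subsets with $\overline{W_n}\subset W_{n+1}$. Setting $U_n:=W_{n+1}\setminus\overline{W_{n-1}}$ yields a locally finite family of relatively compact subanalytic open (in particular locally closed subanalytic) subsets covering $M$. Applying the definition of $\RR$-constructibility to each $U_n$ produces $\SF_n\in\BDC_{\RR-c}(\CC_{M\times\RR_\infty})$ with $\pi^{-1}\CC_{U_n}\otimes K\simeq\CC_M^{\rmE}\Potimes\SF_n$, and taking $Z_n:=U_n$ gives (ii).

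For (ii)$\Rightarrow$(i), fix a relatively compact subanalytic open $U\subset M$. I would first record two facts. First, the projection formula for the convolution $\Potimes$ gives, for any locally closed subanalytic $S\subset M$ and any $\SF\in\BDC_{\RR-c}(\CC_{M\times\RR_\infty})$, the identity $\pi^{-1}\CC_S\otimes(\CC_M^{\rmE}\Potimes\SF)\simeq\CC_M^{\rmE}\Potimes(\pi^{-1}\CC_S\otimes\SF)$, where $\pi^{-1}\CC_S\otimes\SF$ is again $\RR$-constructible; consequently $\CC_M^{\rmE}\Potimes\SF$ is an $\RR$-constructible enhanced ind-sheaf for every such $\SF$. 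Second, if $S\subset Z_i$ then $\CC_S\otimes\CC_{Z_i}\simeq\CC_S$, so the hypothesis of (ii) yields $\pi^{-1}\CC_S\otimes K\simeq\CC_M^{\rmE}\Potimes(\pi^{-1}\CC_S\otimes\SF_i)$, which is $\RR$-constructible. Now, by local finiteness and relative compactness only finitely many $Z_i$ meet $\overline{U}$, and they cover $U$; I would choose a finite subanalytic stratification of $U$ adapted to these finitely many subanalytic sets $Z_i\cap U$, so that each stratum $S$ is locally closed subanalytic and lies in some $Z_{i(S)}$. Ordering the strata gives a filtration $\emptyset=U_0\subset U_1\subset\cdots\subset U_m=U$ by subanalytic open subsets of $U$ with $U_k\setminus U_{k-1}=S_k$ closed in $U_k$; the short exact sequences $0\to\CC_{U_{k-1}}\to\CC_{U_k}\to\CC_{S_k}\to0$, after applying the exact functor $\pi^{-1}(-)\otimes K$, produce distinguished triangles linking $\pi^{-1}\CC_{U_{k-1}}\otimes K$, $\pi^{-1}\CC_{U_k}\otimes K$ and $\pi^{-1}\CC_{S_k}\otimes K$. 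Since each $\pi^{-1}\CC_{S_k}\otimes K$ is $\RR$-constructible and $\BEC_{\RR-c}(\I\CC_M)$ is triangulated, induction on $k$ shows that $L:=\pi^{-1}\CC_U\otimes K$ belongs to $\BEC_{\RR-c}(\I\CC_M)$.

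The remaining, and genuinely delicate, point is that membership of $L$ in $\BEC_{\RR-c}(\I\CC_M)$ is a priori weaker than exhibiting a single isomorphism $L\simeq\CC_M^{\rmE}\Potimes\SF$: producing such an $\SF$ directly would require lifting the gluing morphisms to the level of $\RR$-constructible sheaves, which need not be possible, so this is where I expect the main obstacle. I would bypass it with a support trick. As $\overline{U}$ is compact, there is an index $n$ with $U\subset\overline{U}\subset W_n=:V$, a relatively compact subanalytic open set. Because $U\subset V$ one has $\pi^{-1}\CC_V\otimes L\simeq\pi^{-1}(\CC_V\otimes\CC_U)\otimes K\simeq\pi^{-1}\CC_U\otimes K\simeq L$, while $\RR$-constructibility of $L$ applied to $V$ gives $\pi^{-1}\CC_V\otimes L\simeq\CC_M^{\rmE}\Potimes\SF$ for some $\RR$-constructible $\SF$; combining the two isomorphisms yields $L\simeq\CC_M^{\rmE}\Potimes\SF$. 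Since $U$ was an arbitrary relatively compact subanalytic open subset, $K$ is $\RR$-constructible, establishing (i). Thus the only real difficulty, the passage from ``$\RR$-constructible'' to ``of standard convolution form,'' is resolved by enlarging $U$ within its compact closure rather than by any sheaf-level lifting.
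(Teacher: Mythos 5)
You should first be aware that this paper does not prove Lemma \ref{lem2.3} at all: it is quoted verbatim from D'Agnolo--Kashiwara \cite[Lemma 4.9.7]{DK16}, so the only meaningful comparison is with the proof given in that source. Your argument is correct and follows essentially the same route as theirs: for (i)$\Rightarrow$(ii) a locally finite cover by relatively compact subanalytic opens, and for (ii)$\Rightarrow$(i) restriction to the finitely many $Z_i$ meeting $\overline{U}$, refinement to a finite subanalytic stratification of $U$ (your ordering of the strata into a filtration by opens implicitly uses the frontier condition, which a genuine subanalytic stratification provides), and induction along the resulting distinguished triangles. The one structural difference is at the end: \cite{DK16} keeps the standard form $\CC_M^{\rmE}\Potimes\SF$ throughout the induction, using their lemma that the cone of a morphism between objects of this form is again of this form --- which is exactly the sheaf-level lifting you declare ``need not be possible''; it is possible, and it is also what underlies the fact, used by you as a black box, that $\BEC_{\RR-c}(\I\CC_M)$ is a triangulated subcategory --- whereas you only track membership in $\BEC_{\RR-c}(\I\CC_M)$ and extract the standard form at the very end. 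Your closing ``support trick'' is valid but more elaborate than necessary: since $\pi^{-1}\CC_U\otimes L\simeq L$ for $L:=\pi^{-1}\CC_U\otimes K$, applying the definition of $\RR$-constructibility of $L$ to the open set $U$ itself already gives $L\simeq\CC_M^{\rmE}\Potimes\SF$, with no need to enlarge to $V\supset\overline{U}$. Finally, your appeal to the triangulated property of $\BEC_{\RR-c}(\I\CC_M)$ is legitimate and non-circular, since in \cite{DK16} that property is established before, and independently of, their Lemma 4.9.7.
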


We define the Verdier duality functor for enhanced ind-sheaves by
\[\rmD_M^\rmE : \BEC(\I\CC_M)^{\op}\to\BEC(\I\CC_M),\hspace{5pt}
K\mapsto \Prihom(K, \omega_M^{\rmE}),\]
where
$\omega_M^{\rmE} := e(\omega_M) = \CC_M^\rmE\otimes\pi^{-1}\omega_M$
and $\omega_M$ is the dualizing complex on $M$
(see \cite[Definition 4.8.1]{DK16}).
For any $K\in\BEC_{\RR-c}(\I\CC_M)$, $\rmD_M^{\rmE}K\in\BEC_{\RR-c}(\I\CC_M)$
and there exists an isomorphism
$K\simto \rmD_M^{\rmE}\rmD_M^{\rmE}K$ \cite[Theorem 4.9.12]{DK16}.

By \cite[Proposition 3.3.12 and Notation 3.3.13]{DK16-2},
we have a generalized t-structure
$({}_{\frac{1}{2}}\bfE_{\RR-c}^{\leq c}(\I\CC_M),
{}_{\frac{1}{2}}\bfE_{\RR-c}^{\geq c}(\I\CC_M))_{c\in\RR}$
on $\BEC_{\RR-c}(\I\CC_{M})$ defined by
\begin{align*}
{}_{\frac{1}{2}}\bfE_{\RR-c}^{\leq c}(\I\CC_M) &:=
\left\{K\in\BEC_{\RR-c}(\I\CC_{M}) \left|
\begin{array}{l}
\mbox{for any $k\in\ZZ_{\geq0}$, there exists a 
closed subanalytic subset $Z$}\\
\mbox{of dimension $<k$ with  
$\bfE i_{(M\setminus Z)_\infty}^{-1}K
\in \bfE_{\RR-c}^{\leq c-\frac{k}{2}}(\I\CC_{(M\setminus Z)_\infty})$}
\end{array}
\right.\right\},\\
{}_{\frac{1}{2}}\bfE_{\RR-c}^{\geq c}(\I\CC_M) &:=
\left\{K\in\BEC_{\RR-c}(\I\CC_{M}) \left|
\begin{array}{l}
\mbox{for any $k\in\ZZ_{\geq0}$ and any closed subanalytic subset $Z$}\\
\mbox{of dimension $\leq k$, one has
$\bfE i_{Z_\infty}^{!}K\in \bfE_{\RR-c}^{\geq c-\frac{k}{2}}(\I\CC_{Z_\infty})$}
\end{array}
\right.\right\},
\end{align*}
where $i_{Z_\infty} : Z_\infty\to M$ is a morphism of bordered spaces
given by the embedding $i_Z : Z\hookrightarrow M$.
However this t-structure does not behave well
with the duality functor $\rmD_M^{\rmE}$.
Hence we define full subcategories of $\BEC_{\RR-c}(\I\CC_{M})$ by
\begin{align*}
{}^{\frac{1}{2}}\bfE^{\leq c}_{\RR-c}(\I\CC_M)
&:=\{K\in\BEC_{\RR-c}(\I\CC_{M})\ |\ 
K\in{}_{\frac{1}{2}}\bfE^{\leq c}_{\RR-c}(\I\CC_M),
\rmD_X^{\rmE}K\in{}_{\frac{1}{2}}\bfE^{\geq -c-\frac{1}{2}}_{\RR-c}(\I\CC_M)\},\\
{}^{\frac{1}{2}}\bfE^{\geq c}_{\RR-c}(\I\CC_M)
&:=\{K\in\BEC_{\RR-c}(\I\CC_{M})\ |\
\rmD_M^{\rmE}K\in{}^{\frac{1}{2}}\bfE^{\leq -c}_{\RR-c}(\I\CC_M)\}\\
&=\{K\in\BEC_{\RR-c}(\I\CC_{M})\ |\ 
K\in{}_{\frac{1}{2}}\bfE^{\geq c-\frac{1}{2}}_{\RR-c}(\I\CC_M),
\rmD_X^{\rmE}K\in{}_{\frac{1}{2}}\bfE^{\leq -c}_{\RR-c}(\I\CC_M)\}.
\end{align*}
Then $\big({}^{\frac{1}{2}}\bfE_{\RR-c}^{\leq c}(\I\CC_M),
{}^{\frac{1}{2}}\bfE_{\RR-c}^{\geq c}(\I\CC_M)\big)_{c\in\RR}$
is a generalized t-structure of $\BEC_{\RR-c}(\I\CC_{M})$
by \cite[Theorem 3.5.2 and Definition 3.5.8]{DK16-2}
and for any $c\in\ZZ$ we have
\begin{align*}
{}^{\frac{1}{2}}\bfE^{\leq c}_{\RR-c}(\I\CC_M)
\subset
{}_{\frac{1}{2}}\bfE^{\leq c}_{\RR-c}(\I\CC_M)
&\subset
\bfE^{\leq c}_{\RR-c}(\I\CC_M),\\
\bfE^{\geq c}_{\RR-c}(\I\CC_M)
\subset
{}_{\frac{1}{2}}\bfE^{\geq c}_{\RR-c}(\I\CC_M)
&\subset
{}^{\frac{1}{2}}\bfE^{\geq c}_{\RR-c}(\I\CC_M)
\end{align*}
by \cite[Lemma 3.2.3, Lemma 3.4.4 and (3.5.1)]{DK16-2}.

\subsection{$\D$-Modules}
In this subsection we recall some basic notions and results on $\SD$-modules. 
References are made to 
\cite{HTT08}, 
\cite{Bjo93},
\cite[\S 7]{KS01}, 
\cite[\S 8, 9]{DK16} and 
\cite[\S 3, 4, 7]{KS16}.  
For a complex manifold $X$ we denote by $d_X$ its complex dimension. 
Denote by $\SO_X$ and $\SD_X$ the sheaves of holomorphic functions 
and holomorphic differential operators on $X$, respectively. 
Let $\BDC(\SD_X)$ be the bounded derived category of left $\SD_X$-modules. 
Moreover we denote by $\BDCcoh(\SD_X)$,
$\BDChol(\SD_X)$ and $\BDCrh(\SD_X)$ the full triangulated subcategories
of $\BDC(\SD_X)$ consisting of objects with coherent,
holonomic and regular holonomic cohomologies, respectively.
For a morphism $f : X\to Y$ of complex manifolds, 
denote by $\Dotimes, \rhom_{\SD_X}, \bfD f_\ast, \bfD f^\ast$, 
$\DD_X : \BDCcoh(\SD_X)^{\op} \simto \BDCcoh(\SD_X)$  
the standard operations for $\SD$-modules. 
The classical solution functor is defined by  
\begin{align*}
\Sol_X &: \BDCcoh (\SD_X)^{\op}\to\BDC(\CC_X),
\hspace{10pt}\SM \longmapsto \rhom_{\SD_X}(\SM, \SO_X).
\end{align*}
For a closed hypersurface $D$ in $X$ we denote by $\SO_X(\ast D)$ 
the sheaf of meromorphic functions on $X$ with poles in $D$. 
Then for $\SM\in\BDC(\SD_X)$ we set 
$\SM(\ast D) := \SM\Dotimes\SO_X(\ast D)$.
For $f\in\SO_X(\ast D)$ let us denote $\SE_{X\bs D|X}^f$
the meromorphic connection on $X$ along $D$ associated to $d+df$ \cite[Definition 6.1.1]{DK16}. 
Denote by $\SO_X^{\rmE}$ the enhanced ind-sheaf of tempered holomorphic functions
\cite[Definition 8.2.1]{DK16}
and by $\Sol_X^{\rmE}$ the enhanced solution functor:
\[
\Sol_X^\rmE : \BDCcoh (\SD_X)^{\op}\to\BEC(\I\CC_X), 
\hspace{10pt} 
\SM \longmapsto \rihom_{\SD_X}(\SM, \SO_X^\rmE) ,
\]
\cite[Definition 9.1.1]{DK16}.
Note that for $\SM\in\BDCcoh(\SD_X)$, 
we have an isomorphism
\[\sh\big( \Sol_X^{\rmE}(\M)\big)\simeq \Sol_X(\M)\]
by \cite[Lemma 9.5.5]{DK16}.

Let us recall the results of \cite{DK16}.
We note that (3) of Theorem \ref{thm2.5} below was proved in \cite{DK16}
under the assumption that $\M$ has a globally good filtration.
However, any holonomic $\D$-module on $X$ has a globally defined good filtration
by \cite{Mal94, Mal94-2, Mal96} (see also \cite[Theorem 4.3.4]{Sab11}).
\begin{theorem}[{\cite[\S 9.4]{DK16}}]
\label{thm2.5}

\begin{enumerate}
\item[\rm{(1)}]
 For $\SM\in\BDC_{\rm hol}(\SD_X)$ there is an isomorphism in $\BEC(\I\CC_X)$
\[\Sol_X^\rmE(\DD_X\SM)[2d_X]\simeq\rmD_X^\rmE\Sol_X^\rmE(\SM).\]

\item[\rm{(2)}] 
Let $f : X\to Y$ be a morphism of complex manifolds.
Then for $\SN\in\BDC_{\rm hol}(\SD_Y)$ there is an isomorphism in $\BEC(\I\CC_X)$
\[\Sol_X^\rmE({\bfD} f^\ast\SN)\simeq\bfE f^{-1}\Sol_Y^\rmE(\SN).\]

\item[\rm{(3)}] 
Let $f : X\to Y$ be a proper morphism of complex manifolds.
For $\SM\in\BDC_{\rm hol}(\SD_X)$ there exists an isomorphism in $\BEC(\I\CC_Y)$
\[\Sol_Y^\rmE({\bfD} f_\ast\SM)[d_Y]\simeq\bfE f_\ast \Sol_X^\rmE(\SM )[d_X].\]

\item[\rm{(4)}]
For $\SM_1, \SM_2\in\BDC_{\rm hol}(\SD_X)$,
there exists an isomorphism in $\BEC(\I\CC_X)$
\[\Sol_X^\rmE(\SM_1\Dotimes\SM_2)\simeq \Sol_X^\rmE(\SM_1)
\Potimes \Sol_X^\rmE(\SM_2).\]

\item[\rm{(5)}]
Let $\SM\in\BDC_{\rm hol}(\SD_X)$ and $D\subset X$ be a closed hypersurface,
then there exists an isomorphism in $\BEC(\I\CC_X)$
\[
\Sol_X^\rmE\big(\SM(\ast D)\big) \simeq \pi^{-1}
\CC_{X\bs D}\otimes \Sol_X^\rmE(\SM). 
\]

\item[\rm{(6)}] Let $D$ be a closed hypersurface in $X$ and 
$f\in\SO_X(\ast D)$ a meromorphic function along $D$.
Then there exists an isomorphism in $\BEC(\I\CC_X)$
\[\Sol_X^\rmE\big(\SE_{X\backslash D | X}^\varphi\big) 
\simeq \EE_{X\backslash D | X}^{\Re\varphi}.\]
\end{enumerate}
\end{theorem}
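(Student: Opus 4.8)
The plan is to reduce all six statements to the fundamental functorial properties of the single object $\SO_X^\rmE$, the enhanced ind-sheaf of tempered holomorphic functions, since by definition $\Sol_X^\rmE(\SM)=\rihom_{\SD_X}(\SM,\SO_X^\rmE)$ is obtained by feeding $\SM$ into $\rihom_{\SD_X}(-,\SO_X^\rmE)$. The analytic backbone, which I would import from \cite{DK16}, consists of the compatibility isomorphisms of $\SO_X^\rmE$ with inverse image, proper integration, duality and external tensor product; once these are available the six assertions follow by formal manipulations with the transfer bimodules $\SD_{X\to Y}$, $\SD_{Y\leftarrow X}$ and the standard adjunctions.

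For (2) I would substitute $\bfD f^\ast\SN=\SD_{X\to Y}\Lotimes{f^{-1}\SD_Y}f^{-1}\SN$ into $\rihom_{\SD_X}(-,\SO_X^\rmE)$ and use the tensor--hom adjunction along the $f^{-1}\SD_Y$-action to obtain $\rihom_{f^{-1}\SD_Y}\big(f^{-1}\SN,\ \rihom_{\SD_X}(\SD_{X\to Y},\SO_X^\rmE)\big)$. The crux is the compatibility of tempered holomorphic functions with pullback, identifying $\rihom_{\SD_X}(\SD_{X\to Y},\SO_X^\rmE)$ with $\bfE f^{-1}\SO_Y^\rmE$; combined with the commutation of $\rihom$ and $\bfE f^{-1}$ for coherent $\SN$ this yields $\bfE f^{-1}\Sol_Y^\rmE(\SN)$. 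For (3) I would dualize: write $\bfD f_\ast\SM=\bfR f_\ast\big(\SD_{Y\leftarrow X}\Lotimes{\SD_X}\SM\big)$, apply $\rihom_{\SD_Y}(-,\SO_Y^\rmE)$, and invoke the compatibility of $\SO^\rmE$ with proper integration together with $\bfE f_\ast\simeq\bfE f_{!!}$ for proper $f$; the shifts $[d_X]$, $[d_Y]$ record the discrepancy between the D-module pushforward normalization and the topological $\bfE f_{!!}$.

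Statements (1), (4) and (5) are then more formal. For (1) I would use the self-duality of $\SO_X^\rmE$ interchanging the holonomic duality $\DD_X$ and the enhanced Verdier duality $\rmD_X^\rmE$, the shift $[2d_X]$ arising from $\omega_X\simeq\CC_X[2d_X]$ inside $\omega_X^\rmE=e(\omega_X)$. For (4) I would pass to $X\times X$, use the external-tensor compatibility of $\SO^\rmE$, and restrict along the diagonal $\delta\colon X\to X\times X$ via (2), which converts the external product into $\Dotimes$ on the left and the external convolution into $\Potimes$ on the right. Finally (5) follows from (4) applied to $\SM(\ast D)=\SM\Dotimes\SO_X(\ast D)$ together with the $\varphi=0$ case of (6), since $\SO_X(\ast D)=\SE_{X\bs D|X}^0$ and convolving with its enhanced solution complex realizes the functor $\pi^{-1}\CC_{X\bs D}\otimes(-)$.

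The genuinely hard part is (6), the identification $\Sol_X^\rmE(\SE_{X\bs D|X}^\varphi)\simeq\EE_{X\bs D|X}^{\Re\varphi}$, and it is where the enhancement does its essential work. The connection $d+d\varphi$ on $\SO_X(\ast D)$ forces its enhanced solutions to be the tempered multiples of $e^{-\varphi}$ on $X\setminus D$, and the content is that the auxiliary variable $t$ records the exponential growth rate $\Re\varphi$ so that the tempered enhanced solution complex is exactly $\CC_X^\rmE\Potimes\Q(\CC_{\{t+\Re\varphi\geq0\}})$. I expect the main obstacle to lie precisely here: one must unwind the definition of $\SO_X^\rmE$ and carry out the tempered-growth estimates showing that the growth of $e^{-\varphi}$ along the $t$-direction is cut out exactly by the half-space $\{t+\Re\varphi\geq0\}$, and it is these estimates --- rather than the homological formalism used for (1)--(5) --- that constitute the real analytic core of the theorem.
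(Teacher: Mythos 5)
The first thing to note is that the paper does not prove this statement at all: Theorem~\ref{thm2.5} is recalled verbatim from D'Agnolo--Kashiwara \cite{DK16}, and the only mathematical content the paper adds is the remark immediately before it, namely that (3) was proved in \cite{DK16} only under the hypothesis that $\M$ admits a \emph{globally} good filtration, a hypothesis which is harmless because every holonomic $\D$-module carries a globally defined good filtration by Malgrange's theorem. So your proposal can only be measured against the proofs in \cite{DK16} that the citation stands for. Against those, your outline is broadly faithful: reducing (2)--(5) to compatibilities of the single object $\SO_X^\rmE$ via transfer bimodules and tensor--hom adjunction, isolating (6) as the analytic core (tempered growth estimates showing the solutions $e^{-\varphi}$ are cut out by $\{t+\Re\varphi\geq 0\}$), and obtaining (5) from (4) together with the $\varphi=0$ case of (6) using stability of enhanced solution complexes, is essentially how \cite{DK16} organizes the material.

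There are, however, two concrete gaps. First, for (3) you never address why the compatibility of $\SO^\rmE$ with proper integration applies to \emph{every} holonomic $\M$: in \cite{DK16} this compatibility is established for modules with a globally good filtration (this is where the Grauert-type coherence arguments enter), and the passage to all of $\BDChol(\D_X)$ requires Malgrange's global good filtration theorem --- precisely the one point the present paper is careful to supply on top of the citation. As written, your argument proves (3) only for a subclass. Second, your treatment of (1) is close to circular: the ``compatibility of $\SO_X^\rmE$ with duality'' that you propose to import as backbone is, once unwound, statement (1) itself; there is no formal self-duality of $\SO_X^\rmE$ from which (1) drops out, and the shift $[2d_X]$ is not merely bookkeeping for $\omega_X\simeq\CC_X[2d_X]$. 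In \cite{DK16} the duality isomorphism is obtained by constructing a canonical pairing morphism and then proving it invertible by d\'evissage: one localizes, uses the functorial properties (2), (3), (5) and the Kedlaya--Mochizuki structure theorem to reduce to (quasi-)normal forms after blow-ups and ramifications, and finally checks the exponential case by hand via (6). So (1) belongs with (3) and (6) among the genuinely hard parts of the theorem, not among the ``more formal'' ones as your plan asserts.
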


We also recall the following theorems (\cite[Theorem 9.6.1]{DK16},
 \cite[Theorem 9.1.3]{DK16} and  \cite[Theorem 4.5.1]{DK16-2}).
\begin{theorem}\label{thm2.6}
\begin{itemize}
\item[\rm(1)]
The enhanced solution functor induces an embedding
\[ \Sol_X^\rmE : \BDC_{\rm hol} (\SD_X)^{\rm op}
\hookrightarrow\BEC_{\RR-c}(\I\CC_X).\]
Moreover for any $\M\in\BDChol(\D_X)$ there exists an isomorphism
\[\M\simto \RH_X^{\rmE}\big(\Sol_X^{\rmE}(\M)\big),\]  
where $\RH_X^{\rmE}(K) := \rhom^{\rmE}(K, \SO_X^{\rmE})$.
\item[\rm(2)]
For any $\M\in\BDCrh(\D_X)$ there exists an isomorphism
$$\Sol_X^{\rmE}(\M)\simeq e\big(\Sol_X(\M)\big)$$
and hence there exists a commutative diagram
\[\xymatrix@C=30pt@M=5pt{
\BDChol(\D_X)^{\op}\ar@{^{(}->}[r]^-{\Sol_X^{\rmE}}
\ar@{}[rd]|{\rotatebox[origin=c]{180}{$\circlearrowright$}}
 & \BEC_{\RR-c}(\I\CC_X)\\
\BDCrh(\D_X)^{\op}\ar@{->}[r]_-{\Sol_X}^-{\sim}\ar@{}[u]|-{\bigcup} &\BDC_{\CC-c}(\CC_X).
\ar@{^{(}->}[u]_-{e}
}\]

\item[\rm(3)]
For any $c\in \RR$ we have
\begin{align*}
\Sol_X^{\rmE}\Big(\DChol^{\leq c}(\D_X)\Big)[d_X] &\subset
{}_{\frac{1}{2}}\bfE^{\geq -c}_{\RR-c}(\I\CC_X)\subset
{}^{\frac{1}{2}}\bfE^{\geq -c}_{\RR-c}(\I\CC_X),\\
\Sol_X^{\rmE}\Big(\DChol^{\geq c}(\D_X)\Big)[d_X] &\subset
{}^{\frac{1}{2}}\bfE^{\leq -c}_{\RR-c}(\I\CC_X),\\
\RH_X^{\rmE}\Big({}^{\frac{1}{2}}\bfE^{\leq c}_{\RR-c}(\I\CC_X)\Big)[d_X]
&\subset\bfD^{\geq -c}(\D_X).
\end{align*}
Moreover, we have
\[\Sol_X^{\rmE}\Big(\Modhol(\D_X)\Big)[d_X] \subset
{}^{\frac{1}{2}}\bfE^{\leq 0}_{\RR-c}(\I\CC_X)
\cap
{}^{\frac{1}{2}}\bfE^{\geq 0}_{\RR-c}(\I\CC_X).\]
\end{itemize}
\end{theorem}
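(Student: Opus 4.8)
The plan is to prove the three parts by reducing every statement to a local normal form for holonomic $\D$-modules and then computing enhanced solutions explicitly via Theorem \ref{thm2.5}. For part (1) I would first establish that $\Sol_X^{\rmE}(\M)$ is $\RR$-constructible. Since $\RR$-constructibility is local (Lemma \ref{lem2.3}) and is preserved under the operations, I would apply the structure theorem for holonomic $\D$-modules: after a complex blow-up $f : Z\to X$ (and a ramified covering, if needed) the module $\M$ acquires, on each stratum, a normal form given by finite direct sums of $\SE_{Z\setminus D|Z}^{\varphi}\Dotimes\R$ along a normal crossing divisor $D$, with $\R$ regular holonomic. By Theorem \ref{thm2.5}\,(2),(5),(6) the enhanced solution of such a model is expressed through the exponential enhanced ind-sheaves $\EE_{Z\setminus D|Z}^{\Re\varphi}$, which are $\RR$-constructible by construction; pushing forward along the proper $f$ by Theorem \ref{thm2.5}\,(3) and gluing over the stratification yields $\RR$-constructibility of $\Sol_X^{\rmE}(\M)$. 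The reconstruction isomorphism $\M\simto\RH_X^{\rmE}\big(\Sol_X^{\rmE}(\M)\big)$ is the substantive point; I would verify it on each normal-form piece, where both sides are explicit, and propagate it through distinguished triangles and proper pushforward using the adjunction properties of $\RH_X^{\rmE}$ and its compatibility with the six operations. The embedding then follows since $\RH_X^{\rmE}$ furnishes a functorial left inverse to $\Sol_X^{\rmE}$.

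For part (2) the key is that regular holonomic modules have enhanced solutions with no irregular (exponential) part. I would reduce, via the regular structure theorem and resolution of singularities, to a regular meromorphic connection along a normal crossing divisor; there the exponents $\varphi$ occurring in Theorem \ref{thm2.5}\,(6) contribute no exponential growth, and a direct computation identifies $\Sol_X^{\rmE}(\M)$ with $\CC_X^{\rmE}\otimes\pi^{-1}\Sol_X(\M)=e\big(\Sol_X(\M)\big)$. Using the isomorphism $\sh\big(\Sol_X^{\rmE}(\M)\big)\simeq\Sol_X(\M)$ recorded above together with $\id\simto\sh\circ e$ to pin down the comparison, and propagating through triangles, gives the isomorphism on all of $\BDCrh(\D_X)$. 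The commutative diagram is then immediate from Kashiwara's equivalence $\Sol_X : \BDCrh(\D_X)^{\op}\simto\BDC_{\CC-c}(\CC_X)$.

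Part (3) consists of estimates relating the natural t-filtration $\DChol^{\leq c}$, $\DChol^{\geq c}$ on holonomic modules to the generalized perverse t-structures. Again I would pass to normal form and, on each exponential piece $\EE^{\Re\varphi}$, compute the effect of the restriction $\bfE i^{-1}$ and the exceptional restriction $\bfE i^{!}$ to strata of each dimension $k$, reading off the prescribed shift by $\tfrac{k}{2}$ directly from the definitions of ${}_{\frac{1}{2}}\bfE^{\leq c}_{\RR-c}(\I\CC_X)$ and ${}_{\frac{1}{2}}\bfE^{\geq c}_{\RR-c}(\I\CC_X)$. The duality statements follow from Theorem \ref{thm2.5}\,(1), which intertwines $\Sol_X^{\rmE}\circ\DD_X$ with $\rmD_X^{\rmE}\circ\Sol_X^{\rmE}$, combined with the characterization of ${}^{\frac{1}{2}}\bfE^{\geq c}_{\RR-c}(\I\CC_X)$ and of $\bfD^{\geq -c}(\D_X)$ through the respective duality functors. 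The final inclusion is the specialization of the two one-sided estimates to $c=0$.

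The hard part will be the reconstruction isomorphism in part (1): it is precisely the irregular Riemann-Hilbert correspondence, whose self-contained proof requires the full asymptotic and formal structure theory of irregular meromorphic connections (Hukuhara-Levelt-Turrittin together with the Kedlaya-Mochizuki good-formal-structure theorem) as well as the delicate analytic properties of $\SO_X^{\rmE}$ developed in \cite{DK16}. By contrast, once the normal-form reduction is in place, parts (2) and (3) are comparatively formal consequences of the explicit computations in Theorem \ref{thm2.5} and of the definitions of the generalized t-structures.
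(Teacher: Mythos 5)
You should first be aware that the paper does not prove this theorem at all: it is the irregular Riemann--Hilbert correspondence of D'Agnolo--Kashiwara together with its perversity estimates, and the paper simply recalls it, citing \cite[Theorems 9.6.1 and 9.1.3]{DK16} and \cite[Theorem 4.5.1]{DK16-2}. So your attempt is measured against the original proofs rather than against anything in this paper. As an outline, your plan for parts (1) and (2) does follow the broad architecture of those originals (dévissage via Kedlaya--Mochizuki to normal forms along a normal crossing divisor, explicit computation of enhanced solutions of exponentials via Theorem~\ref{thm2.5}, proper pushforward, duality), and you correctly identify where the genuinely hard analytic content lives. But the sketch has a concrete gap in part (3).

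The gap is the third inclusion, $\RH_X^{\rmE}\bigl({}^{\frac{1}{2}}\bfE^{\leq c}_{\RR-c}(\I\CC_X)\bigr)[d_X]\subset\bfD^{\geq -c}(\D_X)$. This statement quantifies over \emph{arbitrary} objects of ${}^{\frac{1}{2}}\bfE^{\leq c}_{\RR-c}(\I\CC_X)$, i.e.\ over $\RR$-constructible enhanced ind-sheaves, which in general are not of the form $\Sol_X^{\rmE}(\M)$ for any holonomic $\M$ --- indeed the main point of the present paper is that the essential image of $\Sol_X^{\rmE}$ is the strictly smaller class of $\CC$-constructible objects. Consequently, ``pass to normal form and compute on each exponential piece $\EE^{\Re\varphi}$'' proves nothing about this inclusion: normal forms describe solution complexes of holonomic modules, not general $\RR$-constructible objects, and assuming every such $K$ is a solution complex would be both false and circular. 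The proof in \cite{DK16-2} instead works directly with $\rhom^{\rmE}(-,\SO_X^{\rmE})$: locally one writes $K\simeq\CC_X^{\rmE}\Potimes\SF$ with $\SF$ an $\RR$-constructible sheaf on $X\times\RR_\infty$ and establishes vanishing estimates for the enhanced/tempered cohomology of $\SO_X^{\rmE}$ against such $\SF$ --- analytic input about $\SO_X^{\rmE}$, not about $\D$-modules. A second, smaller soft spot is in part (1): to ``propagate through distinguished triangles'' you must first exhibit a globally defined morphism $\M\to\RH_X^{\rmE}\bigl(\Sol_X^{\rmE}(\M)\bigr)$, natural in $\M$ (it comes from the evaluation morphism relating $\rihom_{\D_X}(-,\SO_X^{\rmE})$ and $\rhom^{\rmE}(-,\SO_X^{\rmE})$); isomorphisms produced ad hoc on normal-form pieces cannot be glued through triangles unless they are restrictions of such a natural transformation, so this construction needs to be made explicit before any dévissage.
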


At the end of this subsection, let us recall the notion of $\M_{\reg}$.
We denote by $\D_X^\infty$
the sheaf of rings of differential operators of infinite order on $X$, and 
by $\BDC(\D_X^\infty)$ the derived category of $\D_X^\infty$-modules.
Let us remark that $\BDC(\D_X^\infty)$ has the standard t-structure
$\big(\bfD^{\leq 0}(\D_X^\infty), \bfD^{\geq 0}(\D_X^\infty)\big)$.
We set $\M^\infty := \D_X^\infty\otimes _{\D_X}\M$ and hence
we obtain a functor
\[(\cdot)^\infty : \Mod(\D_X)\to\Mod(\D_X^\infty),
\hspace{5pt} \M\mapsto\M^\infty.\]

Note that $\D_X^\infty$ is faithfully flat over $\D_X$ \cite[p 406]{SKK}.
%by \cite{KK}, see also \cite[Theorem 3.4.4]{Bjo93}.
Hence, we also obtain a functor
$$(\cdot)^\infty : \BDC(\D_X)\to\BDC(\D_X^\infty)$$
between derived categories.
We say that a $\D_X^\infty$-module $\rm M$ is holonomic (resp.\ regular holonomic)
if there exists a holonomic (resp.\ regular holonomic) $\D_X$-modules $\M$
such that ${\rm M}\simeq \M^\infty$.
Let us denote by $\BDChol(\D_X^\infty)$ (resp.\ $\BDCrh(\D_X^\infty)$)
the full triangulated subcategory of $\BDC(\D_X^\infty)$ consisting of objects
whose cohomologies are holonomic (resp.\ regular holonomic) $\D_X^\infty$-modules.
However, by the following proposition, we have
$$\BDChol(\D_X^\infty) = \BDCrh(\D_X^\infty).$$

\begin{proposition}[{\cite[Theorem 5.5.22]{Bjo93}},
{\cite[Theorem 5.2.1]{KK}} and {\cite[Proposition 5.7]{Kas84}}]\label{prop2.7}~\\
\vspace{-20pt}
\begin{itemize}
\item[\rm (1)]
Let $\M$ be a holonomic $\D_X$-module.
Then there exists a unique regular holonomic $\D_X$-module $\M_{\reg}$
such that
\begin{itemize}
\item[\rm (i)]
$\M_{\reg}^\infty\simeq \M^\infty$,

\item[\rm (ii)]
$\M_\reg$ contains every regular holonomic $\D_X$-submodule of $\M^\infty$,

\item[\rm (iii)]
$\Sol_X(\M_\reg)\simeq \Sol_X(\M)$.
\end{itemize}

\item[\rm (2)]
There exists an isomorphism
\[\M_{\reg}\simeq
\{s\in\M^\infty\ |\ \D_X\cdot s\in\Modrh(\D_X)\}.\]
\end{itemize}
\end{proposition}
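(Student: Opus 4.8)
The plan is to take the explicit formula in part (2) as the construction and to deduce all of part (1) from it. The two ingredients I would lean on are the faithful flatness of $\D_X^\infty$ over $\D_X$ recalled above — which makes $(\cdot)^\infty$ exact and conservative, so that it reflects isomorphisms and zero objects — and the comparison theorem of Kashiwara--Kawai and Björk, namely that $(\cdot)^\infty$ restricts to an equivalence $\Modrh(\D_X)\simto\Modhol(\D_X^\infty)$ (recall that $\Modhol(\D_X^\infty)=\Modrh(\D_X^\infty)$), together with the fact that $\M^\infty$ is a holonomic $\D_X^\infty$-module whenever $\M$ is holonomic and the idempotency $\D_X^\infty\otimes_{\D_X}\D_X^\infty\simeq\D_X^\infty$ of the infinite-order completion.

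First I would set $\M':=\{s\in\M^\infty\mid \D_X\cdot s\in\Modrh(\D_X)\}$ and verify that it is a $\D_X$-submodule of $\M^\infty$. This is purely formal, using that $\Modrh(\D_X)$ is thick in $\Modhol(\D_X)$ (closed under subobjects, quotients and extensions): for $s,t\in\M'$ the module $\D_X(s+t)$ is a submodule of the regular holonomic module $\D_X s+\D_X t$ (itself a quotient of $\D_X s\oplus\D_X t$), hence regular holonomic, and likewise $\D_X(Ps)\subset\D_X s$ is regular holonomic for $P\in\D_X$. The same stability under subobjects shows immediately that $\M'$ contains every regular holonomic $\D_X$-submodule of $\M^\infty$, which is exactly property (ii).

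Next I would produce, via the comparison equivalence, a regular holonomic $\N$ with $\N^\infty\simeq\M^\infty$. Faithful flatness makes $\N\hookrightarrow\N^\infty\simeq\M^\infty$ injective, and the induced $\D_X^\infty$-linear map $\N^\infty\to\M^\infty$ is precisely the chosen isomorphism, so $\D_X^\infty\cdot\N=\M^\infty$. By the previous paragraph $\N\subseteq\M'$, and the crux is the reverse inclusion $\M'\subseteq\N$. Given $s\in\M'$ I would form the regular holonomic module $\mathcal{P}:=\N+\D_X s\subseteq\M^\infty$; applying $(\cdot)^\infty$ and using $(\M^\infty)^\infty\simeq\M^\infty$, the inclusions become $\N^\infty\hookrightarrow\mathcal{P}^\infty\hookrightarrow\M^\infty$ whose composite $\N^\infty\to\M^\infty$ is the isomorphism above. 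Since $\mathcal{P}^\infty\hookrightarrow\M^\infty$ is injective, $\N^\infty\to\mathcal{P}^\infty$ is forced to be an isomorphism, whence $\N=\mathcal{P}$ by conservativity of $(\cdot)^\infty$, and thus $s\in\N$. Therefore $\M_{\reg}:=\M'=\N$ is regular holonomic and satisfies (i); property (iii) then follows because $\SO_X$ is a $\D_X^\infty$-module, so $\Sol_X$ factors through $(\cdot)^\infty$ and depends only on $\M^\infty\simeq\M_{\reg}^\infty$.

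Uniqueness is formal once (i) and (ii) are available: any regular holonomic $\M''$ satisfying (i) embeds into $\M^\infty$ through $\M''\hookrightarrow(\M'')^\infty\simeq\M^\infty$, and if it also satisfies (ii) then $\M''\subseteq\M_{\reg}$ and $\M_{\reg}\subseteq\M''$ by mutual maximality, forcing $\M''=\M_{\reg}$. The genuine obstacle is the coherence step in the third paragraph: a priori $\M'$ is only a filtered union of regular holonomic submodules, and nothing elementary guarantees that such a union is $\D_X$-coherent. This finiteness is precisely the work carried out by the cited theorems of Kashiwara--Kawai and Björk (resting on $\Modhol(\D_X^\infty)=\Modrh(\D_X^\infty)$), so in practice I would quote the comparison equivalence and the coherence it encodes rather than reprove them.
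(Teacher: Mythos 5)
The paper itself contains no proof of this proposition: it is imported verbatim as a known result of Bj\"ork, Kashiwara--Kawai and Kashiwara, so your strategy of organizing a formal derivation around the quoted comparison theorem is consonant with the paper's treatment, and most of your formal steps are sound --- the verification that $\M'$ is a $\D_X$-submodule via thickness of $\Modrh(\D_X)$, property (ii), property (iii) (indeed $\Sol_X(\M)\simeq\rhom_{\D_X^\infty}(\M^\infty,\SO_X)$ by flatness and adjunction, since $\SO_X$ is a $\D_X^\infty$-module), and the uniqueness argument. One caveat: with the paper's definition of $\Modhol(\D_X^\infty)$ (objects isomorphic to $\M^\infty$ for $\M$ holonomic), the essential surjectivity of $(\cdot)^\infty\colon\Modrh(\D_X)\to\Modhol(\D_X^\infty)$, and likewise the equality $\Modhol(\D_X^\infty)=\Modrh(\D_X^\infty)$ which you cite as an input, \emph{is} statement (1)(i); so that part of your argument quotes the conclusion rather than proving it. This is defensible here, since the proposition is itself a citation, but it should be said openly.

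The genuine gap is in the inclusion $\M'\subseteq\N$: you need the map $\mathcal{P}^\infty\to\M^\infty$ induced by $\mathcal{P}\subseteq\M^\infty$ to be injective, and you justify this by the ``idempotency'' $\D_X^\infty\otimes_{\D_X}\D_X^\infty\simeq\D_X^\infty$. This is not among the available facts: the paper, following Sato--Kashiwara--Kawai, records only the faithful flatness of $\D_X^\infty$ over $\D_X$. Idempotency is equivalent to $\D_X\to\D_X^\infty$ being an epimorphism of rings (equivalently, that every $\D_X$-linear map between $\D_X^\infty$-modules is automatically $\D_X^\infty$-linear), which is neither elementary nor, to my knowledge, in the literature --- note that for the analogous completion $k[x]\to k[[x]]$ the corresponding statement is false; in the cited sources the injectivity of $\D_X^\infty\otimes_{\D_X}\M_\reg\to\M^\infty$ is part of the theorem and comes out of the Riemann--Hilbert machinery, not formal algebra. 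The step can, however, be repaired with an ingredient you already invoke, namely the full faithfulness half of the Kashiwara--Kawai equivalence: combined with the adjunction $\Hom_{\D_X^\infty}(\mathcal{P}^\infty,\N^\infty)\simeq\Hom_{\D_X}(\mathcal{P},\N^\infty)$, it says that every $\D_X$-linear map from the regular holonomic $\mathcal{P}$ to $\N^\infty$ factors uniquely through $\N\subseteq\N^\infty$. Applying this to $\phi^{-1}|_{\mathcal{P}}\colon\mathcal{P}\to\N^\infty$, where $\phi\colon\N^\infty\simto\M^\infty$ is your isomorphism and $\N$ sits in $\M^\infty$ as $\phi(\N)\subseteq\mathcal{P}$, yields a $\D_X$-linear $g\colon\mathcal{P}\to\N$ that is injective (because $\phi^{-1}|_{\mathcal{P}}$ is) and satisfies $g\circ\phi|_{\N}=\id_{\N}$, hence is bijective; then $\mathcal{P}/\phi(\N)\simeq g(\mathcal{P})/g(\phi(\N))=\N/\N=0$, so $\mathcal{P}=\phi(\N)$ and $s\in\phi(\N)$, as required.
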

By this proposition, we obtain a functor
\[(\cdot)_{\reg} : \Modhol(\D_X)\to\Modrh(\D_X),
\hspace{5pt} \M\mapsto\M_\reg.\]
Here for a morphism $\varphi : \M\to \N$ of holonomic $\D_X$-modules,
then we set $\varphi_\reg = (\varphi^\infty)|_{\M_\reg}$.
We call it the regularization functor.
Note that this is an exact functor.
Hence, we can also consider
the functor $$(\cdot)_{\reg} : \BDChol(\D_X)\to\BDCrh(\D_X)$$
between derived categories.

\subsection{$\D^{\SA}$-Modules}
In this subsection we recall some notions and results
on $\SD_{\tl{X}}^\SA$ in \cite[\S 7]{DK16}. 
Let $X$ be an $n$-dimensional complex manifold
and $Y \subset X$ a smooth closed hyepersurface.
The real blow-up $\varpi_X : \tl{X}_Y\to X$ of $X$ along $Y$ is the real analytic map of
real analytic manifolds locally defined as follows.
We take local coordinates $(z, w)\in\CC\times\CC^{n-1}$ on $X$
such that $Y=\{z=0\}$.
Then we has 
\[\tl{X}_Y = \{(t, \zeta, w)\in\RR\times\CC\times\CC^{n-1}\ |\ 
|\zeta|=1, t\geq0 \}\]
and
\[\varpi_X : \tl{X}_Y\to X\hspace{10pt} (t, \zeta, w)\mapsto (t\zeta, w).\]
Let now $D$ be a normal crossing divisor of $X$,
and locally write $D=D_1\cup\cdots\cup D_r$
where $D_i$ is a smooth closed hypersurface of $X$.
Then the real blow-up of $X$ along $D$ is defined by
\[
\tl{X}_D := \tl{X}_{D_1}\underset{X}{\times}\cdots\underset{X}{\times}\tl{X}_{D_r}
\]
and
\[
\varpi_X : \tl{X}_D\to X.
\]
Sometimes we abbreviate $\varpi_X$ to $\varpi : \tl{X}\to X$ for simplicity. 
Denote by $\SO_{\tl{X}}^{\rmt}\in\BDC(\I\CC_{\tl{X}})$
the ind-sheaf of tempered holomorphic functions on $\tl{X}$.
See \cite[Notation 7.2.6]{DK16} for the definition.
We set $\SA_{\tl{X}} := \alpha_{\tl{X}}(\SO_{\tl{X}}^{\rmt})$
(see \cite[Proposition 7.2.10]{DK16} for precisely)
and 
\begin{align*}
\SD_{\tl{X}}^\SA &:= \SA_{\tl{X}}\otimes_{\varpi^{-1}\SO_X}
\varpi^{-1}\SD_X,\\
\SM^{\SA} &:= \SD_{\tl{X}}^\SA\Lotimes
{\varpi^{-1}\SD_X}\varpi^{-1}\SM
\simeq\SA_{\tl{X}}
\Lotimes{\varpi^{-1}\SO_X}\varpi^{-1}\SM
\end{align*}
for $\M\in\BDC(\SD_X)$.
Recall that a section of $\SA_{\tl X}$ is a holomorphic function having
moderate growth at $\varpi_X^{-1}(D)$.
Note that $\SA_{\tl{X}}$ and 
$\SD_{\tl{X}}^\SA$ are sheaves of rings on $\tl{X}$. 
For $\mathscr{M}\in\BDC(\SD_{\tl{X}}^\SA)$
we define the enhanced solution functor on $\tl{X}$ by 
\[
\Sol_{\tl{X}}^\rmE(\mathscr{M}) := 
\rihom_{\SD_{\tl{X}}^\SA}(\mathscr{M}, \SO_{\tl{X}}^\rmE)
\]
where $\SO_{\tl{X}}^{\rmE}\in\BEC(\I\CC_{\tl{X}})$ is
the enhanced ind-sheaf of tempered holomorphic functions on $\tl{X}$
(See \cite[Definition 9.2.1]{DK16} for the definition).

From now on, we introduce the result of K.S. Kedlaya and T. Mochizuki.
Let $X$ be a complex manifold and 
$D \subset X$ a normal crossing divisor in it. 
Let us take local coordinates 
$(u_1, \ldots, u_l, v_1, \ldots, v_{d_X-l})$ 
of $X$ such that $D= \{ u_1 u_2 \cdots u_l=0 \}$. 
We define a partial order $\leq$ on the 
set $\ZZ^l$ by 
\[ a \leq a^{\prime} \ \Longleftrightarrow 
\ a_i \leq a_i^{\prime} \ (1 \leq i \leq l).\] 
Then for a meromorphic function $\varphi\in\SO_X(\ast D)$
on $X$ along $D$ which has the Laurent expansion
\[ \varphi = \sum_{a \in \ZZ^l} c_a( \varphi )(v) \cdot 
u^a \ \in \SO_X(\ast D) \]
with respect to $u_1, \ldots, u_{l}$,
we define its order 
$\ord( \varphi ) \in \ZZ^l$ by the minimum 
\[ \min \Big( \{ a \in \ZZ^l \ | \
c_a( \varphi ) \not= 0 \} \cup \{ 0 \} \Big) \]
if it exists. 
For any $f\in\SO_X(\ast D)/ \SO_X$, we take any lift $\tl{f}$ to $\SO_X(\ast D)$,
and we set $\ord(f) := \ord(\tl{f})$, if the right hand side exists.
Note that it is independent of the choice of a lift $\tl{f}$.
If $\ord(f)\neq0$, $c_{\ord(f)}(\tl{f})$ is independent of the choice of a lift $\tl{f}$,
which is denoted by $c_{\ord(f)}(f)$.
\begin{definition}[{\cite[Definition 2.1.2]{Mochi11}}]\label{def2.10}
In the situation as above,
let us set $$Y= \{ u_1=u_2= \cdots =u_l=0 \}.$$
A finite subset $\calI \subset \SO_X(\ast D)/ \SO_X$
is called a good set of irregular values on $(X,D)$,
if the following conditions are satisfied:
\begin{itemize}
\setlength{\itemsep}{-3pt}
\item[-]
$\ord(f)$ exists for each element $f\in\calI$.
If $f\neq0$ in $\SO_X(\ast D)/ \SO_X$, $c_{\ord(f)}(f)$ is invertible on $Y$.
\item[-]
$\ord(f-g)$ exists for two distinct $f, g\in\calI$,
$c_{\ord(f-g)}(f-g)$ is invertible on $Y$.
\item[-]
The set $\{\ord(f-g)\ |\ f, g\in\calI\}$ is totally ordered
with respect to the above partial order $\leq$ on $\ZZ^l$.
\end{itemize}
\end{definition}

\begin{definition}\label{def2.8}
We say that a holonomic $\SD_X$-module $\SM$ has a normal form along $D$ if
\begin{itemize}
\setlength{\itemsep}{-3pt}
\item[(i)]
$\SM\simto\SM(\ast D)$
\item[(ii)] $\singsupp(\SM)\subset D$
\item[(iii)] for any $\theta\in\varpi^{-1}(D)\subset\tl{X}$,
there exist an open neighborhood $U\subset X$
of $\varpi({\theta})$,
a good set of irregular values $\{\varphi_i\}$ on $(U, U\cap D)$
and an open neighborhood $V$ of $\theta$ 
with $V\subset\varpi^{-1}(U)$
such that
\[
(\SM|_U)^\SA|_V
\simeq
\Bigl(
\bigoplus_i\bigl(\SE_{U\bs D|U}^{\varphi_i}\bigr)^\SA
\Bigr)
|_V.\]
\end{itemize}
\end{definition}

A ramification of $X$ along a normal crossing divisor $D$ on a neighborhood $U$ 
of $x \in D$ is a finite map $p : U'\to U$ of complex manifolds of the form
$z' \mapsto z=(z_1,z_2, \ldots, z_n)= 
 p(z') = (z'^{m_1}_1,\ldots, z'^{m_r}_r, z'_{r+1},\ldots,z'_n)$ 
for some $(m_1, \ldots, m_r)\in (\ZZ_{>0})^r$, where 
$(z'_1,\ldots, z'_n)$ is a local coordinate system of $U'$ and 
$(z_1, \ldots, z_n)$ is the one of 
$U$ such that $D \cap U=\{z_1\cdots z_r=0\}$. 

\begin{definition}\label{def2.9}
We say that a holonomic $\SD_X$-module $\SM$ has a quasi-normal form along $D$
if it satisfies the conditions (i), (ii) in Definition \ref{def2.8} and if for any $x \in D$ 
there exists a ramification $p_x : U_x'\to U_x$ 
on a neighborhood $U_x$ of $x$ such that $\bfD p_x^\ast(\SM|_{U_x})$
has a normal form along $p_x^{-1}(D\cap U_x)$.
\end{definition}

Note that $\bfD p_x^\ast(\SM|_{U_x})$ as well 
as $\bfD p_{x\ast}\bfD p_x^\ast(\SM|_{U_x})$
is concentrated in degree zero and $\SM|_{U_x}$ is a 
direct summand of $\bfD p_{x\ast}\bfD p_x^\ast(\SM|_{U_x})$. 

A modification of $X$ with respect to an analytic hypersurface $Y$
is a projective map $f : X'\to X$ such that
$D' := f^{-1}(Y)$ is a normal crossing divisor of $X'$
and $f$ induces an isomorphism $X'\setminus D'\simto X\setminus Y$.
The following fundamental result is due to
K.S. Kedlaya and T. Mochizuki:

\begin{theorem}[\cite{Ked10, Ked11, Mochi09, Mochi11}]
For a holonomic $\SD_X$-module $\SM$ and $x\in X$,
there exist an open neighborhood $U_x$ of $x$, 
a closed hypersurface $Y_x\subset U_x$ and
a modification $f_x : U'_x\to U_x$ with respect to $Y_x$ such that
\begin{itemize}
\setlength{\itemsep}{-3pt}
\item[\rm(i)]
$ \singsupp (\SM)\cap U_x\subset Y_x$,
\item[\rm(ii)] $(\bfD f_x^\ast\SM)(\ast D_x')$ has a quasi-normal form along $D_x$,
where $D_x':=f_x^{-1}(Y_x)$ is a normal crossing divisor of $U'_x$.
\end{itemize}
\end{theorem}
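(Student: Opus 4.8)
The plan is to reduce the statement to the existence of a good formal (equivalently, good asymptotic) structure for a meromorphic connection, and then to resolve the turning points by a finite sequence of blow-ups absorbed into the modification $f_x$. First I would localize and replace $\SM$ by its restriction to a small neighborhood $U_x$ of $x$. Since $\SM$ is holonomic, its singular support is a closed analytic subset of $U_x$ of dimension $<d_X$, so after shrinking $U_x$ I may enclose it in a closed hypersurface $Y_x$; this already gives (i). Away from $Y_x$ the module $\SM$ is an integrable connection. Applying Hironaka's resolution of singularities I would choose a projective modification $f_x : U_x'\to U_x$, isomorphic over $U_x\setminus Y_x$, such that $D_x' := f_x^{-1}(Y_x)$ is a normal crossing divisor. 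Passing to $(\bfD f_x^\ast\SM)(\ast D_x')$ then reduces the problem to a meromorphic flat bundle on $(U_x', D_x')$ whose only singularities lie along the normal crossing divisor $D_x'$, so that conditions (i) and (ii) of Definition \ref{def2.8} are automatic and only the local decomposition remains.

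The heart of the matter is the local analysis along $D_x'$. At a generic (smooth) point of each component of $D_x'$, the Hukuhara--Levelt--Turrittin theorem provides, after a ramification $p$ in the variable transverse to the divisor, a decomposition of the formalized connection into elementary pieces $\SE^{\varphi_i}\otimes R_i$ with $R_i$ regular and $\varphi_i$ a meromorphic function in $\SO_{U_x'}(\ast D_x')/\SO_{U_x'}$. The difficulty is that the irregular values $\varphi_i$, their orders $\ord(\varphi_i-\varphi_j)$, and the required ramification indices vary as one moves along $D_x'$ and can degenerate at the so-called turning points. To produce a \emph{quasi}-normal form (Definition \ref{def2.9}) I must arrange that, after a single local ramification $p_x$, the resulting data $\{\varphi_i\}$ form a \emph{good set of irregular values} in the sense of Definition \ref{def2.10}: each leading coefficient $c_{\ord(\varphi_i)}(\varphi_i)$ invertible on $Y$, each $c_{\ord(\varphi_i-\varphi_j)}(\varphi_i-\varphi_j)$ invertible, and the orders $\{\ord(\varphi_i-\varphi_j)\}$ totally ordered in $\ZZ^l$.

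The main obstacle, and indeed the entire substance of the Kedlaya--Mochizuki theorem, is exactly this resolution of turning points: showing that a \emph{finite} sequence of blow-ups makes the formal structure good everywhere along $D_x'$. I would follow one of two established routes. Mochizuki's approach proceeds by asymptotic analysis and an induction on the rank of the connection, peeling off the pieces of maximal irregularity and using blow-ups together with ramified coverings to separate the $\varphi_i$, with the inductive control on the Stokes structure guaranteeing the total-order condition. Kedlaya's approach is valuation-theoretic: he interprets the irregularity as a numerical function on a space of valuations, establishes its semicontinuity, and deduces that the locus of bad formal structure is resolved after finitely many blow-ups. Either way, the nontrivial finiteness statement --- that no infinite tower of blow-ups is required --- is the crux. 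Once it is in hand, combining the good formal decomposition with the comparison between the formal structure and the genuine asymptotic ($\SA_{\tl{X}}$-module) structure yields a normal form along $D_x'$ after the ramification $p_x$, which is precisely condition (ii).
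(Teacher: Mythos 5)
The paper gives no proof of this theorem: it is stated as a fundamental result quoted directly from the literature (\cite{Ked10, Ked11, Mochi09, Mochi11}), with the resolution of turning points treated entirely as a black box. Your outline --- localization, enclosing the singular support in a hypersurface, Hironaka's resolution to obtain a normal crossing divisor, Hukuhara--Levelt--Turrittin decomposition at generic points of the divisor, and deferral of the essential finiteness statement (resolution of turning points) to Mochizuki's asymptotic--inductive method or Kedlaya's valuation-theoretic method --- is a faithful description of exactly those cited proofs, so your treatment is essentially the same as the paper's: both rely on Kedlaya and Mochizuki for the crux rather than proving it.
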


\begin{corollary}\label{cor2.11}
Let $\M$ be a meromorphic connection on $X$ along an analytic hypersurface $Y$.
Then for any $x\in Y$ there exist an open neighborhood $U_x$
and a modification $f_x : U_x'\to U_x$ along $Y_x := Y\cap U_x$
such that $\bfD f_x^{\ast}\M$ has a quasi-normal form along $D_x' := f_x^{-1}(Y_x)$.
\end{corollary}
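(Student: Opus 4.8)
The plan is to deduce the corollary from the preceding Kedlaya--Mochizuki theorem by exploiting the fact that a meromorphic connection is already localized along its polar divisor, so that the localization $(\ast D_x')$ appearing in the theorem can be dropped. First I would record the basic properties of $\M$: as a meromorphic connection along $Y$ it is holonomic, it satisfies $\M\simto\M(\ast Y)$, and, since it restricts to an integrable connection on $X\setminus Y$, its singular support satisfies $\singsupp(\M)\subset Y$.

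Next I would apply the preceding theorem to the holonomic module $\M$ at the point $x\in Y$. This produces an open neighborhood $U_x$, a closed hypersurface containing $\singsupp(\M)\cap U_x$, and a modification $f_x:U_x'\to U_x$ with respect to it such that $(\bfD f_x^{\ast}\M)(\ast D_x')$ has a quasi-normal form. Since $\singsupp(\M)\cap U_x\subset Y\cap U_x$ and $Y\cap U_x$ is itself a closed hypersurface, after shrinking $U_x$ I would arrange the reference hypersurface to be exactly $Y_x:=Y\cap U_x$, so that $D_x':=f_x^{-1}(Y_x)$ is a normal crossing divisor of $U_x'$ and $f_x$ induces an isomorphism $U_x'\setminus D_x'\simto U_x\setminus Y_x$.

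The core of the argument is the removal of the localization $(\ast D_x')$. Using $\M\simto\M(\ast Y)$ together with the compatibility of the inverse image functor $\bfD f_x^{\ast}$ with localization along a hypersurface, I would establish isomorphisms $\bfD f_x^{\ast}\M\simto\bfD f_x^{\ast}\big(\M(\ast Y)\big)\simto(\bfD f_x^{\ast}\M)(\ast D_x')$. This shows at once that $\bfD f_x^{\ast}\M$ satisfies condition (i) of Definition \ref{def2.8} (it is isomorphic to its own localization along $D_x'$), and it identifies $\bfD f_x^{\ast}\M$ with the module $(\bfD f_x^{\ast}\M)(\ast D_x')$ that the theorem has already placed in quasi-normal form in the sense of Definition \ref{def2.9}.

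Finally I would verify condition (ii) of Definition \ref{def2.8}: since $f_x$ is an isomorphism over $U_x\setminus Y_x$ and $\M|_{U_x\setminus Y_x}$ is an integrable connection, the pullback $\bfD f_x^{\ast}\M$ is an integrable connection on $U_x'\setminus D_x'$, whence $\singsupp(\bfD f_x^{\ast}\M)\subset D_x'$. Combining this with the previous step, $\bfD f_x^{\ast}\M$ meets all the requirements of a quasi-normal form along $D_x'$. The main obstacle I anticipate is the bookkeeping around the reference hypersurface, namely justifying that the Kedlaya--Mochizuki modification may be taken with respect to $Y\cap U_x$ rather than the a priori possibly smaller hypersurface produced by the theorem, together with a careful proof of the commutation $\bfD f_x^{\ast}\big(\M(\ast Y)\big)\simeq(\bfD f_x^{\ast}\M)(\ast D_x')$, which underlies the whole reduction.
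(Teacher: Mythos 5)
The paper itself gives no proof of Corollary \ref{cor2.11}: it is quoted as part of the cited Kedlaya--Mochizuki results, and in fact the cited works prove precisely the meromorphic-connection statement (modification taken along the pole divisor $Y$ itself), from which the preceding theorem for general holonomic modules is deduced by localizing along a hypersurface containing $\singsupp(\M)$. Within your proposal, the second half is sound: the properties $\M\simto\M(\ast Y)$, $\singsupp(\M)\subset Y$, holonomicity, and the compatibility $\bfD f_x^{\ast}\big(\M(\ast Y)\big)\simeq\big(\bfD f_x^{\ast}\M\big)\big(\ast f_x^{-1}(Y\cap U_x)\big)$ are all correct, and once the modification is known to be taken with respect to $Y\cap U_x$, they identify $\bfD f_x^{\ast}\M$ with the module $(\bfD f_x^{\ast}\M)(\ast D_x')$ that the theorem places in quasi-normal form; your separate re-verification of conditions (i) and (ii) of Definition \ref{def2.8} is then redundant, since they are contained in Definition \ref{def2.9}.

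The genuine gap is exactly the step you flag as the main obstacle and then assume: ``after shrinking $U_x$ I would arrange the reference hypersurface to be exactly $Y_x:=Y\cap U_x$.'' The theorem is a pure existence statement: it produces \emph{some} closed hypersurface $Y_x\supset\singsupp(\M)\cap U_x$ and a modification with respect to \emph{that} $Y_x$; it gives you no freedom to prescribe $Y_x$. Shrinking $U_x$ only discards components of $Y_x$ that miss $x$; any extra component through $x$ survives. If $Y_x$ strictly contains $Y\cap U_x$ near $x$, the argument breaks in two ways: $f_x$ need not be a modification along $Y\cap U_x$ (it may blow up centers inside $Y_x\setminus Y$, and $f_x^{-1}(Y\cap U_x)$ need not be a normal crossing divisor); and $(\bfD f_x^{\ast}\M)(\ast D_x')$ is a \emph{further} localization of $\bfD f_x^{\ast}\M$ along the extra components of $f_x^{-1}(Y_x)$ (for instance $\SO_{U_x'}\neq\SO_{U_x'}(\ast Z)$ for a nonempty divisor $Z$), hence a genuinely different module, so its quasi-normal form says nothing directly about $\bfD f_x^{\ast}\M$ along $f_x^{-1}(Y\cap U_x)$. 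There is no way to repair this from the stated theorem alone: the freedom to choose the hypersurface equal to $Y\cap U_x$ is logically equivalent to the meromorphic-connection version of the theorem, i.e.\ to the corollary itself. The correct justification is simply to invoke the original results of Kedlaya and Mochizuki for meromorphic connections, which is what the paper implicitly does.
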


At the end of this subsection,
we shall introduce the following results of a joint work with K. Takeuchi \cite{IT18}:

\begin{theorem}[{\cite[Theorem 3.12]{IT18}}]\label{thm2.12}
Let $X$ be a complex manifold and $D$ a normal crossing divisor in it.
For $\SM\in\BDC_{\rm hol}(\SD_X)$ and an 
open sector $V\subset X\setminus D$ along $D$
we set $K:=\pi^{-1}\CC_V\otimes \Sol_X^{\rmE}(\SM)$.
Then for any open subset $W$ of $\tl{X}$ such that
$W\cap\varpi^{-1}(D)\neq\emptyset, \var{W}\subset \Int\Big(
\var{\varpi^{-1}(V)}\Big)$,
there exists  an isomorphism
\[\SM^\SA|_W\simeq\rhom^\rmE\Big(\big({\bfE}\varpi^!
\rihom(\pi^{-1}\CC_{X\bs D}, K)\big)|_W, \SO_{\tl{X}}^\rmE|_W\Big)\]
in $\BDC(\SD_{\tl{X}}^\SA)$.
\end{theorem}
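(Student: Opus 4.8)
The plan is to reduce the general assertion to an explicit computation for a single exponential factor, and then to transport the known D'Agnolo--Kashiwara reconstruction $\SM\simto\rhom^\rmE\big(\Sol_X^\rmE(\SM),\SO_X^\rmE\big)$ (Theorem \ref{thm2.6} (1)) from $X$ up to the real blow-up $\tl{X}$. Since the statement is local on $\tl{X}$ and only concerns the restriction to $W$, which lies over the sector $V\subset X\setminus D$, I would first reduce to the case where $\SM$ is a meromorphic connection along $D$. On the left-hand side this is harmless, because a section of $\SA_{\tl{X}}$ is holomorphic with moderate growth along $\varpi^{-1}(D)$, so that $\SA_{\tl{X}}$ is already a $\varpi^{-1}\SO_X(\ast D)$-algebra and $\SM^\SA\simeq\big(\SM(\ast D)\big)^\SA$; on the right-hand side the passage from $\SM$ to $\SM(\ast D)$ is matched by Theorem \ref{thm2.5} (5), which gives $\pi^{-1}\CC_{X\bs D}\otimes\Sol_X^\rmE(\SM)\simeq\Sol_X^\rmE\big(\SM(\ast D)\big)$, so that the term $\rihom(\pi^{-1}\CC_{X\bs D},K)$ depends only on $\SM(\ast D)$.

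Next I would invoke the Kedlaya--Mochizuki structure theorem together with Corollary \ref{cor2.11}: after shrinking $X$ there is a modification $f\colon X'\to X$ and, locally, a ramification $p$ so that the pulled-back connection acquires a normal form along the relevant normal crossing divisor (Definition \ref{def2.8}). The functoriality of the enhanced solution functor under inverse and proper direct images (Theorem \ref{thm2.5} (2), (3)) transports both sides of the desired isomorphism across $f$ and $p$, and the fact that $\SM|_{U_x}$ is a direct summand of $\bfD p_{x\ast}\bfD p_x^\ast(\SM|_{U_x})$ (noted after Definition \ref{def2.9}) lets me descend the conclusion to $\SM$ itself. Here one must check that the sector $V$ and the condition $\var{W}\subset\Int\big(\var{\varpi^{-1}(V)}\big)$ are compatible with these operations and with the induced real blow-ups; I expect this bookkeeping to be tedious but routine.

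Once $\SM$ has a normal form, Definition \ref{def2.8} gives $\SM^\SA\simeq\bigoplus_i\big(\SE_{U\bs D|U}^{\varphi_i}\big)^\SA$, while Theorem \ref{thm2.5} (6) gives $\Sol_X^\rmE(\SM)\simeq\bigoplus_i\EE_{U\bs D|U}^{\Re\varphi_i}$, so the problem splits into the single-exponential model $\SE^\varphi$. For this model I would compute the right-hand side directly: with $K=\pi^{-1}\CC_V\otimes\EE_{X\bs D|X}^{\Re\varphi}$, I evaluate $\rihom(\pi^{-1}\CC_{X\bs D},K)$, pull it back by $\bfE\varpi^!$, and apply $\rhom^\rmE(-,\SO_{\tl{X}}^\rmE)$. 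The core identity to establish is
\[
\rhom^\rmE\big(\bfE\varpi^!\,\EE_{X\bs D|X}^{\Re\varphi}, \SO_{\tl{X}}^\rmE\big)\big|_W
\simeq\big(\SE_{X\bs D|X}^\varphi\big)^\SA\big|_W
\]
as $\SD_{\tl{X}}^\SA$-modules, i.e.\ that the enhanced hom of the exponential object into the enhanced ind-sheaf of tempered holomorphic functions on $\tl{X}$ recovers exactly the moderate-growth flat sections $e^{\varphi}\cdot\SA_{\tl{X}}$. The sector condition $\var{W}\subset\Int\big(\var{\varpi^{-1}(V)}\big)$ is used to guarantee that over $W$ the sign of $\Re\varphi$ is controlled in the relevant directions, so that the tempered growth condition defining $\SO_{\tl{X}}^\rmE$ matches the moderate-growth condition defining $\SA_{\tl{X}}$.

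The main obstacle is precisely this model computation: it rests on a careful growth analysis of $\SO_{\tl{X}}^\rmE$ on the real blow-up and on the compatibility of the operations $\bfE\varpi^!$ and $\rihom(\pi^{-1}\CC_{X\bs D},-)$ with the tempered structure---equivalently, on relating $\SO_{\tl{X}}^\rmE$ to $\bfE\varpi^!\SO_X^\rmE$ and checking that the reconstruction of Theorem \ref{thm2.6} (1) descends from $X$ to $\tl{X}$ compatibly with the $\SD_{\tl{X}}^\SA$-module structure and with the $e^{\varphi}$-twist. Verifying this identity at the exponential level, and confirming that the resulting isomorphism is natural enough to glue over the index set $\{i\}$ and to survive the modification and ramification reductions, is where the real work lies; the remaining steps are formal consequences of the functoriality recalled in Theorem \ref{thm2.5}.
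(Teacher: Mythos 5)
A preliminary remark: this paper does not prove Theorem \ref{thm2.12}; it imports it from \cite{IT18}, so your proposal can only be judged on internal correctness and against the real blow-up machinery of \cite{DK16} on which that reference relies. Your toolkit (localization along $D$, Kedlaya--Mochizuki d\'evissage, the functoriality in Theorem \ref{thm2.5}, an exponential model computation on $\tl{X}$) is the right circle of ideas, but two steps fail as stated. The first is the opening reduction ``to the case where $\SM$ is a meromorphic connection along $D$.'' Localization does give $\SM^\SA\simeq\big(\SM(\ast D)\big)^\SA$, and Theorem \ref{thm2.5} (5) shows both sides depend only on $\SM(\ast D)$; but $\SM(\ast D)$ is a meromorphic connection along $D$ only when $\singsupp(\SM)\subset D$, whereas the theorem allows arbitrary holonomic $\SM$. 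In dimension $\geq 2$ the singular support may have components crossing the sector $V$ and accumulating on $D$ (e.g.\ $D=\{z_1=0\}$, $\singsupp(\SM)=D\cup\{z_2=0\}$), and $\SM^\SA$ near a point of $W\cap\varpi^{-1}(D)$ genuinely sees them, since $\SA_{\tl{X}}$ restricted over $X\setminus D$ is just $\SO_{X\setminus D}$. Removing these singularities by Kedlaya--Mochizuki forces a modification with respect to a hypersurface $Y_x$ strictly larger than $D$ (Corollary \ref{cor2.11} applies only once $\singsupp\subset D$ is known), and this changes both the real blow-up $\tl{X}$ and the cutoff $\pi^{-1}\CC_{X\setminus D}$ occurring in the statement; your proposal never explains how the assertion for $(X,D)$ follows from one for the modified pair.

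The second, and more fundamental, problem is that you misidentify the role of the hypothesis $\var{W}\subset\Int\big(\var{\varpi^{-1}(V)}\big)$. It is not there to control the sign of $\Re\varphi$: the model reconstruction of $\SA_{\tl{X}}e^{\varphi}$ from the enhanced solutions of $\SE_{X\bs D|X}^{\varphi}$ (essentially \cite[\S 9.3]{DK16}) holds on all of $\tl{X}$ with no sign restriction. The hypothesis is what permits removal of the sectorial cutoff: one must show that the contribution of $(X\setminus D)\setminus V$, i.e.\ the cone of $K\to\rihom\big(\pi^{-1}\CC_{X\bs D},\Sol_X^{\rmE}(\SM(\ast D))\big)$, dies after applying $\bfE\varpi^!\rihom(\pi^{-1}\CC_{X\bs D},-)$ and restricting to $W$, so that over $W$ one recovers $\Sol_{\tl{X}}^{\rmE}(\SM^\SA)|_W$ and may then invoke a global (non-sectorial) reconstruction on $\tl{X}$. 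This support argument is delicate: the crude bound on the support of $\bfE\varpi^!(\cdot)$ is by the \emph{preimage} $\varpi^{-1}\big(\var{(X\setminus D)\setminus V}\big)$ of a closure, which contains whole fibers over points of $D$ and does meet $W$; one needs the finer bound by the closure of the preimage, and establishing it is exactly where the directional geometry of the real blow-up enters. Since this cutoff-removal is the entire difference between Theorem \ref{thm2.12} and the reconstruction results already in \cite{DK16}, its absence is a genuine gap rather than bookkeeping. Two lesser points: your ``core identity'' drops the $\rihom(\pi^{-1}\CC_{X\bs D},-)$ localization, which matters precisely on $\varpi^{-1}(D)$ where $\SA_{\tl{X}}$ lives; and descending isomorphisms through ramifications, modifications and direct summands requires a globally defined canonical morphism whose isomorphy is then checked after pullback --- you acknowledge this naturality issue but never construct the morphism.
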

This result means that
we can reconstruct the $\SD_{\tl{X}}^\SA$-module structure
of $\SM^\SA$ on $W\subset \tl{X}$ by the enhanced ind-sheaf 
$K = \pi^{-1}\CC_V \otimes \Sol_X^{\rmE}(\SM )$. 
We regard Theorem \ref{thm2.12}
as a sectorial refinement of the irregular Riemann-Hilbert 
correspondence of \cite{DK16}.
Conversely,
we can reconstruct the enhanced ind-sheaf 
$\pi^{-1}\CC_V \otimes \Sol_X^{\rmE}(\SM )$ by
the $\SD_{\tl{X}}^\SA$-module structure
of $\SM^\SA$ on $W\subset \tl{X}$
as follows:
\begin{theorem}[{\cite[Theorem 3.8]{IT18}}]\label{thm2.13}
Let $X$ be a complex manifold and $D$ a normal crossing divisor in it.
For $\SM\in\BDC_{\rm hol}(\SD_X)$ and
an open subset $W$ of $\tl{X}$ such that
$W\cap\varpi^{-1}(D)\neq\emptyset$,
we set $\mathscr{K} := {\bf E}i_W^{-1}\Sol_{\tl X}^{\rmE}(\M^\SA) 
= \Sol_W^{\rmE}(\M^\SA|_W)$,
where $i_W : W\xhookrightarrow{\ \ \ } \tl X$ is the inclusion map.
Then for any open sector $V\subset X\setminus D$ along $D$ 
such that $\tl{V} := \var{\varpi^{-1}(V)}\subset W$, 
there exists an isomorphism
\[\pi^{-1}\CC_V\otimes \Sol_X^{\rmE}(\M)\simeq
{\bf E}\varpi_{\ast}(\pi^{-1}\CC_{\varpi^{-1}(V)}\otimes
{\bf E}i_{\tl{V}\ast}{\bf E} j^{-1}\mathscr{K})\]
in $\BEC(\I\CC_X)$, where
$j : {\tl V}\xhookrightarrow{\ \ \ } W$ and 
$i_{\tl V} : \tl V \xhookrightarrow{\ \ \ } {\tl X}$
are the inclusion maps.
\end{theorem}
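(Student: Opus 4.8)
The plan is to rewrite the right-hand side entirely in terms of $\Sol_{\tl X}^{\rmE}(\M^\SA)$ by formal functorial identities, reducing the theorem to a single comparison between the enhanced solutions on $\tl X$ and on $X$ across the exceptional locus $\varpi^{-1}(D)$; that comparison is the only genuine analytic input.

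First I would unwind the inner object. Since $i_{\tl V}=i_W\circ j$, functoriality of the enhanced inverse image gives
\[\bfE j^{-1}\mathscr K=\bfE j^{-1}\bfE i_W^{-1}\Sol_{\tl X}^{\rmE}(\M^\SA)=\bfE i_{\tl V}^{-1}\Sol_{\tl X}^{\rmE}(\M^\SA),\]
so the construction is in fact independent of the auxiliary open set $W$. As $\tl V=\var{\varpi^{-1}(V)}$ is closed in $\tl X$, the embedding $i_{\tl V}$ satisfies $\bfE i_{\tl V\ast}=\bfE i_{\tl V!!}$, and the enhanced projection formula yields the closed-support identity $\bfE i_{\tl V\ast}\bfE i_{\tl V}^{-1}(-)\simeq\pi^{-1}\CC_{\tl V}\otimes(-)$, whence
\[\bfE i_{\tl V\ast}\bfE j^{-1}\mathscr K\simeq\pi^{-1}\CC_{\tl V}\otimes\Sol_{\tl X}^{\rmE}(\M^\SA).\]
Tensoring by $\pi^{-1}\CC_{\varpi^{-1}(V)}$ and using $\varpi^{-1}(V)\subset\tl V$, so that $\CC_{\varpi^{-1}(V)}\otimes\CC_{\tl V}\simeq\CC_{\varpi^{-1}(V)}$, the right-hand side of the theorem becomes $\bfE\varpi_{\ast}\big(\pi^{-1}\CC_{\varpi^{-1}(V)}\otimes\Sol_{\tl X}^{\rmE}(\M^\SA)\big)$. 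Finally $\varpi$ is proper (its fibres over $D$ are real tori and it is an isomorphism elsewhere), so $\bfE\varpi_{\ast}=\bfE\varpi_{!!}$ and, since $\pi^{-1}\CC_{\varpi^{-1}(V)}\simeq\bfE\varpi^{-1}(\pi^{-1}\CC_V)$, the projection formula gives
\[\bfE\varpi_{\ast}\big(\pi^{-1}\CC_{\varpi^{-1}(V)}\otimes\Sol_{\tl X}^{\rmE}(\M^\SA)\big)\simeq\pi^{-1}\CC_V\otimes\bfE\varpi_{\ast}\Sol_{\tl X}^{\rmE}(\M^\SA).\]
Thus everything is reduced to the blow-up comparison $\bfE\varpi_{\ast}\Sol_{\tl X}^{\rmE}(\M^\SA)\simeq\pi^{-1}\CC_{X\setminus D}\otimes\Sol_X^{\rmE}(\M)$: granting it, one tensors by $\pi^{-1}\CC_V$ and absorbs $\pi^{-1}\CC_{X\setminus D}$ using $V\subset X\setminus D$ to recover the left-hand side $\pi^{-1}\CC_V\otimes\Sol_X^{\rmE}(\M)$.

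The blow-up comparison is the \emph{main obstacle}. I would first observe that both sides depend only on $\M(\ast D)$: the right-hand side equals $\Sol_X^{\rmE}(\M(\ast D))$ by Theorem \ref{thm2.5}(5), while $\M^\SA\simeq(\M(\ast D))^\SA$ because sections of $\SA_{\tl X}$ already have moderate growth along $\varpi^{-1}(D)$. Hence I may assume $\M\simto\M(\ast D)$ with $\singsupp\M\subset D$; on $\varpi^{-1}(X\setminus D)$, where $\varpi$ is an isomorphism, both sides restrict to $\Sol_{X\setminus D}^{\rmE}(\M|_{X\setminus D})$, so the content lies entirely along $\varpi^{-1}(D)$. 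To reach a computable model I would apply the Kedlaya--Mochizuki theorem recalled above, together with Corollary \ref{cor2.11}, to pass via a ramified modification to the case in which $\M$ has a normal form along a normal crossing divisor $D$; the compatibilities of $\Sol^{\rmE}$ with proper pushforward and inverse image (Theorem \ref{thm2.5}(2),(3)) transport the comparison along the modification, the delicate bookkeeping being to match the two real blow-ups and their exceptional loci upstairs and downstairs. In the normal form case $\M^\SA$ is locally a finite direct sum $\bigoplus_i(\SE_{X\setminus D|X}^{\varphi_i})^\SA$, and by Theorem \ref{thm2.5}(4),(6) both sides are expressed through the exponential enhanced ind-sheaves $\EE_{X\setminus D|X}^{\Re\varphi_i}$. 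The remaining and hardest point is to verify that $\bfE\varpi_{\ast}$ glues the sectorial solutions $\Sol_{\tl X}^{\rmE}\big((\SE^{\varphi_i})^\SA\big)$ on $\tl X$ back to $\EE_{X\setminus D|X}^{\Re\varphi_i}$ on $X$; this is precisely the control of the Stokes structure, i.e. of how the regions $\{t+\Re\varphi_i\geq0\}$ behave across $\varpi^{-1}(D)$, and I expect it to absorb essentially all the work, everything else being formal.
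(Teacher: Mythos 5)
Your formal reductions are fine: using $i_{\tl V}=i_W\circ j$, the closed-embedding identity $\bfE i_{\tl V\ast}\bfE i_{\tl V}^{-1}(-)\simeq\pi^{-1}\CC_{\tl V}\otimes(-)$, and the projection formula for the proper map $\varpi$, the right-hand side of the theorem does collapse to $\pi^{-1}\CC_V\otimes\bfE\varpi_{\ast}\Sol_{\tl X}^{\rmE}(\M^\SA)$. (Note this paper does not prove the statement at all; it quotes it from \cite{IT18}, whose proof follows the same formal skeleton.) The genuine gap is your ``blow-up comparison'': the isomorphism $\bfE\varpi_{\ast}\Sol_{\tl X}^{\rmE}(\M^\SA)\simeq\pi^{-1}\CC_{X\setminus D}\otimes\Sol_X^{\rmE}(\M)$ is \emph{false}. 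What D'Agnolo--Kashiwara actually prove (\cite[\S 9.2]{DK16}, and this is what \cite{IT18} invokes) is
$$\bfE\varpi_{\ast}\Sol_{\tl X}^{\rmE}(\M^\SA)\simeq\rihom\big(\pi^{-1}\CC_{X\setminus D},\,\Sol_X^{\rmE}(\M)\big),$$
the ``$j_\ast j^{-1}$'' extension across $D$ rather than the ``$j_{!!}j^{-1}$'' one. The two differ exactly over $D$ --- precisely the locus where you say all the content lies. Concretely, take $X=\CC$, $D=\{0\}$, $\M=\SO_X(\ast 0)$; then $\M^\SA\simeq\SA_{\tl X}$, and since $\sh$ commutes with proper direct images and the flat sections of $\SA_{\tl X}$ are the constants, $\sh\big(\bfE\varpi_{\ast}\Sol_{\tl X}^{\rmE}(\M^\SA)\big)\simeq\bfR\varpi_{\ast}\CC_{\tl X}$, whose stalk at $0$ is $H^{\bullet}(S^1;\CC)\neq 0$; whereas $\sh\big(\pi^{-1}\CC_{X\setminus 0}\otimes\Sol_X^{\rmE}(\M)\big)\simeq\CC_{X\setminus 0}$ has vanishing stalk at $0$. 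So the statement you plan to spend ``essentially all the work'' proving via Kedlaya--Mochizuki normal forms and Stokes bookkeeping cannot be proved: it is wrong, and the work it describes is in any case already done in \cite{DK16} for the correct ($\rihom$) version.

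The gap is repairable, and the repair is essentially the actual proof in \cite{IT18}: replace your comparison by the $\rihom$ version above, and then observe that the discrepancy is invisible after tensoring with $\pi^{-1}\CC_V$. Indeed, since $V\subset X\setminus D$ is open, $\pi^{-1}\CC_V\otimes(-)$ depends only on the restriction to $X\setminus D$, and $\rihom(\pi^{-1}\CC_{X\setminus D},K)$ and $K$ have the same restriction there; hence
$$\pi^{-1}\CC_V\otimes\rihom\big(\pi^{-1}\CC_{X\setminus D},\Sol_X^{\rmE}(\M)\big)\simeq\pi^{-1}\CC_V\otimes\Sol_X^{\rmE}(\M),$$
which, fed into your (correct) formal reduction, yields the theorem with no analytic input beyond the cited result of \cite{DK16}.
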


The following proposition will be used in the proof of Sublemma \ref{sublem3.4}:
\begin{proposition}[{\cite[Proposition 3.19]{IT18}}]\label{prop2.14}
In the situation as above,
let $\varpi_X : \tl X\to X$ be the 
real blow-up of $X$ along the normal 
crossing divisor $D$. Assume that 
$\varphi_1, \ldots, \varphi_m$ 
$($resp.\ $\psi_1, \ldots, \psi_m)$ 
$\in \SO_X(\ast D)/ \SO_X$ form 
a good set of irregular values on $(X,D)$. 
Assume also that for a point $\theta \in  
\varpi^{-1}_X( Y ) \subset \varpi^{-1}_X( D )$ 
there exists its open neighborhood $U$ in $\tl X$ 
on which we have an isomorphism
\[\Phi : \bigoplus_{j=1}^m \SA_{\tl X} e^{\varphi_j}\simto
\bigoplus_{i=1}^m \SA_{\tl X} e^{\psi_i}\]
of $\D_{\tl X}^\SA$-modules,
where $Y$ is the subset of $X$ in Definition \ref{def2.10}. 
Then after reordering $\varphi_j$'s and $\psi_i$'s 
for any $1\leq j\leq m$
we have $\varphi_j = \psi_j$.
\end{proposition}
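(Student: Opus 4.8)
The plan is to turn the $\D_{\tl X}^\SA$-linear isomorphism $\Phi$ into an invertible matrix over $\SA_{\tl X}$, solve the first-order system coming from the connection to identify its entries as constant multiples of exponentials, and then read off the matching $\varphi_j=\psi_j$ from a determinant identity together with growth considerations. First I would write $\Phi(e^{\varphi_j})=\sum_{i=1}^{m}a_{ij}\,e^{\psi_i}$ with $a_{ij}\in\SA_{\tl X}(U)$, so that $\Phi$ is given by $A=(a_{ij})$. Since $\D_{\tl X}^\SA$ acts by $\partial_k\cdot e^{\varphi_j}=(\partial_k\varphi_j)e^{\varphi_j}$, comparing the coefficients of $e^{\psi_i}$ in the identity $\Phi(\partial_k\cdot e^{\varphi_j})=\partial_k\cdot\Phi(e^{\varphi_j})$ yields
\[ d a_{ij}=a_{ij}\,d(\varphi_j-\psi_i)\qquad(1\le i,j\le m). \]
On $U\setminus\varpi^{-1}(D)$, where sections of $\SA_{\tl X}$ are holomorphic, this is a rank-one flat equation; after shrinking $U$ so that $U\setminus\varpi^{-1}(D)$ is a simply connected sector its solutions are $a_{ij}=c_{ij}\,e^{\varphi_j-\psi_i}$ for constants $c_{ij}\in\CC$, once a lift of $\varphi_j-\psi_i$ to $\SO_X(\ast D)$ is fixed (changing the lift multiplies $e^{\varphi_j-\psi_i}$ by a nonvanishing holomorphic factor, i.e.\ by a unit of $\SA_{\tl X}$, which is harmless).

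The moderate growth of $a_{ij}$ along $\varpi^{-1}(D)$ imposes the constraint that, whenever $c_{ij}\ne0$, the exponential $e^{\varphi_j-\psi_i}$ has moderate growth on $U$, equivalently $\Re(\varphi_j-\psi_i)$ is bounded above on $U$. The elementary growth principle I would use repeatedly is: \emph{if $g\in\SO_X(\ast D)/\SO_X$ is nonzero and $e^{g}$ has moderate growth on the open set $U$, then $\Re(g)$ is unbounded below on $U$}. Indeed a nonzero pole part has a leading term $c\,u^{a}$ with $a<0$, and the bound $\Re(g)\le C$ on $U$ already forbids the directions along which $\Re(c\,u^{a})\to+\infty$; in the complementary directions still present in the open set $U$ one then has $\Re(g)\to-\infty$. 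The hypothesis that $\{\varphi_j\}$ and $\{\psi_i\}$ are good sets guarantees that the relevant orders $\ord(\varphi_j-\psi_i)$ exist and that these leading coefficients are invertible on $Y$, so that this dichotomy between growth and decay is clean.

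The key algebraic input is that every permutation $\sigma$ satisfies $\sum_j(\varphi_j-\psi_{\sigma(j)})=\sum_j\varphi_j-\sum_i\psi_i$ independently of $\sigma$, so that all terms in the Leibniz expansion of $\det A$ share the same exponential factor and
\[ \det A=\det(c_{ij})\cdot e^{\sum_{j}\varphi_j-\sum_{i}\psi_i}. \]
As $\Phi$ is an isomorphism, $A$ is invertible over $\SA_{\tl X}$, so $\det A$ is a unit; hence both $\det A$ and $\det(c_{ij})^{-1}e^{\sum_i\psi_i-\sum_j\varphi_j}$ lie in $\SA_{\tl X}$. Applying the growth principle to $g=\sum_j\varphi_j-\sum_i\psi_i$ and to $-g$ at once shows $g$ cannot be a nonzero pole part, so $\sum_j\varphi_j=\sum_i\psi_i$ in $\SO_X(\ast D)/\SO_X$ and $\det(c_{ij})\ne0$. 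Invertibility of the constant matrix $(c_{ij})$ then provides a permutation $\tau$ with $c_{\tau(j),j}\ne0$ for all $j$; writing $g_j:=\varphi_j-\psi_{\tau(j)}$, each $\Re(g_j)$ is bounded above on $U$ while $\sum_j g_j=0$, so $\sum_j\Re(g_j)$ is bounded. If some $g_{j_0}\ne0$, the growth principle gives a sequence in $U$ along which $\Re(g_{j_0})\to-\infty$ while all other $\Re(g_j)$ stay bounded above, forcing $\sum_j\Re(g_j)\to-\infty$ --- a contradiction. Hence $g_j=0$ for every $j$, that is $\varphi_j=\psi_{\tau(j)}$, and reindexing the $\psi_i$ through $\tau$ gives $\varphi_j=\psi_j$ for all $j$.

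The main obstacle is the analytic step carried out in the second paragraph: one must make precise that moderate growth on the \emph{open} neighborhood $U$ of a possibly special direction $\theta\in\varpi^{-1}(Y)$ --- not merely along the single ray $\theta$ --- is what pins down the sign behaviour of $\Re(\varphi_j-\psi_i)$, and this is exactly where the goodness of the irregular values enters. Once this growth dichotomy is established, the remaining combinatorics of the determinant identity and the permutation $\tau$ are formal, and Proposition \ref{prop2.14} follows.
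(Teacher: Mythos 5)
A preliminary remark: the paper itself does not prove Proposition \ref{prop2.14} at all --- it is quoted verbatim from \cite[Proposition 3.19]{IT18} --- so your proposal can only be measured against the argument given there, which runs along the same general lines (morphisms between exponential $\D_{\tl X}^{\SA}$-modules are constants times exponentials of moderate growth, and invertibility of $\Phi$ forces a matching, there via composition with $\Phi^{-1}$ rather than via the determinant). Your skeleton is sound: the equation $da_{ij}=a_{ij}\,d(\varphi_j-\psi_i)$, the conclusion $a_{ij}=c_{ij}e^{\varphi_j-\psi_i}$ on a contractible polysector, the fact that $\det A$ is a unit of $\SA_{\tl X}(U)$, and the extraction of a permutation $\tau$ from a nonvanishing term of $\det(c_{ij})$ are all correct; the determinant route even has the merit of handling possible repetitions among the $\varphi_j$'s automatically, which the composition route needs an extra counting argument for.

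The genuine gap is in your ``elementary growth principle,'' which is the entire analytic content of the proposition. Your justification of it is a leading-term argument: it presupposes that the element $g$ under consideration admits an order $\ord(g)\in\ZZ^l$ with invertible leading coefficient. You then assert that goodness of $\{\varphi_j\}$ and $\{\psi_i\}$ ``guarantees that the relevant orders $\ord(\varphi_j-\psi_i)$ exist.'' This is false: Definition \ref{def2.10} controls only differences of two elements \emph{within one} good set, and the proposition assumes the two families are good \emph{separately}; nothing constrains the cross-differences $\varphi_j-\psi_i$, nor the element $\sum_j\varphi_j-\sum_i\psi_i$ to which you also apply the principle. Concretely, with $l=2$, the functions $\varphi_1=u_1^{-1}+u_1^{-1}u_2^{-1}$ and $\psi_1=u_2^{-1}+u_1^{-1}u_2^{-1}$ are each good singletons (both have order $(-1,-1)$ with leading coefficient $1$), while $\varphi_1-\psi_1=u_1^{-1}-u_2^{-1}$ has the incomparable exponents $(-1,0)$ and $(0,-1)$, so $\ord(\varphi_1-\psi_1)$ does not exist and there is no leading term on which to run your dichotomy. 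What your proof actually needs is: for an \emph{arbitrary} nonzero $h\in\SO_X(\ast D)/\SO_X$, if $e^{h}$ has moderate growth on an open neighborhood $U$ of $\theta\in\varpi_X^{-1}(Y)$, then $\Re h\to-\infty$ at a rate polynomial in $1/|u|$ along some sequence in $U$ (in particular $e^{h}$ and $e^{-h}$ are never both of moderate growth on $U$). This statement is true, but it is not elementary; for $l\geq2$ it requires a genuine argument --- e.g.\ restriction to analytic curves through $Y$, reducing to the one-variable case where orders always exist, or an appeal to the results of Mochizuki underlying \cite{IT18} --- and it cannot be obtained by pretending a leading term exists. (A smaller, fixable imprecision: moderate growth bounds $\Re(\varphi_j-\psi_i)$ above only by $C+N\log(1/|u|)$, not by a constant; your final summation argument survives because the polynomial rate $-|u|^{-k}$ dominates logarithms, but this should be said explicitly.) Until the dichotomy is established for non-good differences, the proof does not go through in the normal crossing case $l\geq2$.
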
 

\section{Main Result}
In this section,
we define $\CC$-constructible enhanced ind-sheaves and prove that they are
nothing but the images of objects of $\BDChol(\D_X)$ via the enhanced solution functor.

\subsection{Ind-Stalk for Enhanced Ind-Sheaves}
In this subsection,
we define ind-stalks for an enhanced ind-sheaf.
Let $M$ be a good topological space
(i.e., a locally compact Hausdorff space
which is countable at infinity and has finite soft dimension).

\begin{definition}
Let $Z$ be a locally closed subset of $M$.
For $K\in\BEC(\I\CC_M)$,
we set $${}_ZK := K\otimes \pi^{-1}(\beta_M\CC_Z) \in\BEC(\I\CC_M).$$

Sometimes,  we abbreviate ${}_{\{x\}}K$ to ${}_xK$ for $x\in M$
and we call ${}_xK$ the ind-stalk of $K$ at $x\in M$.
\end{definition}
Remark that the functor ${}_Z(\cdot) : \BEC(\I\CC_M)\to\BEC(\I\CC_M)$
is t-exact with respect to the standard t-structure
by \cite[Lemma 2.7.5 (i)]{DK16-2}.

\begin{proposition}\label{prop3.2}
Let $K, L\in\BEC(\I\CC_M)$ and $\Phi :K\to L$ be a morphism of enhanced ind-sheaves. 
If the morphism $${}_x\Phi : {}_xK\to {}_xL$$ induced by $\Phi$
is an isomorphism for any $x\in M$, 
then $\Phi$ is an isomorphism.
\end{proposition}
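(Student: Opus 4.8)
The plan is to reduce the statement about enhanced ind-sheaves on $M$ to a statement about objects of $\BDC(\I\CC_{M\times\RR_\infty})$, where an analogous ``check on stalks'' criterion is available. The morphism $\Phi : K\to L$ can be completed to a distinguished triangle
\[
K\xrightarrow{\ \Phi\ } L\to \Cone(\Phi)\xrightarrow{\ +1\ }
\]
in $\BEC(\I\CC_M)$, so it suffices to prove that $\Cone(\Phi)\simeq 0$. Since the functor ${}_x(\cdot)$ is triangulated (it is given by tensoring with $\pi^{-1}(\beta_M\CC_{\{x\}})$, hence commutes with shifts and sends triangles to triangles), the hypothesis that ${}_x\Phi$ is an isomorphism for every $x\in M$ translates into ${}_x\Cone(\Phi)\simeq 0$ for every $x\in M$. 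Thus the whole claim is equivalent to the following: if $N\in\BEC(\I\CC_M)$ satisfies ${}_xN\simeq 0$ for all $x\in M$, then $N\simeq 0$.

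To handle $N$ I would pass to the ambient derived category via the fully faithful embedding $\bfL^\rmE : \BEC(\I\CC_M)\to\BDC(\I\CC_{M\times\RR_\infty})$, so that $N\simeq 0$ if and only if $\bfL^\rmE N\simeq 0$. The key point to establish is a compatibility of the form
\[
\bfL^\rmE\big({}_xN\big)\simeq \big(\pi^{-1}\beta_M\CC_{\{x\}}\big)\otimes \bfL^\rmE N,
\]
i.e.\ that tensoring with $\pi^{-1}\beta_M\CC_{\{x\}}$ commutes with $\bfL^\rmE$ up to isomorphism; this should follow from the projection-formula type identities in \cite{DK16} together with the fact that $\pi^{-1}\beta_M\CC_{\{x\}}$ is already invariant under the relevant convolution, so that applying $\bfL^\rmE$ introduces no correction. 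Granting this, the hypothesis ${}_xN\simeq 0$ for all $x$ yields $\big(\pi^{-1}\beta_M\CC_{\{x\}}\big)\otimes\bfL^\rmE N\simeq 0$ for all $x\in M$.

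It then remains to show that an object $F := \bfL^\rmE N\in\BDC(\I\CC_{M\times\RR_\infty})$ with $\big(\pi^{-1}\beta_M\CC_{\{x\}}\big)\otimes F\simeq 0$ for every $x\in M$ must vanish. Working on the ambient space $\che M\times\var{\RR}$ through $\bfR j_{M!!}$, this is the standard fact that an ind-sheaf all of whose ``ind-stalks along the fibers $\{x\}\times\RR$'' vanish is itself zero; concretely one argues that the restriction of $F$ to each fiber $\pi^{-1}(x)$ is zero, and an object of the derived category of ind-sheaves whose restriction to every fiber of $\pi$ is zero must be zero, since one can test vanishing of cohomology ind-sheaves stalkwise and the fibers cover the total space. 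The main obstacle I anticipate is precisely the compatibility in the second paragraph: one must verify carefully that the operation $K\mapsto {}_xK = K\otimes\pi^{-1}(\beta_M\CC_{\{x\}})$ in $\BEC(\I\CC_M)$ corresponds, after applying $\bfL^\rmE$, to the honest tensor product in $\BDC(\I\CC_{M\times\RR_\infty})$, because the tensor product $\pi^{-1}F\otimes K$ for enhanced ind-sheaves is defined through $\bfL^\rmE$ and a quotient functor, and one has to check that the stability/idempotency condition $(\CC_{\{t\geq0\}}\oplus\CC_{\{t\leq0\}})\Potimes(-)$ interacts transparently with $\beta_M\CC_{\{x\}}$; once this bookkeeping is settled, the reduction to the fiberwise vanishing statement for ordinary ind-sheaves is routine.
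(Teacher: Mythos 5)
Your reduction follows the paper's proof step for step: you complete $\Phi$ to a distinguished triangle and reduce to showing that ${}_xN\simeq0$ for all $x\in M$ forces $N\simeq0$ (the paper's Lemma \ref{lem}); you then descend through $\bfL^\rmE$, where the compatibility you flag as the main obstacle, $\bfL^\rmE({}_xN)\simeq\pi^{-1}(\beta_M\CC_{\{x\}})\otimes\bfL^\rmE N$, is exactly what the paper extracts from \cite[Lemma 4.3.1]{DK16} (tensoring with $\pi^{-1}\beta_M\CC_{\{x\}}$ passes through the convolution with $\CC_{\{t\geq0\}}\oplus\CC_{\{t\leq0\}}$); and you finally descend through $\bfR j_{M!!}$ (the functor $\bfl$) to the honest space $M\times\var{\RR}$, where the paper (Sublemma \ref{sublem}) uses \cite[Lemma 3.3.12]{DK16} and \cite[Proposition 4.2.14 (i)]{KS01} to rewrite the hypothesis as ${}_{\{x\}\times\var{\RR}}(\CC_{M\times\RR}\otimes\SF)\simeq0$ for all $x\in M$. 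Up to this point your outline and the paper's argument coincide, and your bookkeeping concerns are settled by the citations just named.

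The genuine gap is in your last step. What is needed there is that the functors $G\mapsto G\otimes\beta_{M\times\var{\RR}}\CC_{\{x\}\times\var{\RR}}$ form a conservative family; this is precisely \cite[Proposition 4.3.21]{KS01}, the second nontrivial input of the paper's proof, and it is a statement specifically about tensoring with $\beta$ of skyscraper-type sheaves. Your proposed justification --- that the restriction of $F$ to every fiber vanishes, and that ``one can test vanishing of cohomology ind-sheaves stalkwise'' --- is false for ind-sheaves: restrictions to points (or to fibers) do \emph{not} form a conservative family on $\I\CC_M$. For instance, on $M=[0,1]$ the canonical morphism $``\underset{\e\to0}{\varinjlim}"\,\CC_{[\e,1]}\to\CC_{(0,1]}$ is a monomorphism but not an isomorphism, so its cokernel $N$ is a nonzero ind-sheaf; yet $i_y^{-1}$ is exact and commutes with filtrant ind-limits, and one computes $i_y^{-1}N\simeq0$ for every $y\in[0,1]$, whereas ${}_0N=N\otimes\beta_M\CC_{\{0\}}\neq0$, as it must be by \cite[Proposition 4.3.21]{KS01}. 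In general, vanishing of ${}_ZF=F\otimes\beta\CC_Z$ implies vanishing of the restriction to $Z$, not conversely, and your sketch uses the wrong implication --- conflating the ind-stalk ${}_xK$ (defined via $\beta$) with the ordinary stalk/restriction is exactly the subtlety this proposition is about. If you replace that paragraph by the citation of \cite[Proposition 4.3.21]{KS01}, your proof becomes correct and agrees with the paper's.
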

\begin{proof}
Since there exists an object $M_\Phi\in\BEC(\I\CC_M)$ such that 
a triangle
$$K\xrightarrow{\ \Phi\ }L\xrightarrow{\ ~ \ }M_\Phi\xrightarrow{\ +1\ }$$
is a distinguished triangle,
it is enough to show that
if ${}_x(M_\Phi)\simeq0$ for any $x\in M$
then $M_\Phi\simeq0$ in $\BEC(\I\CC_M)$. 
This follows from Lemma \ref{lem}.
\end{proof}

\begin{lemma}\label{lem}
Let $K$ be an object of $\BEC(\I\CC_M)$.
If ${}_xK\simeq0$ in $\BEC(\I\CC_M)$ for any $x\in M$,
then we have $K\simeq0$ in $\BEC(\I\CC_M)$.
\end{lemma}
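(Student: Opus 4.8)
The plan is to reduce the statement to a conservativity property of stalks for honest ind-sheaves on a good topological space, after first cutting down to the heart of the standard t-structure.

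First I would reduce to the heart. Since the functor ${}_x(\cdot)$ is t-exact (as recalled in the remark following its definition, by \cite[Lemma 2.7.5 (i)]{DK16-2}), it commutes with the cohomology functors, so ${}_x\big(\SH^n(K)\big)\simeq\SH^n({}_xK)\simeq0$ for every $n\in\ZZ$ and every $x\in M$. As $\BEC(\I\CC_M)$ is a \emph{bounded} derived category, a standard d\'evissage then reduces the claim to the case $K\in\bfE^0(\I\CC_M)$: if the lemma holds in the heart, each $\SH^n(K)$ vanishes, whence $K\simeq0$.

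Next, using the equivalence $\bfE^0(\I\CC_M)\simeq\I\CC_{M\times\RR_\infty}/\pi^{-1}\I\CC_M$, I would realize such a $K$ by an actual ind-sheaf, namely its image $\bfL^\rmE K\in\BDC(\I\CC_{M\times\RR_\infty})$, and push it forward to the good topological space $N:=M\times\var{\RR}$ by applying $\bfl$ (the fully faithful left adjoint $\bfR j_{!!}$ along the open inclusion $M\times\RR\hookrightarrow N$); call the resulting ind-sheaf $G\in\BDC(\I\CC_N)$. Since $\bfL^\rmE$ and $\bfl$ are fully faithful, it suffices to prove $G\simeq0$. The hypothesis is then transported as follows: the identity ${}_xK\simeq\bfE i_{x\ast}\bfE i_x^{-1}K$ for the closed embedding $i_x:\{x\}\hookrightarrow M$, together with the full faithfulness of $\bfE i_{x\ast}$, shows that ${}_xK\simeq0$ is equivalent to $\bfE i_x^{-1}K\simeq0$, i.e.\ to the vanishing of $G$ on the whole fibre $\{x\}\times\var{\RR}$. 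In particular every point-stalk $i_y^{-1}G$ with $y=(x,t)\in N$ vanishes.

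The crux is then the conservativity statement for ind-sheaves on $N$: if $i_y^{-1}G\simeq0$ for all $y\in N$, then $G\simeq0$. I would prove it by writing $G$ as a filtrant colimit $\varinjlim_i G_i$ of sheaves with compact support and analysing when such an ind-object is zero in terms of the vanishing of the individual stalks $(G_i)_y$, using that an honest sheaf is determined by its stalks together with the exactness of $\alpha_N$ and the description of zero objects in the ind-category. \textbf{This last point is the main obstacle:} the vanishing of the ind-stalk $i_y^{-1}G$ as an object of the ind-category is genuinely stronger than the vanishing of the stalks of $\alpha_N(G)$, so one must exploit the finer ind-object structure of $G$, not merely its associated sheaf $\alpha_N(G)$, to conclude. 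Once this conservativity is established, the chain of reductions above yields $K\simeq0$, completing the proof.
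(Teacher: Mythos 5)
Your reduction skeleton (passing to the heart, realizing $K$ by $\bfL^\rmE K$, descending to $M\times\var{\RR}$ via $\bfl$) matches the paper's proof, but the step where you transport the hypothesis is wrong, and it breaks the argument irreparably. The identity ${}_xK\simeq\bfE i_{x\ast}\bfE i_x^{-1}K$ you invoke is false: Remark \ref{rem-stalk} gives $\beta_M\CC_U\simeq\bfR i_{U!!}\,i_U^{-1}(\iota_M\CC_M)$, hence ${}_UK\simeq\bfE i_{U!!}\bfE i_U^{-1}K$, only for \emph{open} subsets $U\subset M$, and the point $\{x\}$ is closed. In fact $\beta_M\CC_{\{x\}}$ is a formal germ $``\underset{V\ni x}{\varinjlim}"\,\CC_{\var{V}}$ over shrinking closed neighbourhoods of $x$, so for an honest sheaf $\SF$ one has ${}_x\SF\simeq 0$ iff $\SF$ vanishes on a whole neighbourhood of $x$, while $i_x^{-1}\SF$ is the ordinary stalk: already $\SF=\CC_{(0,1)}$ on $\RR$ has $i_0^{-1}\SF=0$ but ${}_0\SF\neq 0$. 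So the asserted equivalence ${}_xK\simeq0\Leftrightarrow\bfE i_x^{-1}K\simeq0$ fails; only the forward implication holds, and after this step you have strictly weakened the hypothesis to the vanishing of the point fibres $i_y^{-1}G$.

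The "crux" you then need --- that $i_y^{-1}G\simeq0$ for all $y\in N$ forces $G\simeq0$ --- is not a gap to be filled by a cleverer analysis of the ind-presentation: it is \emph{false} for ind-sheaves, which is exactly the obstacle you flagged. On $N=\RR$ take $G:=``\underset{\e\to0+}{\varinjlim}"\,\CC_{(0,\e]}$, a filtrant colimit of compactly supported sheaves with the restriction maps $\CC_{(0,\e]}\to\CC_{(0,\e']}$ as transitions. No transition map is zero (each is the identity on stalks over $(0,\e']$), so $G\not\simeq0$ in $\I\CC_N$; yet $i_y^{-1}G\simeq0$ for every $y$, since for $y\neq0$ the system of stalks is essentially zero and for $y=0$ every stalk $(\CC_{(0,\e]})_0$ already vanishes. (Consistently, the ind-stalk $G\otimes\beta_N\CC_{\{0\}}$ is nonzero.) Ind-sheaves are designed to admit such objects concentrated "along a boundary", invisible to all fibres, so no use of the finer ind-structure can prove your statement. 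The conservativity that is true, \cite[Proposition 4.3.21]{KS01}, is formulated in terms of $G\otimes\beta_N\CC_{\{y\}}$ --- precisely the datum the lemma's hypothesis hands you and that your first step discards. That is how the paper argues: it never passes to fibres, but carries the factor $\pi^{-1}(\beta_M\CC_{\{x\}})$ through $\bfL^\rmE$ and $\bfl$ (via \cite[Lemma 4.3.1]{DK16} and \cite[Lemma 3.3.12]{DK16}, as in Sublemma \ref{sublem}), obtaining ${}_{\{x\}\times\var{\RR}}(\CC_{M\times\RR}\otimes\SF)\simeq0$ on $M\times\var{\RR}$, to which \cite[Proposition 4.3.21]{KS01} applies.
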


\begin{proof}
Let $K = \Q(F)$.
Namely $K$ is represented by $F\in\BDC(\I\CC_{M\times\RR_\infty})$.
Then we have ${}_xK = \Q(F\otimes\pi^{-1}(\beta_M\CC_x))$.
By the assumption,
there exist isomorphisms
\begin{align*}
0 &\simeq \bfL^\rmE\big(\Q\big(F\otimes\pi^{-1}(\beta_M\CC_x)\big)\big)\\
&\simeq
(\CC_{\{t\leq0\}}\oplus\CC_{\{t\geq0\}})\Potimes
\big(F\otimes\pi^{-1}(\beta_M\CC_x)\big)\\
&\simeq
\big(
(\CC_{\{t\leq0\}}\oplus\CC_{\{t\geq0\}})\Potimes F\big)\otimes\pi^{-1}(\beta_M\CC_x)
\end{align*}
in $\BDC(\I\CC_{M\times\RR_\infty})$ for any $x\in M$,
where in the last isomorphism we used \cite[Lemma 4.3.1]{DK16}.
Therefore by Sublemma \ref{sublem} below,
we have $$\bfL^\rmE(K) \simeq
(\CC_{\{t\leq0\}}\oplus\CC_{\{t\geq0\}})\Potimes F\simeq0$$
in $\BDC(\I\CC_{M\times\RR_\infty})$ and hence $K\simeq0$ in $\BEC(\I\CC_M)$.
\end{proof}

\begin{sublemma}\label{sublem}
Let $F\in\BDC(\I\CC_{M\times\RR_\infty})$.
If $F\otimes\pi^{-1}(\beta_M\CC_x) \simeq 0$
in $\BDC(\I\CC_{M\times\RR_\infty})$ for any $x\in M$,
we have $F\simeq0$ in $\BDC(\I\CC_{M\times\RR_\infty})$.
\end{sublemma}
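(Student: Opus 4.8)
\emph{Plan.} The plan is to descend to the ambient good topological space $N := M\times\var\RR$ by means of the fully faithful functor $\bfl$ (realized as $\bfR j_{!!}$ for the open embedding $j\colon M\times\RR_\infty\to M\times\var\RR$), and then to read off the vanishing from the conservativity of the ind-stalk functors on $N$.

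First I would set $\tilde F := \bfl F\in\BDC(\I\CC_N)$; by the description of $\bfl$ it satisfies $\CC_{M\times\RR}\otimes\tilde F\simeq\tilde F$, and since $\bfl$ is fully faithful it is conservative, so it suffices to prove $\tilde F\simeq0$. Writing $p : N\to M$ for the projection extending $\pi$, I would use the elementary commutations $\bfl(A\otimes B)\simeq\bfl A\otimes\bfl B$ (both sides being $\CC_{M\times\RR}$-local), $\bfl\circ\pi^{-1}\simeq(\CC_{M\times\RR}\otimes(-))\circ p^{-1}$, and $p^{-1}\circ\beta_M\simeq\beta_N\circ p^{-1}$ to rewrite the hypothesis, after absorbing the factor $\CC_{M\times\RR}$ into $\tilde F$, as
\[\tilde F\otimes\beta_N\CC_{\{x\}\times\var\RR}\simeq0\qquad\text{for every }x\in M.\]

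Next I would upgrade these fibrewise vanishings to the vanishing of all ind-stalks. For a point $y=(x,t)\in N$ one has $\CC_{\{y\}}\simeq\CC_{\{x\}\times\var\RR}\otimes\CC_{\{y\}}$ as sheaves on $N$, since $\{y\}\subset\{x\}\times\var\RR$; because $\beta_N$ commutes with $\otimes$, this yields
\[\tilde F\otimes\beta_N\CC_{\{y\}}\simeq\bigl(\tilde F\otimes\beta_N\CC_{\{x\}\times\var\RR}\bigr)\otimes\beta_N\CC_{\{y\}}\simeq0 .\]
As $y$ ranges over $N$, every ind-stalk of $\tilde F$ vanishes, and I would conclude $\tilde F\simeq0$ — hence $F\simeq0$ — from the fundamental conservativity property of ind-sheaves, i.e.\ the fact that a complex $G\in\BDC(\I\CC_N)$ with $\beta_N\CC_{\{y\}}\otimes G\simeq0$ for all $y\in N$ must vanish (the ind-sheaf analogue of ``a complex with vanishing stalks is zero'').

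The main obstacle is precisely this last input: unlike ordinary sheaves, ind-sheaves have no honest stalks, so the conservativity of the functors $\beta_N\CC_{\{y\}}\otimes(-)$ must be established, or carefully cited from the theory of Kashiwara--Schapira, by working through $\beta_N$ and the filtrant ind-structure. The remaining verifications — the commutation formulas for $\bfl$, $\pi^{-1}$ and $\beta$, together with the monoidality of $\beta_N$ on the constant sheaves involved — are routine.
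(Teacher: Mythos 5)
Your proposal is correct and takes essentially the same route as the paper: the paper also descends to the ambient space $M\times\var{\RR}$ (writing $F=\q(\SF)$, so that $\bfl F\simeq\CC_{M\times\RR}\otimes\SF$), uses the commutation of $\beta$ with $\var{\pi}^{-1}$ to turn the hypothesis into ${}_{\{x\}\times\var{\RR}}\big(\CC_{M\times\RR}\otimes\SF\big)\simeq 0$ for all $x\in M$, and then concludes by the conservativity statement you single out as the key input, which is precisely \cite[Proposition 4.3.21]{KS01}. So the one ingredient you flagged as needing a careful citation is exactly the result from Kashiwara--Schapira that the paper invokes, and the rest of your argument matches the paper's step for step.
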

\begin{proof}
Let $F=\q(\SF)$.
Namely, $F$ is represented by $\SF\in\BDC(\I\CC_{M\times\var{\RR}})$.
In this case by \cite[Lemma 3.3.12]{DK16} we have 
$$F\otimes\pi^{-1}(\beta_M\CC_x)\simeq
\q\big(\SF\otimes\var{\pi}^{-1}(\beta_M\CC_x)\big)$$
where $\var{\pi} : M\times\var{\RR}\to M$ is the canonical projection.
By the assumption, we have isomorphisms
\begin{align*}
0 &\simeq \bfl\big(\q\big(\SF\otimes\var{\pi}^{-1}(\beta_M\CC_x)\big)\big)\\
&\simeq
\CC_{M\times\RR}\otimes\big(\SF\otimes\var{\pi}^{-1}(\beta_M\CC_x)\big)\\
&\simeq
\CC_{M\times\RR}\otimes
\big(\SF\otimes\beta_{M\times\var{\RR}}\CC_{\{x\}\times\var{\RR}}\big)\\
&\simeq
\big(\CC_{M\times\RR}\otimes\SF\big)
\otimes\beta_{M\times\var{\RR}}\CC_{\{x\}\times\var{\RR}}\\
&\simeq
{}_{\{x\}\times\var{\RR}}(\CC_{M\times\RR}\otimes\SF)
\end{align*}
 in $\BDC(\I\CC_{M\times\var{\RR}})$ for any $x\in M$,
where in the last isomorphism we used \cite[Proposition 4.2.14 (i)]{KS01}.
Therefore we have $$\bfl(F)\simeq\CC_{M\times\RR}\otimes\SF\simeq0$$
in $\BDC(\I\CC_{M\times\var{\RR}})$ by \cite[Proposition 4.3.21]{KS01}
and hence $F\simeq0$ in $\BDC(\I\CC_{M\times\RR_\infty})$.
\end{proof}

\begin{remark}\label{rem-stalk}
Let $U\subset M$ be an open subset of $M$ and $i_U : U\hookrightarrow M$ the open embedding.
Then we have isomorphisms in $\I\CC_M$
$$\beta_M\CC_U\simeq \bfR i_{U!!}i_U^{-1}(\beta_M\CC_M)
\simeq \bfR i_{U!!}i_U^{-1}(\iota_M\CC_M)$$
by \cite[Proposition 4.3.17, Corollary 4.3.7, Example 3.3.25 and Theorem 3.3.26]{KS01}.
Hence for $K\in\BEC(\I\CC_M)$ there exists an isomorphism in $\BEC(\I\CC_M)$
$${}_UK \simeq \bfE i_{U!!}\bfE i_U^{-1}K.$$
\end{remark}

\subsection{Normal Form}
In this subsection,
we define enhanced ind-sheaves
which have a normal form along a normal crossing divisor
and prove that they are nothing but the images of holonomic $\D$-modules
which have a normal form via the enhanced solution functor.
Let $X$ be a complex manifold and $D$ a normal crossing divisor of $X$.

\begin{definition}\label{def3.1}
We say that an $\RR$-constructible enhanced ind-sheaf
$K\in\ZEC_{\RR-c}(\I\CC_X)$ has a normal form along $D$ if 
\begin{itemize}
\setlength{\itemsep}{-3pt}
\item[(i)]
$\pi^{-1}\CC_{X\setminus D}\otimes K\simto K$,

\item[(ii)]
for any $x\in X\setminus D$ there exist an open neighborhood $U_x\subset X\setminus D$
of $x$ and a non-negative integer $k$ such that
\[K|_{U_x}\simeq (\CC_{U_x}^{\rmE})^{\oplus k},\]

\item[(iii)]
for any $x\in D$ there exist an open neighborhood $U_x\subset X$ of $x$,
a good set of irregular values $\{\varphi_i\}_i$ on $(U_x, D\cap U_x)$
and a finite sectorial open covering $\{U_{x, j}\}_j$ of $U_x\bs D$
such that
\[\pi^{-1}\CC_{U_{x, j}}\otimes K|_{U_x}\simeq
\bigoplus_i \EE_{U_{x, j} | U_x}^{\Re\varphi_i}
\hspace{10pt} \mbox{for any } j.\]
\end{itemize}
\end{definition}

The following sublemma will be used later in this paper.
We shall skip the details of enhanced ind-sheaves with ring actions.
References are made to \cite[\S 5.4, 5.5 and 5.6]{KS01}, \cite[\S 4.10]{DK16},
\cite[\S 2.7]{KS16-2} and \cite[\S 6.7]{KS16}.
\begin{sublemma}\label{sublem3.3}
Let $M$ be a subanalytic space and $\SA$ a sheaf of $\CC$-algebras on $M$
which has a finite flat dimension.
Let $K\in\BEC_{\RR-c}(\I\CC_M)$, $L\in\BECstb(\I\CC_M)\cap\BEC(\I\SA)$ 
and $\SF\in\BDC(\SA^{\op})$.
Then we have an isomorphism
$$\SF\Lotimes{\SA}\rhom^\rmE(K, L)\simto
\rhom^\rmE(K, \pi^{-1}\beta_M\SF\Lotimes{\pi^{-1}\beta_M\SA}L).$$
\end{sublemma}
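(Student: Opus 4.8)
The plan is to first construct the canonical comparison morphism and then prove it is an isomorphism by a reduction argument on $\SF$. To build the morphism I would start from the projection-type morphism for the outer-hom functor, namely the natural map $\pi^{-1}\beta_M\SF\otimes\rihom^\rmE(K,L)\to\rihom^\rmE(K,\pi^{-1}\beta_M\SF\otimes L)$, and combine it with the left $\pi^{-1}\beta_M\SA$-action on $L$ (coming from $L\in\BEC(\I\SA)$) in order to pass to the relative tensor product over $\SA$; applying $\alpha_M$ then yields the desired morphism $\SF\Lotimes{\SA}\rhom^\rmE(K,L)\to\rhom^\rmE(K,\pi^{-1}\beta_M\SF\Lotimes{\pi^{-1}\beta_M\SA}L)$, functorial in $\SF$. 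The stability hypothesis on $L$ is used here and throughout to ensure that $\pi^{-1}\beta_M\SF\Lotimes{\pi^{-1}\beta_M\SA}L$ is again a stable enhanced ind-sheaf carrying the expected module structure, so that all identifications below are legitimate.

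Next I would observe that, for fixed $K$ and $L$, both the source and the target are exact (triangulated) functors of $\SF\in\BDC(\SA^{\op})$: the source because $\Lotimes{\SA}$ is exact and, since $\SA$ has finite flat dimension, is well defined on the bounded derived category; the target because $\pi^{-1}$, $\beta_M$, the tensors $\otimes$ and $\Lotimes{\pi^{-1}\beta_M\SA}$, and $\rhom^\rmE(K,-)$ are all exact. Crucially, $\pi^{-1}$, $\beta_M$, $\otimes$ and $\Potimes$ commute with small direct sums, and---this is the key point---so does $\rhom^\rmE(K,-)$ because $K$ is $\RR$-constructible. Hence both functors of $\SF$ commute with direct sums, and by the standard way-out lemma (cf. \cite{KS06}) it suffices to check the comparison morphism on a family of generators closed under the relevant operations. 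Using the finite flat dimension of $\SA$ to guarantee bounded flat resolutions, I would take the generators to be the flat modules $j_{U!}(\SA|_U)\simeq\CC_U\otimes\SA$ for open $U\subset M$, together with their direct sums.

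For such a generator $\SF=\CC_U\otimes\SA$ the two sides collapse: the source becomes $\CC_U\otimes\rhom^\rmE(K,L)$, while, after commuting $\beta_M$ past $\otimes$ (cf. \cite{KS01}) and cancelling $\pi^{-1}\beta_M\SA\Lotimes{\pi^{-1}\beta_M\SA}L\simeq L$, the target becomes $\rhom^\rmE(K,\pi^{-1}\CC_U\otimes L)$. Thus for generators the statement reduces to the projection formula $\CC_U\otimes\rhom^\rmE(K,L)\simto\rhom^\rmE(K,\pi^{-1}\CC_U\otimes L)$ for the outer-hom functor, which again rests on the $\RR$-constructibility of $K$; the case $\SF=\SA$ is the special case $U=M$, where the morphism is visibly the identity, and passage to arbitrary direct sums uses the compatibility with direct sums established above.

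The main obstacle is precisely the two inputs that $\rhom^\rmE(K,-)$ commutes with small direct sums and satisfies the projection formula against $\pi^{-1}\CC_U\otimes(-)$. To secure these I would reduce to the local structure of $K$ provided by Lemma \ref{lem2.3}: after tensoring with $\pi^{-1}\CC_{Z_i}$ one has $K\simeq\CC_M^\rmE\Potimes\SF_i$ with $\SF_i\in\BDC_{\RR-c}(\CC_{M\times\RR_\infty})$, and the outer-hom against such standard objects can be rewritten, via the adjunctions relating $\Potimes$ and $\Prihom$ together with $\bfL^\rmE$ and $\bfR^\rmE$, in terms of ordinary $\rhom$ on $M\times\RR_\infty$ against the $\RR$-constructible sheaf $\SF_i$. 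There both the commutation with direct sums and the projection formula are the classical properties of $\rhom$ against an $\RR$-constructible sheaf, for which I would cite \cite{KS01}. This local-to-global passage, rather than the formal reduction, is the step I expect to require the most care.
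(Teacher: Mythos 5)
Your dévissage strategy founders exactly at the generator step, and the error is the distinction that ind-sheaf theory is built on: $\iota_M\CC_U$ versus $\beta_M\CC_U$. For $\SF=\CC_U\otimes\SA$ the right-hand side of the sublemma is
$\rhom^\rmE\big(K,\pi^{-1}\beta_M(\CC_U\otimes\SA)\Lotimes{\pi^{-1}\beta_M\SA}L\big)\simeq\rhom^\rmE(K,\pi^{-1}\beta_M\CC_U\otimes L)$,
not $\rhom^\rmE(K,\pi^{-1}\CC_U\otimes L)$ as you wrote: since $\beta_M$ is monoidal, commuting it past $\otimes$ does not make it disappear. This matters, because the formula you reduce to, $\CC_U\otimes\rhom^\rmE(K,L)\simto\rhom^\rmE(K,\pi^{-1}\CC_U\otimes L)$, is simply false. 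Take $M=\RR$, $\SA=\CC_M$, $U=M\setminus\{0\}$, $K=e(\CC_{\{0\}})$, $L=e(\CC_M)$; using the compatibilities $\rhom^\rmE(e(\SF'),e(\SG'))\simeq\rhom(\SF',\SG')$ and $\pi^{-1}\CC_U\otimes e(\SG')\simeq e(\CC_U\otimes\SG')$ from \cite{DK16}, the claimed isomorphism specializes to $\CC_U\otimes\rhom(\CC_{\{0\}},\CC_M)\simeq 0$ on the left versus $\rhom(\CC_{\{0\}},\CC_U)\simeq\CC_{\{0\}}^{\oplus2}[-1]\neq0$ on the right. So there is no ``classical projection formula of $\rhom$ against an $\RR$-constructible sheaf'' to cite from \cite{KS01}; internal hom out of a constructible sheaf does not commute with $\CC_U\otimes(\cdot)$ even for ordinary sheaves. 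The statement that is true is the one with $\beta_M\CC_U$, and that statement is not classical: it is precisely the case $\SF=\CC_U$, $\SA=\CC_M$ of the sublemma being proved, i.e.\ the genuinely ind-sheaf-theoretic commutation of $\rihom(G,\cdot)$, for $G$ $\RR$-constructible, with $\beta_M\SF\Lotimes{\beta_M\SA}(\cdot)$, which is what \cite[Theorem 5.6.1 (ii)]{KS01} provides. Your reduction therefore concentrates all the difficulty into the one step you justify with a false claim; it is circular at best. (Your auxiliary claim that $\rhom^\rmE(K,\cdot)$ commutes with small direct sums because $K$ is $\RR$-constructible is likewise left unproved, and is itself a nontrivial statement about ind-limits.)

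For comparison, the paper never dévisses $\SF$ at all: it writes $K\simeq\CC_M^\rmE\Potimes\SG$ ($\RR$-constructibility), uses stability of $L$ to replace $K$ by $\SG$, rewrites $\rhom^\rmE(\SG,L)$ as $\alpha_M\bfR\var{\pi}_\ast\rihom(\bfR j_{M!!}\bfL^\rmE\SG,\bfR j_{M!!}\bfL^\rmE L)$, and then applies \cite[Theorem 5.2.7]{KS01} together with \cite[Theorem 5.6.1 (ii)]{KS01} --- the latter being exactly the $\beta$-twisted commutation your generator case needs --- before reassembling, using that $\pi^{-1}\beta_M\SF\Lotimes{\pi^{-1}\beta_M\SA}L$ is again stable. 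If you want to salvage your argument you must keep $\beta_M$ throughout and then prove the generator case by an independent ind-sheaf argument (in effect, re-deriving the special case of \cite[Theorem 5.6.1]{KS01}); once that is done, the dévissage over $\SF$ is an unnecessary detour.
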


\begin{proof}
Note that there exists a canonical morphism
$$\SF\Lotimes{\SA}\rhom^\rmE(K, L)\to
\rhom^\rmE(K, \pi^{-1}\beta_M\SF\Lotimes{\pi^{-1}\beta_M\SA}L).$$
We shall prove that it is an isomorphism.
Since $K$ is $\RR$-constructible, 
we may assume $K=\CC_M^\rmE\Potimes \SG$
for $\SG\in\BDC(\CC_{M\times\RR_\infty})$. 
Then we have isomorphisms
\begin{align*}
\rhom^\rmE(K, L) &=
\rhom^\rmE(\CC_M^\rmE\Potimes \SG, L)\\
&\simeq
\rhom^\rmE\big(\SG, \Prihom(\CC_M^\rmE, L)\big)\\
&\simeq
\rhom^\rmE(\SG, L),
\end{align*}
where in the last isomorphism we used the assumption
that $L$ is a stable object.
Note that there exists an isomorphism
$$
\rhom^\rmE(\SG, L)\simeq
\alpha_M\bfR\var{\pi}_\ast\rihom(\bfR j_{M!!}\bfL^\rmE\SG,
\bfR j_{M!!}\bfL^\rmE L)
$$
by \cite[Lemma 3.3.7 (iv), Lemma 4.5.12]{DK16},
where $\var{\pi} : M\times\var{\RR}\to M$ is the first projection.
Hence we have isomorphisms
\begin{align*}
\SF\Lotimes{\SA}\rhom^\rmE(K, L)
&\simeq
\SF\otimes_{\SA}\rhom^\rmE(\SG, L)\\
&\simeq\SF\Lotimes{\SA}
\big(\alpha_M\bfR\var{\pi}_\ast\rihom(\bfR j_{M!!}\bfL^\rmE\SG,
\bfR j_{M!!}\bfL^\rmE L)\big)\\
&\simeq
\alpha_M\bfR\var{\pi}_\ast\big(\var{\pi}^{-1}\beta_M\SF
\Lotimes{\var{\pi}^{-1}\beta_M\SA}\rihom(
\bfR j_{M!!}\bfL^\rmE\SG, \bfR j_{M!!}\bfL^\rmE L)\big)\\
&\simeq
\alpha_M\bfR\var{\pi}_\ast
\rihom(\bfR j_{M!!}\bfL^\rmE\SG, \var{\pi}^{-1}\beta_M\SF
\Lotimes{\var{\pi}^{-1}\beta_M\SA}\bfR j_{M!!}\bfL^\rmE L )\\
&\simeq
\alpha_M\bfR\var{\pi}_\ast
\rihom(\bfR j_{M!!}\bfL^\rmE\SG, \bfR j_{M!!}\big(\pi^{-1}\beta_M\SF
\Lotimes{\pi^{-1}\beta_M\SA}\bfL^\rmE L\big)\big)\\
&\simeq
\alpha_M\bfR\var{\pi}_\ast
\rihom(\bfR j_{M!!}\bfL^\rmE\SG, \bfR j_{M!!}\bfL^\rmE\big(\pi^{-1}\beta_M\SF
\Lotimes{\pi^{-1}\beta_M\SA} L\big)\big)\\
&\simeq
\rhom^\rmE(\SG, \pi^{-1}\beta_M\SF\Lotimes{\pi^{-1}\beta_M\SA}L),
\end{align*}
in the thirwr.\ forth, fifth, sixth) isomorphism
we used \cite[Theorem 5.2.7]{KS01}
(resp.\ \cite[Theorem 5.6.1 (ii)]{KS01},
\cite[Lemma 3.3.7 (iv)]{DK16},
\cite[Lemma 4.3.1]{DK16}).
More over, since $L$ is a stable object,
$\pi^{-1}\beta_M\SF\otimes_{\pi^{-1}\beta_M\SA}L$ is also stable
by \cite[Lemma 4.3.1]{DK16}.
Then we have
\[\rhom^\rmE(\SG, \pi^{-1}\beta_M\SF\Lotimes{\pi^{-1}\beta_M\SA}L)
\simeq
\rhom^\rmE(K, \pi^{-1}\beta_M\SF\Lotimes{\pi^{-1}\beta_M\SA}L),
\]
and hence the proof is completed.
\end{proof}

We need the following sublemmas to prove Lemma \ref{lem3.3} below:
\begin{sublemma}\label{sublem3.2}
Let $Y$ be an analytic hypersurface of $X$.
\begin{itemize}
\item[\rm(1)]
If a holonomic $\D_X$-module $\M$ satisfies
\begin{itemize}
\setlength{\itemsep}{-3pt}
\item[$(a)$]
$\SM\simto\SM(\ast Y)$,
\item[$(b)$] 
$\singsupp(\SM)\subset Y$
\end{itemize} 
then the enhanced solution complex $K := \Sol_X^{\rmE}(\M)\in\ZEC_{\RR-c}(\I\CC_X)$ of $\M$ satisfies
\begin{itemize}
\setlength{\itemsep}{-3pt}
\item[$(a)'$]
$\pi^{-1}\CC_{X\setminus Y}\otimes K\simto K$,

\item[$(b)'$]
for any $x\in X\setminus Y$ there exist an open neighborhood $U_x\subset X\setminus Y$
of $x$ and a non-negative integer $k$ such that
\[K|_{U_x}\simeq (\CC_{U_x}^{\rmE})^{\oplus k}.\]
\end{itemize} 

\item[\rm(2)]
On the other hand, 
If $K\in\ZEC_{\RR-c}(\I\CC_X)$ satisfies the above conditions $(a)'$ and $(b)'$,
then $\M = \RH_X^{\rmE}(K)\in\BDC(\D_X)$ satisfies the above conditions $(a)$ and $(b)$.
\end{itemize}
\end{sublemma}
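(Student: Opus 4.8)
The plan is to transport the two $\D$-module conditions $(a),(b)$ and the two enhanced conditions $(a)',(b)'$ through the functors $\Sol_X^{\rmE}$ and $\RH_X^{\rmE}$, using the compatibility formulas recorded in Theorem \ref{thm2.5} and Theorem \ref{thm2.6}, together with Sublemma \ref{sublem3.3}. For part (1), condition $(a)'$ is immediate: applying the contravariant functor $\Sol_X^{\rmE}$ to the isomorphism $\M\simto\M(\ast Y)$ of hypothesis $(a)$ and invoking Theorem \ref{thm2.5} (5) with $D=Y$, one gets $K=\Sol_X^{\rmE}(\M)\simeq\Sol_X^{\rmE}(\M(\ast Y))\simeq\pi^{-1}\CC_{X\setminus Y}\otimes K$, which is exactly $(a)'$. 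For $(b)'$, hypothesis $(b)$ forces $\M|_{X\setminus Y}$ to be an integrable connection (its characteristic variety over $X\setminus Y$ is the zero section). Hence for $x\in X\setminus Y$ I would choose a simply connected polydisk $U_x\subset X\setminus Y$ on which the connection is trivial, i.e. $\M|_{U_x}\simeq\SO_{U_x}^{\oplus k}$ as a $\D$-module. Since $\Sol^{\rmE}$ commutes with restriction to open subsets (Theorem \ref{thm2.5} (2) for the open embedding) and $\SO_{U_x}$ is regular holonomic, Theorem \ref{thm2.6} (2) gives $K|_{U_x}\simeq\Sol_{U_x}^{\rmE}(\SO_{U_x}^{\oplus k})\simeq e(\CC_{U_x}^{\oplus k})\simeq(\CC_{U_x}^{\rmE})^{\oplus k}$, which is $(b)'$.

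For part (2), I would first establish $(b)$, which is again local on $X\setminus Y$. Using that $\rhom^\rmE(-,\SO_X^{\rmE})$ commutes with restriction to an open $U$ (internal hom localizes and $\SO_X^{\rmE}|_U\simeq\SO_U^{\rmE}$), hypothesis $(b)'$ yields $\M|_{U_x}=\RH_{U_x}^{\rmE}(K|_{U_x})\simeq\RH_{U_x}^{\rmE}\big((\CC_{U_x}^{\rmE})^{\oplus k}\big)$. Now $\CC_{U_x}^{\rmE}=e(\CC_{U_x})\simeq\Sol_{U_x}^{\rmE}(\SO_{U_x})$, so the reconstruction isomorphism $\M\simto\RH^{\rmE}(\Sol^{\rmE}\M)$ of Theorem \ref{thm2.6} (1), applied to $\SO_{U_x}^{\oplus k}$, gives $\M|_{U_x}\simeq\SO_{U_x}^{\oplus k}$. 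Thus $\M$ is an integrable connection on $X\setminus Y$, whence $\singsupp(\M)\subset Y$, which is $(b)$.

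The main work is condition $(a)$ of part (2), and this is where Sublemma \ref{sublem3.3} enters. Writing $\M=\RH_X^{\rmE}(K)=\rhom^\rmE(K,\SO_X^{\rmE})$ and recalling $\M(\ast Y)=\SO_X(\ast Y)\otimes_{\SO_X}\M$ with $\SO_X(\ast Y)$ flat over $\SO_X$, I would apply Sublemma \ref{sublem3.3} with $\SA=\SO_X$, $L=\SO_X^{\rmE}\in\BECstb(\I\CC_X)\cap\BEC(\I\SO_X)$ and $\SF=\SO_X(\ast Y)$ to obtain $\M(\ast Y)\simeq\rhom^\rmE\big(K,\ \pi^{-1}\beta_X\SO_X(\ast Y)\Lotimes{\pi^{-1}\beta_X\SO_X}\SO_X^{\rmE}\big)$. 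The coefficient on the right is the meromorphic localization of the tempered complex $\SO_X^{\rmE}$ along $Y$, which by the standard properties of $\SO_X^{\rmE}$ (D'Agnolo--Kashiwara) is isomorphic to $\pi^{-1}\CC_{X\setminus Y}\otimes\SO_X^{\rmE}$, and this object also computes $\rihom(\pi^{-1}\CC_{X\setminus Y},\SO_X^{\rmE})$. Combining these identifications with the tensor--hom adjunction $\rhom^\rmE(\pi^{-1}\CC_{X\setminus Y}\otimes K,\SO_X^{\rmE})\simeq\rhom^\rmE(K,\rihom(\pi^{-1}\CC_{X\setminus Y},\SO_X^{\rmE}))$, the canonical localization morphism $\M\to\M(\ast Y)$ is identified with $\RH_X^{\rmE}$ applied to the canonical morphism $\pi^{-1}\CC_{X\setminus Y}\otimes K\to K$. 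Hypothesis $(a)'$ says precisely that this last morphism is an isomorphism, so $\M\simto\M(\ast Y)$ follows.

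I expect the main obstacle to be the honest verification, at the level of the tempered object $\SO_X^{\rmE}$, of the localization identities $\pi^{-1}\beta_X\SO_X(\ast Y)\Lotimes{\pi^{-1}\beta_X\SO_X}\SO_X^{\rmE}\simeq\pi^{-1}\CC_{X\setminus Y}\otimes\SO_X^{\rmE}\simeq\rihom(\pi^{-1}\CC_{X\setminus Y},\SO_X^{\rmE})$, and the accompanying bookkeeping of naturality, so that the resulting isomorphism $\M\simeq\M(\ast Y)$ really is the canonical localization morphism and not merely an abstract isomorphism; this matters because $\M\to\M(\ast Y)$ is an isomorphism off $Y$ for trivial reasons, yet could a priori have a cone supported on $Y$. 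I would also confirm, for part (1), that $K$ lands in the heart $\ZEC_{\RR-c}(\I\CC_X)$ using the t-exactness properties already recorded, and, for part (2), that the commutation of $\rhom^\rmE(-,\SO_X^{\rmE})$ with open restriction is legitimate. The remaining steps are a formal transport through the solution and reconstruction functors.
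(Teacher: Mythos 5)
Your proposal reproduces the paper's proof essentially step by step: $(a)'$ from Theorem \ref{thm2.5} (5); $(b)'$ from local triviality of $\M$ off $Y$ together with $\Sol_X^{\rmE}(\SO_X)\simeq\CC_X^{\rmE}$; condition $(b)$ in part (2) from $\RH^{\rmE}(\CC^{\rmE})\simeq\SO$ after restricting to $U_x$; and condition $(a)$ from Sublemma \ref{sublem3.3} with $\SA=\SO_X$, $\SF=\SO_X(\ast Y)$, $L=\SO_X^{\rmE}$, followed by tensor--hom adjunction and hypothesis $(a)'$. Your canonicity worry resolves as in the paper: every isomorphism in the chain is natural, so the composite identifies the canonical localization morphism $\M\to\M(\ast Y)$ with $\rhom^{\rmE}$ applied to $\pi^{-1}\CC_{X\setminus Y}\otimes K\to K$, and $(a)'$ finishes the argument.

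One step as you wrote it is wrong, though it is excisable. You interpolate the identification
$\pi^{-1}\beta_X\SO_X(\ast Y)\Lotimes{\pi^{-1}\beta_X\SO_X}\SO_X^{\rmE}
\simeq\pi^{-1}\CC_{X\setminus Y}\otimes\SO_X^{\rmE}
\simeq\rihom(\pi^{-1}\CC_{X\setminus Y},\SO_X^{\rmE})$,
but the middle term must not appear: $\pi^{-1}\CC_{X\setminus Y}\otimes(\cdot)$ is extension by zero (its enhanced stalks along $Y$ vanish, since $\bfE i_x^{-1}$ commutes with $\pi^{-1}(\cdot)\otimes$), whereas $\rihom(\pi^{-1}\CC_{X\setminus Y},\SO_X^{\rmE})$ is the tempered meromorphic localization, which pairs with $\CC_V^{\rmE}$ on a small ball $V$ meeting $Y$ to give $\Gamma(V;\SO_X(\ast Y))\neq 0$. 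The analogous statement for the constant object already fails: $\sh\big(\pi^{-1}\CC_{X\setminus Y}\otimes\CC_X^{\rmE}\big)\simeq\CC_{X\setminus Y}$ while $\sh\big(\rihom(\pi^{-1}\CC_{X\setminus Y},\CC_X^{\rmE})\big)\simeq\bfR j_{\ast}\CC_{X\setminus Y}$ for $j : X\setminus Y\hookrightarrow X$. The correct bridge, and the only one the paper uses, is the single isomorphism $\SO_X^{\rmE}\Dotimes\SO_X(\ast Y)\simeq\rihom(\pi^{-1}\CC_{X\setminus Y},\SO_X^{\rmE})$, cited to \cite[p.88]{KS16}; deleting your middle term and invoking this directly makes your chain literally the paper's. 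A second, smaller point: the membership $K=\Sol_X^{\rmE}(\M)\in\ZEC_{\RR-c}(\I\CC_X)$, i.e.\ concentration in degree zero, does not follow from the t-exactness facts you allude to; the paper obtains it from Mochizuki's results \cite{Mochi16}, and you would need the same input.
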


\begin{proof}
\item[(1)]
By Theorem \ref{thm2.5} (5) and the condition $(a)$,
we have isomorphisms
\begin{align*}
\pi^{-1}\CC_{X\bs Y}\otimes K &= \pi^{-1}\CC_{X\bs Y}\otimes \Sol_X^{\rmE}(\M)\\
&\simeq\Sol_X^{\rmE}\big(\M(\ast Y)\big)\\
&\simeq \Sol_X^{\rmE}(\M) = K.
\end{align*}
By the definition of $\singsupp(\M)\subset Y$, 
for any $x\in X\setminus Y$ there exist an open neighborhood $U_x\subset X\setminus Y$
of $x$ and a non-negative integer $k$ such that $\M|_{U_x}\simeq \SO_{U_x}^{\oplus k}$
and hence $$K|_{U_x} = \Sol_X^{\rmE}(\M)|_{U_x}\simeq
\Sol_{U_x}^{\rmE}(\M|_{U_x})\simeq\Sol_{U_x}^{\rmE}(\SO_{U_x}^{\oplus k})
\simeq(\CC_{U_x}^{\rmE})^{\oplus k},$$
by the fact that
there exists an isomorphism $\Sol_X^{\rmE}(\SO_X)\simeq \CC_X^{\rmE}$
(see Theorem \ref{thm2.6} (2)).
Therefore, $K$ satisfies the condition $(b)'$ as above.
Remark that $K := \Sol_X^\rmE(\M)\in\BEC(\I\CC_X)$ is concentrated in degree zero
by \cite[Corollary 5.21, Lemma 9.5, Proposition 9.6 and Theorem 9.3]{Mochi16}.

\item[(2)]
First we shall show that the condition $(a)'$ implies that
$\M = \RH_X^{\rmE}(K)$ satisfies the condition $(a)$.
In fact, we have isomorphisms
\begin{align*}
\M(\ast Y) & = \M\Dotimes\SO_X(\ast Y)\\
&=\rhom^{\rmE}(K, \SO_X^{\rmE})\Dotimes\SO_X(\ast Y)\\
&\simeq\rhom^{\rmE}\big(K, \SO_X^{\rmE}\Dotimes\SO_X(\ast Y)\big)\\
&\simeq\rhom^{\rmE}\big(K, \rihom(\pi^{-1}\CC_{X\bs Y}, \SO_X^{\rmE})\big)\\
&\simeq\rhom^{\rmE}(\pi^{-1}\CC_{X\bs Y}\otimes K, \SO_X^{\rmE})\\
&\simeq\rhom^{\rmE}(K, \SO_X^{\rmE}) = \M,
\end{align*}
where the third isomorphism follows from Sublemma \ref{sublem3.3},
the forth one follows from
$\SO_X^{\rmE}\Dotimes\SO_X(\ast Y)\simeq\rihom(\pi^{-1}\CC_{X\bs Y}, \SO_X^{\rmE})$ (see, e.g., \cite[p.88]{KS16}),
and the sixth one follows from the condition $(a)'$.
%$\SO_X^{\rmE}\Dotimes\SO_X(\ast Y)
%\simto\Prihom(\Sol_X^{\rmE}(\SO_X(\ast Y)), \SO_X^{\rmE})
%\simeq\Prihom(\pi^{-1}\CC_{X\bs Y}\Potimes\CC_X^{\rmE}, \SO_X^{\rmE})
%\simeq\rihom(\pi^{-1}\CC_{X\bs Y}, \Prihom(\CC_X^{\rmE}, \SO_X^{\rmE}))
%\simeq\rihom(\pi^{-1}\CC_{X\bs Y}, \SO_X^{\rmE}).$

We shall show that the condition $(b)'$ implies that 
$\M$ satisfies the condition $(b)$.
Now, for any $x\in X\setminus D$
there exist an open neighborhood $U_x\subset X\setminus D$
of $x$ and a non-negative integer $k$
such that $K|_{U_x}\simeq (\CC_{U_x}^{\rmE})^{\oplus k}.$
Hence, we have isomorphisms
\begin{align*}
\M|_{U_x} &\simeq \rhom^{\rmE}(K, \SO_X^{\rmE})|_{U_x}\\
&\simeq \rhom^{\rmE}(K|_{U_x}, \SO_{U_x}^{\rmE})\\
&\simeq \rhom^{\rmE}\big((\CC_{U_x}^{\rmE})^{\oplus k}, \SO_{U_x}^{\rmE}\big)\\
&\simeq \rhom^{\rmE}(\CC_{U_x}^{\rmE}, \SO_{U_x}^{\rmE})^{\oplus k}
\simeq \SO_{U_x}^{\oplus k},
\end{align*}
where in the last isomorphism
we used the fact $\RH_X^{\rmE}(\CC_X^{\rmE})
\simeq\RH_X^{\rmE}\big(\Sol_X^{\rmE}(\SO_X)\big)\simeq\SO_X$
(see, Theorem \ref{thm2.6} (1)).
\end{proof}

\begin{sublemma}\label{sublem3.4}
The condition $\rm (iii)$ in Definition \ref{def2.8}
is equivalent to the following condition ${\rm (iii)' : }$
for any $x\in D$, 
there exist an open neighborhood $V_x\subset X$ of $x$,
a good set of irregular values $\{\varphi_i^x\}$ on $(V_x, V_x\cap D)$ such that
for any $\theta\in\varpi^{-1}(x)$
there exists an an open neighborhood $W_{x, \theta}$ of $\theta$ such that 
\[
W_{x, \theta}\subset\varpi^{-1}(V_x) \mbox{ and }
(\SM|_{V_x})^\SA|_{W_{x, \theta}}
\simeq \Bigl(
\bigoplus_i\bigl(\SE_{V_x\bs D | V_x}^{\varphi_i^x}\bigr)^\SA\Bigr)|_{W_{x, \theta}}.\]
\end{sublemma}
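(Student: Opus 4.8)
The two conditions differ only in the \emph{uniformity} of the good set of irregular values: in (iii) the set $\{\varphi_i\}$ is allowed to depend on the point $\theta\in\varpi^{-1}(D)$, while in (iii)$'$ a single good set $\{\varphi_i^x\}$ must serve every $\theta$ in the whole fiber $\varpi^{-1}(x)$. The implication (iii)$'\Rightarrow$(iii) is therefore immediate: given $\theta\in\varpi^{-1}(D)$, put $x:=\varpi(\theta)\in D$ and invoke (iii)$'$ to obtain $V_x$, $\{\varphi_i^x\}$ and, since $\theta\in\varpi^{-1}(x)$, a neighborhood $W_{x,\theta}$; then $U:=V_x$, $\{\varphi_i\}:=\{\varphi_i^x\}$ and $V:=W_{x,\theta}$ satisfy (iii), because $W_{x,\theta}\subset\varpi^{-1}(V_x)$ and the isomorphism holds by hypothesis. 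So the real content is the converse (iii)$\Rightarrow$(iii)$'$.

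For the converse, fix $x\in D$ and shrink to a neighborhood on which only the components of $D$ through $x$ occur, so that in suitable coordinates $D=\{u_1\cdots u_r=0\}$ and $x$ lies in the deepest stratum $Y=\{u_1=\cdots=u_r=0\}$; in particular $\varpi^{-1}(x)\subset\varpi^{-1}(Y)$. The fiber $\varpi^{-1}(x)$ is then compact and connected (locally a product of circles $(S^1)^r$). Applying (iii) at each $\theta\in\varpi^{-1}(x)$ yields a neighborhood $U_\theta$ of $x$, a good set $\{\varphi_i^\theta\}$ on $(U_\theta,U_\theta\cap D)$ and a neighborhood $V_\theta\ni\theta$ carrying the normal-form isomorphism. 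By compactness, finitely many $V_{\theta_1},\dots,V_{\theta_N}$ cover $\varpi^{-1}(x)$; choosing a connected open $V_x\subset\bigcap_k U_{\theta_k}$ with $x\in V_x$, each $\{\varphi_i^{\theta_k}\}$ restricts to a good set of irregular values on $(V_x,V_x\cap D)$, all of whose associated direct sums have the same number $m$ of summands (the rank of the free $\SA_{\tl X}$-module $(\SM|_{V_x})^\SA$).

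It remains to show these finitely many good sets coincide. Whenever $V_{\theta_k}\cap V_{\theta_{k'}}\cap\varpi^{-1}(x)\neq\emptyset$, pick a point $\theta'$ in it; then $\theta'\in\varpi^{-1}(Y)$, and near $\theta'$ the two normal-form descriptions provide an isomorphism of $\SD_{\tl X}^\SA$-modules between $\bigoplus_{i}\bigl(\SE_{V_x\bs D|V_x}^{\varphi_i^{\theta_k}}\bigr)^\SA$ and $\bigoplus_{i}\bigl(\SE_{V_x\bs D|V_x}^{\varphi_i^{\theta_{k'}}}\bigr)^\SA$. Under the standard identification $\bigl(\SE_{V_x\bs D|V_x}^{\varphi}\bigr)^\SA\simeq\SA_{\tl X}e^{\varphi}$ this is exactly the hypothesis of Proposition \ref{prop2.14}, which forces $\{\varphi_i^{\theta_k}\}=\{\varphi_i^{\theta_{k'}}\}$ after reordering. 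Since $\varpi^{-1}(x)$ is connected and covered by the $V_{\theta_k}$, the intersection graph of this cover is connected, so all the sets $\{\varphi_i^{\theta_k}\}$ agree; call the common one $\{\varphi_i^x\}$. Finally, for $\theta\in\varpi^{-1}(x)$ choose $k$ with $\theta\in V_{\theta_k}$ and set $W_{x,\theta}:=V_{\theta_k}\cap\varpi^{-1}(V_x)$; restricting the isomorphism for $\theta_k$ and using $\{\varphi_i^{\theta_k}\}=\{\varphi_i^x\}$ gives (iii)$'$.

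The main obstacle is precisely the passage from the pointwise data of (iii) to a fiberwise-uniform good set, and it is here that Proposition \ref{prop2.14} does the decisive work: the rigidity of good sets of irregular values under $\SD_{\tl X}^\SA$-linear isomorphisms, combined with the compactness and connectedness of $\varpi^{-1}(x)$, is what lets the local choices be amalgamated. The two technical points requiring care are shrinking $V_x$ so that $x$ sits in the deepest stratum (ensuring the overlap points lie over $Y$, as Proposition \ref{prop2.14} demands) and checking that restriction of a good set of irregular values to the smaller $V_x$ remains a good set.
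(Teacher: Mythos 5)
Your proof is correct, and its engine is the same as the paper's: apply condition (iii) at each point of the fiber $\varpi^{-1}(x)$ and invoke Proposition \ref{prop2.14} on overlaps to force the local good sets of irregular values to coincide. The difference is in how the local data are amalgamated. The paper keeps the neighborhoods $V_{x,\theta}$ produced by (iii), proves agreement of $\{\varphi_i^{x,\theta}\}$ and $\{\varphi_i^{x,\eta}\}$ only when the blow-up neighborhoods satisfy $W_{x,\theta}\cap W_{x,\eta}\neq\emptyset$, and then takes $V_x:=\bigcup_{\theta} V_{x,\theta}$, gluing the local good sets into a single one on the union; you instead use compactness of $\varpi^{-1}(x)$ to pass to finitely many charts, shrink to a connected $V_x$ inside $\bigcap_k U_{\theta_k}$, and propagate equality of the restricted good sets along the connected intersection graph of the induced cover of the fiber. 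Your route is tighter on two points the paper leaves implicit: you arrange $x\in Y$ and apply Proposition \ref{prop2.14} at a point of $\varpi^{-1}(x)\subset\varpi^{-1}(Y)$, exactly as its hypothesis requires, whereas a nonempty overlap $W_{x,\theta}\cap W_{x,\eta}$ need not a priori contain a point of $\varpi^{-1}(Y)$; and your connectedness argument gives agreement of the good sets across the whole fiber, which is what any gluing over overlaps of the $V$'s actually needs. The only cost is that your $V_x$ is smaller (an intersection rather than a union), which is harmless for the statement.
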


\begin{proof}
We shall only prove $\rm(iii) \Rightarrow (iii)'$.
Let $\M$ be a holonomic $\D_X$-module which has a normal form along $D$.
Then, 
for any $x\in D$ and any $\theta\in\varpi^{-1}(x)\subset \varpi^{-1}(D)$
there exist an open neighborhood $V_{x, \theta}$ of $x=\varpi(\theta)$, 
a good set of irregular values $\{\varphi_i^{x, \theta}\}_i$
on $(V_{x, \theta}, V_{x, \theta}\cap D)$
and an open subset $W_{x, \theta}\subset\varpi^{-1}(V_{x, \theta})$
such that 
\[(\M|_{V_{x, \theta}})^\SA|_{W_{x, \theta}}\simeq\bigoplus_i (
\E_{V_{x, \theta}\bs D | V_{x, \theta}}^{\varphi_i^{x, \theta}})^\SA|_{W_{x, \theta}}.\]
Let us consider two points $\theta, \eta\in\varpi^{-1}(x)$
such that $W_{x, \theta}\cap W_{x, \eta}\neq\emptyset$.
Then we obtain an isomorphism
\[\bigoplus_i \big(\E_{V_{x, \theta}\cap V_{x, \eta}\bs D | V_{x, \theta}\cap V_{x, \eta}}
^{\varphi_i^{x, \theta}|_{V_{x, \theta}\cap V_{x, \eta}}}
\big)^\SA|_{W_{x, \theta}\cap W_{x, \eta}}
\simeq
\bigoplus_i \big(\E_{V_{x, \theta}\cap V_{x, \eta}\bs D | V_{x, \theta}\cap V_{x, \eta}}
^{\varphi_i^{x, \eta}|_{V_{x, \theta}\cap V_{x, \eta}}}
\big)^\SA|_{W_{x, \theta}\cap W_{x, \eta}}.\]
Since $\{\varphi_i^{x, \theta}\}_i$ and $\{\varphi_i^{x, \theta}\}_i$
are good sets of irregular values,
we obtain equality
\[\varphi_i^{x, \theta}|_{V_{x, \theta}\cap V_{x, \eta}}=
\varphi_i^{x, \eta}|_{V_{x, \theta}\cap V_{x, \eta}}\]
by Proposition \ref{prop2.14}.
We set $V_x := \cup_{\theta\in\varpi^{-1}(x)} V_{x, \theta}$, 
then there exist a good set of irregular values $\{\varphi_i\}_i$ on $(V_x, V_x\cap D)$
such that $\varphi_i|_{V_{x, \theta}}=\varphi_i^{x, \theta}$ and 
\[(\M|_{V_{x}})^\SA|_{W_{x, \theta}}\simeq
\bigoplus_i \big(\E_{V_{x}\bs D | V_{x}}^{\varphi_i}\big)^\SA|_{W_{x, \theta}}.\]
\end{proof}

The following lemma means that
the enhanced solution functor $\Sol_X^{\rmE}$ induces
an equivalence of categories between
the full subcategory of $\ZEC_{\RR-c}(\I\CC_X)$ consisting of
objects which have a normal form
and the one of $\Mod(\D_X)$ consisting of
objects which have a normal form.
\begin{lemma}\label{lem3.3}
\begin{itemize}
\item[\rm (1)]
For any holonomic $\D_X$-module $\M$ which has a normal form along $D$,
the enhanced solution complex $K := \Sol_X^{\rmE}(\M)$ of $\M$
has a normal form along $D$.

\item[\rm (2)]
For any enhanced ind-sheaf $K\in\ZEC_{\RR-c}(\I\CC_X)$ which has a normal form along $D$,
$\RH_X^{\rmE}(K)$ is a holonomic $\D_X$-module which has a normal form along $D$
and there exists an isomorphism $$K\simto \Sol_X^{\rmE}\big(\RH_X^{\rmE}(K)\big).$$
\end{itemize}
\end{lemma}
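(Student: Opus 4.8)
The plan is to prove the two parts separately, in each direction verifying the three conditions of the relevant normal-form definition. In both parts conditions (i) and (ii) are handled by Sublemma~\ref{sublem3.2} (applied with $Y=D$), since conditions (i), (ii) of Definition~\ref{def2.8} are exactly the hypotheses $(a)$, $(b)$ there and conditions (i), (ii) of Definition~\ref{def3.1} are exactly $(a)'$, $(b)'$. The substance in both directions is condition (iii), which I would obtain from the sectorial reconstruction Theorems~\ref{thm2.12} and~\ref{thm2.13} together with the computation of the enhanced solutions of exponential modules furnished by Theorem~\ref{thm2.5}~(6).

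For (1), let $\M$ have a normal form along $D$ and set $K=\Sol_X^{\rmE}(\M)$. Conditions (i), (ii) follow from Sublemma~\ref{sublem3.2}~(1). For (iii), I would fix $x\in D$ and use the reformulation $(iii)'$ of Sublemma~\ref{sublem3.4}: on a neighborhood $V_x$ there is a good set of irregular values $\{\varphi_i\}$ with $(\M|_{V_x})^\SA|_W\simeq\bigoplus_i(\SE_{V_x\setminus D|V_x}^{\varphi_i})^\SA|_W$ for suitable opens $W\subset\tl{X}$. For an open sector $V\subset V_x\setminus D$ with $\tl{V}=\var{\varpi^{-1}(V)}\subset W$, Theorem~\ref{thm2.13} writes $\pi^{-1}\CC_V\otimes K$ as an $\bfE\varpi_\ast$-pushforward of data built from $\mathscr{K}=\Sol_W^{\rmE}(\M^\SA|_W)$. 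Since $\Sol^{\rmE}$ commutes with finite direct sums, $\mathscr{K}\simeq\bigoplus_i\Sol_W^{\rmE}\big((\SE^{\varphi_i})^\SA|_W\big)$, and each summand is an enhanced exponential along $\varpi^{-1}(D)$ by the $\SA$-version of Theorem~\ref{thm2.5}~(6). Pushing down by $\bfE\varpi_\ast$ then yields, on the sectors $U_{x,j}$ of a finite covering of $U_x\setminus D$, the required isomorphism $\pi^{-1}\CC_{U_{x,j}}\otimes K|_{U_x}\simeq\bigoplus_i\EE_{U_{x,j}|U_x}^{\Re\varphi_i}$.

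For (2), set $\M:=\RH_X^{\rmE}(K)$. Sublemma~\ref{sublem3.2}~(2) gives that $\M$ satisfies $(a)$ and $(b)$, so $\M\simto\M(\ast D)$, $\singsupp(\M)\subset D$, and $\M\simeq\SO^{\oplus k}$ locally on $X\setminus D$. The strategy near $D$ is to build, for each $x\in D$, a holonomic $\D$-module $\N_x$ on $U_x$ with a normal form out of the good set $\{\varphi_i\}$ and the Stokes data encoded in $K|_{U_x}$, arranged so that $\Sol^{\rmE}(\N_x)\simeq K|_{U_x}$ by the computation of part (1). The candidate $\SD_{\tl{X}}^\SA$-module $\N_x^\SA$ is produced by feeding $\rihom(\pi^{-1}\CC_{X\setminus D},K)$ into the formula of Theorem~\ref{thm2.12}, whose sectorial decomposition from condition (iii) of Definition~\ref{def3.1} makes $\N_x^\SA|_W\simeq\bigoplus_i(\SE^{\varphi_i})^\SA|_W$. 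Using the full faithfulness of $\Sol_X^{\rmE}$ (Theorem~\ref{thm2.6}~(1)), the gluing isomorphisms for $K$ on overlaps transport to a compatible gluing of the $\N_x$, yielding a global holonomic $\N$ with a normal form and $\Sol_X^{\rmE}(\N)\simeq K$. Then $\M=\RH_X^{\rmE}(K)\simeq\RH_X^{\rmE}\big(\Sol_X^{\rmE}(\N)\big)\simeq\N$ by Theorem~\ref{thm2.6}~(1), so $\M$ is holonomic with a normal form and $K\simeq\Sol_X^{\rmE}(\N)\simeq\Sol_X^{\rmE}(\M)$; if one prefers, the last isomorphism can instead be checked on ind-stalks by Proposition~\ref{prop3.2}, reducing to the sectorial models already computed.

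The main obstacle I anticipate is the apparent circularity in part (2): Theorem~\ref{thm2.12} is stated for objects already lying in $\BDChol(\SD_X)$, whereas holonomicity of $\M=\RH_X^{\rmE}(K)$ is precisely what must be established. I would circumvent this by treating the output of Theorem~\ref{thm2.12} merely as a candidate $\SA$-module, proving it genuinely has the shape $\bigoplus_i(\SE^{\varphi_i})^\SA$ from condition (iii), and invoking the Kedlaya--Mochizuki structure theory together with descent along $\varpi$ to realize it as $\N_x^\SA$ for an honest holonomic meromorphic connection $\N_x$. The remaining delicate point is the verification of the gluing cocycle for the $\N_x$: on overlaps the two good sets of irregular values must agree, which is exactly the content of Proposition~\ref{prop2.14}, and the cocycle condition then follows from the faithfulness of $\Sol_X^{\rmE}$.
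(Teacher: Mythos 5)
Your part (1) is correct and is essentially the paper's argument: Sublemma~\ref{sublem3.2}~(1) handles conditions (i)--(ii), Sublemma~\ref{sublem3.4} globalizes the good set of irregular values over the compact fiber $\varpi^{-1}(x)$, Theorem~\ref{thm2.13} applied on sectors $S_{x,\theta}$ with $\var{\varpi^{-1}(S_{x,\theta})}\subset W_{x,\theta}$ (combined with Theorem~\ref{thm2.5}~(6), since the right-hand side of Theorem~\ref{thm2.13} depends only on $\M^{\SA}|_{W}$) produces the exponential decompositions, and compactness of $\varpi^{-1}(x)$ yields the finite sectorial covering $\{U_{x,j}\}_j$.

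Part (2), however, has a genuine gap at its central step. You propose to manufacture a local holonomic $\N_x$ realizing the Stokes data of $K$ by feeding $\rihom(\pi^{-1}\CC_{X\setminus D},K)$ into the formula of Theorem~\ref{thm2.12} and then descending along $\varpi$ ``by Kedlaya--Mochizuki structure theory.'' This invokes the wrong theorem: Kedlaya--Mochizuki takes a holonomic module one \emph{already has} and produces a good formal structure after blow-up and ramification; it goes in the opposite direction and cannot realize prescribed sectorial models $\bigoplus_i\SA_{\tl X}e^{\varphi_i}$ glued along $\varpi^{-1}(D)$ as an honest meromorphic connection. That realization step is a Riemann--Hilbert--Birkhoff/Malgrange--Sibuya type statement, and in higher dimension along a normal crossing divisor it is exactly the hard input; moreover, Theorem~\ref{thm2.12} is itself proved under the hypothesis $\M\in\BDChol(\D_X)$, so even the claim that your candidate $\SA$-module has the shape $\bigoplus_i(\SE^{\varphi_i})^{\SA}$ is unsupported for a bare $K$. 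The paper sidesteps this entire construction: it cites \cite[Theorem 9.3, Lemma 9.8]{Mochi16}, which state directly that if $K\in\ZEC_{\RR-c}(\I\CC_X)$ admits the sectorial exponential decomposition of Definition~\ref{def3.1}~(iii), then $\M|_{U_x}=\RH_X^{\rmE}(K)|_{U_x}\in\Conn(U_x;U_x\cap D)$ (in particular holonomic) and $\Phi|_{U_x}:K|_{U_x}\simto\Sol_X^{\rmE}(\M)|_{U_x}$; locality is then handled via Proposition~\ref{prop3.2}, as you suggest at the end. Without importing a result of this strength your proof cannot be completed. Two smaller remarks: your gluing-cocycle discussion is superfluous, since $\M:=\RH_X^{\rmE}(K)$ is already globally defined and only holonomicity and the isomorphism are local statements (Proposition~\ref{prop2.14} enters the paper only inside Sublemma~\ref{sublem3.4}, i.e., in part (1)); and once the Mochizuki input is granted, your final verification of condition (iii) of Definition~\ref{def2.8} via $K\simto\Sol_X^{\rmE}(\M)$, Theorem~\ref{thm2.5}~(6) and Theorem~\ref{thm2.12} matches the paper exactly.
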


\begin{proof}
\item[(1)]
Let $\M$ be a holonomic $\D_X$-module which has a normal form along $D$.
By Sublemma \ref{sublem3.2} (1),
it is enough to show that $K:=\Sol_X^{\rmE}(\M)$ satisfies the condition (iii)
in Definition \ref{def3.1}.

Since $\M$ has a normal form along $D$, by Sublemma \ref{sublem3.4},
for any $x\in D$, 
there exist an open neighborhood $V_x\subset X$ of $x$,
a good set of irregular values $\{\varphi_i^x\}$ on $(V_x, V_x\cap D)$ such that
for any $\theta\in\varpi^{-1}(x)$
there exists an an open neighborhood $W_{x, \theta}$ of $\theta$ such that 
\[
W_{x, \theta}\subset\varpi^{-1}(V_x) \mbox{ and }
(\SM|_{V_x})^\SA|_{W_{x, \theta}}
\simeq \Bigl(
\bigoplus_i\bigl(\SE_{V_x\bs D | V_x}^{\varphi_i^x}\bigr)^\SA\Bigr)|_{W_{x, \theta}}.\]
By Theorem \ref{thm2.13}, for any open sector $S_{x, \theta}\subset V_x\setminus D$
such that  $\var{\varpi^{-1}(S_{x, \theta})}\subset W_{x, \theta}$
we have an isomorphism \[\pi^{-1}\CC_{S_{x, \theta}}\otimes K|_{V_x}\simeq
\bigoplus_i \EE_{S_{x, \theta} | V_x}^{\Re\varphi_i^x}.\]
Therefore, we obtain an open neighborhood
$$U_x:=\cup_{\theta\in\varpi^{-1}(x)}\varpi\big(\Int\big(\var{\varpi^{-1}(S_{x,\theta})}\big)
\big)\subset V_x$$
of $x$ which satisfies $U_x\setminus D=\cup_{\theta\in\varpi^{-1}(x)}S_{x, \theta}$.
Since $\varpi^{-1}(x)$ is compact, 
we obtain a finite sectorial open covering
$\{U_{x, j}\}_j\subset \{S_{x, \theta}\}_{\theta\in\varpi^{-1}(x)}$ of $U_x\setminus D$.
Hence for any $x\in D$ there exist an open neighborhood $U_x\subset X$ of $x$,
a good set of irregular values $\{\varphi_i^x\}$ on $(U_x, U_x\cap D)$
and a finite sectorial open covering $\{U_{x, j}\}_j$ of $U_x\bs D$
such that
\[\pi^{-1}\CC_{U_{x, j}}\otimes K|_{U_x}\simeq
\bigoplus_i \EE_{U_{x, j} | U_x}^{\Re\varphi_i^x}.\]

\item[(2)]
Let us consider $\M := \RH_X^{\rmE}(K)\in\BDC(\D_X)$.
By Sublemma \ref{sublem3.2} (2),
$\M$ satisfies the conditions
$\M(\ast D)\simeq \M$ and $\singsupp(\M)\subset D$.
We shall prove that $\M$ is a holonomic $\D_X$-module which satisfies
the condition (iii) in Definition \ref{def2.8} and 
the canonical morphism $\Phi : K\to \Sol_X^{\rmE}(\RH_X^{\rmE}(K))$ is an isomorphism.

First, let us prove that $\M$ is holonoimc and
the canonical morphism $\Phi$ is an isomorphism.
Since $K$ satisfies the condition (ii) in Definition \ref{def3.1},
$\M$ is holonomic on $X\bs D$
and the restriction $\Phi|_{X\bs D}$ of $\Phi$ to $X\bs D$ is an isormorphism.
Hence, it is enough to prove that
for any $x\in D$ there exists an open neighborhood $U_x$ of $x$
such that $\M$ is holonomic on $U_x$ and
the restriction $\Phi|_{U_x}$ of $\Phi$ to $U_x$ is an isomorohism
by Proposition \ref{prop3.2} (see also Remark \ref{rem-stalk}).
Since $K$ satisfies the condition (iii) in Definition \ref{def3.1},
for any $x\in D$, there exist an open neighborhood $U_x\subset X$ of $x$,
a good set of irregular values $\{\varphi_i\}_i$ on $(U_x, U_x\cap D)$
and a finitely sectorial open covering $\{U_{x, j}\}_j$ of $U_x\bs D$
such that
\[\pi^{-1}\CC_{U_{x, j}}\otimes K|_{U_x}\simeq
\bigoplus_i \EE_{U_{x, j} | U_x}^{\Re\varphi_i}.\]
By \cite[Theorem 9.3, Lemma 9.8]{Mochi16},
we have $\M|_{U_x}\in\Conn(U_x; U_x\cap D)$
(in particular $\M|_{U_x}$ is holonomic)
and $\Phi|_{U_x} : K|_{U_x}\simto\Sol_X^\rmE(\M)|_{U_x}$. 
Therefore $\M$ is a holonomic $\D_X$-module and 
the canonical morphism $K\to \Sol_X^{\rmE}(\M)$ is an isomorphism.

We shall prove that $\M$ satisfies the condition (iii) in Definition \ref{def2.8}.
Since $K$ satisfies the condition (iii) in Definition \ref{def3.1},
we have isomorphisms
\begin{align*}
\pi^{-1}\CC_{U_{x, j}}\otimes\Sol_{U_x}^{\rmE}(\M|_{U_x})
&\simeq
\pi^{-1}\CC_{U_{x, j}}\otimes K|_{U_x}\\
&\simeq
\bigoplus_i \EE_{U_{x, j} | U_x}^{\Re\varphi_i}\\
&\simeq
\bigoplus_i \big(\pi^{-1}\CC_{U_{x, j}}\otimes\EE_{U_x\bs D | U_x}^{\Re\varphi_i}\big)\\
&\simeq
\bigoplus_i \big(\pi^{-1}\CC_{U_{x, j}}\otimes\Sol_{U_x}^{\rmE}(\E_{U_x \bs D | U_x}^{\varphi_i})\big)\\
&\simeq
\pi^{-1}\CC_{U_{x, j}}\otimes\Sol_{U_x}^{\rmE}
\big(\bigoplus_i \E_{U_x \bs D | U_x}^{\varphi_i}\big),
\end{align*}
where in the first (resp.\ forth) isomorphism
we used the fact $K\simto \Sol_X^{\rmE}(\M)$
(resp.\ Theorem \ref{thm2.5} (6)).
Let us denote by $\tl{U_x}$ the real blow-up of $U_x$ along $U_x\cap D$.
Then by Theorem \ref{thm2.12},
for any open subset $W\subset \tl{U_x}$ such that
$W\cap \varpi^{-1}(U_x\cap D)\neq\emptyset$,
$\var{W}\subset \Int(\var{\varpi^{-1}(U_{x, j})})$
and we have an isomorphism
\[(\M|_{U_x})^\SA|_W
\simeq
\big(\bigoplus_i \E_{U_x \bs D | U_x}^{\varphi_i}\big)^{\SA}|_W.\]
Hence the proof is completed.
\end{proof}

\subsection{Quasi-Normal Form}
In this subsection,
we define enhanced ind-sheaves 
which have a quasi-normal form along a normal crossing divisor
and prove that they are nothing but the images of holonomic $\D$-modules
which have a quasi-normal form via the enhanced solution functor.
Let $X$ be a complex manifold and $D$ a normal crossing divisor of $X$.

\begin{definition}\label{def3.4}
We say that an enhanced ind-sheaf
$K\in\ZEC(\I\CC_X)$ has a quasi-normal form along $D$ if 
\begin{itemize}
\setlength{\itemsep}{-3pt}
\item[(i)]
$\pi^{-1}\CC_{X\setminus D}\otimes K\simto K$,

\item[(ii)]
for any $x\in X\setminus D$, there exist an open neighborhood $U_x\subset X\setminus D$
of $x$ and a non-negative integer $k$ such that
\[K|_{U_x}\simeq (\CC_{U_x}^{\rmE})^{\oplus k},\]

\item[(iii)]
for any $x\in D$, there exist an open neighborhood $U_x\subset X$ of $x$
and a ramification $p_x : U_x'\to U_x$ of $U_x$ along $D_x := U_x\cap D$
such that $\bfE p_x^{-1}(K|_{U_x})$ has a normal form along $D_x' := p_x^{-1}(D_x)$.
\end{itemize}
\end{definition}

\begin{proposition}\label{prop3.7}
Any enhanced ind-sheaf which has a quasi-normal form along $D$
is an $\RR$-constructible enhanced ind-sheaf. 
\end{proposition}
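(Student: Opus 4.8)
The plan is to reduce the statement to a local one by means of Lemma \ref{lem2.3}, which says that $\RR$-constructibility is a local property, and then to descend $\RR$-constructibility along the ramifications furnished by Definition \ref{def3.4}. It suffices to cover $X$ by open subsets on which $K$ is $\RR$-constructible. For $x\in X\setminus D$, condition (ii) of Definition \ref{def3.4} gives, on a smaller neighbourhood, $K\simeq(\CC_{U_x}^{\rmE})^{\oplus k}$, which is $\RR$-constructible; so the essential case is $x\in D$, which I treat next.

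Fix $x\in D$ and let $p_x\colon U_x'\to U_x$ be the ramification of Definition \ref{def3.4}(iii), so that $\bfE p_x^{-1}(K|_{U_x})$ has a normal form along $D_x'=p_x^{-1}(D_x)$; in particular, by Definition \ref{def3.1}, having a normal form presupposes $\bfE p_x^{-1}(K|_{U_x})\in\BEC_{\RR-c}(\I\CC_{U_x'})$. Write $V:=U_x\setminus D_x$ with open embedding $j\colon V\hookrightarrow U_x$, and $V':=p_x^{-1}(V)=U_x'\setminus D_x'$ with $j'\colon V'\hookrightarrow U_x'$; then $q:=p_x|_{V'}\colon V'\to V$ is a finite étale covering of degree $m:=m_1\cdots m_r$. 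The idea is that condition (i) lets me reconstruct $K|_{U_x}$ from its restriction to $V$ by extension by zero: indeed $\pi^{-1}\CC_V\otimes K|_{U_x}\simto K|_{U_x}$ identifies $K|_{U_x}$ with $\bfE j_{!!}L$, where $L:=\bfE j^{-1}(K|_{U_x})$. Since $\bfE j_{!!}$ preserves $\RR$-constructibility for the subanalytic open embedding $j$ \cite{DK16-2}, it is enough to prove that $L\in\BEC_{\RR-c}(\I\CC_V)$.

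From $p_x\circ j'=j\circ q$ one gets $\bfE q^{-1}L=\bfE j'^{-1}\bfE p_x^{-1}(K|_{U_x})$, which is $\RR$-constructible as the inverse image of an $\RR$-constructible object. The remaining and main point is then the descent of $\RR$-constructibility along the finite étale covering $q$. For this I would use the trace morphism $\bfE q_{\ast}\bfE q^{-1}L\to L$ attached to $q$, available because $q$ is finite étale, so $\bfE q^{!}\simeq\bfE q^{-1}$ and $\bfE q_{\ast}\simeq\bfE q_{!!}$: its composite with the adjunction unit $L\to\bfE q_{\ast}\bfE q^{-1}L$ should be multiplication by the degree $m$, which is invertible in $\CC$. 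Hence $L$ is a direct summand of $\bfE q_{\ast}\bfE q^{-1}L$; the latter is $\RR$-constructible, since $\bfE q^{-1}L$ is and $q$ is proper, so that the direct image preserves $\RR$-constructibility \cite{DK16-2}. As $\BEC_{\RR-c}(\I\CC_V)$ is closed under direct summands, $L\in\BEC_{\RR-c}(\I\CC_V)$, whence $K|_{U_x}=\bfE j_{!!}L$ is $\RR$-constructible; covering $X$ by such $U_x$ and applying Lemma \ref{lem2.3} finishes the proof. The delicate step to verify carefully is precisely the trace identity $\mathrm{tr}\circ(\text{unit})=m\cdot\id$ in the enhanced setting, together with the compatibility $p_x\circ j'=j\circ q$ that transports the normal-form input to the covering $q$; by contrast, the passage across $D_x$ is handled painlessly by condition (i) through the extension-by-zero functor $\bfE j_{!!}$.
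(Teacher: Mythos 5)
Your overall architecture coincides with the paper's: points of $X\setminus D$ are handled by condition (ii), and at points of $D$ one exhibits $K|_{U_x}$ as a direct summand of a proper direct image of the $\RR$-constructible pullback and concludes by locality (Lemma \ref{lem2.3}); indeed, the trace mechanism you spell out is exactly what underlies the paper's terse assertion that $K|_{U_x}$ is a direct summand of $\bfE p_{x\ast}\bfE p_x^{-1}(K|_{U_x})$. However, there is a genuine gap in your reduction step, and it sits at the place you call ``painless'', not at the trace identity you flag as delicate: the claim that $\bfE j_{!!}$ preserves $\RR$-constructibility for the open embedding $j\colon V\hookrightarrow U_x$ is false for the notion used in this paper (\cite[Definition 4.9.2]{DK16}). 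That definition tests an object only on \emph{relatively compact} subanalytic open subsets of the ambient space, so membership in $\BEC_{\RR-c}(\I\CC_V)$ is a condition local on $V$ and records nothing about the behaviour of $L$ near $D_x$. Concretely, take $V=(0,1)\subset U_x=(-1,1)$ and $L=\CC_V^{\rmE}\Potimes\CC_{\{t\geq e^{1/x}\}}$: then $L\in\BEC_{\RR-c}(\I\CC_V)$, and $\pi^{-1}\CC_V\otimes\bfE j_{!!}L\simto\bfE j_{!!}L$ (so a condition of type (i) does not exclude this), yet $\bfE j_{!!}L$ is not $\RR$-constructible on $U_x$, since no $\RR$-constructible sheaf on $U_x\times\RR_\infty$ can reproduce the non-subanalytic growth $e^{1/x}$ near $x=0$ up to stabilization by $\CC^{\rmE}\Potimes(\cdot)$. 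The preservation statements for $\bfE f_{!!}$ in \cite{DK16-2} concern the \emph{bordered} notion, i.e.\ $\RR$-constructibility on $(V,U_x)$, which is strictly stronger than $\RR$-constructibility on $V=(V,V)$; your trace argument on the plain spaces $V$, $V'$ only yields the latter, so the reduction ``it suffices to prove $L\in\BEC_{\RR-c}(\I\CC_V)$'' rests on a false lemma.

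The gap is repairable, and the repair lands precisely on the paper's proof: run the summand argument over $U_x$ instead of descending to $V$ first. Since $q$ is proper one has $\bfE q_{\ast}\simeq\bfE q_{!!}$, so applying $\bfE j_{!!}$ to your splitting and using $j\circ q=p_x\circ j'$ gives that
\[
\bfE j_{!!}L\simeq K|_{U_x}
\quad\text{is a direct summand of}\quad
\bfE j_{!!}\bfE q_{!!}\bfE q^{-1}L
\simeq\bfE p_{x!!}\bigl(\pi^{-1}\CC_{V'}\otimes\bfE p_x^{-1}(K|_{U_x})\bigr)
\simeq\bfE p_{x\ast}\bfE p_x^{-1}(K|_{U_x}),
\]
where the last identification uses condition (i) pulled back through $p_x$. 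The right-hand side is $\RR$-constructible on $U_x$, because a normal form lies in $\ZEC_{\RR-c}$ by Definition \ref{def3.1} and $p_x$ is proper (\cite[Theorem 4.9.11 (ii)]{DK16}), and direct summands of $\RR$-constructible objects are $\RR$-constructible (\cite[Theorem 4.9.6]{DK16}); Lemma \ref{lem2.3} then finishes, which is exactly the paper's argument. Alternatively, your proof works essentially verbatim if every occurrence of $V$ and $V'$ is replaced by the bordered spaces $(V,U_x)$ and $(V',U_x')$, for which the $\bfE j_{!!}$-step is legitimate.
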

\begin{proof}
Let $K\in\ZEC(\I\CC_X)$ be an enhanced ind-sheaf which has a quasi-normal form along $D$.
Since $K$ satisfies the condition (ii) in Definition \ref{def3.4}
and the constant enhanced ind-sheaf $\CC^\rmE$ is $\RR$-constructible,
for any $x\in X\setminus D$,
there exists an open neighborhood $U_x\subset X\setminus D$ of $x$
such that $K|_{U_x}\in\BEC_{\RR-c}(\I\CC_{U_x})$.
Since $K$ satisfies the condition (iii) in Definition \ref{def3.4},
for any $x\in D$, there exist an open neighborhood $U_x$ of $x$
and a ramification $p_x : U_x' \to U_x$
such that $\bfE p_x^{-1}(K|_{U_x})$ is $\RR$-constructible
because an enhanced ind-sheaf which has a normal form is $\RR$-constructible.
Since $p_x$ is proper, $\bfE p_{x\ast}\bfE p_x^{-1}(K|_{U_x})$
is also $\RR$-constructible by \cite[Theorem 4.9.11 (ii)]{DK16}.
Therefore $K|_{U_x}$ which is a direct summand
of $\bfE p_{x\ast}\bfE p_x^{-1}(K|_{U_x})$
is also $\RR$-constructible by \cite[Theorem 4.9.6]{DK16}.
Since the $\RR$-constructability of enhanced ind-sheaves is a local property,
the proof is completed.
\end{proof}

The following lemma means that
the enhanced solution functor $\Sol_X^{\rmE}$ induces
an equivalence of categories between
the full subcategory of $\ZEC_{\RR-c}(\I\CC_X)$ consisting of
objects which have a quasi-normal form
and the one of $\Mod(\D_X)$ consisting of
objects which have a quasi-normal form.
\begin{lemma}\label{lem3.6}
\begin{itemize}
\item[\rm (1)]
For any holonomic $\D_X$-module $\M$ which has a quasi-normal form along $D$,
the enhanced solution complex $K := \Sol_X^{\rmE}(\M)$ of $\M$
has a quasi-normal form along $D$.

\item[\rm (2)]
For any enhanced ind-sheaf $K\in\ZEC(\I\CC_X)$ which has a quasi-normal form along $D$,
$\RH_X^{\rmE}(K)$ is a holonomic $\D_X$-module which
has a quasi-normal form along $D$ and
there exists an isomorphism $$K\simto \Sol_X^{\rmE}\big(\RH_X^{\rmE}(K)\big).$$
\end{itemize}
\end{lemma}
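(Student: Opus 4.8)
The plan is to reduce both assertions to the already-settled normal-form case, Lemma \ref{lem3.3}, by pulling back along the ramifications $p_x$ and exploiting the compatibility of $\Sol_X^\rmE$ with inverse and proper direct images recorded in Theorem \ref{thm2.5} (2) and (3). Conditions (i) and (ii) of Definition \ref{def3.4} coincide with conditions $(a)'$ and $(b)'$ of Sublemma \ref{sublem3.2} (taken with $Y=D$), so on both sides they will be handled directly by that sublemma; the whole content lies in transporting condition (iii) across the ramification.

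For part (1), Sublemma \ref{sublem3.2} (1) already gives conditions (i) and (ii) for $K=\Sol_X^\rmE(\M)$. For condition (iii), I would fix $x\in D$ together with a ramification $p_x:U_x'\to U_x$ as in Definition \ref{def2.9}, so that $\bfD p_x^\ast(\M|_{U_x})$ has a normal form along $D_x'$. Then Theorem \ref{thm2.5} (2) yields
\[\bfE p_x^{-1}(K|_{U_x})\simeq \bfE p_x^{-1}\Sol_{U_x}^\rmE(\M|_{U_x})\simeq \Sol_{U_x'}^\rmE\big(\bfD p_x^\ast(\M|_{U_x})\big),\]
and Lemma \ref{lem3.3} (1) shows this has a normal form along $D_x'$, which is exactly condition (iii) of Definition \ref{def3.4}.

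For part (2), Sublemma \ref{sublem3.2} (2) first provides $\M:=\RH_X^\rmE(K)\in\BDC(\D_X)$ with $\M(\ast D)\simeq\M$ and $\singsupp(\M)\subset D$; it remains to show $\M$ is holonomic, that the canonical morphism $\Phi:K\to\Sol_X^\rmE(\M)$ is an isomorphism, and that $\M$ has a quasi-normal form. By Proposition \ref{prop3.2} it suffices to argue locally, and on $X\setminus D$ condition (ii) handles all three points exactly as in the proof of Lemma \ref{lem3.3} (2). So I would fix $x\in D$ and a ramification $p_x:U_x'\to U_x$ with $\bfE p_x^{-1}(K|_{U_x})$ in normal form along $D_x'$. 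By Lemma \ref{lem3.3} (2), $\N_x:=\RH_{U_x'}^\rmE\big(\bfE p_x^{-1}(K|_{U_x})\big)$ is a holonomic $\D_{U_x'}$-module with a normal form along $D_x'$ and $\bfE p_x^{-1}(K|_{U_x})\simto\Sol_{U_x'}^\rmE(\N_x)$. Since $p_x$ is proper and $\dim U_x'=\dim U_x$, the shifts in Theorem \ref{thm2.5} (3) cancel and give
\[\Sol_{U_x}^\rmE(\bfD p_{x\ast}\N_x)\simeq\bfE p_{x\ast}\Sol_{U_x'}^\rmE(\N_x)\simeq\bfE p_{x\ast}\bfE p_x^{-1}(K|_{U_x}).\]
Because $p_x$ is a ramified covering with Galois group $G=\prod_i\ZZ/m_i\ZZ$ over $U_x\setminus D_x$ and we work over $\CC$, averaging over $G$ exhibits $K|_{U_x}$ (which by condition (i) is concentrated on $X\setminus D$) as a direct summand of $\bfE p_{x\ast}\bfE p_x^{-1}(K|_{U_x})\simeq\Sol_{U_x}^\rmE(\bfD p_{x\ast}\N_x)$. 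As $\bfD p_{x\ast}\N_x$ is holonomic, applying $\RH^\rmE$ and using Theorem \ref{thm2.6} (1) exhibits $\M|_{U_x}$ as a direct summand of $\bfD p_{x\ast}\N_x$, hence holonomic; matching the two splittings through the quasi-inverse pair $(\Sol_X^\rmE,\RH_X^\rmE)$ identifies $\Phi|_{U_x}$ with an isomorphism. Finally, from $K|_{U_x}\simto\Sol_{U_x}^\rmE(\M|_{U_x})$ and Theorem \ref{thm2.5} (2) one gets $\Sol_{U_x'}^\rmE\big(\bfD p_x^\ast(\M|_{U_x})\big)\simeq\bfE p_x^{-1}(K|_{U_x})$, which is in normal form; since $\bfD p_x^\ast(\M|_{U_x})$ is holonomic, Lemma \ref{lem3.3} (2) and Theorem \ref{thm2.6} (1) identify it with $\N_x$ and show it has a normal form along $D_x'$, giving condition (iii) of Definition \ref{def2.9}.

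I expect the splitting step to be the main obstacle: one must verify carefully that the ramification $p_x$ yields $K|_{U_x}$ as a \emph{direct summand} of $\bfE p_{x\ast}\bfE p_x^{-1}(K|_{U_x})$ in the enhanced ind-sheaf setting, mirroring the $\D$-module fact that $\SM|_{U_x}$ is a direct summand of $\bfD p_{x\ast}\bfD p_x^\ast(\SM|_{U_x})$, and that this splitting is compatible with the $\D$-module one, so that under $(\Sol_X^\rmE,\RH_X^\rmE)$ the summand $\M|_{U_x}$ is matched with the summand $K|_{U_x}$ and $\Phi|_{U_x}$ is genuinely the isomorphism claimed.
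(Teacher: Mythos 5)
Your proposal is correct and follows essentially the same route as the paper: part (1) is exactly the paper's proof (Sublemma \ref{sublem3.2} (1), Theorem \ref{thm2.5} (2) and Lemma \ref{lem3.3} (1)), and part (2) matches the paper's localization step, the application of Lemma \ref{lem3.3} (2) to the ramified pullback to produce $\N_{U_x'}$, the holonomicity of $\M|_{U_x}$ as a direct summand of $\bfD p_{x\ast}\N_{U_x'}$, and the chain of isomorphisms via Theorem \ref{thm2.5} (3). The ``splitting compatibility'' you flag as the main obstacle is not actually needed: since the canonical morphism $\id\to\Sol_X^{\rmE}\circ\RH_X^{\rmE}$ is natural and additive, $\Phi|_{U_x}$ is a direct summand of the canonical morphism for $\bfE p_{x\ast}\bfE p_x^{-1}(K|_{U_x})$, which the paper proves to be an isomorphism, so $\Phi|_{U_x}$ is automatically one as well; as for the direct-summand fact itself, the paper merely asserts it (mirroring the $\D$-module remark after Definition \ref{def2.9}), so your Galois-averaging sketch is a legitimate way to fill in that unproved assertion rather than a deviation from the paper's argument.
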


\begin{proof}
(1) By Theorem \ref{thm2.5} (2), Sublemma \ref{sublem3.2}(1) and Lemma \ref{lem3.3}(1),
we obtain the assertion.

(2) Let us consider $\M := \RH_X^{\rmE}(K)\in\BDC(\D_X)$.
By Sublemma \ref{sublem3.2} (2),
$\M$ satisfies the conditions
$\M(\ast D)\simeq \M$ and $\singsupp(\M)\subset D$.
If $\M$ is a holonomic $\D_X$-module and 
the canonical morphism $\Phi : K\to \Sol_X^{\rmE}(\M)$ is an isomorphism,
$\M$ satisfies the third condition in Definition \ref{def2.9}
by Theorem \ref{thm2.5} (2), Lemma \ref{lem3.3} (2) and the fact that
$K$ satisfies the third condition in Definition \ref{def3.4}.
Hence, it is enough to prove that $\M$ is holonomic and
the canonical morphism $K\to \Sol_X^{\rmE}(\M)$ is an isomorphism.
Since $\M$ is holonomic on $X\bs D$ and
the restriction $\Phi|_{X\bs D}$ of $\Phi$ to $X\bs D$ is an isomorphism, 
we shall show that for any $x\in D$ there exists an open neighborhood $U_x$ of $x$
such that $\M$ is holonomic over $U_x$ and
the restriction $\Phi|_{U_x}$ of $\Phi$ to $U_x$ is an isomorphism.
By Lemma \ref{lem3.3} (2) and
the condition (iii) in the Definition \ref{def3.4}, 
for any $x\in D$ there exist an open neighborhood $U_x$ of $x$,
a ramification $p_x : U_x' \to U_x$ of $U_x$ along $D_x := U_x\cap D$
and a holonomic $\D_{U_x'}$-module $\N_{U_x'}$
which has a normal form along $D_x' := p_x^{-1}(D_x)$
such that $\bfE p_x^{-1}(K|_{U_x}) = \Sol_{U_x'}^{\rmE}(\N_{U_x'})$.
Then we obtain an isomorphism
\[\N_{U_x'} \simeq \rhom^{\rmE}\big(\bfE p_x^{-1}(K|_{U_x}), \SO_{U_x'}^\rmE\big)\]
by Theorem \ref{thm2.6} (1).
Moreover, we have isomorphisms
\begin{align*}
\bfD p_{x\ast}\N_{U_x'}
&\simeq
\bfD p_{x\ast}\rhom^{\rmE}\big(\bfE p_x^{-1}(K|_{U_x}), \SO_{U_x'}^\rmE\big)\\
&\simeq
\bfR p_{x\ast}\rhom^{\rmE}\big(\bfE p_x^{-1}(K|_{U_x}),
\bfE p_x^!\SO_{U_x}^\rmE\big)\\
&\simeq
\rhom^{\rmE}\big(\bfE p_{x\ast}\bfE p_x^{-1}(K|_{U_x}),\SO_{U_x}^\rmE\big)
\end{align*}
where in the second (resp.\ last) isomorphism
we used Sublemma \ref{sublem3.3} and \cite[Theorem 9.1.2 (i)]{DK16}
(resp.\ \cite[Lemma 4.5.17]{DK16}).
Hence $\M|_{U_x}$ is a direct summand of $\bfD p_{x\ast}\N_{U_x'}$
because $K|_{U_x}$ is a direct summand of $\bfE p_{x\ast}\bfE p_x^{-1}(K|_{U_x})$.
Since the morphism $p_x$ is proper,
$\bfD p_{x\ast}\N_{U_x'}$ is holonomic
by \cite[Theorem 4.4.1]{Sab11},
therefore $\M|_{U_x}$ is also holonomic.
We shall prove the restriction $\Phi|_{U_x}$ of $\Phi$ to $U_x$ is an isomorphism.
It is enough to show that
the canonical morphism $\bfE p_{x\ast}\bfE p_x^{-1}(K|_{U_x})\to
\Sol_{U_x}^\rmE\big(\RH_{U_x}^\rmE\big(\bfE p_{x\ast}\bfE p_x^{-1}(K|_{U_x})\big)\big)$
is an isomorphism
because $K|_{U_x}$ is a direct summand of $\bfE p_{x\ast}\bfE p_x^{-1}(K|_{U_x})$.
This follows from isomorphisms below:
\begin{align*}
\Sol_{U_x}^\rmE\big(\RH_{U_x}^\rmE\big(\bfE p_{x\ast}\bfE p_x^{-1}(K|_{U_x})\big)\big)
&\simeq
\Sol_{U_x}^\rmE\big(\rhom^{\rmE}\big(
\bfE p_{x\ast}\bfE p_x^{-1}(K|_{U_x}),\SO_{U_x}^\rmE\big)\big)\\
&\simeq
\Sol_{U_x}^\rmE(\bfD p_{x\ast}\N_{U_x'})\\
&\simeq
\bfE p_{x\ast}\Sol_{U_x'}^\rmE(\N_{U_x'})\\
&\simeq
\bfE p_{x\ast}\bfE p_x^{-1}(K|_{U_x})
\end{align*}
where the third (resp.\ last) isomorphism follows from Theorem \ref{thm2.5} (3)
(resp.\ the definition of $\N_{U_x'}$).
Hence the proof is completed.
\end{proof}

\subsection{Modified Quasi-Normal Form}
In this subsection, we define enhanced ind-sheaves
which have a modified quasi-normal form along an analytic hypersurface.
Moreover, we show that these are nothing but the images of meromorphic connections
via the enhanced solution functor.
Let $X$ be a complex manifold and $Y$ an analytic hypersurface of $X$.

\begin{definition}\label{def3.7}
We say that an enhanced ind-sheaf $K\in\ZEC(\I\CC_X)$
has a modified quasi-normal form along $Y$ if 
\begin{itemize}
\setlength{\itemsep}{-3pt}
\item[(i)]
$\pi^{-1}\CC_{X\setminus Y}\otimes K\simto K$,

\item[(ii)]
for any $x\in X\setminus Y$, there exist an open neighborhood $U_x\subset X\setminus Y$
of $x$ and a non-negative integer $k$ such that
\[K|_{U_x}\simeq (\CC_{U_x}^{\rmE})^{\oplus k},\]

\item[(iii)]
for any $x\in Y$, there exist an open neighborhood $U_x\subset X$ of $x$
and a modification $f_x : U_x'\to U_x$ of $U_x$ along $Y_x := U_x\cap Y$
such that $\bfE f_x^{-1}(K|_{U_x})$ has a quasi-normal form along $D_x' := f_x^{-1}(Y_x)$.
\end{itemize}
\end{definition}

\begin{proposition}\label{prop3.10}
Any enhanced ind-sheaf which has a modified quasi-normal form along $Y$
is an $\RR$-constructible enhanced ind-sheaf. 
\end{proposition}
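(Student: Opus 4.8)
The plan is to follow the scheme of Proposition \ref{prop3.7}, replacing the ramification by the modification $f_x$ of condition (iii) and invoking Proposition \ref{prop3.7} itself (``quasi-normal form $\Rightarrow$ $\RR$-constructible'') in place of the corresponding statement for normal forms. Since $\RR$-constructibility is a local property by Lemma \ref{lem2.3}, it suffices to find, for each $x\in X$, an open neighborhood $U_x$ with $K|_{U_x}\in\BEC_{\RR-c}(\I\CC_{U_x})$. For $x\in X\setminus Y$, condition (ii) of Definition \ref{def3.7} gives $K|_{U_x}\simeq(\CC_{U_x}^\rmE)^{\oplus k}$, which is $\RR$-constructible because $\CC^\rmE$ is; so all the work is at points of $Y$.

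Fix $x\in Y$ and take the neighborhood $U_x$ and the modification $f_x\colon U_x'\to U_x$ provided by condition (iii), so that $L:=\bfE f_x^{-1}(K|_{U_x})$ has a quasi-normal form along $D_x'=f_x^{-1}(Y_x)$. By Proposition \ref{prop3.7} we then have $L\in\BEC_{\RR-c}(\I\CC_{U_x'})$; and since a modification is projective, hence proper, the pushforward $\bfE f_{x\ast}L$ is again $\RR$-constructible by \cite[Theorem 4.9.11 (ii)]{DK16}. Thus the whole matter reduces to identifying $K|_{U_x}$ with $\bfE f_{x\ast}L=\bfE f_{x\ast}\bfE f_x^{-1}(K|_{U_x})$.

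The key step, which I expect to carry the content, is to prove that the adjunction unit $u\colon K|_{U_x}\to\bfE f_{x\ast}\bfE f_x^{-1}(K|_{U_x})$ is an \emph{isomorphism}. This is where the modification (rather than a ramification) is essential: $f_x$ restricts to an isomorphism $U_x'\setminus D_x'\simto U_x\setminus Y_x$, so proper base change shows $u$ is an isomorphism over $V:=U_x\setminus Y_x$, i.e.\ $\bfE i_V^{-1}(\Cone u)\simeq0$. On the other hand, condition (i) of Definition \ref{def3.7} says $K|_{U_x}$ is meromorphic along $Y_x$, namely $\pi^{-1}\CC_V\otimes K|_{U_x}\simto K|_{U_x}$, and the target shares this property, since $L$ is meromorphic along $D_x'$ and the projection formula for the proper map $f_x$ yields
\[
\pi^{-1}\CC_V\otimes\bfE f_{x\ast}L
\simeq\bfE f_{x\ast}\bigl(\pi^{-1}\CC_{U_x'\setminus D_x'}\otimes L\bigr)
\simeq\bfE f_{x\ast}L .
\]
Hence $\Cone u$ is meromorphic as well, so using $\pi^{-1}\CC_V\otimes(-)\simeq\bfE i_{V!!}\bfE i_V^{-1}(-)$ for the open set $V$ (as in Remark \ref{rem-stalk}) we get $\Cone u\simeq\bfE i_{V!!}\bfE i_V^{-1}(\Cone u)\simeq\bfE i_{V!!}(0)\simeq0$. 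Therefore $u$ is an isomorphism, $K|_{U_x}\simeq\bfE f_{x\ast}L$ is $\RR$-constructible, and the local verification is complete.

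I expect the remaining points to be routine bookkeeping: that inverse image commutes with the functors $\pi^{-1}\CC_{(-)}\otimes$ (so that $L$ is genuinely meromorphic along $D_x'$), that the projection formula and proper base change hold for $f_x$ at the level of enhanced ind-sheaves, and that meromorphic objects are stable under $\Cone$; all of these follow formally from the operations recalled in Section 2. I also note the structural contrast with Proposition \ref{prop3.7}: there a ramification $p_x$ is generically finite of degree $>1$, so one obtains $K|_{U_x}$ only as a \emph{direct summand} of $\bfE p_{x\ast}\bfE p_x^{-1}(K|_{U_x})$, whereas here the genericity of the modification promotes this to an honest isomorphism, which is exactly what makes the meromorphic cancellation above available.
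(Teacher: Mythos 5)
Your proof is correct and follows essentially the same route as the paper: locality plus condition (ii) off $Y$, and at points of $Y$ the combination of Proposition \ref{prop3.7}, properness of $f_x$ together with \cite[Theorem 4.9.11 (ii)]{DK16}, and the identification $K|_{U_x}\simeq\bfE f_{x\ast}\bfE f_x^{-1}(K|_{U_x})$ --- which the paper establishes by the same condition-(i)-plus-isomorphism-off-the-divisor reasoning that you package as the cone argument for the adjunction unit. The only slip is your reference for $\pi^{-1}\CC_V\otimes(-)\simeq\bfE i_{V!!}\bfE i_V^{-1}(-)$: Remark \ref{rem-stalk} concerns $\beta_M\CC_V$ (the plain open embedding), so you should instead invoke the corresponding standard fact of \cite{DK16} for the bordered inclusion $V_\infty=(V,\var{V})$, with which your base-change and cancellation steps go through verbatim.
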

\begin{proof}
By the condition (ii) in Definition \ref{def3.7},
for any $x\in X\setminus D$,
there exists an open neighborhood $U_x\subset X\setminus D$ of $x$
such that $K|_{U_x}\in\BEC_{\RR-c}(\I\CC_{U_x})$
because the constant enhanced ind-sheaf $\CC_{U_x}^\rmE$ is $\RR$-constructible.

By the condition (iii) in Definition \ref{def3.7},
for any $x\in Y$, there exist an open neighborhood $U_x$ of $x$
and a modification $f_x : U_x' \to U_x$
such that $\bfE f_x^{-1}(K|_{U_x})$ has a quasi-normal form along
$D_x' := f_x^{-1}(U_x\cap Y)$,
and hence it is $\RR$-constructible
by Proposition \ref{prop3.7}.
Since $f_x$ is proper, $\bfE f_{x\ast}\bfE f_x^{-1}(K|_{U_x})$ is also $\RR$-constructible
by \cite[Proposition 4.9.11 (ii)]{DK16}.
By the condition (i) in Definition \ref{def3.7}
and the fact that the modification $f_x$ induces an isomorphism
$U_x'\bs D_x'\simto U_x\bs Y$,
we have isomorphisms
\begin{align*}
\bfE f_{x\ast}\bfE f_x^{-1}(K|_{U_x})
&\simeq
\bfE f_{x\ast}\bfE f_x^{-1}\big((\pi^{-1}\CC_{X\bs Y}\otimes K)|_{U_x}\big)\\
&\simeq
\bfE f_{x\ast}\bfE f_x^{-1}(\pi^{-1}\CC_{U_x\bs Y}\otimes K|_{U_x})\\
&\simeq
\pi^{-1}\CC_{U_x\bs Y}\otimes K|_{U_x}
\simeq
K|_{U_x}.
\end{align*}
Therefore $K|_{U_x}$ is also $\RR$-constructible.
\end{proof}

The following lemma means that
the enhanced solution functor $\Sol_X^{\rmE}$ induces
an equivalence of categories between
the full subcategory of $\ZEC_{\RR-c}(\I\CC_X)$ consisting of
objects which have a modified quasi-normal form
and the abelian category $\Conn(X; Y)$ of
meromorphic connections on $X$ along $Y$.
\begin{lemma}\label{lem3.9}
\begin{itemize}
\item[\rm (1)]
For any meromorphic connection $\M$ on $X$ along $Y$,
the enhanced solution complex $K := \Sol_X^{\rmE}(\M)$ of $\M$
has a modified quasi-normal form along $Y$.

\item[\rm (2)]
For any enhanced ind-sheaf $K\in\ZEC(\I\CC_X)$
which has a modified quasi-normal form along $Y$,
$\RH_X^{\rmE}(K)$ is a meromorphic connection on $X$ along $Y$ 
which satisfies $$K\simto \Sol_X^{\rmE}\big(\RH_X^{\rmE}(K)\big).$$
\end{itemize}
\end{lemma}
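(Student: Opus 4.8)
The plan is to prove both directions by the same reduction as in the proof of Lemma \ref{lem3.6}, with modifications playing the role that ramifications played there and with Corollary \ref{cor2.11} furnishing the local geometric input. For (1), conditions (i) and (ii) of Definition \ref{def3.7} follow at once from Sublemma \ref{sublem3.2} (1), because a meromorphic connection $\M$ along $Y$ satisfies $\M \simto \M(\ast Y)$ and $\singsupp(\M) \subset Y$; the same input shows that $K := \Sol_X^\rmE(\M)$ lies in $\ZEC(\I\CC_X)$. For condition (iii), I would fix $x \in Y$ and apply Corollary \ref{cor2.11} to get a neighborhood $U_x$ and a modification $f_x : U_x' \to U_x$ along $Y_x := Y \cap U_x$ for which $\bfD f_x^\ast(\M|_{U_x})$ has a quasi-normal form along $D_x' := f_x^{-1}(Y_x)$. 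Theorem \ref{thm2.5} (2) then gives $\bfE f_x^{-1}(K|_{U_x}) \simeq \Sol_{U_x'}^\rmE\big(\bfD f_x^\ast(\M|_{U_x})\big)$, and Lemma \ref{lem3.6} (1) says the right-hand side has a quasi-normal form along $D_x'$; this is exactly condition (iii).

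For (2), I set $\M := \RH_X^\rmE(K)$. Sublemma \ref{sublem3.2} (2) gives $\M \simto \M(\ast Y)$ and $\singsupp(\M) \subset Y$, and condition (ii) of Definition \ref{def3.7} yields $\M|_{U_x} \simeq \SO_{U_x}^{\oplus k}$ locally on $X \setminus Y$, so that there $\M$ is a degree-zero integrable connection and the canonical morphism $\Phi : K \to \Sol_X^\rmE(\M)$ restricts to an isomorphism. For $x \in Y$, condition (iii) supplies a modification $f_x : U_x' \to U_x$ such that $\bfE f_x^{-1}(K|_{U_x})$ has a quasi-normal form along $D_x'$; by Lemma \ref{lem3.6} (2) the object $\N_{U_x'} := \RH_{U_x'}^\rmE\big(\bfE f_x^{-1}(K|_{U_x})\big)$ is a holonomic $\D_{U_x'}$-module with a quasi-normal form along $D_x'$ and $\bfE f_x^{-1}(K|_{U_x}) \simto \Sol_{U_x'}^\rmE(\N_{U_x'})$. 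Because $f_x$ is an isomorphism over $U_x \setminus Y_x$, the computation in Proposition \ref{prop3.10} gives $\bfE f_{x\ast}\bfE f_x^{-1}(K|_{U_x}) \simeq K|_{U_x}$. Moreover $\bfD f_{x\ast}\N_{U_x'}$ is holonomic by \cite[Theorem 4.4.1]{Sab11}, so combining the previous isomorphism with Theorem \ref{thm2.5} (3) (whose shifts cancel since $\dim U_x' = \dim U_x$) and Theorem \ref{thm2.6} (1) yields $\M|_{U_x} \simeq \bfD f_{x\ast}\N_{U_x'}$. Reading the same isomorphisms backwards gives $\Sol_{U_x}^\rmE(\M|_{U_x}) \simeq K|_{U_x}$, hence $\Phi|_{U_x}$ is an isomorphism, and Proposition \ref{prop3.2} promotes the local isomorphisms to a global $K \simto \Sol_X^\rmE(\M)$.

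It remains to check that the holonomic complex $\M$ is genuinely a meromorphic connection along $Y$, and I expect concentration of $\M$ in degree zero to be the main obstacle: the proper pushforward $\bfD f_{x\ast}\N_{U_x'}$ of a good meromorphic connection by a projective modification is a priori a complex. Rather than analyzing this pushforward degreewise, I would argue globally. Since $\M$ is a degree-zero integrable connection on $X \setminus Y$, each cohomology $H^j(\M)$ with $j \neq 0$ is a holonomic module supported in $Y$; on the other hand the meromorphic localization $(\cdot)(\ast Y)$ is exact on holonomic modules, so $\M \simto \M(\ast Y)$ forces $H^j(\M) \simeq H^j(\M)(\ast Y)$ for every $j$, and a holonomic module supported in $Y$ that is isomorphic to its own localization along $Y$ must vanish. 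Therefore $H^j(\M) = 0$ for $j \neq 0$, so $\M$ is a single holonomic module with $\M \simto \M(\ast Y)$ and $\singsupp(\M) \subset Y$; being an integrable connection off $Y$ and meromorphic along $Y$, it is a meromorphic connection on $X$ along $Y$. Combined with the isomorphism $K \simto \Sol_X^\rmE(\M)$ established above, this proves (2).
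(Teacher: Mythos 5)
Your proposal is correct, and part (1) together with the skeleton of part (2) (localize, pull back along the modification $f_x$, apply Lemma \ref{lem3.6} (2), use $\bfE f_{x\ast}\bfE f_x^{-1}(K|_{U_x})\simeq K|_{U_x}$, identify $\M|_{U_x}$ with $\bfD f_{x\ast}\N_{U_x'}$, and globalize via Proposition \ref{prop3.2}) coincides with the paper's proof. You diverge from the paper at two points in part (2), both legitimately. First, to get $\M|_{U_x}\simeq \bfD f_{x\ast}\N_{U_x'}$ the paper runs an adjunction computation ($\bfD f_{x\ast}\rhom^{\rmE}(\cdot,\SO^{\rmE})\simeq \rhom^{\rmE}(\bfE f_{x\ast}(\cdot),\SO^{\rmE})$ via Sublemma \ref{sublem3.3}, \cite[Theorem 9.1.2 (i)]{DK16} and \cite[Lemma 4.5.17]{DK16}), whereas you deduce it from $K|_{U_x}\simeq \Sol_{U_x}^{\rmE}(\bfD f_{x\ast}\N_{U_x'})$ (Theorem \ref{thm2.5} (3), shifts cancelling) together with holonomicity of the pushforward (\cite[Theorem 4.4.1]{Sab11}) and full faithfulness (Theorem \ref{thm2.6} (1)); this is slightly more economical and simultaneously yields the isomorphism $\Phi|_{U_x}$. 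Second, and more substantially, the paper settles concentration in degree zero and the meromorphic-connection property in one stroke by quoting \cite[Proposition 8.16]{Sab13} (the pushforward of a meromorphic connection under a modification is again a meromorphic connection, in degree zero), while you prove degree-zero concentration by hand: $H^j(\M)$ for $j\neq 0$ is holonomic, supported in $Y$, and isomorphic to its own localization $H^j(\M)(\ast Y)$ by exactness of $(\cdot)(\ast Y)$, hence vanishes. That argument is sound (for holonomic $N$ with $\supp N\subset Y$ one has $N(\ast Y)=0$, e.g.\ from the triangle $\rsect_{[Y]}N\to N\to N(\ast Y)$), and it buys independence from Sabbah's structural result, at the cost of being longer.

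One step you should make explicit: your final sentence, that a holonomic module $\M$ with $\M\simto\M(\ast Y)$ which is an integrable connection on $X\setminus Y$ is a meromorphic connection (i.e.\ coherent over $\SO_X(\ast Y)$), is not automatic from the definitions; it is exactly the content of \cite[Theorem 3.1]{Kas78}, which the paper itself invokes for the same purpose in the proof of Lemma \ref{lem3.23}. With that citation added, your proof is complete.
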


\begin{proof}
(1) This follows from Corollary \ref{cor2.11},
Sublemma \ref{sublem3.2} (1) and Lemma \ref{lem3.6} (1).

(2) Let us consider $\M := \RH_X^{\rmE}(K)\in\BDC(\D_X)$.
It is clear that $\M$ satisfies the conditions
$\M(\ast Y)\simeq \M$ and $\singsupp(\M)\subset Y$
by Sublemma \ref{sublem3.2} (2).
We shall prove that $\M$ is a holonomic $\D_X$-module and 
the canonical morphism $\Phi : K\to \Sol_X^{\rmE}(\M)$ is an isomorphism.
Now, $\M$ is holonomic on $X\bs Y$ and
the restriction $\Phi|_{X\bs Y}$ of $\Phi$ to $X\bs Y$ is an isomorphism.
Hence, it is enough to show that for any $x\in Y$ there exists an open neighborhood $U_x$ of $x$
such that $\M$ is holonomic over $U_x$ and
the restriction $\Phi|_{U_x}$ of $\Phi$ to $U_x$ is an isomorphism.
By Lemma \ref{lem3.6} (2) and the fact that 
$K$ satisfies the condition (iii) in the Definition \ref{def3.7},
for any $x\in Y$ there exist an open neighborhood $U_x$ of $x$,
a modification $f_x : U_x' \to U_x$ of $U_x$ along $Y_x:=U_x\cap Y$
and a holonomic $\D_{U_x'}$-module $\N_{U_x'}$
which has a quasi-normal form along $D_x':=f_x^{-1}(Y_x)$ such that
$\bfE f_x^{-1}(K|_{U_x})=\Sol_{U_x'}^{\rmE}(\N_{U_x'})$. 
Then we obtain an isomorphism
\[\N_{U_x'} \simeq \rhom^{\rmE}\big(\bfE f_x^{-1}(K|_{U_x}), \SO_{U_x'}^\rmE\big)\]
by Theorem \ref{thm2.6}.
Moreover, we have isomorphisms
\begin{align*}
\bfD f_{x\ast}\N_{U_x'}
&\simeq
\bfD f_{x\ast}\rhom^{\rmE}\big(\bfE f_x^{-1}(K|_{U_x}), \SO_{U_x'}^\rmE\big)\\
&\simeq
\bfR f_{x\ast}\rhom^{\rmE}\big(\bfE f_x^{-1}(K|_{U_x}),
\bfE f_x^!\SO_{U_x}^\rmE\big)\\
&\simeq
\rhom^{\rmE}\big(\bfE f_{x\ast}\bfE f_x^{-1}(K|_{U_x}),\SO_{U_x}^\rmE\big)\\
&\simeq
\rhom^{\rmE}\big(K|_{U_x},\SO_{U_x}^\rmE\big)
\simeq \M|_{U_x},
\end{align*}
where in the second (resp.\ third) isomorphism
we used Sublemma \ref{sublem3.3} and \cite[Theorem 9.1.2 (i)]{DK16}
(resp.\ \cite[Lemma 4.5.17]{DK16})
and in the last one we used fact 
$\bfE f_{x\ast}\bfE f_x^{-1}(K|_{U_x})\simeq K|_{U_x}$.
Since the morphism $f_x$ is a modification,
$\bfD f_{x\ast}(\N_{U_x'})$ is also a meromorphic connection on $U_x$ along $Y_x$
by \cite[Proposition 8.16]{Sab13}
and therefore $\M|_{U_x}$ is also meromorphic connection (in particular, holonomic).
Moreover by the definition of $\N_{U_x'}$
we have isomorphisms
\begin{align*}
K|_{U_x}
&\simeq
\bfE f_{x\ast}\bfE f_x^{-1}(K|_{U_x})\\
&\simeq
\bfE f_{x\ast}\Sol_{U_x'}^{\rmE}(\N_{U_x'})\\
&\simeq
\Sol_{U_x}^{\rmE}(\bfD f_{x\ast}\N_{U_x'})\\
&\simeq
\Sol_{U_x}^\rmE(\M|_{U_x})
\simeq
\Sol_X^\rmE(\M)|_{U_x}
\end{align*}
where third isomorphism follows from Theorem \ref{thm2.5} (3)
and hence the morphism $\Phi|_{U_x}$ is an isomorphism.
The proof is completed.
\end{proof}

\begin{notation}\label{nota3.12}
We denote by $\ZECmero(\I\CC_{X(Y)})$ the essential image of
$$\Sol_X^{\rmE} : \Conn(X; Y)^{\op}\to\ZEC_{\RR-c}(\I\CC_X).$$
This abelian category is nothing but the full subcategory of $\ZEC_{\RR-c}(\I\CC_X)$
consisting of enhanced ind-sheaves which have a modified quasi-normal form along $Y$
by Lemma \ref{lem3.9}.
Moreover, we set
\begin{align*}
\BDCmero(\D_{X(Y)}) &:=\{\M\in\BDChol(\D_X)\
|\ \SH^i(\M)\in\Conn(X; Y) \mbox{ for any }i\in\ZZ \},\\
\BECmero(\I\CC_{X(Y)}) &:=\{K\in\BEC_{\RR-c}(\I\CC_X)\
|\ \SH^i(K)\in\ZECmero(\I\CC_{X(Y)}) \mbox{ for any }i\in\ZZ \}.
\end{align*}
\end{notation}
Since the category $\BDCmero(\D_{X(Y)})$ is a full triangulated subcategory
of $\BDChol(\D_X)$
and the category $\BECmero(\I\CC_{X(Y)})$ is a full triangulated subcategory
of $\BEC_{\RR-c}(\I\CC_X)$, the following proposition is obvious
by induction on the length of the complex:
\begin{proposition}\label{prop3.17}
The enhanced solution functor $\Sol_X^\rmE$ induces an equivalence of triangulated categories
$$\BDCmero(\D_{X(Y)})^{\op} \simto \BECmero(\I\CC_{X(Y)}),$$
and hence we obtain a commutative diagram
\[\xymatrix@C=30pt@M=3pt{
\BDCmero(\D_{X(Y)})^{\op}\ar@{->}[r]^\sim\ar@<1.0ex>@{}[r]^-{\Sol_X^{\rmE}}
\ar@{}[rd]|{\rotatebox[origin=c]{180}{$\circlearrowright$}}
 & \BECmero(\I\CC_{X(Y)})\\
\Conn(X; Y)^{\op}\ar@{->}[r]_-{\Sol_X^\rmE}^-{\sim}\ar@{}[u]|-{\bigcup}
&\ZECmero(\CC_{X(Y)}).\ar@{}[u]|-{\bigcup}
}\]
\end{proposition}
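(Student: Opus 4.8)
The plan is to deduce everything from two facts already in hand: by Theorem \ref{thm2.6} (1) the enhanced solution functor $\Sol_X^\rmE$ is fully faithful on all of $\BDChol(\D_X)^{\op}$, and by Lemma \ref{lem3.9} (together with Notation \ref{nota3.12}) it restricts to an equivalence $\Conn(X;Y)^{\op}\simto\ZECmero(\I\CC_{X(Y)})$ on the hearts. Since $\BDCmero(\D_{X(Y)})^{\op}$ is a full subcategory of $\BDChol(\D_X)^{\op}$, the restriction of $\Sol_X^\rmE$ to it is again fully faithful; hence it suffices to check that its essential image is exactly $\BECmero(\I\CC_{X(Y)})$, and then to read off the commutative diagram. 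I would prove the two inclusions separately, each by induction on the length $\ell(\M)$, i.e.\ the number of integers $i$ with $\SH^i\neq0$.

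First I would show $\Sol_X^\rmE(\BDCmero(\D_{X(Y)}))\subset\BECmero(\I\CC_{X(Y)})$. When $\ell(\M)\leq1$ we have $\M\simeq N[j]$ with $N\in\Conn(X;Y)$, so $\Sol_X^\rmE(\M)\simeq\Sol_X^\rmE(N)[-j]$ is a shift of an object of $\ZECmero(\I\CC_{X(Y)})$ by Lemma \ref{lem3.9} (1), hence lies in $\BECmero(\I\CC_{X(Y)})$. For the inductive step, let $n$ be the top degree of $\M$ and use the standard truncation triangle $\tau^{\leq n-1}\M\to\M\to\SH^n(\M)[-n]\xrightarrow{+1}$; applying the contravariant exact functor $\Sol_X^\rmE$ yields a distinguished triangle whose two outer terms lie in $\BECmero(\I\CC_{X(Y)})$ by the inductive hypothesis (the length-one case being Lemma \ref{lem3.9} (1)). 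Because $\BECmero(\I\CC_{X(Y)})$ is a full triangulated subcategory of $\BEC_{\RR-c}(\I\CC_X)$, the remaining vertex $\Sol_X^\rmE(\M)$ also lies in it.

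Next I would prove essential surjectivity, again by induction on the length of $K\in\BECmero(\I\CC_{X(Y)})$. The case $\ell(K)\leq1$ is Lemma \ref{lem3.9} (2): writing $K\simeq L[-j]$ with $L\in\ZECmero(\I\CC_{X(Y)})$, the meromorphic connection $N:=\RH_X^\rmE(L)$ satisfies $\Sol_X^\rmE(N[j])\simeq L[-j]\simeq K$ with $N[j]\in\BDCmero(\D_{X(Y)})$. For the step, choose $m$ so that both $\tau^{\leq m}K$ and $\tau^{>m}K$ are nonzero; both lie in $\BECmero(\I\CC_{X(Y)})$ and have smaller length, so by induction $\tau^{\leq m}K\simeq\Sol_X^\rmE(\M_1)$ and $\tau^{>m}K\simeq\Sol_X^\rmE(\M_2)$ for some $\M_1,\M_2\in\BDCmero(\D_{X(Y)})$. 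The connecting morphism $\tau^{>m}K\to(\tau^{\leq m}K)[1]$ is a morphism $\Sol_X^\rmE(\M_2)\to\Sol_X^\rmE(\M_1[-1])$, which by full faithfulness of $\Sol_X^\rmE$, through the identification $\Hom(\Sol_X^\rmE(\M_2),\Sol_X^\rmE(\M_1[-1]))\simeq\Hom_{\BDChol(\D_X)}(\M_1[-1],\M_2)$, comes from a morphism $u:\M_1[-1]\to\M_2$. Completing $u$ to a distinguished triangle $\M_1[-1]\xrightarrow{u}\M_2\to\M\xrightarrow{+1}$ in $\BDChol(\D_X)$ produces $\M\in\BDCmero(\D_{X(Y)})$ (again since this category is triangulated), and applying $\Sol_X^\rmE$ gives a distinguished triangle $\tau^{\leq m}K\to\Sol_X^\rmE(\M)\to\tau^{>m}K\xrightarrow{+1}$ whose connecting morphism coincides with that of the original truncation triangle; hence $\Sol_X^\rmE(\M)\simeq K$.

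The resulting equivalence $\BDCmero(\D_{X(Y)})^{\op}\simto\BECmero(\I\CC_{X(Y)})$ restricts on hearts to the equivalence of Lemma \ref{lem3.9}, and the two vertical arrows in the diagram are simply the inclusions of the hearts into their triangulated categories, so commutativity is immediate. The only genuinely non-formal inputs are Lemma \ref{lem3.9} (the length-one case) and Theorem \ref{thm2.6} (1) (full faithfulness). I expect the step requiring the most care to be essential surjectivity: one must realize the connecting morphism of a truncation triangle as $\Sol_X^\rmE$ of an actual morphism of $\D$-modules and then invoke the completion of a morphism to a distinguished triangle, verifying that the two triangles share the same connecting morphism so that their third vertices are isomorphic. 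Keeping track of the contravariance throughout is the only real bookkeeping hazard; everything else reduces to the fact, asserted above, that $\BDCmero(\D_{X(Y)})$ and $\BECmero(\I\CC_{X(Y)})$ are triangulated subcategories.
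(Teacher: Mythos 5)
Your proposal is correct and takes essentially the same approach as the paper, which disposes of this proposition in a single sentence: full faithfulness from Theorem \ref{thm2.6} (1), the heart equivalence from Lemma \ref{lem3.9} and Notation \ref{nota3.12}, and an induction on the length of the complex using that $\BDCmero(\D_{X(Y)})$ and $\BECmero(\I\CC_{X(Y)})$ are full triangulated subcategories. Your write-up simply makes explicit the standard d\'evissage the paper declares ``obvious'' --- truncation triangles for both inclusions, and for essential surjectivity the lifting of the connecting morphism through the fully faithful contravariant functor followed by completion to a distinguished triangle --- with the contravariance handled correctly throughout.
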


\subsection{$\CC$-Constructible Enhanced Ind-Sheaves}
In this subsection we define $\CC$-constructible enhanced ind-sheaves
and prove the main theorem.
Let $X$ be a complex manifold.

\begin{definition}\label{def3.19}
We say that an enhanced ind-sheaf $K\in\ZEC(\I\CC_X)$ is $\CC$-constructible if
there exists a complex stratification $\{X_\alpha\}_\alpha$ of $X$
such that $$\pi^{-1}\CC_{Z_\alpha\setminus D_\alpha}\otimes \bfE f_\alpha^{-1}K$$
has a modified quasi-normal form along $D_\alpha$ for any $\alpha$,
where $f_\alpha : Z_\alpha \to X$ is a complex blow-up of $\var{X_\alpha}$
along $\var{X_\alpha}\setminus X_\alpha$ and
$D_\alpha := f_\alpha^{-1}(\var{X_\alpha}\setminus X_\alpha)$.
Namely $Z_\alpha$ is a complex manifold,
$D_\alpha$ is a normal crossing divisor of $Z_\alpha$
and $f_\alpha$ is a projective map
which induces an isomorphism $Z_\alpha\setminus D_\alpha\simto X_\alpha$
and satisfies $f_\alpha(Z_\alpha)=\var{X_\alpha}$.

We call such a family $\{X_\alpha\}_{\alpha\in A}$ a stratification adapted to $K$.
\end{definition}

\begin{remark}\label{rem3.12}
\begin{itemize}
\item[(1)]
Definiton \ref{def3.19} does not depend on the choice of the complex blow-up $f_\alpha$
by Sublemma \ref{sublem3.22} below.

\item[(2)]
In the situation as above,
since $\pi^{-1}\CC_{Z_\alpha\setminus D_\alpha}\otimes \bfE f_\alpha^{-1}K$
has a modified quasi-normal form along $D_\alpha$,
there exists a meromorphic connection $\N_\alpha$ on $Z_\alpha$ along $D_\alpha$
such that $$\pi^{-1}\CC_{Z_\alpha\setminus D_\alpha}\otimes \bfE f_\alpha^{-1}K
\simto \Sol_{Z_\alpha}^{\rmE}(\N_\alpha)$$
by Lemma \ref{lem3.9} (2).
By applying the direct image functor $\bfE f_{\alpha !!}$
we obtain an isomorphism
$$\pi^{-1}\CC_{X_\alpha}\otimes K\simto
\Sol_X^{\rmE}(\bfD f_{\alpha\ast}\N_\alpha)[d_X-d_{X_\alpha}].$$
Moreover, by \cite[Theorem 4.4.1]{Sab11},
we have $\bfD f_{\alpha\ast}\N_\alpha\in\BDChol(\D_X)$
and hence $\pi^{-1}\CC_{X_\alpha}\otimes K\in\BEC_{\RR-c}(\I\CC_X)$.
\end{itemize}
\end{remark}

We denote by $\ZEC_{\CC-c}(\I\CC_X)$ the full subcategory of $\ZEC(\I\CC_X)$
whose objects are $\CC$-constructible
and set
\[\BEC_{\CC-c}(\I\CC_X) := \{K\in\BEC(\I\CC_X)\
|\ \SH^i(K)\in\ZEC_{\CC-c}(\I\CC_X) \mbox{ for any }i\in\ZZ \}\subset \BEC(\I\CC_X).\]

\begin{proposition}
The category $\ZEC_{\CC-c}(\I\CC_X)$ is
the full abelian subcategory of $\ZEC_{\RR-c}(\I\CC_X)$.
Hence the category $\BEC_{\CC-c}(\I\CC_X)$ is
a full triangulated subcategory of $\BEC_{\RR-c}(\I\CC_X)$.
\end{proposition}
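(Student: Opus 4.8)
The plan is to exploit the fact that $\ZEC_{\RR-c}(\I\CC_X)$ is the heart of the standard t-structure on $\BEC_{\RR-c}(\I\CC_X)$, hence abelian, and to check that $\ZEC_{\CC-c}(\I\CC_X)$ is stable under the kernels, cokernels, finite direct sums and extensions formed inside this heart; the triangulated assertion for $\BEC_{\CC-c}(\I\CC_X)$ will then be a formal consequence. First I would record the inclusion $\ZEC_{\CC-c}(\I\CC_X)\subset\ZEC_{\RR-c}(\I\CC_X)$: by Remark \ref{rem3.12}~(2) each $\pi^{-1}\CC_{X_\alpha}\otimes K$ is $\RR$-constructible, and since $\{X_\alpha\}$ is a locally finite family of locally closed subanalytic subsets covering $X$ and $\RR$-constructibility is local, Lemma \ref{lem2.3} (after refining the $X_\alpha$ by relatively compact subanalytic opens to reach the product form in condition (ii)) gives $K\in\BEC_{\RR-c}(\I\CC_X)$.

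The main device is a reduction to the strata. For the blow-up $f_\alpha\colon Z_\alpha\to X$ put $G_\alpha:=\pi^{-1}\CC_{Z_\alpha\setminus D_\alpha}\otimes\bfE f_\alpha^{-1}(\cdot)$. Using $\bfL^\rmE\bfE f_\alpha^{-1}\simeq(f_\alpha\times\id)^{-1}\bfL^\rmE$ together with the exactness of the inverse image of ind-sheaves, $\bfE f_\alpha^{-1}$ is t-exact for the standard t-structure, and $\pi^{-1}\CC_{Z_\alpha\setminus D_\alpha}\otimes(\cdot)$ is t-exact as well; both preserve $\RR$-constructibility, so $G_\alpha\colon\ZEC_{\RR-c}(\I\CC_X)\to\ZEC_{\RR-c}(\I\CC_{Z_\alpha})$ is an exact functor. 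On the target side, by Lemma \ref{lem3.9} the subcategory $\ZECmero(\I\CC_{Z_\alpha(D_\alpha)})$ of objects with modified quasi-normal form is the essential image of the fully faithful functor $\Sol_{Z_\alpha}^\rmE\colon\Conn(Z_\alpha;D_\alpha)^\op\to\ZEC_{\RR-c}(\I\CC_{Z_\alpha})$; since a short exact sequence of meromorphic connections is sent to a distinguished triangle all of whose vertices lie in the heart, $\Sol^\rmE$ is exact, and as $\Conn(Z_\alpha;D_\alpha)$ is abelian and closed under extensions among holonomic $\D$-modules, this essential image is a full abelian subcategory of $\ZEC_{\RR-c}(\I\CC_{Z_\alpha})$ closed under kernels, cokernels and extensions.

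Granting that $\CC$-constructibility is stable under refinement of the adapted stratification, I can then, for any morphism $\Phi\colon K\to L$ (resp.\ any short exact sequence) in $\ZEC_{\CC-c}$, fix a single complex stratification $S$ adapted to both $K$ and $L$. Applying the exact functors $G_\alpha$ stratum by stratum, $G_\alpha(\Ker\Phi)=\Ker(G_\alpha\Phi)$ and $G_\alpha(\Coker\Phi)=\Coker(G_\alpha\Phi)$ (and the analogous statement for the middle term of an extension) lie in $\ZECmero(\I\CC_{Z_\alpha(D_\alpha)})$, i.e.\ have modified quasi-normal form; hence $\Ker\Phi$, $\Coker\Phi$, $K\oplus L$ and extensions are again $\CC$-constructible, proving that $\ZEC_{\CC-c}(\I\CC_X)$ is a full abelian subcategory closed under extensions. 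For the last assertion, $\BEC_{\CC-c}(\I\CC_X)$ consists of the objects whose standard cohomologies $\SH^i$ lie in $\ZEC_{\CC-c}$; it is visibly stable under shift, and if $K\to L\to M\xrightarrow{+1}$ is a distinguished triangle with $K,L\in\BEC_{\CC-c}$, the long exact sequence in $\SH^\bullet$ exhibits each $\SH^i(M)$ as built from the $\CC$-constructible objects $\SH^i(K),\SH^i(L)$ by kernels, cokernels and one extension, so $M\in\BEC_{\CC-c}$ and the subcategory is triangulated.

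The hard part is exactly the refinement-stability invoked above: if $K$ is $\CC$-constructible for $\{X_\alpha\}$ and $\{X_\gamma'\}$ refines it, one must show that for a stratum $X_\gamma'\subset X_\alpha$ the object $\pi^{-1}\CC_{Z_\gamma'\setminus D_\gamma'}\otimes\bfE g_\gamma^{-1}K$ again has modified quasi-normal form along $D_\gamma'$, where $g_\gamma\colon Z_\gamma'\to X$ blows up $\var{X_\gamma'}$ along $\var{X_\gamma'}\setminus X_\gamma'$ and $D_\gamma':=g_\gamma^{-1}(\var{X_\gamma'}\setminus X_\gamma')$. Off $D_\gamma'$ this object restricts to $K|_{X_\gamma'}$, which by condition (ii) of Definition \ref{def3.7} (applied on $X_\alpha$) is locally isomorphic to $(\CC^\rmE)^{\oplus k}$, so conditions (i),(ii) are immediate; the substance is condition (iii) near the new boundary $\var{X_\gamma'}\setminus X_\gamma'$. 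By Lemma \ref{lem3.9}~(2) this reduces to showing that $\RH_{Z_\gamma'}^\rmE$ of the pulled-back object is a meromorphic connection along $D_\gamma'$, which I would deduce by applying Corollary \ref{cor2.11} and the Kedlaya–Mochizuki structure theorem to the holonomic modules $\bfD f_{\alpha\ast}\N_\alpha$ produced in Remark \ref{rem3.12}~(2), using that restricting such a module to the smaller submanifold and re-blowing-up along the new boundary yields, after a further modification, a quasi-normal form. Establishing this holonomicity and local normal form uniformly along the new boundary is where the real work lies; once it is in place, the abelian and triangulated statements follow formally from the exactness of the $G_\alpha$ and the abelian structure of $\Conn$.
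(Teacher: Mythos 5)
Your overall strategy for the abelian and triangulated statements coincides with the paper's: reduce everything to the strata via the exact functors $\pi^{-1}\CC_{Z_\alpha\setminus D_\alpha}\otimes\bfE f_\alpha^{-1}(\cdot)$ after choosing a common adapted stratification, transfer kernels, cokernels and extensions through the equivalence $\Sol_{Z_\alpha}^{\rmE}:\Conn(Z_\alpha;D_\alpha)^{\op}\simto\ZECmero(\I\CC_{Z_\alpha(D_\alpha)})$ of Lemma \ref{lem3.9}, prove $\RR$-constructibility via Remark \ref{rem3.12} (2) and Lemma \ref{lem2.3}, and handle the triangulated part by the long exact cohomology sequence (here you are in fact more explicit than the paper about extension-closure). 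However, you have not actually proved the proposition: the whole argument rests on the refinement-stability claim (the paper's Sublemma \ref{sublem3.22}, which is precisely what makes the common-stratification Lemma \ref{lem3.14} available), and you only ``grant'' it, correctly identifying it as the hard part, but the sketch you then give for it contains a genuine logical error.

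Concretely, you reduce refinement stability ``by Lemma \ref{lem3.9} (2)'' to showing that $\RH_{Z_\gamma'}^{\rmE}$ of the pulled-back object is a meromorphic connection along $D_\gamma'$. That implication runs the wrong way: Lemma \ref{lem3.9} (2) starts from an object \emph{already known} to have a modified quasi-normal form; merely knowing that $\RH^{\rmE}$ of an $\RR$-constructible object is a meromorphic connection does not put that object in the essential image of $\Sol^{\rmE}$, so it does not yield the modified quasi-normal form. What is needed, and what the paper's Sublemma \ref{sublem3.22} does, is a direct identification of the pulled-back object with $\Sol^{\rmE}$ of an explicit $\D$-module: using Theorem \ref{thm2.5} (2), (3) and (5) one obtains
$$\pi^{-1}\CC_{Z_\gamma'\setminus D_\gamma'}\otimes\bfE g_\gamma^{-1}K\simeq
\Sol_{Z_\gamma'}^{\rmE}\Big((\bfD g_\gamma^{\ast}\bfD f_{\alpha\ast}\N_\alpha)(\ast D_\gamma')[d_{Z_\alpha}-d_X]\Big),$$
where $\N_\alpha$ is the meromorphic connection of Remark \ref{rem3.12} (2). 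Holonomicity of this module follows from properness of $f_\alpha$, its restriction off $D_\gamma'$ is an integrable connection so its cohomologies are meromorphic connections, and it is concentrated in degree zero because the functor $\pi^{-1}\CC_{Z_\gamma'\setminus D_\gamma'}\otimes\bfE g_\gamma^{-1}(\cdot)$ is t-exact, whence it lies in $\Conn(Z_\gamma';D_\gamma')$ by Proposition \ref{prop3.17}; then Lemma \ref{lem3.9} (1) (equivalently the definition of $\ZECmero$) concludes. Your appeal to Corollary \ref{cor2.11} and the Kedlaya--Mochizuki theorem together with ``a further modification'' is misplaced here: that input is already encapsulated inside Lemma \ref{lem3.9} (1), and no additional blow-up is needed once the identification above is established. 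Until this step is carried out correctly, your proof is incomplete exactly at its crux.
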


\begin{proof}
First let us prove that 
the category $\ZEC_{\CC-c}(\I\CC_X)$ is abelian.
It is enough to show that the kernel and the cokernel of a morphism $\Phi : K\to L$
of $\CC$-constructible enhanced ind-sheaves are also $\CC$-constructible.
By Lemma \ref{lem3.14} below,
we can take a common stratification $\{X_\alpha\}_\alpha$ adapted to $K$ and $L$
with a common complex blow-up $f_\alpha : Z_\alpha\to X$ of $\var{X_\alpha}$
along $\var{X_\alpha}\bs X_\alpha$
such that there exist meromorphic connections $\M_\alpha, \N_\alpha$
on $Z_\alpha$ satisfying the following isomorphisms
\[\pi^{-1}\CC_{Z_\alpha\setminus D_\alpha}\otimes \bfE f_\alpha^{-1} K
\simto \Sol_{Z_\alpha}^{\rmE}(\M_\alpha), \hspace{7pt}
\pi^{-1}\CC_{Z_\alpha\setminus D_\alpha}\otimes \bfE f_\alpha^{-1}L
\simto \Sol_{Z_\alpha}^{\rmE}(\N_\alpha),\]
where we set $D_\alpha = f_\alpha^{-1}(\var{X_\alpha}\setminus X_\alpha)$.
Let $\varphi_\alpha :=
\RH_{Z_\alpha}^\rmE(
\pi^{-1}\CC_{Z_\alpha\bs D_\alpha}\otimes \bfE f_\alpha^{-1}\Phi)
 : \N_\alpha\to \M_\alpha$ be the morphism of meromorphic connections
induced by the morphism $\Phi : K\to L$.
Since the category of meromorphic connections is abelian,
$\Coker\varphi_\alpha$ is a meromorphic connection on $Z_\alpha$ along $D_\alpha$.
Moreover, we have $$\Ker\big(\Sol_{Z_\alpha}^\rmE(\varphi_\alpha)\big)
\simeq \Sol_{Z_\alpha}^{\rmE}(\Coker\varphi_\alpha)$$
because we have an equivalence of abelian categories
$$\Sol_{Z_\alpha}^\rmE : \Conn(Z_\alpha; D_\alpha)^{\op}
\simto\ZECmero(\I\CC_{Z_\alpha(D_\alpha)})$$
by Lemma \ref{lem3.9} (see also Notation \ref{nota3.12}).
Then we obtain a commutative diagram
\[\xymatrix@C=13pt@R=20pt{
0\ar@{->}[r] & \pi^{-1}\CC_{Z_\alpha\setminus D_\alpha}
\otimes\bfE f_\alpha^{-1}\Ker\Phi\ar@{->}[r]\ar@{.>}[d]_{{}^\exists}^-\wr
& \pi^{-1}\CC_{Z_\alpha\setminus D_\alpha}\otimes\bfE f_\alpha^{-1}K
\ar@{->}[r]\ar@{->}[d]^-\wr
& \pi^{-1}\CC_{Z_\alpha\setminus D_\alpha}\otimes\bfE f_\alpha^{-1}L
\ar@{->}[d]^-\wr\\
0\ar@{->}[r] & 
\Ker\big(\Sol_{Z_\alpha}^\rmE(\varphi_\alpha)\big)\ar@{->}[r]\ar@{-}[d]^-\wr
& \Sol_{Z_\alpha}^{\rmE}(\M_\alpha)
\ar@{->}[r]_-{\Sol_{Z_\alpha}^{\rmE}(\varphi_\alpha)}
& \Sol_{Z_\alpha}^{\rmE}(\N_\alpha).\\
{} & \Sol_{Z_\alpha}^{\rmE}(\Coker\varphi_\alpha) & {} & {}
}\]
Therefore, we have
$\pi^{-1}\CC_{Z_\alpha\setminus D_\alpha}\otimes\bfE f_\alpha^{-1}(\Ker\Phi)
\in\ZECmero(\I\CC_{Z_\alpha(D_\alpha)})$
and hence, $\Ker\Phi$ is $\CC$-constructible.
Similarly we can show that $\Coker\Phi$ is $\CC$-constructible.

Let us prove that any $\CC$-constructible enhanced ind-sheaf is $\RR$-constructible.
Let $\{X_\alpha\}_{\alpha\in A}$ be a stratification of $X$ adapted to $K$,
then $\pi^{-1}\CC_{X_\alpha}\otimes K$ is $\RR$-constructible,
see Remark \ref{rem3.12} (2).
Therefore by Lemma \ref{lem2.3},
for each $\alpha\in A$ there exist a locally finite family
$\{Z_\beta^\alpha\}_{\beta\in B_\alpha}$
of locally closed subanalytic subset of $X$ and
a family $\{\SF_\beta^\alpha\}_{\beta\in B_\alpha}$
of objects of $\BDC_{\RR-c}(\CC_{X\times\RR_\infty})$
such that 
\[\CC_X^{\rmE}\Potimes \SF_\beta^\alpha\simeq
\pi^{-1}\CC_{Z_\beta^\alpha}\otimes(\pi^{-1}\CC_{X_\alpha}\otimes K)
\simeq \pi^{-1}\CC_{X_\alpha\cap Z_\beta^\alpha}\otimes K.\]
Hence, the proof is completed by Lemma \ref{lem2.3}.
\end{proof}

\begin{sublemma}\label{sublem3.22}
Let $K\in\ZEC_{\CC-c}(\I\CC_X)$ and
$\{X_\alpha\}_{\alpha\in A}$ a stratification of $X$ adapted to $K$.
Then any stratification of $X$ which is finer than the one $\{X_\alpha\}_{\alpha\in A}$
is also adapted to $K$.
\end{sublemma}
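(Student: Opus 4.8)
The plan is to reduce the statement to a single stratum and then, for each stratum of the finer stratification, to exhibit one explicit complex blow-up for which the condition of Definition \ref{def3.19} can be checked by transporting, through Theorem \ref{thm2.5}, the meromorphic-connection description provided by Lemma \ref{lem3.9}. Let $\{Y_\beta\}_{\beta\in B}$ be a stratification finer than $\{X_\alpha\}_{\alpha\in A}$, so that each $Y_\beta$ is contained in a single stratum $X_\alpha$ (write $\alpha=\alpha(\beta)$). Since $\{X_\alpha\}$ is adapted to $K$, I would fix complex blow-ups $f_\alpha:Z_\alpha\to X$ as in Definition \ref{def3.19}; by Lemma \ref{lem3.9} (2) (see also Notation \ref{nota3.12}) there is a meromorphic connection $\N_\alpha$ on $Z_\alpha$ along $D_\alpha$ with $\pi^{-1}\CC_{Z_\alpha\setminus D_\alpha}\otimes\bfE f_\alpha^{-1}K\simto\Sol_{Z_\alpha}^{\rmE}(\N_\alpha)$.

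Fix $\beta$ with $Y_\beta\subset X_\alpha$. Using the isomorphism $f_\alpha:Z_\alpha\setminus D_\alpha\simto X_\alpha$, set $\tl{Y}:=f_\alpha^{-1}(Y_\beta)\subset Z_\alpha\setminus D_\alpha$, a locally closed submanifold isomorphic to $Y_\beta$ and disjoint from $D_\alpha$. I would then choose a complex blow-up $g':Z_\beta'\to Z_\alpha$ of $\var{\tl Y}$ along $\var{\tl Y}\setminus\tl Y$, with normal crossing divisor $D_\beta':=g'^{-1}(\var{\tl Y}\setminus\tl Y)$ and $g':Z_\beta'\setminus D_\beta'\simto\tl Y$, exactly as the blow-ups in Definition \ref{def3.19} are produced. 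Because $\tl Y\cap D_\alpha=\emptyset$ we have $D_\alpha\cap\var{\tl Y}\subset\var{\tl Y}\setminus\tl Y$, hence $g'^{-1}(D_\alpha)\subset D_\beta'$. Setting $g_\beta:=f_\alpha\circ g'$, the map $g_\beta:Z_\beta'\to X$ is projective, satisfies $g_\beta(Z_\beta')=\var{Y_\beta}$ and induces $Z_\beta'\setminus D_\beta'\simto Y_\beta$, with $g_\beta^{-1}(\var{Y_\beta}\setminus Y_\beta)=D_\beta'$; thus $g_\beta$ is a complex blow-up of $\var{Y_\beta}$ along $\var{Y_\beta}\setminus Y_\beta$, and it is along $D_\beta'$ that I must verify the modified quasi-normal form condition.

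The core computation is to identify $\pi^{-1}\CC_{Z_\beta'\setminus D_\beta'}\otimes\bfE g_\beta^{-1}K$. Since $Z_\beta'\setminus D_\beta'=g'^{-1}(\tl Y)\subset g'^{-1}(Z_\alpha\setminus D_\alpha)$ and $\bfE g'^{-1}$ commutes with tensoring by the pullback of a constant ind-sheaf, I would compute
\begin{align*}
\pi^{-1}\CC_{Z_\beta'\setminus D_\beta'}\otimes\bfE g_\beta^{-1}K
&\simeq
\pi^{-1}\CC_{Z_\beta'\setminus D_\beta'}\otimes
\bfE g'^{-1}\big(\pi^{-1}\CC_{Z_\alpha\setminus D_\alpha}\otimes\bfE f_\alpha^{-1}K\big)\\
&\simeq
\pi^{-1}\CC_{Z_\beta'\setminus D_\beta'}\otimes\bfE g'^{-1}\Sol_{Z_\alpha}^{\rmE}(\N_\alpha)\\
&\simeq
\pi^{-1}\CC_{Z_\beta'\setminus D_\beta'}\otimes\Sol_{Z_\beta'}^{\rmE}(\bfD g'^\ast\N_\alpha)\\
&\simeq
\Sol_{Z_\beta'}^{\rmE}\big((\bfD g'^\ast\N_\alpha)(\ast D_\beta')\big),
\end{align*}
using Theorem \ref{thm2.5} (2) in the third step and Theorem \ref{thm2.5} (5) in the last. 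It then remains to check that $(\bfD g'^\ast\N_\alpha)(\ast D_\beta')$ is a meromorphic connection on $Z_\beta'$ along $D_\beta'$: the inverse image $\bfD g'^\ast\N_\alpha$ is holonomic, and since $\N_\alpha$ is an integrable connection away from $D_\alpha$ and $g'^{-1}(D_\alpha)\subset D_\beta'$, it is an integrable connection on $Z_\beta'\setminus D_\beta'$, so localizing along the normal crossing divisor $D_\beta'$ yields an object of $\Conn(Z_\beta';D_\beta')$. By Lemma \ref{lem3.9} (see also Notation \ref{nota3.12}) this shows that $\pi^{-1}\CC_{Z_\beta'\setminus D_\beta'}\otimes\bfE g_\beta^{-1}K$ has a modified quasi-normal form along $D_\beta'$, whence $\{Y_\beta\}_{\beta\in B}$ is adapted to $K$.

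The step I expect to be the main obstacle is the geometric one: arranging the blow-up $g'$ of $\var{\tl Y}$ inside $Z_\alpha$ so that $D_\beta'$ is simultaneously a normal crossing divisor, that $g_\beta=f_\alpha\circ g'$ is a genuine complex blow-up of $\var{Y_\beta}$, and that $g'^{-1}(D_\alpha)\subset D_\beta'$, so that the localization $(\,\cdot\,)(\ast D_\beta')$ absorbs every singularity of $\bfD g'^\ast\N_\alpha$. Once these divisors are set up correctly the remaining steps are formal, following from the functoriality recorded in Theorem \ref{thm2.5} and the characterization of modified quasi-normal forms in Lemma \ref{lem3.9}; I also note that the same computation yields the independence of the blow-up asserted in Remark \ref{rem3.12} (1).
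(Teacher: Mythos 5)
Your construction verifies the condition of Definition \ref{def3.19} only for one specially built blow-up of $\var{Y_\beta}$, namely $g_\beta=f_\alpha\circ g'$, which by design factors through the given $f_\alpha$. The paper's proof establishes something stronger: it takes an \emph{arbitrary} complex blow-up $g_\beta:W_\beta\to X$ of $\var{Y_\beta}$ along $\var{Y_\beta}\setminus Y_\beta$, pushes the meromorphic data down to $X$ (via $\pi^{-1}\CC_{X_\alpha}\otimes K\simeq\bfE f_{\alpha!!}\Sol_{Z_\alpha}^{\rmE}(\M_\alpha)$, cf.\ Remark \ref{rem3.12} (2)), and then pulls back along $g_\beta$, so that the relevant $\D$-module is $(\bfD g_\beta^{\ast}\bfD f_{\alpha\ast}\M_\alpha)(\ast H_\beta)$ up to shift, where $H_\beta:=g_\beta^{-1}(\var{Y_\beta}\setminus Y_\beta)$. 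This difference is not cosmetic. The role of Sublemma \ref{sublem3.22} in the paper is precisely to yield the independence of Definition \ref{def3.19} on the choice of blow-ups (Remark \ref{rem3.12} (1) and the sentence following the sublemma) and the existence of \emph{common} blow-ups in Lemma \ref{lem3.14}; for that one must know the condition for every blow-up of every stratum of the finer stratification, not only for blow-ups of the special form $f_\alpha\circ g'$. In particular your closing claim, that the same computation yields the independence asserted in Remark \ref{rem3.12} (1), is unjustified: your computation never handles a blow-up of $\var{Y_\beta}$ that does not factor through $Z_\alpha$. Under a purely existential reading of ``adapted'' your argument does prove the literal statement, but that reading makes the sublemma too weak for the uses the paper puts it to.

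A second, smaller gap: the assertion that ``localizing along $D_\beta'$ yields an object of $\Conn(Z_\beta';D_\beta')$'' is true but not immediate. A priori $\bfD g'^{\ast}\N_\alpha$ is a complex, and holonomicity plus the connection property off $D_\beta'$ only give $(\bfD g'^{\ast}\N_\alpha)(\ast D_\beta')\in\BDCmero(\D_{Z_\beta'(D_\beta')})$, i.e.\ a complex whose cohomologies are meromorphic connections (this is how the paper argues, via \cite{Kas78}). To get concentration in degree zero you must either complete the $\D$-module argument (localization is exact, and the nonzero-degree cohomologies of $\bfD g'^{\ast}\N_\alpha$ are supported in $g'^{-1}(D_\alpha)\subset D_\beta'$, hence are killed by $(\ast D_\beta')$), or use the paper's device: $\pi^{-1}\CC_{Z_\beta'\setminus D_\beta'}\otimes\bfE g_\beta^{-1}K$ is concentrated in degree zero because this functor is t-exact for the standard t-structure, and then the equivalence of Proposition \ref{prop3.17} forces the $\D$-module to be a genuine meromorphic connection. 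Apart from these two points, your geometric setup (that $g_\beta=f_\alpha\circ g'$ is indeed a complex blow-up of $\var{Y_\beta}$ with $g_\beta^{-1}(\var{Y_\beta}\setminus Y_\beta)=D_\beta'$, and that $g'^{-1}(D_\alpha)\subset D_\beta'$) and the chain of isomorphisms using Theorem \ref{thm2.5} (2) and (5) are correct; your route even avoids the direct image functor $\bfD f_{\alpha\ast}$ entirely, which is a genuine simplification, but it buys that simplification exactly at the cost of losing the arbitrariness of $g_\beta$ that the paper needs.
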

 
\begin{proof}
Let $\{Y_\beta\}_{\beta\in B}$ be a stratification of $X$ finer than 
the one $\{X_\alpha\}_{\alpha\in A}$.
Then for each $\beta\in B$ there exists $\alpha\in A$
such that $Y_\beta\subset X_\alpha$.
Then we have the diagram:
\[\xymatrix@R=7pt@M=5pt{
Z_\alpha \ar@/^18pt/[rr]^{f_\alpha} \ar@{->}[r]&
\var{X_\alpha}\ar@{^{(}->}[r]\ar@{}[d]|-{\bigcup} & X\ar@{=}[d]\\
W_\beta\ar@{->}[r]\ar@/_18pt/[rr]_{g_\beta}
& \var{Y_\beta}\ar@{^{(}->}[r]  & X,
}\]
where $g_\beta : W_\beta\to X$ is a complex blow-up
of $\var{Y_\beta}$ along $\var{Y_\beta}\setminus Y_\beta$.
We set $H_\beta := g_\beta^{-1}(\var{Y_\beta}\setminus Y_\beta)$
then an enhanced ind-sheaf
$$\pi^{-1}\CC_{W_\beta\setminus H_\beta}\otimes\bfE g_\beta^{-1}K$$
is concentrated in degree zero
because the functor
$\pi^{-1}\CC_{W_\beta\setminus H_\beta}\otimes\bfE g_\beta^{-1}(\cdot)$
is t-exact with respect to the standard t-structure
(see \cite[Proposition 2.7.3 (iv) and Lemma 2.7.5 (i)]{DK16-2}).
Now there exists a meromorphic connection $\M_\alpha$ on $Z_\alpha$ along $D_\alpha$
such that $$\pi^{-1}\CC_{X_\alpha}\otimes
K\simeq\bfE f_{\alpha!!}\Sol_{Z_\alpha}^{\rmE}(\M_\alpha)$$
and hence we have a sequence of isomorphisms
\begin{align*}
\pi^{-1}\CC_{W_\beta\setminus H_\beta}\otimes\bfE g_\beta^{-1}K
&\simeq
\bfE g_\beta^{-1}(\pi^{-1}\CC_{Y_\beta}\otimes K)\\
&\simeq
\bfE g_\beta^{-1}\big(\pi^{-1}\CC_{Y_\beta}\otimes
(\pi^{-1}\CC_{X_\alpha}\otimes K)\big)\\
&\simeq
\bfE g_\beta^{-1}\big(\pi^{-1}\CC_{Y_\beta}\otimes
\bfE f_{\alpha!!}\Sol_{Z_\alpha}^{\rmE}(\M_\alpha)\big)\\
&\simeq
\pi^{-1}\CC_{W_\beta\setminus H_\beta}\otimes
\bfE g_\beta^{-1}\bfE f_{\alpha!!}\Sol_{Z_\alpha}^{\rmE}(\M_\alpha)\\
&\simeq
\pi^{-1}\CC_{W_\beta\setminus H_\beta}\otimes
\Sol_{W_\beta}^{\rmE}\big(\bfD g_{\beta}^{\ast}\bfD f_{\alpha\ast}(\M_\alpha)\big)
[d_X-d_{Z_\alpha}]\\
&\simeq
\Sol_{W_\beta}^{\rmE}\big(
(\bfD g_{\beta}^{\ast}\bfD f_{\alpha\ast}\M_\alpha)(\ast H_\beta)[d_{Z_\alpha}-d_X]\big),
\end{align*}
where in the fifth (resp.\ sixth) isomorphism
we used Theorem \ref{thm2.5} (2) and (3)
(resp.\ Theorem \ref{thm2.5} (5)). 
Let us set $$\N_\beta := (\bfD g_{\beta}^{\ast}\bfD f_{\alpha\ast}\M_\alpha)
(\ast H_\beta)[d_{Z_\alpha}-d_X]\in\BDC(\D_{W_\beta}).$$
Since $f_\alpha$ is proper
we have $\N_\beta\in\BDChol(\D_{W_\beta})$.
Moreover since $\N_\beta|_{W_\beta\bs H_\beta}$
is an integrable connection on $W_\beta\bs H_\beta$
we have $\N_\beta\in\BDCmero(W_\beta; H_\beta)$.
Since the enhanced ind-sheaf
$\pi^{-1}\CC_{W_\beta\setminus H_\beta}\otimes\bfE g_\beta^{-1}K$
is concentrated in degree zero,
we obtain
$\N_\beta\in\Conn(W_\beta; H_\beta)$
by Proposition \ref{prop3.17}.
Therefore we have 
$$\pi^{-1}\CC_{W_\beta\setminus H_\beta}\otimes\bfE g_\beta^{-1}K
\in\ZECmero(\I\CC_{W_\beta(H_\beta)})$$
and the proof is completed.
\end{proof}

By this sublemma, it is clear that the $\CC$-constructability is local property
and the following holds.  
Moreover, Definiton \ref{def3.19} does not depend on the choice of the complex blow-up $f_\alpha$.
\begin{lemma}\label{lem3.14}
For any two $\CC$-constructible enhanced ind-sheaves $K, L\in\ZEC_{\CC-c}(\I\CC_X)$,
there exist a common stratification $\{X_\alpha\}_\alpha$ adapted to $K$ and $L$
with a common complex blow-up of $\var{X_\alpha}$
along $\var{X_\alpha}\bs X_\alpha$.
\end{lemma}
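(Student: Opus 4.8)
The plan is to reduce the statement to two ingredients: the refinement-stability of adaptedness already proved in Sublemma \ref{sublem3.22}, and the standard existence of a common refinement of complex stratifications. First I would invoke Definition \ref{def3.19} to choose a complex stratification $\{X_\alpha^K\}_{\alpha}$ of $X$ adapted to $K$ and a complex stratification $\{X_\beta^L\}_{\beta}$ adapted to $L$.

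Next I would produce a single complex stratification $\{X_\gamma\}_{\gamma}$ of $X$ that refines both of these. Here the geometric input enters: the locally finite partition of $X$ by the intersections $X_\alpha^K\cap X_\beta^L$ consists of locally closed complex analytic (indeed constructible) subsets, and by the standard existence theorem for complex analytic stratifications this partition can be refined to a genuine complex stratification $\{X_\gamma\}$ whose strata are smooth and satisfy the frontier condition. Since $\{X_\gamma\}$ is finer than $\{X_\alpha^K\}$, Sublemma \ref{sublem3.22} shows it is adapted to $K$; since it is finer than $\{X_\beta^L\}$, the same sublemma shows it is adapted to $L$.

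It then remains to arrange one family of complex blow-ups serving both objects. For each stratum $X_\gamma$ I would fix, by resolution of singularities of the pair $(\var{X_\gamma}, \var{X_\gamma}\setminus X_\gamma)$, a single complex blow-up $f_\gamma : Z_\gamma\to X$ of $\var{X_\gamma}$ along $\var{X_\gamma}\setminus X_\gamma$, so that $Z_\gamma$ is smooth, $D_\gamma := f_\gamma^{-1}(\var{X_\gamma}\setminus X_\gamma)$ is a normal crossing divisor, and $f_\gamma$ induces $Z_\gamma\setminus D_\gamma\simto X_\gamma$. By Remark \ref{rem3.12} (1), whether $\{X_\gamma\}$ is adapted to a given object is independent of the choice of blow-up per stratum; hence this single family $\{f_\gamma\}$ simultaneously witnesses that $\{X_\gamma\}$ is adapted to $K$ and to $L$, which is precisely the assertion.

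The genuinely analytic work — that pulling back along $\bfE f_\gamma^{-1}$ and tensoring with $\pi^{-1}\CC_{Z_\gamma\setminus D_\gamma}$ preserves the modified quasi-normal form when one passes to a finer stratum — has already been carried out in Sublemma \ref{sublem3.22}, so the only new ingredient is the existence of the common refinement. I expect this to be the main (though standard) obstacle: one must guarantee that the common refinement is again a complex stratification with smooth strata satisfying the frontier condition, rather than a mere partition into constructible pieces, so that Definition \ref{def3.19} applies to it verbatim. Once this is secured, the independence statement of Remark \ref{rem3.12} (1) renders the passage to a common blow-up purely formal.
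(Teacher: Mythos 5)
Your proof is correct and takes essentially the same approach as the paper: the paper's entire proof is a one-line appeal to Sublemma \ref{sublem3.22}, whose implicit content --- choose stratifications adapted to $K$ and to $L$, pass to a common complex-analytic refinement, and conclude by refinement-stability (which, as noted in Remark \ref{rem3.12} (1), also gives independence of the choice of complex blow-up) --- is exactly what you spell out. The only difference is that you make explicit the standard existence of a common refinement and the selection of a single family of blow-ups, steps the paper leaves tacit.
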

\begin{proof}
This follows from Sublemma \ref{sublem3.22}.
\end{proof}

\begin{lemma}\label{lem3.23}
Let $\M$ be a holonomic $\D_X$-module.
Then there exists a stratification $\{X_\alpha\}_{\alpha\in A}$ of $X$
such that for any $\alpha\in A$ and
any complex blow-up $f_\alpha : Z_\alpha \to X$ of $\var{X_\alpha}$
along $\var{X_\alpha}\setminus X_\alpha$ we have
$(\bfD f_\alpha^\ast \M)(\ast D_\alpha)
\in\BDCmero(\D_{Z_\alpha(D_\alpha)})$,
where $D_\alpha := f_\alpha^{-1}(\var{X_\alpha}\setminus X_\alpha)$
is a normal crossing divisor of $Z_\alpha$.
\end{lemma}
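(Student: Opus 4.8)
The plan is to produce the stratification directly from the characteristic variety of $\M$, and then to deduce the conclusion from a characteristic--variety estimate for inverse images together with the exactness of localization along a divisor. I expect the only substantial input to be geometric (the choice of the stratification), after which everything is formal.

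First I would choose the stratification. Since $\M$ is holonomic, its characteristic variety $\ch(\M)\subset T^\ast X$ is a closed conic Lagrangian analytic subset, hence a locally finite union of closures $\overline{T^\ast_{Z}X}$ of conormal bundles to smooth locally closed analytic submanifolds $Z$. I would then take a complex Whitney stratification $\{X_\alpha\}_{\alpha\in A}$ of $X$ refining this family, so that $\ch(\M)\subset\bigcup_{\alpha\in A}\overline{T^\ast_{X_\alpha}X}$; such a stratification exists by the general theory of Whitney stratifications adapted to a finite analytic family (see \cite{KS06}). The point I will extract is Whitney's condition (a): for a stratum $X_\alpha$ and a point $x\in X_\alpha$, any covector arising as a limit $\xi=\lim_{y\to x}\xi_y$ with $\xi_y$ conormal to a stratum $X_\beta$ (with $x\in\overline{X_\beta}$) annihilates $T_xX_\alpha$, because $T_xX_\alpha\subset\lim_{y\to x}T_yX_\beta$. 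Consequently $\ch(\M)|_{X_\alpha}\subset T^\ast_{X_\alpha}X$, i.e. over each stratum the characteristic variety of $\M$ consists only of conormal--to--$X_\alpha$ directions.

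Next, I would fix $\alpha$ and an arbitrary complex blow-up $f_\alpha:Z_\alpha\to X$ as in Definition \ref{def3.19}, so that $f_\alpha$ restricts to an isomorphism $Z_\alpha\setminus D_\alpha\simto X_\alpha$, and argue in two independent steps. On the one hand, $\bfD f_\alpha^\ast\M$ is holonomic (inverse images of holonomic modules are holonomic, see \cite{HTT08, Sab11}), and since localization along the normal crossing divisor $D_\alpha$ preserves holonomicity and is exact by flatness of $\sho_{Z_\alpha}(\ast D_\alpha)$, the object $(\bfD f_\alpha^\ast\M)(\ast D_\alpha)$ is holonomic with $\SH^i\big((\bfD f_\alpha^\ast\M)(\ast D_\alpha)\big)\simeq\big(\SH^i\bfD f_\alpha^\ast\M\big)(\ast D_\alpha)$, each cohomology being isomorphic to its own localization along $D_\alpha$. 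On the other hand, over $Z_\alpha\setminus D_\alpha$, where $f_\alpha$ is the isomorphism onto $X_\alpha$ followed by the inclusion $i_{X_\alpha}:X_\alpha\hookrightarrow X$, the module $\bfD f_\alpha^\ast\M$ is transported from $\bfD i_{X_\alpha}^\ast\M$. The inverse-image estimate gives $\ch(\bfD i_{X_\alpha}^\ast\M)\subset\rho\big(\ch(\M)|_{X_\alpha}\big)$, where $\rho$ restricts covectors from $T^\ast X|_{X_\alpha}$ to $T^\ast X_\alpha$ and kills the conormal directions $T^\ast_{X_\alpha}X$; combined with $\ch(\M)|_{X_\alpha}\subset T^\ast_{X_\alpha}X$ from the previous step, this forces $\ch(\bfD i_{X_\alpha}^\ast\M)$ into the zero section, so every cohomology of $\bfD f_\alpha^\ast\M|_{Z_\alpha\setminus D_\alpha}$ is $\sho$-coherent, i.e. an integrable connection.

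Combining the two steps, each $\SH^i\big((\bfD f_\alpha^\ast\M)(\ast D_\alpha)\big)$ is a holonomic $\D_{Z_\alpha}$-module, isomorphic to its own localization along $D_\alpha$, whose restriction to $Z_\alpha\setminus D_\alpha$ is an integrable connection --- that is, a meromorphic connection on $Z_\alpha$ along $D_\alpha$. Hence $(\bfD f_\alpha^\ast\M)(\ast D_\alpha)\in\BDCmero(\D_{Z_\alpha(D_\alpha)})$, as required; and since the argument uses only the isomorphism $Z_\alpha\setminus D_\alpha\simto X_\alpha$ off $D_\alpha$ together with exactness of $(\ast D_\alpha)$, it applies to every blow-up $f_\alpha$. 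The hard part will be the geometric input of the first two paragraphs, namely producing a Whitney stratification adapted to $\ch(\M)$ and extracting the inclusion $\ch(\M)|_{X_\alpha}\subset T^\ast_{X_\alpha}X$; once this inclusion is available, the inverse-image characteristic-variety estimate and the flatness of $\sho_{Z_\alpha}(\ast D_\alpha)$ make the remainder routine.
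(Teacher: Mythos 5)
Your second half (holonomicity of $\bfD f_\alpha^\ast\M$, exactness of localization along $D_\alpha$, and the characterization of objects of $\BDCmero(\D_{Z_\alpha(D_\alpha)})$ as localized holonomic complexes whose cohomologies restrict to connections off $D_\alpha$) is sound and agrees with the paper, which likewise reduces everything to producing a stratification along whose strata the $\D$-module restrictions of $\M$ have integrable-connection cohomologies, and invokes Kashiwara ({\cite[Theorem 3.1]{Kas78}}) at the same point. The gap is in your middle step. The estimate you rely on, $\ch(\bfD i_{X_\alpha}^\ast\M)\subset\rho\big(\ch(\M)|_{X_\alpha}\big)$, is Kashiwara's bound for \emph{non-characteristic} inverse images; it is false for characteristic embeddings, and the hypothesis you extract from Whitney's condition (a), namely $\ch(\M)|_{X_\alpha}\subset T^\ast_{X_\alpha}X$, does not imply it. Concretely, let $X=\CC^2$, $C=\{y=x^2\}$, and let $\M=B_{C|X}=\D_X/\big(\D_X(y-x^2)+\D_X(\partial_x+2x\partial_y)\big)$ be the (regular holonomic) module of $\delta$-functions along $C$, so that $\ch(\M)=T^\ast_CX$; let $Y=\{y=0\}$ be the tangent line to $C$ at the origin. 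Then $\ch(\M)|_Y=\{(0,0;0,\eta)\mid\eta\in\CC\}\subset T^\ast_YX$, so your condition-(a) hypothesis holds for $Y$; but a Koszul computation shows $\bfD i_Y^\ast\M$ is concentrated in degree $0$ and is a nonzero holonomic module supported at the origin (of length $2$, the intersection multiplicity of $Y$ and $C$). Its characteristic variety is the full fibre $T^\ast_{\{0\}}Y$, whereas $\rho\big(\ch(\M)|_Y\big)$ is only the zero covector at the origin: both your estimate and your conclusion (integrable-connection cohomologies) fail.

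What rules out this configuration is exactly the structure your argument never uses: in a genuine Whitney stratification (condition (b) plus the frontier condition) adapted to $T^\ast_CX$, the line $Y$ can never occur as a stratum, since the tangency point is forced to be a separate zero-dimensional stratum. So the statement you want may well be true for strata of an honest adapted Whitney stratification, but proving it requires normal-cone-type (condition (b)) estimates for characteristic restrictions, not the non-characteristic pullback bound; this is a substantive missing argument, and condition (a) alone is demonstrably insufficient. The paper sidesteps the issue entirely by building the stratification recursively from singular supports: $Y:=\singsupp(\M)$ and $Y_{k+1}:=(Y_k)_{\sing}\cup\singsupp\big(\bfD i_{(Y_k)_{\reg}}^\ast\M\big)$, so that on each difference $Y_k\setminus Y_{k+1}$ the restriction has connection cohomologies by the very definition of singular support; dimensions strictly decrease, any refinement of the resulting partition retains the property, and from there the proof proceeds exactly as in your last paragraph. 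To repair your proof you must either supply the missing condition-(b) estimate, or switch to this recursive excision, which makes the key property hold by construction.
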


\begin{proof}
First we shall construct a stratification $\{X_\alpha\}_{\alpha\in A}$
such that any cohomology of $\bfD i_{X_\alpha}^\ast(\M)$
is an integrable connection on $X_\alpha$ for each $\alpha\in A$.

We put $Y := \singsupp(\M)$.
Then $Y$ is an analytic subset of $X$ and
$\M|_{X\setminus Y}$ is an integrable connection on $X\setminus Y$ by definition.
Let us set
$$Y_1 := Y_{\sing}\cup \var{\singsupp(\bfD i_{Y_\reg}^{\ast}\M)}
=Y_{\sing}\cup \singsupp(\bfD i_{Y_\reg}^{\ast}\M)\subset Y.$$
Then $Y_1$ is an analytic subset of $X$ and
$Y\setminus Y_1\subset Y_{\reg}\setminus \singsupp(\bfD i_{Y_\reg}^{\ast}\M)$.
Hence, any cohomology of $\bfD i_{Y\setminus Y_1}^\ast\M$
is an integrable connection on $Y\setminus Y_1$.
Similarly to the construction as above,
we put $$Y_{k+1} := (Y_k)_{\sing}\cup \var{\singsupp(\bfD i_{(Y_k)_\reg}^{\ast}\M)}
= (Y_k)_{\sing}\cup \singsupp(\bfD i_{(Y_k)_\reg}^{\ast}\M)\subset Y_k.$$
Since $Y_k\setminus Y_{k+1}\subset (Y_k)_\reg\setminus\singsupp(\bfD i_{(Y_k)_\reg}^\ast\M)$,
any cohomology of $\bfD i_{Y_k\setminus Y_{k+1}}^\ast\M$ is an integrable connection on $Y_k\setminus Y_{k+1}$.
By $\dim Y_{k+1}<\dim Y_k$,
there exists a positive integer $m\in\NN$
such that $Y_{m+1}=\emptyset, Y_m\neq\emptyset$.
Namely, $Y_m$ is a smooth analytic subset of $X$ and 
any cohomology of $\bfD i_{Y_m}^{\ast}\M$ is an integrable connection on ${Y_m}$.
Therefore we obtain a partition
\[X = (X\setminus Y) \sqcup (Y\setminus Y_1)\sqcup \cdots
\sqcup (Y_{m-1}\setminus Y_m)\sqcup Y_m\]
and the desired stratification $\{X_\alpha\}_{\alpha\in A}$ finer than it.

Let $f_\alpha : Z_\alpha\to X$ be a complex blow-up of $\var{X_\alpha}$
along $\var{X_\alpha}\setminus X_\alpha$
and $D_\alpha := f_\alpha^{-1}(\var{X_\alpha}\setminus X_\alpha)$.
Since the restriction of $f_\alpha$ to $Z_\alpha\setminus D_\alpha$ induces
an isomorphism $Z_\alpha\setminus D_\alpha\simto X_\alpha$,
any cohomology of $(\bfD f_\alpha^\ast \M)|_{Z_\alpha\setminus D_\alpha}$
is an integrable connection on $Z_\alpha\setminus D_\alpha$.
Hence we obtain $(\bfD f_\alpha^\ast\M)(\ast D_\alpha)
\in \BDCmero(\D_{Z_\alpha(D_\alpha)})$
by \cite[Theorem 3.1]{Kas78}.
The proof is completed.
\end{proof}

\begin{proposition}
For any $\M\in\BDChol(\D_X)$
the enhanced solution complex $\Sol_X^{\rmE}(\M)$ of $\M$
is an object of $\BEC_{\CC-c}(\I\CC_X)$.
\end{proposition}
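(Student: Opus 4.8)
The plan is to reduce the statement to the case of a single holonomic module and then feed it into the stratification produced by Lemma \ref{lem3.23}. Since we have already shown that $\BEC_{\CC-c}(\I\CC_X)$ is a full triangulated subcategory of $\BEC(\I\CC_X)$ and the enhanced solution functor $\Sol_X^\rmE$ sends distinguished triangles to distinguished triangles, I would first argue by induction on the cohomological amplitude of $\M$. Using the standard truncation triangles
$$\tau^{\leq j-1}\M\to\tau^{\leq j}\M\to\SH^j(\M)[-j]\xrightarrow{\ +1\ }$$
together with the fact that a shift of a cohomologically $\CC$-constructible complex is again cohomologically $\CC$-constructible, it suffices to treat the case where $\M$ is a single holonomic $\D_X$-module placed in degree zero.

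So I would fix such an $\M$ and let $\{X_\alpha\}_{\alpha\in A}$ be the stratification provided by Lemma \ref{lem3.23}, with complex blow-ups $f_\alpha : Z_\alpha\to X$ of $\var{X_\alpha}$ along $\var{X_\alpha}\setminus X_\alpha$ and normal crossing divisors $D_\alpha := f_\alpha^{-1}(\var{X_\alpha}\setminus X_\alpha)$. I claim this single stratification is adapted to every standard cohomology of $K := \Sol_X^\rmE(\M)$. The key computation is
\begin{align*}
\pi^{-1}\CC_{Z_\alpha\setminus D_\alpha}\otimes\bfE f_\alpha^{-1}K
&\simeq \pi^{-1}\CC_{Z_\alpha\setminus D_\alpha}\otimes\Sol_{Z_\alpha}^\rmE(\bfD f_\alpha^\ast\M)\\
&\simeq \Sol_{Z_\alpha}^\rmE\big((\bfD f_\alpha^\ast\M)(\ast D_\alpha)\big),
\end{align*}
where the first isomorphism is Theorem \ref{thm2.5} (2) and the second is Theorem \ref{thm2.5} (5).

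By Lemma \ref{lem3.23} we have $(\bfD f_\alpha^\ast\M)(\ast D_\alpha)\in\BDCmero(\D_{Z_\alpha(D_\alpha)})$, so Proposition \ref{prop3.17} gives $\Sol_{Z_\alpha}^\rmE\big((\bfD f_\alpha^\ast\M)(\ast D_\alpha)\big)\in\BECmero(\I\CC_{Z_\alpha(D_\alpha)})$; in particular each of its standard cohomologies lies in $\ZECmero(\I\CC_{Z_\alpha(D_\alpha)})$ and hence has a modified quasi-normal form along $D_\alpha$ by Lemma \ref{lem3.9} (see Notation \ref{nota3.12}). Finally, since the functor $\pi^{-1}\CC_{Z_\alpha\setminus D_\alpha}\otimes\bfE f_\alpha^{-1}(\cdot)$ is t-exact for the standard t-structure by \cite[Proposition 2.7.3 (iv) and Lemma 2.7.5 (i)]{DK16-2}, it commutes with $\SH^i$, whence
$$\pi^{-1}\CC_{Z_\alpha\setminus D_\alpha}\otimes\bfE f_\alpha^{-1}\big(\SH^i(K)\big)
\simeq \SH^i\big(\pi^{-1}\CC_{Z_\alpha\setminus D_\alpha}\otimes\bfE f_\alpha^{-1}K\big)$$
has a modified quasi-normal form along $D_\alpha$ for every $i$ and every $\alpha$. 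Thus each $\SH^i(K)$ is $\CC$-constructible with $\{X_\alpha\}_{\alpha\in A}$ as an adapted stratification, i.e.\ $\Sol_X^\rmE(\M)\in\BEC_{\CC-c}(\I\CC_X)$.

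The genuinely substantive input here is Lemma \ref{lem3.23}; granting it, the remaining work is essentially bookkeeping through the already established isomorphisms. The only real subtlety I anticipate is that for a single holonomic module the complex $\Sol_X^\rmE(\M)$ need not be concentrated in one degree, so one must verify that a single stratification works simultaneously for all of its standard cohomologies. This is precisely what the t-exactness of the cut-off functor $\pi^{-1}\CC_{Z_\alpha\setminus D_\alpha}\otimes\bfE f_\alpha^{-1}(\cdot)$ delivers, and I expect that step to be the one requiring the most care.
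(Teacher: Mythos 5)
Your proposal is correct and follows essentially the same route as the paper: reduce by the triangulated-subcategory structure to a single holonomic module, apply Lemma \ref{lem3.23} to get the stratification and blow-ups, compute $\pi^{-1}\CC_{Z_\alpha\setminus D_\alpha}\otimes\bfE f_\alpha^{-1}K \simeq \Sol_{Z_\alpha}^\rmE\big((\bfD f_\alpha^\ast\M)(\ast D_\alpha)\big)$ via Theorem \ref{thm2.5} (2), (5), invoke Proposition \ref{prop3.17}, and conclude via the t-exactness of the cut-off functor. The subtlety you flag at the end (one stratification serving all cohomologies of $\Sol_X^\rmE(\M)$ simultaneously) is handled in the paper by exactly the same exactness argument you give.
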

\begin{proof}
Since the category $\BDChol(\D_X)$ is a full triangulated subcategory of $\BDC(\D_X)$
and the category $\BEC_{\CC-c}(\I\CC_X)$ is a full triangulated subcategory
of $\BEC_{\RR-c}(\I\CC_X)$, 
it is enough to show the assertion in the case $\M\in\Modhol(\D_X)$ 
by induction on the lengths of the complexes.

Let $\M\in\Modhol(\D_X)$ and we put $K := \Sol_X^\rmE(\M)$.
By Lemma \ref{lem3.23}, 
there exist a stratification $\{X_\alpha\}_{\alpha\in A}$ of $X$
and a complex blow-up $f_\alpha : Z_\alpha \to X$ of $\var{X_\alpha}$
along $\var{X_\alpha}\setminus X_\alpha$ for each $\alpha\in A$
such that $(\bfD f_\alpha^\ast \M)(\ast D_\alpha)
\in\BDCmero(\D_{Z_\alpha(D_\alpha)})$,
where $D_\alpha := f_\alpha^{-1}(\var{X_\alpha}\setminus X_\alpha)$
is a normal crossing divisor.
Then we have
$$\pi^{-1}\CC_{Z_\alpha\setminus D_\alpha}\otimes \bfE f_\alpha^{-1}K
\simeq
\Sol_{Z_\alpha}^\rmE\big((\bfD f_\alpha^\ast \M)(\ast D_\alpha)\big)
\in\BECmero(\I\CC_{Z_\alpha(D_\alpha)})$$
for any $\alpha\in A$,
where we used Theorem \ref{thm2.5} (2), (5) and Proposition \ref{prop3.17}.
Since the functor
$\pi^{-1}\CC_{Z_\alpha\setminus D_\alpha}\otimes \bfE f_\alpha^{-1}(\cdot)$ is exact
we have 
\begin{align*}
\pi^{-1}\CC_{Z_\alpha\setminus D_\alpha}\otimes \bfE f_\alpha^{-1}(\SH^iK)
\simeq
\SH^i(\pi^{-1}\CC_{Z_\alpha\setminus D_\alpha}\otimes \bfE f_\alpha^{-1}K)
\in\ZECmero(\I\CC_{Z_\alpha(D_\alpha)})
\end{align*}
for any $i\in\ZZ$.
Therefore $K\in\BEC_{\CC-c}(\I\CC_X)$ and the proof is completed.
\end{proof}

By this proposition and the irregular Riemann-Hilbert correspondence
of A. D'Agnolo and M. Kashiwara
we obtain a fully faithful functor
\[\Sol_X^{\rmE} : \BDChol(\D_X)^{\op} \hookrightarrow \BEC_{\CC-c}(\I\CC_X).\]
We shall prove that this functor is essentially surjective.

\begin{theorem}\label{thm3.17}
For any $\CC$-constructible enhanced ind-sheaf $K\in\BEC_{\CC-c}(\I\CC_X)$,
there exists $\M\in\BDChol(\D_X)$
such that $$K\simto \Sol_X^{\rmE}(\M).$$
Therefore we obtain an equivalence of categories
\[\xymatrix@C=50pt@R=7pt{
\BDChol(\D_X)^{\op}\ar@<0.7ex>@{->}[r]^-{\Sol_X^{\rmE}}\ar@{}[r]|-{\sim}
&
\BEC_{\CC-c}(\I\CC_X)\ar@<0.7ex>@{->}[l]^-{\RH_X^{\rmE}}}.\]
\end{theorem}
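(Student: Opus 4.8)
Only essential surjectivity remains, since the preceding proposition together with Theorem \ref{thm2.6} (1) already exhibits $\Sol_X^{\rmE} : \BDChol(\D_X)^{\op} \to \BEC_{\CC-c}(\I\CC_X)$ as fully faithful with $\M \simto \RH_X^{\rmE}(\Sol_X^{\rmE}(\M))$; once every $K$ is shown to be $\Sol_X^{\rmE}(\M)$ for some $\M \in \BDChol(\D_X)$, the quasi-inverse is automatically $\RH_X^{\rmE}$. First I would fix a stratification $\{X_\alpha\}_{\alpha \in A}$ adapted to $K$ (a common one for all cohomologies $\SH^i(K)$, by Lemma \ref{lem3.14}). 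For each $\alpha$ the object $\pi^{-1}\CC_{Z_\alpha\setminus D_\alpha}\otimes \bfE f_\alpha^{-1}K$ lies in $\BECmero(\I\CC_{Z_\alpha(D_\alpha)})$, so by Proposition \ref{prop3.17} it is $\Sol_{Z_\alpha}^{\rmE}(\N_\alpha)$ for some $\N_\alpha\in\BDCmero(\D_{Z_\alpha(D_\alpha)})$; pushing forward by the proper map $f_\alpha$ as in Remark \ref{rem3.12} (2) gives
$$\pi^{-1}\CC_{X_\alpha}\otimes K \simeq \Sol_X^{\rmE}\big(\bfD f_{\alpha\ast}\N_\alpha\big)[d_X-d_{X_\alpha}],$$
with $\bfD f_{\alpha\ast}\N_\alpha\in\BDChol(\D_X)$. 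Thus each $\pi^{-1}\CC_{X_\alpha}\otimes K$ already lies in the essential image of $\Sol_X^{\rmE}$.

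The construction of $\M$ proceeds by induction on $\dim\supp K$. After refining the stratification (Sublemma \ref{sublem3.22}) I may assume the closed analytic set $S:=\supp K$ is a union of strata; let $S'$ be the union of those strata of dimension $<\dim S$, a closed analytic set with $\dim S'<\dim S$. The exact sequence $0\to\CC_{X\setminus S'}\to\CC_X\to\CC_{S'}\to0$ yields a distinguished triangle
$$\pi^{-1}\CC_{X\setminus S'}\otimes K \to K \to \pi^{-1}\CC_{S'}\otimes K \xrightarrow{\ +1\ }.$$
I would check that $\pi^{-1}\CC_{S'}\otimes K$ is again $\CC$-constructible and supported on $S'$: for a stratum $X_\alpha\subset S'$ the relevant pullback $\pi^{-1}\CC_{Z_\alpha\setminus D_\alpha}\otimes\bfE f_\alpha^{-1}(\pi^{-1}\CC_{S'}\otimes K)$ is unchanged because $\var{X_\alpha}\subset S'$, while for a top-dimensional stratum $X_\alpha$ it vanishes because $f_\alpha^{-1}(S')\subset D_\alpha$. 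Hence by the inductive hypothesis $\pi^{-1}\CC_{S'}\otimes K\simeq\Sol_X^{\rmE}(\M_{S'})$ with $\M_{S'}\in\BDChol(\D_X)$. The first term is supported on the smooth locus $S\setminus S'$, the disjoint union of the top-dimensional strata, so $\pi^{-1}\CC_{X\setminus S'}\otimes K\simeq\bigoplus_\alpha\pi^{-1}\CC_{X_\alpha}\otimes K$ is $\Sol_X^{\rmE}(\M_{\mathrm{top}})$ with $\M_{\mathrm{top}}\in\BDChol(\D_X)$ by the first paragraph. The base case $K\simeq0$ (detected by Proposition \ref{prop3.2}) is trivial.

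It then remains to glue the two ends of the triangle, which I write as
$$\Sol_X^{\rmE}(\M_{\mathrm{top}})\to K\to\Sol_X^{\rmE}(\M_{S'})\xrightarrow{\ \delta\ }\Sol_X^{\rmE}(\M_{\mathrm{top}})[1].$$
Using the contravariant identity $\Sol_X^{\rmE}(\M_{\mathrm{top}})[1]=\Sol_X^{\rmE}(\M_{\mathrm{top}}[-1])$ and full faithfulness, the connecting morphism $\delta$ equals $\Sol_X^{\rmE}(\psi)$ for a unique $\psi:\M_{\mathrm{top}}\to\M_{S'}[1]$ in $\BDChol(\D_X)$. Completing $\psi$ to a distinguished triangle $\M_{S'}\to\M\to\M_{\mathrm{top}}\xrightarrow{\psi}\M_{S'}[1]$ produces $\M\in\BDChol(\D_X)$ (holonomicity is preserved, the category being triangulated), and applying the contravariant exact functor $\Sol_X^{\rmE}$ gives a distinguished triangle with the same connecting morphism $\delta$. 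Since $K$ and $\Sol_X^{\rmE}(\M)$ are then both cones of $\delta[-1]$, the uniqueness of the cone yields $K\simto\Sol_X^{\rmE}(\M)$, completing the induction.

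The step I expect to be most delicate is precisely this gluing: one must ensure that $\delta$ genuinely descends through full faithfulness to a morphism of holonomic complexes and that the induced comparison is an isomorphism, rather than merely a map of triangles — this rests on the full faithfulness of $\Sol_X^{\rmE}$ on all of $\BDChol(\D_X)$. The other point requiring care is the verification that $\pi^{-1}\CC_{S'}\otimes K$ stays $\CC$-constructible with strictly smaller support, so that the induction on $\dim\supp K$ is well-founded.
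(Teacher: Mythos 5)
Your proposal is correct and follows essentially the same d\'evissage as the paper's own proof: cut $K$ along an adapted stratification by the functors $\pi^{-1}\CC_{Z}\otimes(\cdot)$, use Remark \ref{rem3.12} (2) (equivalently Proposition \ref{prop3.17} together with proper pushforward along $f_\alpha$) for the single-stratum pieces, and conclude because the essential image of the fully faithful exact functor $\Sol_X^{\rmE}$ is closed under cones. The differences are purely organizational---the paper first reduces to $K\in\ZEC_{\CC-c}(\I\CC_X)$ by induction on the length of the complex and then runs a double induction on the dimension filtration $Y_k$ and on the number of strata in each $S_k$, whereas you induct on $\dim\supp K$ and treat complexes directly---and your explicit gluing paragraph (descending the connecting morphism $\delta$ through full faithfulness and comparing the two triangles) merely spells out the cone-closure step that the paper's ``by induction on $k$, it is enough to show'' uses tacitly.
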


\begin{proof}
By induction on the length of the complex,
it is enough to show in the case of $K\in\ZEC_{\CC-c}(\I\CC_X)$.
Let $\{X_\alpha\}_{\alpha\in A}$ be a stratification of $X$ adapted to $K$
and we put $$ Y_k := \bigsqcup_{\dim X_\alpha\leq k}X_\alpha,
\hspace{17pt}
S_k := Y_k\setminus Y_{k-1} = \bigsqcup_{\dim X_\alpha = k}X_\alpha
\hspace{17pt}
\mbox{for any } k=0, 1, \ldots, d_X.$$
Then  $Y_0=S_0$ and $X=Y_{d_X}$.
Moreover, there exists a distinguished triangle
\[\pi^{-1}\CC_{S_k}\otimes K\to
\pi^{-1}\CC_{Y_k}\otimes K \to
\pi^{-1}\CC_{Y_{k-1}}\otimes K\xrightarrow{+1}.\]
Hence, by induction on $k$,
it is enough to show that 
$\pi^{-1}\CC_{S_k}\otimes K\in\Sol_X^{\rmE}(\BDChol(\D_X))$ for any $k$.

Let $S_i$ be decomposed into $Z_1\sqcup\cdots \sqcup Z_{m_i}$
with some strata $Z_1, \ldots, Z_{m_i}\in\{X_\alpha\}_{\alpha\in A}$.
If $m_i=1$, by Remark \ref{rem3.12} (2)
we have $\pi^{-1}\CC_{S_i}\otimes K\in\Sol_X^{\rmE}(\BDChol(\D_X))$.
We shall prove the case $m_i\geq2$.
In this case there exists a distinguished triangle
\[\pi^{-1}\CC_{Z_1}\otimes K\to
\pi^{-1}\CC_{Z_1\sqcup\cdots \sqcup Z_j}\otimes K \to
\pi^{-1}\CC_{Z_2\sqcup\cdots \sqcup Z_j}\otimes K\xrightarrow{+1}\]
for any $j=2, \ldots, m_i$.
Hence, by induction on $j$
it is enough to show that 
$\pi^{-1}\CC_{Z_1}\otimes K\in\Sol_X^{\rmE}(\BDChol(\D_X))$.
However, it follows from Remark \ref{rem3.12} (2).
\end{proof}

By Theorem \ref{thm2.6} (2), we obtain:
\begin{corollary}\label{cor3.19}
The functor $e : \BDC(\CC_M) \hookrightarrow \BECstb(\I\CC_M)$
induces an embedding
$$\BDC_{\CC-c}(\CC_X)\hookrightarrow\BEC_{\CC-c}(\I\CC_X)$$
and hence we have a commutative diagram
\[\xymatrix@C=30pt@M=5pt{
\BDChol(\D_X)^{\op}\ar@{->}[r]^\sim\ar@<1.0ex>@{}[r]^-{\Sol_X^{\rmE}}
\ar@{}[rd]|{\rotatebox[origin=c]{180}{$\circlearrowright$}}
 & \BEC_{\CC-c}(\I\CC_X)\\
\BDCrh(\D_X)^{\op}\ar@{->}[r]_-{\Sol_X}^-{\sim}\ar@{}[u]|-{\bigcup}
&\BDC_{\CC-c}(\CC_X).
\ar@{^{(}->}[u]_-{e}
}\]
\end{corollary}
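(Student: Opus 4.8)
The plan is to deduce the statement from Theorem \ref{thm2.6} (2) together with the classical Riemann--Hilbert correspondence and the main theorem (Theorem \ref{thm3.17}), so that the argument reduces to a short diagram chase.

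First I would verify that the functor $e$ sends $\CC$-constructible complexes of sheaves to $\CC$-constructible enhanced ind-sheaves. Given $\SF\in\BDC_{\CC-c}(\CC_X)$, Kashiwara's equivalence $\Sol_X : \BDCrh(\D_X)^{\op}\simto\BDC_{\CC-c}(\CC_X)$ produces a regular holonomic module $\M\in\BDCrh(\D_X)$ with $\SF\simeq\Sol_X(\M)$. Then Theorem \ref{thm2.6} (2) yields
\[e(\SF)\simeq e\big(\Sol_X(\M)\big)\simeq\Sol_X^{\rmE}(\M).\]
As $\BDCrh(\D_X)\subset\BDChol(\D_X)$, the module $\M$ is holonomic, so Theorem \ref{thm3.17} gives $\Sol_X^{\rmE}(\M)\in\BEC_{\CC-c}(\I\CC_X)$; hence $e(\SF)\in\BEC_{\CC-c}(\I\CC_X)$ and $e$ restricts to a functor $\BDC_{\CC-c}(\CC_X)\to\BEC_{\CC-c}(\I\CC_X)$.

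Next I would observe that this restriction is an embedding. The functor $e : \BDC(\CC_X)\to\BECstb(\I\CC_X)$ is fully faithful, and $\BDC_{\CC-c}(\CC_X)$ is a full subcategory of $\BDC(\CC_X)$ while $\BEC_{\CC-c}(\I\CC_X)$ is a full subcategory of $\BEC(\I\CC_X)$; therefore the induced functor $\BDC_{\CC-c}(\CC_X)\hookrightarrow\BEC_{\CC-c}(\I\CC_X)$ is again fully faithful.

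Finally, the commutativity of the square is exactly the isomorphism $\Sol_X^{\rmE}(\M)\simeq e(\Sol_X(\M))$ of Theorem \ref{thm2.6} (2): for $\M\in\BDCrh(\D_X)$, traversing the square by first applying $\Sol_X$ and then $e$ agrees, up to this natural isomorphism, with applying $\Sol_X^{\rmE}$ directly, the top horizontal arrow being the equivalence of Theorem \ref{thm3.17} and the vertical maps being the inclusion $\BDCrh(\D_X)\subset\BDChol(\D_X)$ and the embedding $e$. I do not anticipate a genuine obstacle; the only step that is not purely formal is confirming that $e(\SF)$ is $\CC$-constructible in the enhanced sense of Definition \ref{def3.19}, and this is handled precisely by rewriting $e(\SF)$ as $\Sol_X^{\rmE}(\M)$ and invoking the main theorem.
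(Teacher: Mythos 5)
Your proposal is correct and follows essentially the same route as the paper: the paper derives Corollary \ref{cor3.19} directly from Theorem \ref{thm2.6} (2) (the isomorphism $\Sol_X^{\rmE}(\M)\simeq e\big(\Sol_X(\M)\big)$ for regular holonomic $\M$), combined with the classical Riemann--Hilbert equivalence $\Sol_X$ and the fact, established via Theorem \ref{thm3.17}, that $\Sol_X^{\rmE}$ lands in $\BEC_{\CC-c}(\I\CC_X)$. Your explicit verification that $e(\SF)$ is $\CC$-constructible by rewriting it as $\Sol_X^{\rmE}(\M)$, and that full faithfulness of $e$ passes to the full subcategories, is exactly the diagram chase the paper leaves implicit.
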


Moreover by Proposition \ref{prop2.7} and the fact that there exists an isomorphism
$\sh\big(\Sol_X^{\rmE}(\M)\big)\simeq \Sol_X(\M)$
for $\M\in\BDChol(\D_X)$, we have:

\begin{corollary}\label{cor3.22}
The functor $\sh : \BEC(\I\CC_X)\to\BDC(\CC_X)$ induces
$$\BEC_{\CC-c}(\I\CC_X)\to\BDC_{\CC-c}(\CC_X)$$
and hence we have a commutative diagram
\[\xymatrix@C=30pt@M=5pt{
\BDChol(\D_X)^{\op}\ar@{->}[r]^\sim\ar@<1.0ex>@{}[r]^-{\Sol_X^{\rmE}}
\ar@{->}[d]_-{(\cdot)_\reg}\ar@{}[rd]|{\rotatebox[origin=c]{180}{$\circlearrowright$}}
 & \BEC_{\CC-c}(\I\CC_X)\ar@{->}[d]^-{\sh}\\
\BDCrh(\D_X)^{\op}\ar@{->}[r]_-{\Sol_X}^-{\sim}&\BDC_{\CC-c}(\CC_X).
}\]
\end{corollary}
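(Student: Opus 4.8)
The plan is to read off both assertions from the main equivalence of Theorem \ref{thm3.17} together with the comparison $\sh\big(\Sol_X^\rmE(\M)\big)\simeq\Sol_X(\M)$ for holonomic $\M$, which was recorded via \cite[Lemma 9.5.5]{DK16}.

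First I would show that $\sh$ carries $\BEC_{\CC-c}(\I\CC_X)$ into $\BDC_{\CC-c}(\CC_X)$. Let $K\in\BEC_{\CC-c}(\I\CC_X)$. By Theorem \ref{thm3.17} there is $\M\in\BDChol(\D_X)$ with $K\simto\Sol_X^\rmE(\M)$, whence $\sh(K)\simeq\Sol_X(\M)$ by the cited comparison. Proposition \ref{prop2.7} (1) (iii) gives $\Sol_X(\M)\simeq\Sol_X(\M_\reg)$, and since $\M_\reg\in\BDCrh(\D_X)$, Kashiwara's regular Riemann--Hilbert equivalence (the lower horizontal arrow of the diagram) yields $\Sol_X(\M_\reg)\in\BDC_{\CC-c}(\CC_X)$. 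Thus $\sh(K)$ is $\CC$-constructible and $\sh$ restricts to a functor $\BEC_{\CC-c}(\I\CC_X)\to\BDC_{\CC-c}(\CC_X)$.

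It then remains to produce a natural isomorphism $\sh\circ\Sol_X^\rmE\simeq\Sol_X\circ(\cdot)_\reg$ of functors $\BDChol(\D_X)^\op\to\BDC_{\CC-c}(\CC_X)$, which is exactly the commutativity of the square. For $\M\in\BDChol(\D_X)$, chaining the two isomorphisms already used gives
\[\sh\big(\Sol_X^\rmE(\M)\big)\simeq\Sol_X(\M)\simeq\Sol_X(\M_\reg),\]
and naturality in $\M$ follows from the naturality of each constituent isomorphism (that of \cite[Lemma 9.5.5]{DK16} and that furnished by the regularization functor of Proposition \ref{prop2.7}).

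The argument is essentially an assembly of results already in place, so I do not expect a genuine obstacle. The only point requiring care is that $\CC$-constructibility of $\sh(K)$ is not visible directly on the enhanced side: one must first invoke Theorem \ref{thm3.17} to realise $K$ as an enhanced solution complex, and only then transport constructibility from the classical regular-holonomic case through $(\cdot)_\reg$.
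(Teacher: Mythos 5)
Your proposal is correct and follows essentially the same route as the paper: the paper derives the corollary precisely from Proposition \ref{prop2.7} (giving $\Sol_X(\M)\simeq\Sol_X(\M_\reg)$ with $\M_\reg$ regular holonomic) together with the comparison $\sh\big(\Sol_X^{\rmE}(\M)\big)\simeq\Sol_X(\M)$, with Theorem \ref{thm3.17} supplying the identification of an arbitrary $K\in\BEC_{\CC-c}(\I\CC_X)$ with an enhanced solution complex. Your write-up merely makes explicit (the invocation of Theorem \ref{thm3.17} and the naturality of the isomorphisms) what the paper leaves implicit in its one-line justification.
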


The $\CC$-constructability is closed under many operations. 
\begin{proposition}\label{prop3.18}
Let $f : X\to Y$ be a morphism of complex manifolds and
$K, K_1, K_2\in\BEC_{\CC-c}(\I\CC_X)$, $L\in\BEC_{\CC-c}(\I\CC_Y)$.
Then we have
\begin{itemize}
\item[\rm(1)]
$K_1\Potimes K_2, \Prihom(K_1, K_2)$ and $K\Pboxtimes L$ are $\CC$-constructible,

\item[\rm(2)]
$\rmD_X^{\rmE}(K)\in\BEC_{\CC-c}(\I\CC_X)$ and 
$K\simto \rmD_X^{\rmE}\rmD_X^{\rmE}K$,

\item[\rm(3)]
$\bfE f^{-1}L$ and $\bfE f^!L$ are $\CC$-constructible,

\item[\rm(4)]
if $f$ is proper $\bfE f_{!!}K (\simeq\bfE f_{\ast}K)$ is $\CC$-constructible.
\end{itemize}
\end{proposition}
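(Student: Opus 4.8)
The plan is to reduce every statement to the corresponding fact for holonomic $\D$-modules by means of the equivalence $\Sol_X^{\rmE} : \BDChol(\D_X)^{\op}\simto\BEC_{\CC-c}(\I\CC_X)$ of Theorem \ref{thm3.17}, combined with the compatibilities of the enhanced solution functor with the operations collected in Theorem \ref{thm2.5}. First I would use Theorem \ref{thm3.17} to write $K\simeq\Sol_X^{\rmE}(\M)$, $K_i\simeq\Sol_X^{\rmE}(\M_i)$ and $L\simeq\Sol_Y^{\rmE}(\N)$ for suitable $\M,\M_i\in\BDChol(\D_X)$ and $\N\in\BDChol(\D_Y)$, and record that $\BEC_{\CC-c}(\I\CC_X)$ is closed under shifts, so that the degree shifts occurring in Theorem \ref{thm2.5} are harmless.

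Next I would dispatch the four operations that appear directly in Theorem \ref{thm2.5}. By Theorem \ref{thm2.5} (4), $K_1\Potimes K_2\simeq\Sol_X^{\rmE}(\M_1\Dotimes\M_2)$; by Theorem \ref{thm2.5} (1), $\rmD_X^{\rmE}K\simeq\Sol_X^{\rmE}(\DD_X\M)[2d_X]$; by Theorem \ref{thm2.5} (2), $\bfE f^{-1}L\simeq\Sol_X^{\rmE}(\bfD f^\ast\N)$; and for proper $f$, using $\bfE f_{!!}\simeq\bfE f_\ast$ together with Theorem \ref{thm2.5} (3), $\bfE f_\ast K[d_X]\simeq\Sol_Y^{\rmE}(\bfD f_\ast\M)[d_Y]$. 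In each case the right-hand side lies again in the essential image of $\Sol^{\rmE}$, because $\Dotimes$, $\DD_X$, $\bfD f^\ast$ and (for proper $f$) $\bfD f_\ast$ preserve holonomicity; hence $\CC$-constructibility follows from Theorem \ref{thm3.17}. The biduality $K\simto\rmD_X^{\rmE}\rmD_X^{\rmE}K$ is inherited from the corresponding isomorphism for $\RR$-constructible enhanced ind-sheaves (\cite[Theorem 4.9.12]{DK16}), since $\BEC_{\CC-c}(\I\CC_X)\subset\BEC_{\RR-c}(\I\CC_X)$.

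Finally I would treat the operations not listed in Theorem \ref{thm2.5} by reducing them to the four primitive ones via the standard duality identities of the enhanced formalism: $\Prihom(K_1, K_2)\simeq\rmD_X^{\rmE}(K_1\Potimes\rmD_X^{\rmE}K_2)$ for the internal hom, $\bfE f^!L\simeq\rmD_X^{\rmE}\bfE f^{-1}\rmD_Y^{\rmE}L$ for the exceptional inverse image, and $K\Pboxtimes L\simeq\bfE q_1^{-1}K\Potimes\bfE q_2^{-1}L$ for the external product, where $q_1\colon X\times Y\to X$ and $q_2\colon X\times Y\to Y$ are the projections. The hard part is precisely this last step: one must verify that these reduction formulas are valid in the bordered/enhanced ind-sheaf setting, in particular that $\rmD^{\rmE}$ intertwines the operations as stated, so that the closure properties already established for $\Potimes$, $\rmD_X^{\rmE}$ and $\bfE f^{-1}$ can be invoked. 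Once the formulas are in hand, each of the remaining cases is immediate.
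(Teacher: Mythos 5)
Your proposal is correct and is essentially the paper's own argument: the paper likewise reduces each operation to the holonomic $\D$-module side via the equivalence of Theorem \ref{thm3.17} and the compatibilities of Theorem \ref{thm2.5}, writing out only the case of $\bfE f^{-1}$ and declaring the remaining cases similar. The duality identities you flag as the remaining verification are indeed available for $\RR$-constructible enhanced ind-sheaves (to which all $\CC$-constructible objects belong) by \cite[Theorem 4.9.12 and the surrounding results of \S 4.9]{DK16}, so your reductions of $\Prihom$, $\bfE f^{!}$ and $\Pboxtimes$ to $\Potimes$, $\rmD_X^{\rmE}$ and $\bfE f^{-1}$ go through.
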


\begin{proof}
Since the proofs of these assertions in the proposition are similar,
we only prove the first one of (3).

Let $f : X\to Y$ be a morphism of complex manifolds and $L\in\BEC_{\CC-c}(\I\CC_Y)$.
Then we have isomorphisms
$$\bfE f^{-1}L\simeq\bfE f^{-1}\Big( \Sol_Y^{\rmE}\big(\RH_Y^{\rmE}(L)\big)\Big)
\simeq\Sol_X^{\rmE}\Big(\bfD f^{\ast}\big( \RH_Y^{\rmE}(L)\big)\Big),$$
where in the second isomorphism we used Theorem \ref{thm2.5} (2).
Since $\bfD f^{\ast}\big( \RH_Y^{\rmE}(L)\big)\in\BDChol(\D_X)$
then we obtain $\bfE f^{-1}L\in\BEC_{\CC-c}(\I\CC_X)$.
\end{proof}

By this proposition, the functor $\RH^{\rmE}$ commutes with many operations as below
\begin{corollary}
Let $f : X\to Y$ be a morphism of complex manifolds and
$K, K_1, K_2\in\BEC_{\CC-c}(\I\CC_X)$, $L\in\BEC_{\CC-c}(\I\CC_Y)$.
Then
\begin{itemize}
\item[\rm(1)]
$\RH_X^{\rmE}(K_1\Potimes K_2)\simeq
\RH_X^{\rmE}(K_1)\Dotimes\RH_X^{\rmE}(K_2)$,
\item[\rm(2)]
$\RH_X^{\rmE}\big(\Prihom(K_1, K_2)\big)\simeq
\rhom_{\SO_X}\big(\RH_X^{\rmE}(K_1), \RH_X^{\rmE}(K_2)\big)$,
\item[\rm(3)]
$\RH_{X\times Y}^{\rmE}(K \Pboxtimes L)\simeq
\RH_X^{\rmE}(K)\Dboxtimes\RH_Y^{\rmE}(L)$,
\item[\rm(4)]
$\RH_X(\rmD_X^{\rmE}K)[2d_X]\simeq\DD_X(\RH_X^{\rmE}K)$,
\item[\rm(5)]
$\RH_X^{\rmE}(\bfE f^{-1}L)\simeq\bfD f^{\ast}\big(\RH_Y^{\rmE}(L)\big)$.
\end{itemize}
\end{corollary}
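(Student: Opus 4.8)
The plan is to deduce every identity from the equivalence $\Sol_X^{\rmE} : \BDChol(\D_X)^{\op}\simto\BEC_{\CC-c}(\I\CC_X)$ of Theorem \ref{thm3.17}, whose quasi-inverse is $\RH_X^{\rmE}$, together with the compatibilities of $\Sol_X^{\rmE}$ with the six operations recorded in Theorem \ref{thm2.5}. Concretely, by Theorem \ref{thm3.17} I may write $K_i\simeq\Sol_X^{\rmE}(\M_i)$ with $\M_i:=\RH_X^{\rmE}(K_i)\in\BDChol(\D_X)$, and likewise $L\simeq\Sol_Y^{\rmE}(\N)$ with $\N:=\RH_Y^{\rmE}(L)$. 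Since Proposition \ref{prop3.18} guarantees that each resulting object is again $\CC$-constructible, the functor $\RH_X^{\rmE}$ may be applied to it and remains quasi-inverse to $\Sol_X^{\rmE}$ there. The whole corollary is then an exercise in running a single template: rewrite the operation on the enhanced side as $\Sol^{\rmE}$ of the corresponding $\D$-module operation, then apply $\RH^{\rmE}$.

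For (1), (4) and (5) the template applies directly. For (1), Theorem \ref{thm2.5} (4) gives $K_1\Potimes K_2\simeq\Sol_X^{\rmE}(\M_1\Dotimes\M_2)$, whence $\RH_X^{\rmE}(K_1\Potimes K_2)\simeq\M_1\Dotimes\M_2$. For (5), Theorem \ref{thm2.5} (2) gives $\bfE f^{-1}L\simeq\Sol_X^{\rmE}(\bfD f^{\ast}\N)$, whence $\RH_X^{\rmE}(\bfE f^{-1}L)\simeq\bfD f^{\ast}\N$. For (4), Theorem \ref{thm2.5} (1) gives $\rmD_X^{\rmE}K\simeq\Sol_X^{\rmE}(\DD_X\M)[2d_X]$; here I use in addition that $\RH_X^{\rmE}$, being quasi-inverse to a contravariant equivalence, carries a shift to the opposite shift, which is precisely what produces the displayed $[2d_X]$.

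For (3) I would first reduce the external product to an internal one. Writing $p_1,p_2$ for the projections of $X\times Y$, one has $K\Pboxtimes L\simeq\bfE p_1^{-1}K\Potimes\bfE p_2^{-1}L$ on the enhanced side and $\bfD p_1^{\ast}\M\Dotimes\bfD p_2^{\ast}\N\simeq\M\Dboxtimes\N$ on the $\D$-module side. Combining (1) and (5) then yields the claim, the only point to watch being the dimension shifts hidden in $\bfD p_i^{\ast}$, which is routine bookkeeping.

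The main obstacle is (2), since Theorem \ref{thm2.5} contains no internal-hom statement. My plan is to express $\Prihom$ through convolution and the duality functor: for $\CC$-constructible (hence $\RR$-constructible) objects one has $\Prihom(K_1,K_2)\simeq\rmD_X^{\rmE}(K_1\Potimes\rmD_X^{\rmE}K_2)$, using the biduality $K\simto\rmD_X^{\rmE}\rmD_X^{\rmE}K$ of Proposition \ref{prop3.18} (2). On the $\D$-module side the matching identity is $\rhom_{\SO_X}(\M_1,\M_2)\simeq\DD_X(\M_1\Dotimes\DD_X\M_2)$ up to shift. Feeding this presentation into the already-established (1) and (4) then gives (2). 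I expect the delicate part to be the exact reconciliation of the shifts appearing in the two duality formulas and in Theorem \ref{thm2.5} (1), which must be made to cancel so that the final isomorphism holds on the nose rather than merely up to an undetermined shift.
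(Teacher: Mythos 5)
Your overall strategy --- reconstruct the $\D$-module side via Theorem \ref{thm3.17} and push the operation through the compatibilities of Theorem \ref{thm2.5} --- is exactly the paper's proof: the paper writes out only item (5), which coincides with your computation, and declares the remaining items ``similar''. Your treatments of (1), (4), (5) are correct as written (in (4) the contravariance of $\RH_X^{\rmE}$ does flip the shift of Theorem \ref{thm2.5} (1) into the displayed $[2d_X]$), and your reduction of (3) via $K\Pboxtimes L\simeq \bfE p_1^{-1}K\Potimes\bfE p_2^{-1}L$ and $\M\Dboxtimes\N\simeq\bfD p_1^{\ast}\M\Dotimes\bfD p_2^{\ast}\N$ is also fine; with the unshifted convention for $\bfD f^{\ast}$ implicit in Theorem \ref{thm2.5} (2), no dimension shifts arise there at all.

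The one genuine problem is (2). Your enhanced-side formula $\Prihom(K_1,K_2)\simeq\rmD_X^{\rmE}(K_1\Potimes\rmD_X^{\rmE}K_2)$ is correct (convolution--hom adjunction plus biduality, with Proposition \ref{prop3.18} keeping everything $\CC$-constructible), and feeding it into (1) and (4) makes the shifts cancel exactly, yielding
\[\RH_X^{\rmE}\big(\Prihom(K_1,K_2)\big)\simeq\DD_X\big(\M_1\Dotimes\DD_X\M_2\big),\qquad \M_i:=\RH_X^{\rmE}(K_i).\]
But your final step, the ``matching identity'' $\rhom_{\SO_X}(\M_1,\M_2)\simeq\DD_X(\M_1\Dotimes\DD_X\M_2)$, is not a routine shift-bookkeeping issue: it is the entire content of (2), it is not among the results you can cite, and read literally --- with $\rhom_{\SO_X}$ the derived hom of the underlying $\SO_X$-modules --- it is false. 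For instance, on $X=\CC$ take $\M_1=\SO_X(\ast\{0\})$ and $\M_2=B_{\{0\}}:=\D_X/\D_Xz$; then $\DD_X\M_2\simeq\M_2$ and $\M_1\Dotimes\DD_X\M_2\simeq\SO_X(\ast\{0\})\Lotimes{\SO_X}B_{\{0\}}\simeq0$, so the right-hand side of the display vanishes, whereas $\rhom_{\SO_X}(\M_1,\M_2)$ has nonzero $H^0$: writing $\SO_X(\ast\{0\})=\varinjlim_n z^{-n}\SO_X$, one gets $H^0\simeq\varprojlim_n\big(\cdots\xrightarrow{\ z\ }B_{\{0\}}\xrightarrow{\ z\ }B_{\{0\}}\big)\neq0$, since multiplication by $z$ on $B_{\{0\}}$ is surjective. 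So item (2) can only be correct if $\rhom_{\SO_X}$ is understood as notation for the internal hom of holonomic modules, i.e.\ for $\DD_X(\,\cdot\,\Dotimes\DD_X(\,\cdot\,))$, in which case your formal computation already finishes the proof. Note that this defect is shared by the paper itself: its proof of (2) is the word ``similar'', even though Theorem \ref{thm2.5} contains no statement about $\Prihom$, so any honest proof must pass through exactly the identification you left unproved.
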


\begin{proof}
Since the proofs of these assertions in the corollary are similar,
we only prove (5).
Let $f : X\to Y$ be a morphism of complex manifolds and $L\in\BEC_{\CC-c}(\I\CC_Y)$.
Then we have a sequence of isomorphisms
\begin{align*}
\RH_X^{\rmE}(\bfE f^{-1}L) &\simeq
\RH_X^{\rmE}\Big(\bfE f^{-1}\Sol_Y^{\rmE}\big(\RH_Y^{\rmE}(L)\big)\Big)\\
&\simeq 
\RH_X^{\rmE}\Big(\Sol_Y^{\rmE}\big(
\bfD f^{\ast}\big(\RH_Y^{\rmE}(L)\big)\big)\Big)\\
&\simeq
\bfD f^{\ast}\big(\RH_Y^{\rmE}(L)\big),
\end{align*}
where the first and third isomorphisms follow from Theorem \ref{thm3.17}
and in the second isomorphism we used Theorem \ref{thm2.5} (2).
\end{proof}
On the other hand, we can prove that the functor $\RH_X^\rmE$ commutes with the direct image functors without assuming the $\CC$-constructability as follows:
\begin{proposition}
Let $f : X\to Y$ be a morphism of complex manifolds
and  $K\in\BEC_{\RR-c}(\I\CC_X)$ be an $\RR$-constructible enhanced ind-sheaf.
Then we have
$$\RH_Y^{\rmE}(\bfE f_{!!} K)[d_Y]\simeq \bfD f_{\ast}\big(\RH_X^{\rmE}(K)\big)[d_X].$$
\end{proposition}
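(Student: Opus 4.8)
The plan is to establish the isomorphism by computing $\bfD f_\ast\big(\RH_X^{\rmE}(K)\big)[d_X]$ directly and transforming it into $\RH_Y^{\rmE}(\bfE f_{!!}K)[d_Y]$ through a chain of isomorphisms that reproduces, without the auxiliary restrictions, the one already appearing in the proofs of Lemma \ref{lem3.6} (2) and Lemma \ref{lem3.9} (2), with the dimension shifts now kept explicit. No step uses holonomicity, so the argument is valid for an arbitrary $\RR$-constructible $K$; and, as I explain below, the direct-image adjunction does not require $f$ to be proper.

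First I would unfold the $\D$-module direct image as
\[\bfD f_\ast\big(\RH_X^{\rmE}(K)\big) \simeq \bfR f_\ast\Big(\D_{Y\leftarrow X}\Lotimes{\D_X}\rhom^{\rmE}(K,\SO_X^{\rmE})\Big),\]
where $\D_{Y\leftarrow X}$ is the transfer bimodule, viewed as an object of $\BDC(\D_X^{\op})$. Since $\SO_X^{\rmE}$ is stable and carries a left $\D_X$-action while $\D_X$ has finite flat dimension, Sublemma \ref{sublem3.3} (applied with $\SA=\D_X$, $\SF=\D_{Y\leftarrow X}$ and $L=\SO_X^{\rmE}$, using the $\RR$-constructibility of $K$) moves the transfer bimodule inside the outer hom:
\[\D_{Y\leftarrow X}\Lotimes{\D_X}\rhom^{\rmE}(K,\SO_X^{\rmE}) \simeq \rhom^{\rmE}\!\Big(K,\ \pi^{-1}\beta_X\D_{Y\leftarrow X}\Lotimes{\pi^{-1}\beta_X\D_X}\SO_X^{\rmE}\Big).\]
I would then invoke \cite[Theorem 9.1.2 (i)]{DK16} to identify the resulting coefficient with $\bfE f^!\SO_Y^{\rmE}$, up to the shift by $d_X-d_Y$ — exactly the ingredient used (for a ramification or a modification, where $d_X=d_Y$ and the shift is invisible) in the cited chains — obtaining
\[\bfD f_\ast\big(\RH_X^{\rmE}(K)\big)[d_X] \simeq \bfR f_\ast\,\rhom^{\rmE}(K,\bfE f^!\SO_Y^{\rmE})[d_Y].\]
Finally, the adjunction $(\bfE f_{!!},\bfE f^!)$ for the outer hom functor, namely \cite[Lemma 4.5.17]{DK16}, converts the right-hand side into $\rhom^{\rmE}(\bfE f_{!!}K,\SO_Y^{\rmE})[d_Y]=\RH_Y^{\rmE}(\bfE f_{!!}K)[d_Y]$, as desired.

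The delicate point is the middle step. First, one must check that the hypotheses of Sublemma \ref{sublem3.3} genuinely hold here: the stability of $\SO_X^{\rmE}$, the finiteness of the flat dimension of $\D_X$, and the $\RR$-constructibility of $K$ — the last being the only place the standing hypothesis on $K$ enters. Second, and this is the real content, one must verify that \cite[Theorem 9.1.2 (i)]{DK16} supplies precisely the shift $d_X-d_Y$, so that the two sides carry the claimed normalizations $[d_X]$ and $[d_Y]$; in the ramification and modification cases treated in Lemmas \ref{lem3.6} and \ref{lem3.9} this shift vanishes and is therefore not visible there. Finally, it is worth emphasizing that $\bfD f_\ast$ is defined through $\bfR f_\ast$, which matches the $\bfR f_\ast$ produced by the adjunction $\rhom^{\rmE}(\bfE f_{!!}K,\SO_Y^{\rmE})\simeq\bfR f_\ast\,\rhom^{\rmE}(K,\bfE f^!\SO_Y^{\rmE})$; this is why the proper direct image $\bfE f_{!!}$ appears on the enhanced side and why no properness assumption on $f$ is needed.
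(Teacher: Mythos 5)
Your proposal is correct and is essentially the paper's own proof read in the reverse direction: the paper starts from $\RH_Y^{\rmE}(\bfE f_{!!}K)=\rhom^{\rmE}(\bfE f_{!!}K,\SO_Y^{\rmE})$, applies the adjunction \cite[Lemma 4.5.17]{DK16} first and then identifies $\rhom^{\rmE}(K,\bfE f^!\SO_Y^{\rmE})$ with the $\D$-module direct image via \cite[Theorem 9.1.2 (1)]{DK16} combined with Sublemma \ref{sublem3.3}, which are exactly your three ingredients (with the same absence of any properness or holonomicity hypothesis and the same shift $d_X-d_Y$) assembled from the $\bfD f_\ast$ side instead.
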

\begin{proof}
Indeed, we have isomorphisms
\begin{align*}
\RH_Y^{\rmE}(\bfE f_{!!} K)
&=
\rhom^{\rmE}(\bfE f_{!!} K, \SO_Y^{\rmE})\\
&\simeq
\bfR f_\ast\rhom^{\rmE}(K, \bfE f^{!}\SO_Y^{\rmE})\\
&\simeq
\bfD f_{\ast}\big(\RH_X^{\rmE}(K)\big)[d_X-d_Y],
\end{align*}
where in the second (resp.\ last) isomorphism
we used \cite[Lemma 4.5.17]{DK16}
(resp.\ \cite[Theorem 9.1.2 (1)]{DK16} and Sublemma \ref{sublem3.3}).
\end{proof}

\section{Enhanced Perverse Ind-Sheaves}
In this section, we will define a t-structure on
the triangulated category $\BEC_{\CC-c}(\I\CC_X)$
of $\CC$-constructible enhanced ind-sheaves
so that its heart is equivalent to the abelian category $\Modhol(\D_X)$
of holonomic $\D_X$-modules.
Recall that $\sh := \alpha_Xi_0^!\bfR^{\rmE} : \BEC(\I\CC_X)\to\BDC(\CC_X)$
denotes the sheafification functor and
for any $\CC$-constructible enhanced ind-sheaf $K$,
its sheafification $\sh(K)$ is a  $\CC$-constructible sheaf by Corollary \ref{cor3.22}.

We denote by $\rmD_X :\BDC(\CC_X)^{\op}\to\BDC(\CC_X)$
the Verdier dual functor for sheaves,
see \cite[\S 3]{KS90} for the definition.
The sheafification functor $\sh : \BEC_{\CC-c}(\I\CC_X)\to \BDC_{\CC-c}(\CC_X)$ commutes
with the duality functor as follows.
\begin{lemma}\label{lem4.1}
For any $K\in\BEC_{\CC-c}(\I\CC_X)$
there exists an isomorphism
$$\sh\big(\rmD_X^{\rmE}(K)\big)\simeq\rmD_X\big(\sh(K)\big).$$
\end{lemma}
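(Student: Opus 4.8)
The plan is to use the main theorem (Theorem \ref{thm3.17}) to transport the statement to the side of $\D$-modules, where the compatibility of duality is classical. Since $\Sol_X^{\rmE} : \BDChol(\D_X)^{\op}\simto\BEC_{\CC-c}(\I\CC_X)$ is an equivalence, any $K\in\BEC_{\CC-c}(\I\CC_X)$ can be written as $K\simeq\Sol_X^{\rmE}(\M)$ with $\M:=\RH_X^{\rmE}(K)\in\BDChol(\D_X)$. Both sides are well defined: $\rmD_X^{\rmE}K$ is again $\CC$-constructible by Proposition \ref{prop3.18} (2), so $\sh(\rmD_X^{\rmE}K)$ is $\CC$-constructible by Corollary \ref{cor3.22}, while $\sh(K)$ is $\CC$-constructible and hence its Verdier dual $\rmD_X(\sh(K))$ is defined.

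First I would compute the left-hand side using Theorem \ref{thm2.5} (1) and the isomorphism $\sh\big(\Sol_X^{\rmE}(\cdot)\big)\simeq\Sol_X(\cdot)$ on $\BDChol(\D_X)$ recalled in Section 2:
\[
\sh\big(\rmD_X^{\rmE}K\big)\simeq\sh\big(\rmD_X^{\rmE}\Sol_X^{\rmE}(\M)\big)
\simeq\sh\big(\Sol_X^{\rmE}(\DD_X\M)[2d_X]\big)\simeq\Sol_X(\DD_X\M)[2d_X].
\]
The right-hand side is simply $\rmD_X\big(\sh(K)\big)\simeq\rmD_X\big(\sh\,\Sol_X^{\rmE}(\M)\big)\simeq\rmD_X\big(\Sol_X(\M)\big)$. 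Thus the lemma reduces to the classical compatibility of the non-enhanced solution functor with duality, namely
\[
\rmD_X\big(\Sol_X(\M)\big)\simeq\Sol_X(\DD_X\M)[2d_X]\qquad\text{for }\M\in\BDChol(\D_X).
\]

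To establish this last isomorphism I would reduce to the regular holonomic case via the regularization functor of Proposition \ref{prop2.7}. As $\Sol_X(\M)\simeq\Sol_X(\M_{\reg})$ with $\M_{\reg}$ regular holonomic, the classical Riemann--Hilbert correspondence \cite{Kas84} yields $\rmD_X\Sol_X(\M_{\reg})\simeq\Sol_X(\DD_X\M_{\reg})[2d_X]$ (as a check, for $\M=\SO_X$ both sides are $\CC_X$, using $\omega_X\simeq\CC_X[2d_X]$). It then remains to identify $\Sol_X(\DD_X\M_{\reg})$ with $\Sol_X(\DD_X\M)\simeq\Sol_X((\DD_X\M)_{\reg})$; since $\Sol_X$ is faithful on regular holonomic modules, this amounts to the commutation $(\DD_X\M)_{\reg}\simeq\DD_X(\M_{\reg})$. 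The latter follows from the uniqueness characterization in Proposition \ref{prop2.7} (1): both modules are regular holonomic, and applying the faithfully flat functor $(\cdot)^\infty$ together with $\M^\infty\simeq\M_{\reg}^\infty$ shows that they have isomorphic associated $\D_X^\infty$-modules, whence they agree.

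The main obstacle is precisely this final step, $(\DD_X\M)_{\reg}\simeq\DD_X(\M_{\reg})$: one must know that the functor $(\cdot)^\infty$ is compatible with the duality $\DD_X$, so that $(\DD_X\M)^\infty\simeq(\DD_X\M_{\reg})^\infty$. Once that compatibility is in hand, the remainder is a formal concatenation of the isomorphisms recalled in Section 2. Alternatively, one may invoke the duality formula $\rmD_X\Sol_X(\M)\simeq\Sol_X(\DD_X\M)[2d_X]$ directly for all holonomic $\M$ from the local duality theorem of Kashiwara and Mebkhout, thereby bypassing the explicit regularization argument.
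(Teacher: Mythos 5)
Your proposal is correct and follows essentially the same route as the paper: the paper likewise writes $K\simeq\Sol_X^{\rmE}\big(\RH_X^{\rmE}(K)\big)$ via Theorem \ref{thm3.17}, applies Theorem \ref{thm2.5} (1) together with the isomorphism $\sh\big(\Sol_X^{\rmE}(\M)\big)\simeq\Sol_X(\M)$, and concludes with the classical formula $\rmD_X\big(\Sol_X(\M)\big)\simeq\Sol_X(\DD_X\M)[2d_X]$ for $\M\in\BDChol(\D_X)$. The only divergence is that the paper simply invokes this last classical isomorphism without proof, whereas you digress into a regularization argument (sound modulo the compatibility of $(\cdot)^\infty$ with $\DD_X$ that you flag, which indeed holds by the faithful flatness of $\D_X^\infty$ over $\D_X$); your alternative of citing the local duality theorem directly is precisely the standard justification the paper is relying on, so either branch completes the proof.
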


\begin{proof}
Let $K\in\BEC_{\CC-c}(\I\CC_X)$.
Then there exists an isomorphism $\Sol_X^{\rmE}\big(\RH_X^{\rmE}(K)\big)\simto K$
by Theorem \ref{thm3.17}. 
Therefore we have a sequence of isomorphisms
\begin{align*}
\sh\big(\rmD_X^{\rmE}(K)\big)
&\simeq
\sh\big(\rmD_X^{\rmE}\big(\Sol_X^{\rmE}(\RH_X^{\rmE}(K)\big)\big)\\
&\simeq
\sh\big(\Sol_X^{\rmE}\big(\DD_X(\RH_X^{\rmE}(K)\big)\big)[2d_X]\\
&\simeq
\Sol_X\big(\DD_X\big(\RH_X^{\rmE}(K)\big)\big)[2d_X]\\
&\simeq
\rmD_X\big(\Sol_X(\RH_X^{\rmE}(K))\big)\\
&\simeq
\rmD_X\big(\sh\big(\Sol_X^\rmE(\RH_X^{\rmE}(K))\big)\big)\\
&\simeq
\rmD_X(\sh(K)),
\end{align*}
where in the second isomorphism
we used Theorem \ref{thm2.5} (1),
in the third and fifth ones we used the isomorphism
$\sh\big( \Sol_X^{\rmE}(\M)\big)\simeq \Sol_X(\M)$
and in the forth one we used the isomorphism
$\Sol_X\big(\DD_X(\M)\big)[2d_X]
\simeq
\rmD_X\big(\Sol_X(\M)\big)$ for $\M\in\BDChol(\D_X)$.
\end{proof}

Let us recall the definition of perverse sheaves.
We consider the following full subcategories of $\BDC_{\CC-c}(\CC_X)$:
\begin{align*}
{}^p\bfD^{\leq0}_{\CC-c}(\CC_X) &:=
\{\SF\in\BDC_{\CC-c}(\CC_X)\ |\
\dim(\supp\SH^i\SF)\leq -i \hspace{7pt}
\mbox{for any } i\in\ZZ \},\\
{}^p\bfD^{\geq0}_{\CC-c}(\CC_X) &:=
\{\SF\in\BDC_{\CC-c}(\CC_X)\ |\ \rmD_X(\SF)\in{}^p\bfD^{\leq0}_{\CC-c}(\CC_X)\}\\
&=
\{\SF\in\BDC_{\CC-c}(\CC_X)\ |\
\dim(\supp\SH^i\rmD_X(\SF))\leq -i \hspace{7pt}
\mbox{for any } i\in\ZZ \}.
\end{align*}
Then $\big({}^p\bfD^{\leq0}_{\CC-c}(\CC_X), {}^p\bfD^{\geq0}_{\CC-c}(\CC_X)\big)$
is a t-structure on $\BDC_{\CC-c}(\CC_X)$.
We denote by $\Perv(\CC_X)$ the heart of its t-structure
and call an object of $\Perv(\CC_X)$ a perverse sheaf.

\begin{definition}
We define full subcategories of $\BEC_{\CC-c}(\I\CC_X)$ by
\begin{align*}
{}^p\bfE^{\leq0}_{\CC-c}(\I\CC_X) &:=
\{K\in\BEC_{\CC-c}(\I\CC_X)\ |\ \sh_X(K)\in{}^p\bfD^{\leq0}_{\CC-c}(\CC_X)\},\\
{}^p\bfE^{\geq0}_{\CC-c}(\I\CC_X) &:=
\{K\in\BEC_{\CC-c}(\I\CC_X)\ |\ \rmD_X^{\rmE}(K)\in{}^p\bfE^{\leq0}_{\CC-c}(\I\CC_X)\}\\
&=
\{K\in\BEC_{\CC-c}(\I\CC_X)\ |\ \sh_X(K)\in{}^p\bfD^{\geq0}_{\CC-c}(\CC_X)\}
\hspace{20pt}(\mbox{ by Lemma \ref{lem4.1}}).
\end{align*}
\end{definition}

The following fact was proved by
\cite[Theorem 4.1]{Kas75} and \cite[Theorem 3.5.1]{Bjo93}.
\begin{fact}\label{lem4.4}
For any $\M\in\BDChol(\D_X)$,
we have
\begin{itemize}
\item[\rm(1)]
$\M\in\DChol^{\leq0}(\D_X)\Longleftrightarrow
\Sol_X(\M)[d_X]\in{}^p\bfD^{\geq0}_{\CC-c}(\CC_X)$,
\item[\rm(2)]
$\M\in\DChol^{\geq0}(\D_X)\Longleftrightarrow
\Sol_X(\M)[d_X]\in{}^p\bfD^{\leq0}_{\CC-c}(\CC_X)$.
\end{itemize}
\end{fact}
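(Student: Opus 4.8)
The plan is to reduce both equivalences to the single heart-level assertion that for a holonomic module $\N$ placed in degree $0$ the complex $\Sol_X(\N)[d_X]$ is a perverse sheaf, and then to propagate this to arbitrary complexes by d\'evissage together with a formal t-exactness argument. I would obtain the heart-level assertion from the regular case: by Proposition \ref{prop2.7} (1)(iii) one has $\Sol_X(\N)\simeq\Sol_X(\N_\reg)$ with $\N_\reg\in\Modrh(\D_X)$, and Kashiwara's regular Riemann-Hilbert correspondence \cite{Kas84} (recalled in the Introduction) restricts to an equivalence of abelian categories $\Modrh(\D_X)\simto\Perv(\CC_X)$ sending $\N_\reg$ to $\Sol_X(\N_\reg)[d_X]$. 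Hence $\Sol_X(\N)[d_X]=\Sol_X(\N_\reg)[d_X]$ is perverse; in particular irregularity is invisible to this question, so the general holonomic case is no harder than the regular one.

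For the forward implications I would argue by d\'evissage. If $\M\in\DChol^{\le0}(\D_X)$ (resp.\ $\M\in\DChol^{\ge0}(\D_X)$), its canonical filtration exhibits $\M$ as a finite iterated extension of the shifted cohomology modules $\SH^n(\M)[-n]$ with $n\le0$ (resp.\ $n\ge0$). Applying the triangulated contravariant functor $\Sol_X[d_X]$ turns the truncation triangles into triangles of constructible complexes, and on each graded piece $\Sol_X(\SH^n(\M)[-n])[d_X]\simeq\big(\Sol_X(\SH^n(\M))[d_X]\big)[n]$ is a perverse sheaf shifted by $n$, hence lies in ${}^p\bfD^{\ge0}_{\CC-c}(\CC_X)$ when $n\le0$ (resp.\ in ${}^p\bfD^{\le0}_{\CC-c}(\CC_X)$ when $n\ge0$). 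Since ${}^p\bfD^{\le0}_{\CC-c}(\CC_X)$ and ${}^p\bfD^{\ge0}_{\CC-c}(\CC_X)$ are stable under extensions, this gives the implications $\Rightarrow$ in both (1) and (2).

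For the converses I would reduce to the regular case and invoke a formal lemma on t-structures. The regularization functor $(\cdot)_\reg\colon\BDChol(\D_X)\to\BDCrh(\D_X)$ is exact and conservative: it reflects zero objects because $\D_X^\infty$ is faithfully flat over $\D_X$, so $\M_\reg=0$ forces $\M_\reg^\infty\simeq\M^\infty=0$ and hence $\M=0$. Being exact, it satisfies $\SH^i(\M_\reg)\simeq(\SH^i\M)_\reg$, which vanishes exactly when $\SH^i(\M)$ does, so $\M\in\DChol^{\ge0}(\D_X)\Leftrightarrow\M_\reg\in\DChol^{\ge0}(\D_X)$ and likewise for $\le0$; as $\Sol_X(\M)\simeq\Sol_X(\M_\reg)$, it suffices to prove the converses for $\M\in\BDCrh(\D_X)$. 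There $\Sol_X[d_X]\colon\BDCrh(\D_X)^{\op}\simto\BDC_{\CC-c}(\CC_X)$ is an equivalence, and a contravariant equivalence $G$ with $G(\DChol^{\le0})\subset{}^p\bfD^{\ge0}_{\CC-c}(\CC_X)$ and $G(\DChol^{\ge0})\subset{}^p\bfD^{\le0}_{\CC-c}(\CC_X)$ (the forward inclusions just proved) is automatically t-exact: if $G(X)\in{}^p\bfD^{\le0}_{\CC-c}(\CC_X)$, then applying $G$ to $\tau^{<0}X\to X\to\tau^{\ge0}X\xrightarrow{+1}$ yields $G(\tau^{<0}X)\in{}^p\bfD^{\ge1}_{\CC-c}(\CC_X)$, so the map $G(X)\to G(\tau^{<0}X)$ vanishes and the triangle splits, realising $G(\tau^{<0}X)[-1]$ as a direct summand of $G(X)$ lying in ${}^p\bfD^{\le0}_{\CC-c}(\CC_X)\cap{}^p\bfD^{\ge1}_{\CC-c}(\CC_X)=0$; thus $\tau^{<0}X=0$ and $X\in\DChol^{\ge0}(\D_X)$. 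This is the converse of (2), and the converse of (1) is analogous, or follows from the classical duality $\Sol_X(\DD_X\M)[2d_X]\simeq\rmD_X(\Sol_X\M)$, itself obtained by applying $\sh$ to Theorem \ref{thm2.5} (1) and using $\sh\circ\Sol_X^\rmE\simeq\Sol_X$ and Lemma \ref{lem4.1}.

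The main obstacle is the heart-level perversity of $\Sol_X(\N)[d_X]$ for a single holonomic (equivalently, regular holonomic) module $\N$, which I have above imported from Kashiwara's regular correspondence. Its genuine content is the codimension estimate $\dim\supp\SH^j(\Sol_X\N)\le d_X-j$, together with the same bound for $\DD_X\N$ — equivalently, that the microsupport $\ms{\Sol_X\N}=\ch(\N)$ is Lagrangian — which is precisely the input of \cite{Kas75}. Everything else in the argument is d\'evissage, the faithful flatness of $\D_X^\infty$, and the formal behaviour of t-structures under a contravariant equivalence.
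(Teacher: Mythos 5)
Your proof is correct, but the comparison here is asymmetric: the paper does not prove this statement at all --- it is recorded as a Fact and attributed to \cite[Theorem 4.1]{Kas75} and \cite[Theorem 3.5.1]{Bjo93}, where it is established directly via the codimension (support/cosupport) estimates for solution complexes of holonomic modules, i.e.\ exactly the microsupport-Lagrangian content you identify in your last paragraph. Your argument is a genuine alternative derivation built from ingredients the paper does assume: the heart-level perversity of $\Sol_X(\N)[d_X]$ is imported from Kashiwara's regular Riemann--Hilbert correspondence through $\Sol_X(\N)\simeq\Sol_X(\N_\reg)$ (Proposition \ref{prop2.7}); the forward implications then follow by d\'evissage and extension-stability of the perverse aisles; and the converses follow by transporting the problem to $\BDCrh(\D_X)$ via the exact, conservative regularization functor and applying a formal splitting lemma for a contravariant equivalence interchanging aisles. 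This route has the virtue of making irregularity manifestly irrelevant and of being self-contained modulo the regular case, at the cost of resting on the full regular correspondence of \cite{Kas84} plus the regularization theory --- much heavier inputs than what \cite{Kas75} itself uses.

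Two blemishes, neither fatal. First, in the splitting step: if $v=0$ in the distinguished triangle $G(\tau^{\ge0}X)\to G(X)\xrightarrow{\,v\,}G(\tau^{<0}X)\to G(\tau^{\ge0}X)[1]$, then rotating twice and using the vanishing of the connecting map gives $G(\tau^{\ge0}X)\simeq G(X)\oplus G(\tau^{<0}X)[-1]$; so $G(\tau^{<0}X)[-1]$ is a direct summand of $G(\tau^{\ge0}X)$, not of $G(X)$ as you wrote. Since $G(\tau^{\ge0}X)$ also lies in ${}^p\bfD^{\leq0}_{\CC-c}(\CC_X)$ by your forward inclusion, the conclusion $G(\tau^{<0}X)[-1]\in{}^p\bfD^{\leq0}_{\CC-c}(\CC_X)\cap{}^p\bfD^{\geq1}_{\CC-c}(\CC_X)=0$ survives unchanged. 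Second, your alternative derivation of the classical duality $\Sol_X(\DD_X\M)[2d_X]\simeq\rmD_X(\Sol_X\M)$ from Theorem \ref{thm2.5} (1) and Lemma \ref{lem4.1} is circular within this paper: the paper's proof of Lemma \ref{lem4.1} invokes precisely that classical isomorphism. Keep your ``analogous'' argument for the converse of (1), or quote the duality from the classical literature. Finally, when upgrading $\Sol_X(\M)\simeq\Sol_X(\M_\reg)$ from modules to complexes, you should note that the isomorphism is natural (it is induced by $\M_\reg^\infty\simeq\M^\infty$ together with $\rhom_{\D_X}(\M,\SO_X)\simeq\rhom_{\D_X^\infty}(\M^\infty,\SO_X)$), which is what allows it to propagate along truncation triangles.
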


\begin{theorem}\label{thm4.4}
For any $\M\in\BDChol(\D_X)$,
we have
\begin{itemize}
\item[\rm(1)]
$\M\in\DChol^{\leq0}(\D_X)\Longleftrightarrow
\Sol_X^{\rmE}(\M)[d_X]\in{}^p\bfE^{\geq0}_{\CC-c}(\I\CC_X)$,
\item[\rm(2)]
$\M\in\DChol^{\geq0}(\D_X)\Longleftrightarrow
\Sol_X^{\rmE}(\M)[d_X]\in{}^p\bfE^{\leq0}_{\CC-c}(\I\CC_X)$.
\end{itemize}

Therefore, the pair $\big({}^p\bfE^{\leq0}_{\CC-c}(\I\CC_X),
{}^p\bfE^{\geq0}_{\CC-c}(\I\CC_X)\big)$
is a t-structure on $\BEC_{\CC-c}(\I\CC_X)$
and
its heart $$\Perv(\I\CC_X) :=
{}^p\bfE^{\leq0}_{\CC-c}(\I\CC_X)\cap{}^p\bfE^{\geq0}_{\CC-c}(\I\CC_X)$$ is equivalent to the abelian category $\Modhol(\D_X)$
of holonomic $\D_X$-modules.
\end{theorem}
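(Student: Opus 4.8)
The plan is to deduce the two equivalences from the sheaf--theoretic statement of Fact~\ref{lem4.4} through the compatibility $\sh\big(\Sol_X^{\rmE}(\M)\big)\simeq\Sol_X(\M)$, and then to obtain the t--structure and the description of its heart by transporting the standard t--structure of $\BDChol(\D_X)$ along the contravariant equivalence of Theorem~\ref{thm3.17}.

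First I would prove (2). For $\M\in\BDChol(\D_X)$ we have $\Sol_X^{\rmE}(\M)[d_X]\in\BEC_{\CC-c}(\I\CC_X)$ by Theorem~\ref{thm3.17}, and since $\sh$ commutes with shifts, $\sh\big(\Sol_X^{\rmE}(\M)[d_X]\big)\simeq\Sol_X(\M)[d_X]$. Hence, directly from the definition of ${}^p\bfE^{\leq0}_{\CC-c}(\I\CC_X)$,
\[\Sol_X^{\rmE}(\M)[d_X]\in{}^p\bfE^{\leq0}_{\CC-c}(\I\CC_X)
\Longleftrightarrow\Sol_X(\M)[d_X]\in{}^p\bfD^{\leq0}_{\CC-c}(\CC_X)
\Longleftrightarrow\M\in\DChol^{\geq0}(\D_X),\]
the last step being Fact~\ref{lem4.4}~(2). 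Statement (1) is identical, using the characterization ${}^p\bfE^{\geq0}_{\CC-c}(\I\CC_X)=\{K\mid\sh(K)\in{}^p\bfD^{\geq0}_{\CC-c}(\CC_X)\}$ (valid by Lemma~\ref{lem4.1}) together with Fact~\ref{lem4.4}~(1).

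For the t--structure, I would treat $F:=\Sol_X^{\rmE}(\cdot)[d_X]$ as a contravariant triangulated equivalence $\BDChol(\D_X)\to\BEC_{\CC-c}(\I\CC_X)$ with $F(\M[1])\simeq F(\M)[-1]$ (Theorem~\ref{thm3.17}). The standard t--structure $\big(\DChol^{\leq0}(\D_X),\DChol^{\geq0}(\D_X)\big)$ on $\BDChol(\D_X)$ has heart $\Modhol(\D_X)$, and by (1), (2) together with the essential surjectivity of $F$ one gets, as strictly full subcategories,
\[{}^p\bfE^{\leq0}_{\CC-c}(\I\CC_X)=F\big(\DChol^{\geq0}(\D_X)\big),\qquad
{}^p\bfE^{\geq0}_{\CC-c}(\I\CC_X)=F\big(\DChol^{\leq0}(\D_X)\big).\]
The three t--structure axioms then transfer formally. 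Axiom (i) comes from applying $F$ to the inclusions $\DChol^{\geq0}\subset\DChol^{\geq-1}$ of the $\D$--module t--structure; axiom (ii), $\Hom\big({}^p\bfE^{\leq0}_{\CC-c},{}^p\bfE^{\geq1}_{\CC-c}\big)=0$, is identified through the full faithfulness $\Hom\big(F(A),F(B)\big)\simeq\Hom_{\BDChol(\D_X)}(B,A)$ with $\Hom_{\BDChol(\D_X)}\big(\DChol^{\leq-1},\DChol^{\geq0}\big)=0$; and axiom (iii) is obtained by applying the contravariant $F$ to the truncation triangle $\tau^{\leq-1}\M\to\M\to\tau^{\geq0}\M\xrightarrow{+1}$ of an $\M$ with $K\simeq F(\M)$, yielding a distinguished triangle $F(\tau^{\geq0}\M)\to K\to F(\tau^{\leq-1}\M)\xrightarrow{+1}$ with $F(\tau^{\geq0}\M)\in{}^p\bfE^{\leq0}_{\CC-c}(\I\CC_X)$ and $F(\tau^{\leq-1}\M)\in{}^p\bfE^{\geq1}_{\CC-c}(\I\CC_X)$.

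Finally, intersecting the two displayed identities gives $\Perv(\I\CC_X)=F\big(\Modhol(\D_X)\big)$, so $F$ restricts to a contravariant exact equivalence between $\Modhol(\D_X)$ and $\Perv(\I\CC_X)$. The one point requiring care, and essentially the only subtlety, is that $\Sol_X^{\rmE}$ is contravariant, so this identifies the heart with $\Modhol(\D_X)^{\op}$ rather than $\Modhol(\D_X)$; to reach the stated form one postcomposes with the holonomic duality $\DD_X\colon\Modhol(\D_X)\simto\Modhol(\D_X)^{\op}$, producing the covariant equivalence $\Modhol(\D_X)\simto\Perv(\I\CC_X)$ given by $\M\mapsto\Sol_X^{\rmE}(\DD_X\M)[d_X]\simeq\rmD_X^{\rmE}\Sol_X^{\rmE}(\M)[-d_X]$ (Theorem~\ref{thm2.5}~(1)).
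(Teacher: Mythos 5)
Your proof is correct and follows essentially the same route as the paper: the paper's entire proof consists of observing that (1) and (2) follow from Fact~\ref{lem4.4} via the isomorphism $\sh\big(\Sol_X^{\rmE}(\M)\big)\simeq\Sol_X(\M)$, with the transport of the standard t-structure of $\BDChol(\D_X)$ through the equivalence of Theorem~\ref{thm3.17} (and hence the t-structure axioms and the identification of the heart) left implicit. Your write-up merely makes explicit what the paper compresses into one sentence --- the formal verification of the axioms under the contravariant equivalence, and the passage from an anti-equivalence with $\Modhol(\D_X)$ to a genuine equivalence by composing with $\DD_X$ --- and that last point, which the paper glosses over entirely, is handled correctly.
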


\begin{proof}
Since there exists an isomorphism $\sh\big(\Sol_X^{\rmE}(\M)\big)\simeq \Sol_X(\M)$
for any $\M\in\BDChol(\D_X)$,
this theorem follows from Fact \ref{lem4.4} . 
\end{proof}

Let us recall that
the triangulated category $\BEC_{\RR-c}(\I\CC_X)$
has generalized t-structures
$$\big({}_{\frac{1}{2}}\bfE_{\RR-c}^{\leq c}(\I\CC_X),
{}_{\frac{1}{2}}\bfE_{\RR-c}^{\geq c}(\I\CC_X)\big)_{c\in\RR}
\hspace{7pt}\mbox{and}\hspace{7pt}
\big({}^{\frac{1}{2}}\bfE_{\RR-c}^{\leq c}(\I\CC_X),
{}^{\frac{1}{2}}\bfE_{\RR-c}^{\geq c}(\I\CC_X)\big)_{c\in\RR}$$
such that for any $c\in\ZZ$ we have
\begin{align*}
{}^{\frac{1}{2}}\bfE^{\leq c}_{\RR-c}(\I\CC_M)
\subset
{}_{\frac{1}{2}}\bfE^{\leq c}_{\RR-c}(\I\CC_M)
&\subset
\bfE^{\leq c}_{\RR-c}(\I\CC_M),\\
\bfE^{\geq c}_{\RR-c}(\I\CC_M)
\subset
{}_{\frac{1}{2}}\bfE^{\geq c}_{\RR-c}(\I\CC_M)
&\subset
{}^{\frac{1}{2}}\bfE^{\geq c}_{\RR-c}(\I\CC_M)
\end{align*}
by \cite[Lemma 3.2.3, Lemma 3.4.4 and (3.5.1)]{DK16-2}
(see also \S 2.5).
Moreover, for any $c\in \RR$ we have
\begin{align*}
\Sol_X^{\rmE}\Big(\DChol^{\leq c}(\D_X)\Big)[d_X] &\subset
{}_{\frac{1}{2}}\bfE^{\geq -c}_{\RR-c}(\I\CC_X)\subset
{}^{\frac{1}{2}}\bfE^{\geq -c}_{\RR-c}(\I\CC_X),\\
\Sol_X^{\rmE}\Big(\DChol^{\geq c}(\D_X)\Big)[d_X] &\subset
{}^{\frac{1}{2}}\bfE^{\leq -c}_{\RR-c}(\I\CC_X),\\
\RH_X^{\rmE}\Big({}^{\frac{1}{2}}\bfE^{\leq c}_{\RR-c}(\I\CC_X)\Big)[d_X]
&\subset\bfD^{\geq -c}(\D_X).
\end{align*}
by \cite{DK16-2} (see also \S 2.5 and Theorem \ref{thm2.6} (3)).

\begin{corollary}\label{cor4.6}
We have
\begin{align*}
{}^p\bfE_{\CC-c}^{\leq 0}(\I\CC_X) &=
{}^{\frac{1}{2}}\bfE_{\RR-c}^{\leq 0}(\I\CC_X)\cap
\BEC_{\CC-c}(\I\CC_X),\\
{}^p\bfE_{\CC-c}^{\geq 0}(\I\CC_X) &=
{}_{\frac{1}{2}}\bfE_{\RR-c}^{\geq 0}(\I\CC_X)\cap
\BEC_{\CC-c}(\I\CC_X)\\
&={}^{\frac{1}{2}}\bfE_{\RR-c}^{\geq 0}(\I\CC_X)\cap
\BEC_{\CC-c}(\I\CC_X).
\end{align*}

%In particular, we obtain a generalized t-structure on $\BEC_{\CC-c}(\I\CC_X)$ defined by
%\begin{align*}
%{}^p\bfE_{\CC-c}^{\leq c}(\I\CC_X) &=
%{}^{\frac{1}{2}}\bfE_{\RR-c}^{\leq c}(\I\CC_X)\cap
%\BEC_{\CC-c}(\I\CC_X),\\
%{}^p\bfE_{\CC-c}^{\geq c}(\I\CC_X) &=
%{}^{\frac{1}{2}}\bfE_{\RR-c}^{\geq c}(\I\CC_X)\cap
%\BEC_{\CC-c}(\I\CC_X).
%\end{align*}
%for any $c\in\RR$.
\end{corollary}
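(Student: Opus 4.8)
The plan is to exploit the equivalence $\Sol_X^{\rmE} : \BDChol(\D_X)^{\op}\simto\BEC_{\CC-c}(\I\CC_X)$ of Theorem \ref{thm3.17} to translate every membership condition into one about holonomic $\D$-modules, and then to read off the three identities from Theorem \ref{thm4.4} together with the inclusions recalled just above, i.e.\ Theorem \ref{thm2.6} (3). Concretely, given $K\in\BEC_{\CC-c}(\I\CC_X)$, I would write $K\simeq\Sol_X^{\rmE}(\M)[d_X]$ with $\M:=\RH_X^{\rmE}(K)[d_X]\in\BDChol(\D_X)$; since $\RH_X^{\rmE}$ is the contravariant quasi-inverse of $\Sol_X^{\rmE}$, one has $\RH_X^{\rmE}(K)[d_X]\simeq\M$, and the bookkeeping of the shift $[d_X]$ and of the contravariance is the only thing one must keep straight throughout.

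For the first identity I would prove the two inclusions separately. For $\subset$: if $K\in{}^p\bfE^{\leq0}_{\CC-c}(\I\CC_X)$ then $\M\in\DChol^{\geq0}(\D_X)$ by Theorem \ref{thm4.4} (2), whence $K=\Sol_X^{\rmE}(\M)[d_X]\in{}^{\frac{1}{2}}\bfE^{\leq0}_{\RR-c}(\I\CC_X)$ by the second inclusion of Theorem \ref{thm2.6} (3) with $c=0$; of course $K\in\BEC_{\CC-c}(\I\CC_X)$. For $\supset$: if $K\in{}^{\frac{1}{2}}\bfE^{\leq0}_{\RR-c}(\I\CC_X)\cap\BEC_{\CC-c}(\I\CC_X)$ then $\M\simeq\RH_X^{\rmE}(K)[d_X]\in\bfD^{\geq0}(\D_X)$ by the third inclusion of Theorem \ref{thm2.6} (3), and since $\M$ is already holonomic this means $\M\in\DChol^{\geq0}(\D_X)$; applying Theorem \ref{thm4.4} (2) again yields $K\in{}^p\bfE^{\leq0}_{\CC-c}(\I\CC_X)$.

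For the chain of $\geq0$ identities I would first establish
\[{}^p\bfE^{\geq0}_{\CC-c}(\I\CC_X)\subset{}_{\frac{1}{2}}\bfE^{\geq0}_{\RR-c}(\I\CC_X)\cap\BEC_{\CC-c}(\I\CC_X)\subset{}^{\frac{1}{2}}\bfE^{\geq0}_{\RR-c}(\I\CC_X)\cap\BEC_{\CC-c}(\I\CC_X),\]
the first inclusion coming from Theorem \ref{thm4.4} (1) (which gives $\M\in\DChol^{\leq0}(\D_X)$) combined with the first inclusion of Theorem \ref{thm2.6} (3), and the second being the general inclusion ${}_{\frac{1}{2}}\bfE^{\geq0}_{\RR-c}\subset{}^{\frac{1}{2}}\bfE^{\geq0}_{\RR-c}$ recalled in \S 2.5. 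To close the loop I would use duality: for $K\in{}^{\frac{1}{2}}\bfE^{\geq0}_{\RR-c}(\I\CC_X)\cap\BEC_{\CC-c}(\I\CC_X)$ the definition of the generalized t-structure gives $\rmD_X^{\rmE}(K)\in{}^{\frac{1}{2}}\bfE^{\leq0}_{\RR-c}(\I\CC_X)$, while Proposition \ref{prop3.18} (2) gives $\rmD_X^{\rmE}(K)\in\BEC_{\CC-c}(\I\CC_X)$; by the first identity, already proved, $\rmD_X^{\rmE}(K)\in{}^p\bfE^{\leq0}_{\CC-c}(\I\CC_X)$, and then the very definition of ${}^p\bfE^{\geq0}_{\CC-c}(\I\CC_X)$ yields $K\in{}^p\bfE^{\geq0}_{\CC-c}(\I\CC_X)$. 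This third inclusion closes the chain, forcing all three subcategories to coincide.

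I do not expect a genuine obstacle here: all the hard analysis is already packaged into Theorem \ref{thm4.4} and the inclusions of Theorem \ref{thm2.6} (3). The one place that requires care is the duality step for the $\geq0$ case, where I must invoke both that $\rmD_X^{\rmE}$ preserves $\CC$-constructibility (Proposition \ref{prop3.18} (2)) and that the two families ${}^p\bfE^{\bullet}_{\CC-c}$ and ${}^{\frac{1}{2}}\bfE^{\bullet}_{\RR-c}$ are defined symmetrically with respect to $\rmD_X^{\rmE}$; aligning these symmetries, together with consistently tracking the shift $[d_X]$ and the contravariance of $\RH_X^{\rmE}$, is the only subtlety.
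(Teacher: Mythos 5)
Your proof is correct and follows essentially the same route as the paper: both translate membership through the equivalence $\Sol_X^{\rmE}\dashv\RH_X^{\rmE}$ using Theorem \ref{thm4.4} and the inclusions of Theorem \ref{thm2.6} (3), use the general inclusion ${}_{\frac{1}{2}}\bfE^{\geq0}_{\RR-c}\subset{}^{\frac{1}{2}}\bfE^{\geq0}_{\RR-c}$ for the middle step, and close the $\geq0$ chain by duality via Proposition \ref{prop3.18} (2). The only cosmetic difference is one of factoring: you prove the $\leq0$ identity explicitly and then cite it in the duality step, whereas the paper only checks the $\geq0$ chain and inlines the equivalent argument there (passing through $\RH_X^{\rmE}(\rmD_X^{\rmE}K)\in\DChol^{\geq d_X}$ and $\sh(\rmD_X^{\rmE}K)\in{}^p\bfD^{\leq0}_{\CC-c}(\CC_X)$), which is the same reasoning in a different order.
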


\begin{proof}
Let us only check that
\begin{align*}
{}^p\bfE_{\CC-c}^{\geq 0}(\I\CC_X) &=
{}_{\frac{1}{2}}\bfE_{\RR-c}^{\geq 0}(\I\CC_X)\cap
\BEC_{\CC-c}(\I\CC_X)
={}^{\frac{1}{2}}\bfE_{\RR-c}^{\geq 0}(\I\CC_X)\cap
\BEC_{\CC-c}(\I\CC_X).
\end{align*}
First, we remark that,
since ${}_{\frac{1}{2}}\bfE_{\RR-c}^{\geq 0}(\I\CC_X)
\subset{}^{\frac{1}{2}}\bfE_{\RR-c}^{\geq 0}(\I\CC_X)$,
we have 
$${}_{\frac{1}{2}}\bfE_{\RR-c}^{\geq 0}(\I\CC_X)\cap
\BEC_{\CC-c}(\I\CC_X)
\subset{}^{\frac{1}{2}}\bfE_{\RR-c}^{\geq 0}(\I\CC_X)\cap
\BEC_{\CC-c}(\I\CC_X).$$

If $K\in{}^p\bfE_{\CC-c}^{\geq 0}(\I\CC_X)$
then $\RH_X^\rmE(K)\in\DChol^{\leq d_X}(\D_X)$
by Theorem \ref{thm4.4} (1) and hence,
$$K\simto\Sol_X^\rmE\big(\RH_X^\rmE(K)\big)
\in{}_{\frac{1}{2}}\bfE_{\RR-c}^{\geq 0}(\I\CC_X)$$
by Theorem \ref{thm2.6} (3).
Namely, we obtain $K\in{}_{\frac{1}{2}}\bfE_{\RR-c}^{\geq 0}(\I\CC_X)\cap
\BEC_{\CC-c}(\I\CC_X)$.

On the other hand,
if $K\in{}^{\frac{1}{2}}\bfE_{\RR-c}^{\geq 0}(\I\CC_X)\cap
\BEC_{\CC-c}(\I\CC_X)$
then $$\rmD_X^\rmE(K)
\in{}^{\frac{1}{2}}\bfE_{\RR-c}^{\leq 0}(\I\CC_X)\cap\BEC_{\CC-c}(\I\CC_X)$$
by the definition of the generalized t-structure 
$\big({}^{\frac{1}{2}}\bfE_{\RR-c}^{\leq c}(\I\CC_X),
{}^{\frac{1}{2}}\bfE_{\RR-c}^{\geq c}(\I\CC_X)\big)_{c\in\RR}$
on $\BEC_{\RR-c}(\I\CC_X)$
and Proposition \ref{prop3.18} (2).
By Theorem \ref{thm2.6} (3), 
we have $\RH_X^\rmE(\rmD_X^\rmE K)\in\DChol^{\geq d_X}(\D_X)$
and hence,
$$\sh\big(\rmD_X^\rmE(K)\big)\simto\Sol_X\big(\RH_X^\rmE(\rmD_X^\rmE K)\big)
\in{}^p\bfD^{\leq0}_{\CC-c}(\CC_X)$$
by Theorem \ref{thm4.4} (2).
Namely, we obtain $K\in{}^p\bfE_{\CC-c}^{\geq 0}(\I\CC_X)$.
\end{proof}

\begin{definition}
We say that
$K\in\BEC_{\CC-c}(\I\CC_{X})$ is a enhanced perverse ind-sheaf
 if $K\in\Perv(\I\CC_{X}) := 
 {}^p\bfE^{\leq0}_{\CC-c}(\I\CC_X)\cap{}^p\bfE^{\geq0}_{\CC-c}(\I\CC_X)$.
\end{definition}

By the definition of the t-structure
$\big({}^p\bfE^{\leq0}_{\CC-c}(\I\CC_X), {}^p\bfE^{\geq0}_{\CC-c}(\I\CC_X)\big)$,
we have
\begin{align*}
\rmD_X^\rmE(K)\in{}^p\bfE^{\geq0}_{\CC-c}(\I\CC_X) &
\mbox{ for } K\in{}^p\bfE^{\leq0}_{\CC-c}(\I\CC_X),\\
\rmD_X^\rmE(K)\in{}^p\bfE^{\leq0}_{\CC-c}(\I\CC_X) &
\mbox{ for } K\in{}^p\bfE^{\geq0}_{\CC-c}(\I\CC_X).
\end{align*}

Thus, the functor $\rmD_X^\rmE$ induces an equivalence of categories
$$\rmD_X^{\rmE} : \Perv(\I\CC_X)^{\op}\simto\Perv(\I\CC_X).$$
Since $\id\simto\sh \circ e$, by Corollary \ref{cor3.19}
we obtain:
\begin{proposition}
\begin{itemize}
\item[\rm(1)]
For any $\SF\in{}^p\bfD_{\CC-c}^{\leq0}(\CC_X)$,
we have $e(\SF)\in{}^p\bfE_{\CC-c}^{\leq0}(\I\CC_X)$.
\item[\rm(2)]
For any $\SF\in{}^p\bfD_{\CC-c}^{\geq0}(\CC_X)$,
we have $e(\SF)\in{}^p\bfE_{\CC-c}^{\geq0}(\I\CC_X)$.
\item[\rm(3)] 
The functor $e : \BDC_{\CC-c}(\CC_X)\hookrightarrow\BEC_{\CC-c}(\I\CC_X)$
induces an embedding
\[\Perv(\CC_X)\hookrightarrow\Perv(\I\CC_X).\]
\end{itemize}
\end{proposition}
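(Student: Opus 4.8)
The plan is to deduce all three parts directly from the compatibility $\id\simto\sh\circ e$ together with Corollary \ref{cor3.19}, so that essentially no new computation is required; the content has already been isolated in the preceding results. The guiding observation is that membership in each half of the t-structure $\big({}^p\bfE^{\leq0}_{\CC-c}(\I\CC_X),{}^p\bfE^{\geq0}_{\CC-c}(\I\CC_X)\big)$ is, by construction, detected by the sheafification functor, and that $\sh_X$ is a one-sided inverse to $e$ up to isomorphism.

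First I would treat (1). Given $\SF\in{}^p\bfD_{\CC-c}^{\leq0}(\CC_X)$, Corollary \ref{cor3.19} guarantees that $e(\SF)$ lies in $\BEC_{\CC-c}(\I\CC_X)$, so it is a legitimate object to test against the pair above. By the very definition of ${}^p\bfE^{\leq0}_{\CC-c}(\I\CC_X)$, one has $e(\SF)\in{}^p\bfE^{\leq0}_{\CC-c}(\I\CC_X)$ if and only if $\sh_X(e(\SF))\in{}^p\bfD^{\leq0}_{\CC-c}(\CC_X)$. Since $\id\simto\sh\circ e$ supplies an isomorphism $\sh_X(e(\SF))\simeq\SF$, the hypothesis $\SF\in{}^p\bfD^{\leq0}_{\CC-c}(\CC_X)$ yields the conclusion at once. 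Part (2) is proved by the identical argument, now invoking the second description of ${}^p\bfE^{\geq0}_{\CC-c}(\I\CC_X)$ recorded in its definition, namely that $K\in{}^p\bfE^{\geq0}_{\CC-c}(\I\CC_X)$ exactly when $\sh_X(K)\in{}^p\bfD^{\geq0}_{\CC-c}(\CC_X)$; again $\sh_X(e(\SF))\simeq\SF\in{}^p\bfD^{\geq0}_{\CC-c}(\CC_X)$ closes the matter.

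For (3) I would simply intersect the two conclusions: the heart $\Perv(\CC_X)={}^p\bfD^{\leq0}_{\CC-c}(\CC_X)\cap{}^p\bfD^{\geq0}_{\CC-c}(\CC_X)$ is carried by $e$ into ${}^p\bfE^{\leq0}_{\CC-c}(\I\CC_X)\cap{}^p\bfE^{\geq0}_{\CC-c}(\I\CC_X)=\Perv(\I\CC_X)$ by (1) and (2) combined. Since Corollary \ref{cor3.19} already exhibits $e$ as a fully faithful embedding of $\BDC_{\CC-c}(\CC_X)$ into $\BEC_{\CC-c}(\I\CC_X)$, and since (1)--(2) say precisely that $e$ is t-exact for the two perverse t-structures, its restriction to the heart is a fully faithful exact functor $\Perv(\CC_X)\hookrightarrow\Perv(\I\CC_X)$, as asserted.

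There is no serious obstacle here: the only points requiring the earlier machinery are that $e$ preserves $\CC$-constructibility (Corollary \ref{cor3.19}) and that $\sh$ is a left inverse to $e$ up to isomorphism ($\id\simto\sh\circ e$). The one point I would double-check is purely formal, namely that the definition of ${}^p\bfE^{\geq0}_{\CC-c}(\I\CC_X)$ is being used in its sheafification form (which rests on Lemma \ref{lem4.1}) rather than in its dual form through $\rmD_X^{\rmE}$; both descriptions are available and agree, so either suffices and the argument is unaffected.
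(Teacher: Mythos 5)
Your proof is correct and is essentially the paper's own argument: the paper disposes of this proposition with the single line ``Since $\id\simto\sh \circ e$, by Corollary \ref{cor3.19} we obtain,'' which is exactly the combination you spell out — membership in ${}^p\bfE^{\leq0}_{\CC-c}(\I\CC_X)$ and ${}^p\bfE^{\geq0}_{\CC-c}(\I\CC_X)$ is tested by $\sh_X$ (the latter via the sheafification form of the definition, justified by Lemma \ref{lem4.1}), and $\sh_X(e(\SF))\simeq\SF$. Your added care about $e(\SF)$ landing in $\BEC_{\CC-c}(\I\CC_X)$ and about full faithfulness restricting to the hearts is exactly what the paper's citation of Corollary \ref{cor3.19} supplies, so nothing is missing.
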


Note that the sheafification functor $\sh : \BEC(\I\CC_X)\to\BDC(\CC_X)$ commutes
with the direct image functor.
Indeed, let $f : X\to Y$ be a morphism of complex manifolds and $K\in\BECstb(\I\CC_X)$.
Then we have a sequence of isomorphisms
\begin{align*}
\sh_Y(\bfE f_\ast K) &\simeq
\rhom^\rmE(\CC_{\{t\geq0\}}\oplus\CC_{\{t\leq 0\}}, \bfE f_\ast K)\\
&\simeq 
\bfR f_\ast\rhom^\rmE\big(
\bfE f^{-1}(\CC_{\{t\geq0\}}\oplus\CC_{\{t\leq 0\}}), K\big)\\
&\simeq
\bfR f_\ast\rhom^\rmE(\CC_{\{t\geq0\}}\oplus\CC_{\{t\leq 0\}}, K)\\
&\simeq
\bfR f_\ast\big(\sh_X(K)\big),
\end{align*}
where in the second isomorphism we used \cite[Lemma 4.5.17]{DK16}.

\begin{proposition}
Let $f : X\to Y$ be a proper morphism of complex manifolds.
We assume that there exists a non-negative integer $d\in \ZZ_{\geq0}$
such that $\dim f^{-1}(y)\leq d$ for any $y\in Y$.
Here $\dim f^{-1}(y)$ is the dimension of $f^{-1}(y)$ as an analytic space. 
Then we have
\begin{itemize}
\item[\rm(1)]
for any $K\in{}^p\bfE_{\CC-c}^{\leq0}(\I\CC_X)$,
we have $\bfE f_{!!} K\in{}^p\bfE_{\CC-c}^{\leq d}(\I\CC_Y)$,
\item[\rm(2)]
for any $K\in{}^p\bfE_{\CC-c}^{\geq0}(\I\CC_X)$,
we have $\bfE f_{!!} K\in{}^p\bfE_{\CC-c}^{\geq -d}(\I\CC_Y)$.
\end{itemize}
\end{proposition}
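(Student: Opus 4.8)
The plan is to reduce both statements to the classical behaviour of the middle-perversity $t$-structure on $\BDC_{\CC-c}(\CC_Y)$ under proper direct image, exploiting that the $\bfE$-perverse $t$-structure on $\BEC_{\CC-c}(\I\CC_X)$ is defined entirely through the sheafification functor $\sh$. First I would record that $\bfE f_{!!}K\in\BEC_{\CC-c}(\I\CC_Y)$ by Proposition \ref{prop3.18} (4), so that asking whether it lies in ${}^p\bfE^{\leq d}_{\CC-c}(\I\CC_Y)$ or ${}^p\bfE^{\geq -d}_{\CC-c}(\I\CC_Y)$ is meaningful. Since $f$ is proper we have $\bfE f_{!!}K\simeq\bfE f_\ast K$, and since $K$ is $\CC$-constructible it is stable (being $\RR$-constructible; cf.\ \cite{DK16}, or via Theorem \ref{thm3.17}), so the commutation of $\sh$ with proper direct images recorded just above the statement applies and yields
\[\sh_Y(\bfE f_{!!}K)\simeq\bfR f_\ast\big(\sh_X(K)\big)\simeq\bfR f_!\big(\sh_X(K)\big).\]

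Next I would unwind the shifted subcategories. As $\sh$ is triangulated it commutes with $[\,\cdot\,]$, so $\bfE f_{!!}K\in{}^p\bfE^{\leq d}_{\CC-c}(\I\CC_Y)$ is equivalent to $\sh_Y(\bfE f_{!!}K)\in{}^p\bfD^{\leq d}_{\CC-c}(\CC_Y)$, and likewise membership in ${}^p\bfE^{\geq -d}_{\CC-c}(\I\CC_Y)$ is equivalent to $\sh_Y(\bfE f_{!!}K)\in{}^p\bfD^{\geq -d}_{\CC-c}(\CC_Y)$. Combined with the displayed isomorphism, both parts thus reduce to purely sheaf-theoretic assertions: for $K\in{}^p\bfE^{\leq0}_{\CC-c}(\I\CC_X)$ (resp.\ ${}^p\bfE^{\geq0}_{\CC-c}(\I\CC_X)$) one has $\sh_X(K)\in{}^p\bfD^{\leq0}_{\CC-c}(\CC_X)$ (resp.\ ${}^p\bfD^{\geq0}_{\CC-c}(\CC_X)$) by definition, and it suffices to know that $\bfR f_\ast=\bfR f_!$ sends ${}^p\bfD^{\leq0}_{\CC-c}(\CC_X)$ into ${}^p\bfD^{\leq d}_{\CC-c}(\CC_Y)$ and ${}^p\bfD^{\geq0}_{\CC-c}(\CC_X)$ into ${}^p\bfD^{\geq -d}_{\CC-c}(\CC_Y)$.

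These two facts are the classical estimate for a proper map whose fibres have complex dimension $\leq d$: $\bfR f_!$ is right $t$-exact up to the shift $d$ and $\bfR f_\ast$ is left $t$-exact up to the shift $d$ with respect to the perverse $t$-structure (see, e.g., \cite{BBD} and \cite{KS90}). I would present part (1) in this way and deduce part (2) either directly from the $\bfR f_\ast$ estimate or, symmetrically with the definition of ${}^p\bfE^{\geq0}_{\CC-c}$, by Verdier duality: using Proposition \ref{prop3.18} (2) and the compatibility $\rmD_Y^\rmE\circ\bfE f_{!!}\simeq\bfE f_\ast\circ\rmD_X^\rmE$ for the proper map $f$, the case $K\in{}^p\bfE^{\geq0}_{\CC-c}(\I\CC_X)$ follows from part (1) applied to $\rmD_X^\rmE(K)\in{}^p\bfE^{\leq0}_{\CC-c}(\I\CC_X)$.

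The only genuine point requiring care is the bookkeeping of shift conventions: one must check that the complex (rather than real) dimension bound $\dim f^{-1}(y)\leq d$ matches a shift by exactly $d$ in the normalisation $\dim(\supp\SH^i\SF)\leq -i$. A quick test case pins this down: taking $f$ to map a compact $X$ of complex dimension $d$ to a point and $\SF=\CC_X[d]$, one finds $\bfR f_\ast\SF\simeq\rsect(X;\CC_X)[d]$ has nonzero cohomology exactly in degrees between $-d$ and $d$, i.e.\ it lies simultaneously in ${}^p\bfD^{\leq d}$ and ${}^p\bfD^{\geq -d}$ of the point. Beyond this normalisation check, the whole argument is a formal consequence of the commutation of $\sh$ with proper direct images together with the classical perverse estimates, so I expect no substantial obstacle.
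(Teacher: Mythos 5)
Your proof is correct and follows essentially the same route as the paper: both reduce the statement to the classical perverse $t$-exactness estimates for $\bfR f_!=\bfR f_\ast$ under a proper map with fibre dimension $\leq d$ (the paper cites \cite[Proposition 8.1.42]{HTT08}), using the isomorphism $\sh_Y(\bfE f_{!!}K)\simeq\bfR f_!\big(\sh_X(K)\big)$ established just before the proposition together with the definition of the enhanced perverse $t$-structure via $\sh$. Your version merely makes explicit some points the paper leaves implicit (that $\bfE f_{!!}K$ is $\CC$-constructible by Proposition \ref{prop3.18} (4), the stability of $K$ needed for the $\sh$-commutation, and the shift bookkeeping), which is fine but not a different argument.
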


\begin{proof}
Since the proofs of these assertions in the proposition are similar,
we only prove the assertion (1).
If $\SF\in{}^p\bfD_{\CC-c}^{\leq0}(\CC_X)$
then $\bfR f_{!} K\in{}^p\bfD_{\CC-c}^{\leq d}(\CC_Y)$
by the assumptions (see, e.g., \cite[Proposition 8.1.42]{HTT08}).
Since $\sh(K)\in{}^p\bfD_{\CC-c}^{\leq0}(\CC_X)$,
we have
$\sh(\bfE f_{!!} K)\simeq\bfR f_{!} \big(\sh(K)\big)\in{}^p\bfD_{\CC-c}^{\leq d}(\CC_Y)$
and hence $\bfE f_{!!} K\in{}^p\bfE_{\CC-c}^{\leq d}(\I\CC_Y)$.
\end{proof}

By using the properties of the $\RR$-constructible enhanced ind-sheaves, we have:
\begin{proposition}
Let $f : X\to Y$ be a morphism of complex manifolds.
We assume that there exists a non-negative integer $d\in \ZZ_{\geq0}$
such that $\dim f^{-1}(y)\leq d$ for any $y\in Y$.
\begin{itemize}
\item[\rm(1)]
For any $L\in{}^p\bfE_{\CC-c}^{\leq0}(\I\CC_Y)$
we have $\bfE f^{-1} L\in{}^p\bfE_{\CC-c}^{\leq d}(\I\CC_X)$.
\item[\rm(2)]
For any $L\in{}^p\bfE_{\CC-c}^{\geq0}(\I\CC_Y)$
we have $\bfE f^! L\in{}^p\bfE_{\CC-c}^{\geq -d}(\I\CC_X)$.
\end{itemize}
\end{proposition}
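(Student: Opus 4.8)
The plan is to reduce both statements to the corresponding amplitude estimates for the perverse $t$-structure on $\BDC_{\CC-c}(\CC_X)$ via the sheafification functor $\sh$, exactly in the spirit of the preceding proposition on proper direct images. The crucial input is the commutation
\[\sh_X\circ\bfE f^{-1}\simeq f^{-1}\circ\sh_Y\colon\BEC_{\CC-c}(\I\CC_Y)\to\BDC_{\CC-c}(\CC_X).\]
Granting this, assertion (1) is immediate: for $L\in{}^p\bfE^{\leq0}_{\CC-c}(\I\CC_Y)$ we have $\sh_Y(L)\in{}^p\bfD^{\leq0}_{\CC-c}(\CC_Y)$ by definition, whence $f^{-1}\sh_Y(L)\in{}^p\bfD^{\leq d}_{\CC-c}(\CC_X)$ by the standard estimate for inverse images under the perverse $t$-structure (the fibers of $f$ having complex dimension $\leq d$; see, e.g., \cite{HTT08, KS90}). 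Therefore $\sh_X(\bfE f^{-1}L)\in{}^p\bfD^{\leq d}_{\CC-c}(\CC_X)$, and since $\bfE f^{-1}L$ is $\CC$-constructible by Proposition \ref{prop3.18} (3), this yields $\bfE f^{-1}L\in{}^p\bfE^{\leq d}_{\CC-c}(\I\CC_X)$.

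To establish the commutation I would pass through holonomic $\D$-modules. By Theorem \ref{thm3.17} every $L\in\BEC_{\CC-c}(\I\CC_Y)$ is of the form $L\simeq\Sol_Y^{\rmE}(\N)$ with $\N:=\RH_Y^{\rmE}(L)\in\BDChol(\D_Y)$. Applying Theorem \ref{thm2.5} (2) together with the isomorphism $\sh\circ\Sol^{\rmE}\simeq\Sol$ gives
\[\sh_X(\bfE f^{-1}L)\simeq\sh_X\big(\Sol_X^{\rmE}(\bfD f^{\ast}\N)\big)\simeq\Sol_X(\bfD f^{\ast}\N),\qquad f^{-1}\sh_Y(L)\simeq f^{-1}\Sol_Y(\N).\]
The two right-hand sides coincide by the classical compatibility of the solution functor with inverse images of $\D$-modules, $\Sol_X(\bfD f^{\ast}\N)\simeq f^{-1}\Sol_Y(\N)$ (with the normalization of $\bfD f^{\ast}$ fixed by Theorem \ref{thm2.5} (2); see, e.g., \cite{HTT08}). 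This is a statement of ordinary $\D$-module theory, proved independently of enhanced ind-sheaves, so it introduces no circularity. Alternatively, one can prove the commutation directly from $\sh(K)\simeq\rhom^{\rmE}(\CC_{\{t\geq0\}}\oplus\CC_{\{t\leq0\}},K)$ by invoking base change for $\bfE f^{-1}$ against the outer-hom functor.

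For assertion (2) I would argue by duality rather than repeat the computation for $\bfE f^{!}$. Using the duality exchange $\bfE f^{!}\simeq\rmD_X^{\rmE}\circ\bfE f^{-1}\circ\rmD_Y^{\rmE}$ for $\RR$-constructible enhanced ind-sheaves together with the involutivity $\rmD_X^{\rmE}\rmD_X^{\rmE}\simeq\id$ of Proposition \ref{prop3.18} (2), we obtain $\rmD_X^{\rmE}(\bfE f^{!}L)\simeq\bfE f^{-1}(\rmD_Y^{\rmE}L)$. If $L\in{}^p\bfE^{\geq0}_{\CC-c}(\I\CC_Y)$, then $\rmD_Y^{\rmE}L\in{}^p\bfE^{\leq0}_{\CC-c}(\I\CC_Y)$, so part (1) gives $\bfE f^{-1}(\rmD_Y^{\rmE}L)\in{}^p\bfE^{\leq d}_{\CC-c}(\I\CC_X)$, that is, $\rmD_X^{\rmE}(\bfE f^{!}L)\in{}^p\bfE^{\leq d}_{\CC-c}(\I\CC_X)$. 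By the very definition of ${}^p\bfE^{\geq\bullet}_{\CC-c}$ through $\rmD_X^{\rmE}$ this means precisely $\bfE f^{!}L\in{}^p\bfE^{\geq -d}_{\CC-c}(\I\CC_X)$, $\CC$-constructibility of $\bfE f^{!}L$ being again Proposition \ref{prop3.18} (3).

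The main obstacle is the commutation $\sh_X\circ\bfE f^{-1}\simeq f^{-1}\circ\sh_Y$: the delicate point is getting the shifts to cancel and identifying the correct classical input, since the naive convention for $\bfD f^{\ast}$ would produce a spurious shift that is only absorbed once one fixes the normalization forced by Theorem \ref{thm2.5} (2). Once this commutation is in place — whether through the $\D$-module reduction above or through a direct computation with the enhancement functors $\bfL^{\rmE},\bfR^{\rmE}$ and the section $i_0^{!}$ — the perverse amplitude estimates and the duality formalism already developed in the paper make the remainder routine.
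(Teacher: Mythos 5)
Your proof is correct, but it takes a genuinely different route from the paper's. The paper's argument is a two-line reduction to the enhanced-perversity machinery of D'Agnolo--Kashiwara: by Corollary \ref{cor4.6} one has ${}^p\bfE^{\leq0}_{\CC-c}(\I\CC_Y)\subset{}^{\frac{1}{2}}\bfE^{\leq0}_{\RR-c}(\I\CC_Y)$, then \cite[Proposition 3.5.6]{DK16-2} gives $\bfE f^{-1}L\in{}^{\frac{1}{2}}\bfE^{\leq d}_{\RR-c}(\I\CC_X)$, and since $\bfE f^{-1}L$ is $\CC$-constructible by Proposition \ref{prop3.18}, Corollary \ref{cor4.6} again yields $\bfE f^{-1}L\in{}^p\bfE^{\leq d}_{\CC-c}(\I\CC_X)$; part (2) is handled symmetrically with the $\bfE f^{!}$ estimate of \cite{DK16-2}, with no need for a duality exchange. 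You instead work straight from the definition of ${}^p\bfE^{\leq0}_{\CC-c}$ via sheafification: you prove $\sh_X(\bfE f^{-1}L)\simeq f^{-1}\sh_Y(L)$ on $\CC$-constructible objects by writing $L\simeq\Sol_Y^{\rmE}(\N)$ with $\N=\RH_Y^{\rmE}(L)$ (Theorem \ref{thm3.17}), applying Theorem \ref{thm2.5} (2), the isomorphism $\sh\circ\Sol^{\rmE}\simeq\Sol$, and Kashiwara's classical theorem $\Sol_X(\bfD f^{\ast}\N)\simeq f^{-1}\Sol_Y(\N)$ for holonomic $\N$; the elementary support-condition estimate $f^{-1}\bigl({}^p\bfD^{\leq0}_{\CC-c}(\CC_Y)\bigr)\subset{}^p\bfD^{\leq d}_{\CC-c}(\CC_X)$ then gives (1), and (2) follows from (1) by the exchange $\rmD_X^{\rmE}\bfE f^{!}L\simeq\bfE f^{-1}\rmD_Y^{\rmE}L$ together with the fact that ${}^p\bfE^{\geq\bullet}_{\CC-c}$ is defined through $\rmD_X^{\rmE}$. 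What each approach buys: the paper's is shorter and stays inside enhanced ind-sheaf theory by outsourcing the hard estimate to \cite{DK16-2}; yours avoids both \cite[Proposition 3.5.6]{DK16-2} and Corollary \ref{cor4.6}, reduces everything to classical perverse-sheaf and $\D$-module facts already implicit in the paper's framework, and as a by-product isolates the commutation of $\sh$ with $\bfE f^{-1}$ on $\BEC_{\CC-c}$, which nicely complements the paper's remark that $\sh$ commutes with direct images.

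Three small caveats. First, your two external inputs need precise citations: the commutation of $\Sol$ with $\bfD f^{\ast}$ for holonomic modules (e.g.\ \cite{HTT08}), and the duality exchange $\bfE f^{!}\simeq\rmD_X^{\rmE}\circ\bfE f^{-1}\circ\rmD_Y^{\rmE}$ on $\BEC_{\RR-c}$, which is in \cite{DK16} but is nowhere stated in this paper. Second, your ``alternative'' direct proof of the commutation by base change against $\rhom^{\rmE}$ is not routine --- sheafification does not commute with $\bfE f^{-1}$ for general enhanced ind-sheaves, and constructibility enters essentially --- so the $\D$-module detour should be regarded as the actual proof rather than an option. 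Third, your worry about shift normalizations is unfounded: with the paper's unshifted convention for $\bfD f^{\ast}$ in Theorem \ref{thm2.5} (2) and the unshifted classical isomorphism $\Sol_X(\bfD f^{\ast}\N)\simeq f^{-1}\Sol_Y(\N)$, no shifts appear on either side.
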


\begin{proof}
we only prove the assertion (1).
For any $L\in{}^p\bfE_{\CC-c}^{\leq0}(\I\CC_Y)$
we have $L\in{}^{\frac{1}{2}}\bfE_{\RR-c}^{\leq0}(\I\CC_Y)$.
Hence, by \cite[Proposition 3.5.6]{DK16-2}
we obtain $\bfE f^{-1} L\in{}^{\frac{1}{2}}\bfE_{\RR-c}^{\leq d}(\I\CC_X)$.
Moreover since $\bfE f^{-1} L$ is $\CC$-constructible by Proposition \ref{prop3.18},
we have $\bfE f^{-1} L\in{}^p\bfE_{\CC-c}^{\leq d}(\I\CC_X)$.
\end{proof}

\end{document}